\newtheorem{thm}{Theorem}[section]
\newtheorem*{thm*}{Theorem}
\newtheorem{lem}[thm]{Lemma}
\newtheorem*{lem*}{Lemma}
\newtheorem{cor}[thm]{Corollary}
\newtheorem{claim}[thm]{Claim}
\newtheorem{prop}[thm]{Proposition}
\theoremstyle{definition}
\newtheorem*{case*}{Case}
\newtheorem{defn}[thm]{Definition}
\newtheorem*{defn*}{Definition}
\newtheorem*{exmp*}{Example}
\newtheorem{hyp}[thm]{Hypothesis}
\newtheorem{step}{Step}\renewcommand{\thestep}{}
\theoremstyle{remark}
\newtheorem{case}{Case}\renewcommand{\thecase}{}
\newtheorem{rmk}[thm]{Remark}
\newtheorem*{rmk*}{Remark}
\def\alphenumi{
  \def\theenumi{\alph{enumi}}
  \def\p@enumi{\theenumi}
  \def\labelenumi{(\@alph\c@enumi)}}
\def\thecase{\@arabic\c@case}
\def\thestep{\@arabic\c@step}
\def\hhmm{\number\hh:\ifnum\mm<10{}0\fi\number\mm}
\let\oldmarginpar\marginpar
\renewcommand\marginpar[1]{\-\oldmarginpar[\raggedleft\footnotesize #1]%
{\raggedright\footnotesize #1}}
\newcommand\CC{\mathbb{C}}
\newcommand\HH{\mathbb{H}}
\newcommand\NN{\mathbb{N}}
\newcommand\RR{\mathbb{R}}
\newcommand\cC{{\mathcal{C}}}
\newcommand\sA{{\mathscr{A}}}
\newcommand\sH{{\mathscr{H}}}
\newcommand\sO{{\mathscr{O}}}
\newcommand\sS{{\mathscr{S}}}
\newcommand\eps{\varepsilon}
\newcommand\less{\setminus}
\newcommand\diam{\operatorname{diam}}
\newcommand\dist{\operatorname{dist}}
\DeclareMathOperator{\height}{height}
\DeclareMathOperator{\Int}{int}
\newcommand\loc{\operatorname{loc}}
\newcommand\supp{\operatorname{supp}}
\newcommand\tr{\operatorname{tr}}
\numberwithin{equation}{section}
\begin{document}

\title[Regularity of solutions to boundary-degenerate elliptic equations]
{Schauder a priori estimates and regularity of solutions to boundary-degenerate elliptic linear second-order partial differential equations}

\author[P. M. N. Feehan]{Paul M. N. Feehan}
\address[PF]{Department of Mathematics, Rutgers, The State University of New Jersey, 110 Frelinghuysen Road, Piscataway, NJ 08854-8019, United States}
\email{feehan@math.rutgers.edu}

\author[C. A. Pop]{Camelia A. Pop}
\address[CP]{Department of Mathematics, University of Pennsylvania, 209 South 33rd Street, Philadelphia, PA 19104-6395, United States}
\email{cpop@math.upenn.edu}

\date{September 4, 2013. Incorporates final galley proof corrections corresponding to published version. To appear in the Journal of Differential Equations, \url{dx.doi.org/10.1016/j.jde.2013.08.012}.}

\begin{abstract}
We establish Schauder a priori estimates and regularity for solutions to a class of boundary-degenerate elliptic linear second-order partial differential equations. Furthermore,  given a $C^\infty$-smooth source function, we prove $C^\infty$-regularity of solutions up to the portion of the boundary where the operator is degenerate. Boundary-degenerate elliptic operators of the kind described in our article appear in a diverse range of applications, including as generators of affine diffusion processes employed in stochastic volatility models in mathematical finance \cite{DuffiePanSingleton2000, Heston1993}, generators of diffusion processes arising in mathematical biology \cite{Athreya_Barlow_Bass_Perkins_2002, Epstein_Mazzeo_annmathstudies}, and the study of porous media \cite{DaskalHamilton1998, Daskalopoulos_Rhee_2003}.
\end{abstract}

%
%
%
%


%

\subjclass[2010]{Primary 35J70; secondary 60J60}

\keywords{Boundary-degenerate elliptic partial differential operator, degenerate diffusion process, H\"older regularity, mathematical finance, a priori Schauder estimate}

\thanks{PF was partially supported by NSF grant DMS-1059206, the Max Planck Institut f\"ur Mathematik in der Naturwissenschaft, Leipzig, the Max Planck Institut f\"ur Mathematik, Bonn, Germany, and the Department of Mathematics at Columbia University.}

\maketitle
\tableofcontents
\listoffigures

\section{Introduction}
\label{sec:Introduction}
This article continues our development of regularity theory for solutions to the `partial Dirichlet' boundary value
problem\footnote{In the sense that a Dirichlet boundary condition along only a portion of the boundary is required for uniqueness.}
defined by a `boundary-degenerate elliptic' operator.
We use the term `boundary-degenerate elliptic' in this article to clarify the distinction with the term `degenerate elliptic' as used by M. G. Crandall, H. Ishii, and P.-L. Lions in \cite{Crandall_Ishii_Lions_1992} and the operators considered in this article which are locally strictly elliptic on the interior of an open subset but fail to be strictly elliptic along a portion of its boundary.
Boundary-degenerate elliptic operators of the kind explored in our article can arise as generators of affine diffusion processes employed in stochastic volatility models in mathematical finance \cite{DuffiePanSingleton2000, Heston1993}, generators of diffusion processes arising in mathematical biology
\cite{Athreya_Barlow_Bass_Perkins_2002, Epstein_Mazzeo_annmathstudies}, and the analysis of porous media \cite{DaskalHamilton1998, Daskalopoulos_Rhee_2003}, to name just a few applications.

In \cite{Daskalopoulos_Feehan_statvarineqheston}, in addition to other results, P. Daskalopoulos and the first author obtained existence of $H^1$ solutions to a variational equation defined by the Heston operator \cite{Heston1993}. We recall that the Heston operator serves as a useful paradigm for boundary-degenerate elliptic operators arising in mathematical finance. In \cite{Feehan_Pop_regularityweaksoln}, the present authors proved global $C^\alpha_s$-regularity of $H^1$ solutions to the variational equation defined by the Heston operator, while in \cite{Feehan_Pop_higherregularityweaksoln}, we established $\sH^k$ as well as $C^{k,\alpha}_s$ and $C^{k,2+\alpha}_s$ regularity for those solutions, for all integers $k\geq 0$.
(We refer to \cite{Feehan_Pop_higherregularityweaksoln} for the precise definition of the Sobolev spaces $H^1$ and $\sH^k$; the H\"older spaces $C^\alpha_s$, $C^{k,\alpha}_s$, and $C^{k,2+\alpha}_s$ are defined in Section \ref{sec:Preliminaries}.)
However, our $C^{k,\alpha}_s$ and $C^{k,2+\alpha}_s$ regularity results in \cite{Feehan_Pop_higherregularityweaksoln}, although they provide an important stepping stone, are not optimal due to our reliance on variational methods. The purpose of the present article is to prove analogues --- for a broad class of boundary-degenerate elliptic operators --- of the Schauder a priori estimates and regularity results for strictly elliptic operators in \cite[Chapter 6]{GilbargTrudinger}. When coupled with results of \cite{Daskalopoulos_Feehan_statvarineqheston, Feehan_Pop_regularityweaksoln, Feehan_Pop_higherregularityweaksoln}, we immediately obtain existence and $C^{k,2+\alpha}_s$ regularity for solutions to the Dirichlet boundary value problem, defined by a boundary-degenerate elliptic operator, analogous to those expected from the Schauder approach for strictly elliptic operators in \cite[Chapter 6]{GilbargTrudinger}; uniqueness for a broad class of linear second-order boundary-degenerate elliptic operators, with the second-order (or Ventcel) boundary conditions of the kind implied by our choice of Daskalopoulos-Hamilton $C^{k,2+\alpha}_s$ H\"older spaces \cite{DaskalHamilton1998}, is a consequence of the weak maximum principle discussed by the first author in \cite{Feehan_maximumprinciple}.

To describe our results in more detail, suppose $\sO\subseteqq\HH$ is an open subset (possibly unbounded) in the open upper half-space, $\HH := \RR^{d-1}\times\RR_+$, where $d\geq 2$ and $\RR_+ := (0,\infty)$, and $\partial_1\sO := \partial\sO\cap\HH$ is the portion of the boundary, $\partial\sO$, of $\sO$ which lies in $\HH$, and $\partial_0\sO $ is the interior of $\partial\HH\cap\partial\sO$, where $\partial\HH = \RR^{d-1}\times\{0\}$ is the boundary of $\bar\HH := \RR^{d-1}\times\bar\RR_+$ and $\bar\RR_+ := [0,\infty)$. We assume $\partial_0\sO$ is non-empty and consider a linear second-order elliptic differential operator, $A$, on $\sO$ which is degenerate along $\partial_0\sO$. In this article, when the operator $A$ is given by \eqref{eq:defnA}, we prove an a priori interior Schauder estimate and higher-order H\"older regularity up to the boundary portion, $\partial_0\sO$ --- as measured by certain weighted H\"older spaces, $C^{k,2+\alpha}_s(\underline\sO)$
(Definition \ref{defn:DH2spaces}) --- for solutions to the elliptic boundary value problem,
\begin{align}
\label{eq:IntroBoundaryValueProblem}
Au &= f \quad \hbox{on }\sO,
\\
\label{eq:IntroBoundaryValueProblemBC}
u &= g \quad \hbox{on } \partial_1\sO,
\end{align}
where $f:\sO\to\RR$ is a source function and the function $g:\partial_1\sO\to\RR$ prescribes a partial Dirichlet boundary condition. We denote $\underline{\sO} := \sO\cup\partial_0\sO$ throughout our article, while $\bar \sO=\sO\cup\partial\sO$ denotes the usual topological closure of $\sO$ in $\RR^d$. Furthermore, when $f \in C^\infty(\underline\sO)$, we will also show that $u\in C^\infty(\underline\sO)$ (see Corollary \ref{cor:CinftyGlobal}). Since $A$ becomes degenerate along $\partial_0\sO$, such regularity results do not follow from the standard theory for strictly elliptic differential operators \cite{GilbargTrudinger, Krylov_LecturesHolder}.

The boundary-degenerate elliptic operators considered in this article have the form\footnote{The operator $-A$ is the generator of a degenerate-diffusion process with killing.}
\begin{equation}
\label{eq:defnA}
Av :=  -x_d\tr(aD^2v) - b\cdot Dv + cv \quad\hbox{on }\sO, \quad v\in C^\infty(\sO),
\end{equation}
where $x=(x_1,\ldots,x_d)$ are the standard coordinates on $\RR^d$, and the coefficients of $A$ are given by a matrix-valued function, $a=(a^{ij}):\underline\sO\to\sS^+(d)$, a vector field, $b=(b^i):\underline\sO\to\RR^d$, and a function, $c:\underline\sO\to \RR$, where $\sS(d)\subset \RR^{d\times d}$ is the subset of symmetric matrices and $\sS^+(d)\subset \RR^{d\times d}$ is the subset of non-negative definite matrices. We shall call $A$ in \eqref{eq:defnA} an \emph{operator with constant coefficients} if the coefficients $a,b,c$ are constant. Occasionally we shall also need
\begin{equation}
\label{eq:defnA_0}
A_0 v := (A-c)v = -x_d\tr(aD^2v) - b\cdot Dv \quad\hbox{on }\sO, \quad v\in C^\infty(\sO).
\end{equation}
Throughout this article, we shall assume that there is a positive constant, $b_0$, such that
$$
b^d \geq b_0 \quad \hbox{on } \partial_0 \sO.
$$
Because the coefficient, $b^d$, is assumed to obey a positive lower bound along $\partial_0\sO$,
\emph{no boundary condition} need be prescribed for the equation \eqref{eq:IntroBoundaryValueProblem} along $\partial_0\sO$. Indeed, one expects from \cite{DaskalHamilton1998} that the problem \eqref{eq:IntroBoundaryValueProblem}, \eqref{eq:IntroBoundaryValueProblemBC} should be well-posed, given $f\in C^\alpha_s(\underline\sO)$ and $g\in C(\partial_1\sO)$ obeying mild pointwise growth conditions, when we seek solutions in $C^{2+\alpha}_s(\underline\sO)\cap C(\sO\cup\partial_1\sO)$.

In \cite{Feehan_Pop_higherregularityweaksoln}, we proved existence and uniqueness of a solution, $u \in C^{2+\alpha_0}_s(\underline\sO)\cap C(\bar\sO)$ for \emph{some} $\alpha_0=\alpha_0\in(0,1)$, to \eqref{eq:IntroBoundaryValueProblem}, \eqref{eq:IntroBoundaryValueProblemBC} when $\partial_1\sO$ obeys a uniform exterior cone condition with cone $K$, and $A$ is the elliptic Heston operator, and $f\in C^\infty(\underline\sO)\cap C_b(\sO)$ and $g \in C^\infty(\overline{\partial_1\sO})$.  (The H\"older exponent, $\alpha_0$, depends on the coefficients of $A$ and the cone $K$.) In Section \ref{subsec:Summary}, we state the main results of our article and set them in context in Section \ref{subsec:Survey}, where we discuss connections with previous related research by other authors. In Section \ref{subsec:Extensions}, we indicate some extensions of methods and results in our article which we plan to develop in subsequent articles. We provide a guide in Section \ref{subsec:Guide} to the remainder of this article and point out some of the
mathematical difficulties and issues of broader interest.  We refer the reader to Section \ref{subsec:Notation} for our notational conventions.

\subsection{Summary of main results}
\label{subsec:Summary}
Throughout our article, our use of the term `interior' is in the sense intended by \cite{DaskalHamilton1998}, for example, $U\subset\sO$ is an \emph{interior} open subset of an open subset $\sO\subseteqq\HH$ if $\bar U \subset \underline{\sO}$ and by `interior regularity' of a function $u$ on $\sO$, we mean regularity of $u$ up to $\partial_0\sO$ --- see Figure \ref{fig:higher_order_heston_regularity_regions}.

\begin{figure}[htbp]
\centering
\begin{picture}(200,200)(0,0)
\put(0,0){\includegraphics[height=200pt]{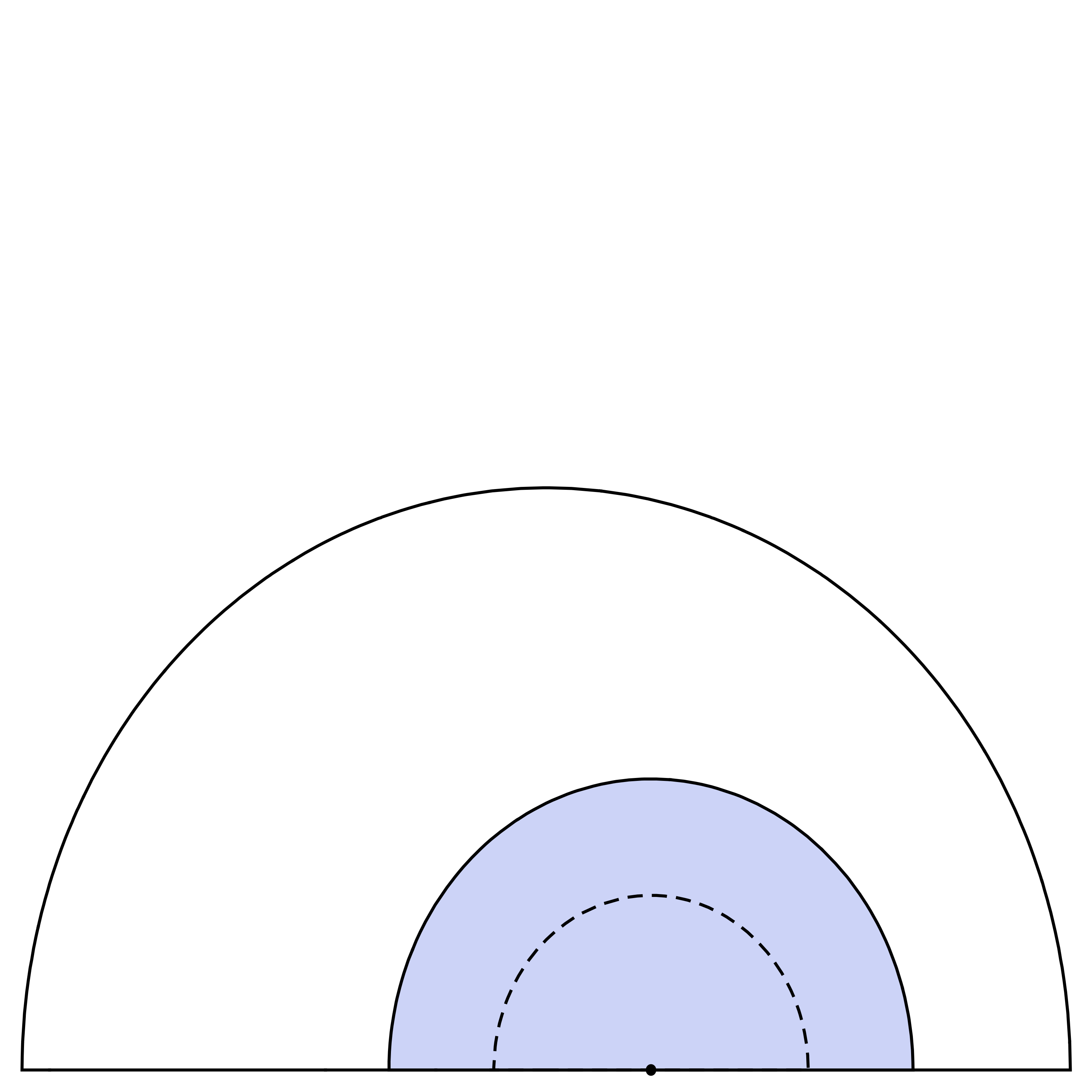}}
\put(12,60){$\scriptstyle \sO$}
\put(35,7){$\scriptstyle \partial_0\sO$}
\put(65,94){$\scriptstyle \partial_1\sO$}
\put(120,61){$\scriptstyle B_{r_0}^+(x^0)$}
\put(115,40){$\scriptstyle B_r^+(x^0)$}
\put(110,7){$\scriptstyle x^0$}
\end{picture}
\caption{Boundaries and regions in Theorem \ref{thm:APrioriSchauderInteriorDomain} and Remark \ref{rmk:APrioriInteriorLocalSchauder}.}
\label{fig:higher_order_heston_regularity_regions}
\end{figure}

Our first main result is the following analogue of \cite[Theorem I.1.3]{DaskalHamilton1998} (for a related boundary-degenerate parabolic operator \eqref{eq:DHModel} and $d=2$), \cite[Theorem 3.1]{Daskalopoulos_Rhee_2003} (for a related boundary-degenerate parabolic operator \eqref{eq:DHModel} with $d\geq 2$), and \cite[Corollary 6.3 and Problem 6.1]{GilbargTrudinger} (strictly elliptic operator). We refer the reader to Definitions \ref{defn:Calphas}, \ref{defn:DHspaces}, and \ref{defn:DH2spaces} for descriptions of the Daskalopoulos-Hamilton family of $C^{k,\alpha}_s$ and $C^{k,2+\alpha}_s$ H\"older norms and Banach spaces. For any $U\subseteqq \HH$, we denote
\begin{equation}
\label{eq:CoefficientNorms}
\|a\|_{C^{k, \alpha}_s(\bar U)} := \sum_{i,j=1}^d\|a^{ij}\|_{C^{k, \alpha}_s(\bar U)}
\quad\hbox{and}\quad
\|b\|_{C^{k, \alpha}_s(\bar U)} := \sum_{i=1}^d\|b^i\|_{C^{k, \alpha}_s(\bar U)}.
\end{equation}
We then have the

\begin{thm}[A priori interior Schauder estimate]
\label{thm:APrioriSchauderInteriorDomain}
For any $\alpha\in(0,1)$, integer $k\geq 0$, and positive constants $b_0$, $d_0$, $\lambda_0$, $\Lambda$, $\nu$, there is a positive constant, $C=C(\alpha, b_0,d,d_0,k,\lambda_0,\Lambda,\nu)$, such that the following holds. Let $\sO \subset \HH$ be an open subset
with\footnote{If one allows $\height(\sO)=\infty$, one may need to modify our definition of H\"older norms to provide a weight for additional control when $x_d\to\infty$ because the coefficient matrix, $x_da$, for $D^2u$ would be unbounded due to \eqref{eq:Strict_ellipticityDomain}. Weighted H\"older norms of this type were used by the authors in
\cite{Feehan_Pop_mimickingdegen_pde}, for this reason, for the corresponding parabolic operator, $-\partial_t+A$, on $(0,T)\times\HH$.}
$\height(\sO)\leq \nu$ and the coefficients $a,b,c$ of $A$ in \eqref{eq:defnA} belong to $C^{k, \alpha}_s(\underline\sO)$ and obey
\begin{gather}
\label{eq:Coeff_Holder_continuityEstimateDomain}
\|a\|_{C^{k, \alpha}_s(\bar\sO)} + \|b\|_{C^{k, \alpha}_s(\bar\sO)} + \|c\|_{C^{k, \alpha}_s(\bar\sO)} \leq \Lambda,
\\
\label{eq:Strict_ellipticityDomain}
\langle a\xi, \xi\rangle \geq \lambda_0 |\xi|^2\quad\hbox{on } \sO,
\quad \forall\,\xi \in \RR^d,
\\
\label{eq:Coeff_b_dDomain}
b^d \geq b_0 \quad \hbox{on } \partial_0 \sO.
\end{gather}
If $u \in C^{k,2+\alpha}_s(\underline\sO)$ and $\sO'\subset\sO$ is an open subset such that $\dist(\partial_1\sO',\partial_1\sO)\geq d_0$, then
\begin{equation}
\label{eq:APrioriSchauderInteriorDomain}
\|u\|_{C^{k, 2+\alpha}_s(\bar\sO')} \leq C\left(\|A u\|_{C^{k, \alpha}_s(\bar\sO)} + \|u\|_{C(\bar\sO)}\right).
\end{equation}
\end{thm}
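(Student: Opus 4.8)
The plan is to follow the classical freezing-of-coefficients scheme of Gilbarg--Trudinger \cite[Chapter 6]{GilbargTrudinger}, adapted to the cycloidal geometry underlying the Daskalopoulos--Hamilton H\"older spaces. Given $x^0\in\bar\sO'$, let $A^{x^0}$ denote the operator obtained by freezing the coefficients $a,b,c$ of $A$ at $x^0$. On a small ball $B$ about $x^0$ one writes $A=A^{x^0}+(A^{x^0}-A)$, combines an a priori $C^{k,2+\alpha}_s$ estimate for the model operator $A^{x^0}$ with the smallness of $A^{x^0}-A$ on $B$ (and a cycloidal cutoff), absorbs the perturbation, and patches the resulting local estimates by a covering argument. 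Two structurally different model estimates are needed. When $x_d^0>0$, the operator $x_d a$ is uniformly elliptic near $x^0$ at Euclidean scale $\sim x_d^0$ --- with ellipticity comparable to $x_d^0\lambda_0$ from below and bounded above by $\nu\Lambda$ using $\height(\sO)\le\nu$ --- while the cycloidal metric there is comparable to the Euclidean metric rescaled by $(x_d^0)^{-1/2}$; rescaling by $x_d^0$ reduces the model problem to a uniformly elliptic one on a fixed ball, where the classical interior Schauder estimate applies, and the powers of $x_d^0$ that appear on rescaling back are exactly the weights defining the $C^{k,2+\alpha}_s$ norm. When $x_d^0=0$ (that is, $x^0\in\partial_0\sO$), one uses instead an a priori $C^{k,2+\alpha}_s$ estimate for the \emph{degenerate} model operator obtained by freezing the coefficients at $x^0$ on a half-ball $B_r^+(x^0)$.

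The degenerate-boundary model estimate is the core of the proof, and I expect it to be the main obstacle. After an affine change of variables normalizing $a(x^0)=I$, the frozen operator takes the form $-x_d(\Delta+\beta\cdot D)v+c_0v$ with $\beta^d\ge b_0$, and the task is to show that a solution of $A^{x^0}u=f$ on $B_r^+(x^0)$ satisfies
\[
\|u\|_{C^{k,2+\alpha}_s(\bar B_{r/2}^+(x^0))}\le C\left(\|f\|_{C^{k,\alpha}_s(\bar B_r^+(x^0))}+\|u\|_{C(\bar B_r^+(x^0))}\right).
\]
This is obtained from fine pointwise estimates on the fundamental solution of the degenerate model operator --- which is explicitly a product of a Gaussian kernel in the tangential variables with a Bessel-type kernel in the $x_d$ variable --- together with the resulting mapping properties of the associated Green operator on the cycloidal H\"older spaces; the hypothesis $\beta^d\ge b_0>0$ is precisely what makes the model problem well posed with no condition along $\partial_0\sO$ and what yields the kernel bounds uniformly in the data. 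In the body of the paper this analysis, together with the construction and properties of the cycloidal H\"older spaces, constitutes the bulk of the technical work.

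Granting the model estimates, one treats the case $k=0$ first. On a ball $B$ of radius $\rho$ about $x^0$, the coefficients of $A^{x^0}-A$ are $O(\rho^\alpha)$ in sup norm and have cycloidal H\"older seminorms controlled by $\Lambda$, so a standard interpolation gives $\|(A^{x^0}-A)u\|_{C^{\alpha}_s(\bar B)}\le \eps\,\|u\|_{C^{2+\alpha}_s(\bar B)}+C_\eps\,\|u\|_{C^{1}_s(\bar B)}$ with $\eps=\eps(\rho)\to0$ as $\rho\to0$. Applying the model estimate to $\zeta u$, where $\zeta$ is a cutoff supported in $B$, equal to $1$ on $B_{\rho/2}(x^0)$, and obeying the scale-invariant bounds $x_d^{1/2}|D\zeta|+x_d|D^2\zeta|\le C$ so that the commutator $[A^{x^0},\zeta]u$ only involves $u$ and $Du$ with controlled coefficients, and combining with the previous estimate, yields
\[
\|u\|_{C^{2+\alpha}_s(\bar B_{\rho/2}(x^0))}\le C\left(\|Au\|_{C^{\alpha}_s(\bar B)}+\eps\|u\|_{C^{2+\alpha}_s(\bar B)}+C_\eps\|u\|_{C^{1}_s(\bar B)}+\|u\|_{C(\bar\sO)}\right).
\]
Choosing $\rho$ small (depending only on the admissible constants) so that the term $\eps\|u\|_{C^{2+\alpha}_s}$ can be absorbed after a standard iteration over concentric balls (cf. \cite[Lemma 6.32]{GilbargTrudinger}), disposing of $\|u\|_{C^1_s}$ by an interpolation inequality for the Daskalopoulos--Hamilton spaces, covering $\sO'$ by countably many such half-sized balls with uniformly bounded overlap, and summing, produces \eqref{eq:APrioriSchauderInteriorDomain} for $k=0$. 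The hypothesis $\dist(\partial_1\sO',\partial_1\sO)\ge d_0$ enters precisely here: it guarantees that every ball of the cover lies inside $\sO$ with its cutoff vanishing before $\partial_1\sO$ is reached, so no boundary condition along $\partial_1\sO$ is ever invoked.

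The case $k\ge1$ follows by induction. Differentiating $Au=f$, a tangential derivative $D_iu$ ($1\le i\le d-1$) solves an equation $A(D_iu)=D_if-(D_iA)u$ of the same form, with coefficients in $C^{k-1,\alpha}_s(\underline\sO)$; invoking the inductive hypothesis (and interpolation) bounds $\|D_iu\|_{C^{k-1,2+\alpha}_s}$ in terms of $\|f\|_{C^{k,\alpha}_s(\bar\sO)}$ and $\|u\|_{C(\bar\sO)}$. The $x_d$-direction is handled using the special structure of the degenerate operator and of the spaces $C^{k,2+\alpha}_s$ --- in which the weighted second-order quantity $x_dD^2u$ is one order more regular than an unweighted second derivative --- so that the normal-derivative component of the $C^{k,2+\alpha}_s$ norm of $u$ reduces to $C^{k-1,2+\alpha}_s$ control of quantities already estimated; combining the tangential and normal contributions closes the induction and completes the proof.
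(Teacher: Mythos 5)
Your outer scaffolding --- freezing coefficients at a point, distinguishing the cases $x_d^0 > 0$ and $x_d^0 = 0$, rescaling the interior model to a uniformly elliptic one on a fixed ball, introducing a cycloidal cutoff with $x_d^{1/2}|D\zeta|+x_d|D^2\zeta|\le C$, absorbing the perturbation via interpolation, iterating over concentric balls, covering $\sO'$, and then inducting on $k$ using the commutation of $A$ with derivatives --- is all essentially the route the paper takes (cf.\ Theorem~\ref{thm:Holder_estimate_local_variable_coeff}, Proposition~\ref{prop:Higher_order_estimate_global_slab_compactsupport}, Theorem~\ref{thm:Higher_order_estimate}, and the short closing argument in Section~\ref{subsec:Variable_coefficients}).

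The genuine gap is in the piece you correctly flag as ``the core of the proof'': the constant-coefficient, degenerate-boundary model estimate on a half-ball. You propose to establish it by pointwise bounds on the fundamental solution (Gaussian tangentially, Bessel-type normally) and mapping properties of the resulting Green operator in the $C^{k,\alpha}_s$ scales. This is precisely the route the authors considered and abandoned: the relevant kernel for the elliptic model --- extracted in Appendix~\ref{sec:Existence_smooth_solution} via Fourier transform and confluent hypergeometric functions $M$ and $U$ --- is, in the authors' words, ``tantalizingly explicit'' but ``quite intractable for the analysis required to emulate the role of potential theory.'' (Koch pursues kernel estimates, but in a Sobolev/Moser-iteration framework, not the H\"older one needed here.) The paper instead proves the boundary model estimate by a three-stage elementary argument: (i) a priori $C^0$ bounds on tangential derivatives on half-balls via Brandt's finite-difference comparison method, together with ODE arguments in the normal direction (Section~\ref{sec:Derivative_estimates_interior_boundary}); (ii) the Daskalopoulos--Hamilton polynomial-approximation and Taylor-remainder estimates, adapted to the elliptic setting (Section~\ref{sec:PolynomialApproximation}); and (iii) a scaling argument near the strictly elliptic interior combined with (i)--(ii) to close the $C^{2+\alpha}_s$ bound (Sections~\ref{sec:Schauder_estimates_interior}--\ref{sec:Boundary_Schauder_estimates}). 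The authors also explain why the obvious adaptation of Daskalopoulos--Hamilton's own derivative bounds does not transfer: their comparison-principle argument genuinely uses parabolicity, forcing the switch to Brandt's method here. As stated, your proposal would leave the central lemma unproved (or provable only by an argument the authors judged infeasible), so the proof does not go through without replacing that step by something along the lines of Sections~\ref{sec:Derivative_estimates_interior_boundary}--\ref{sec:Boundary_Schauder_estimates}.
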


\begin{rmk}[A priori interior Schauder estimate]
\label{rmk:APrioriInteriorLocalSchauder}
The case where $k=0$ and the open subset is a half-ball, $\sO=B^+_{r_0}(x^0)$, the coefficients, $a,b$, of $A$ are constant, $c=0$, and $u \in C^\infty(\bar B^+_{r_0}(x^0))$ is given by Corollary \ref{cor:Holder_estimate_any_r_any_gamma_A_0}.
Theorem \ref{thm:Holder_estimate_local} relaxes those conditions to allow $u \in C^{2+\alpha}_s(\underline B^+_{r_0}(x^0))$ and arbitrary $c \in \RR$;
Theorem \ref{thm:Holder_estimate_local_variable_coeff} further relaxes the conditions on $A$ to allow for variable coefficients, $a,b,c$, in $C^\alpha_s(\underline B^+_{r_0}(x^0))$; Theorem \ref{thm:Higher_order_estimate} relaxes the constraint $k=0$ to allow for arbitrary integers $k\geq 0$; finally, Theorem \ref{thm:APrioriSchauderInteriorDomain} is proved in Section \ref{subsec:Variable_coefficients}, where we relax the constraint that $\sO = B^+_{r_0}(x^0)$ and allow for arbitrary open subsets of the form $\sO\subseteqq\RR^{d-1}\times(0,\nu)$.
\end{rmk}

It is considerably more difficult to prove a global a priori estimate for a solution, $u\in C^{k, 2+\alpha}_s(\bar\sO)$, when the intersection, $\overline{\partial_0\sO}\cap\overline{\partial_1\sO}$, is non-empty and we do not consider that problem in this article, but refer the reader to \cite[\S 1.3]{Feehan_Pop_higherregularityweaksoln} for a discussion of this issue. However, the global estimate in Corollary \ref{cor:Global_Schauder_estimate_VariableCoefficients} has useful applications when $\partial_1\sO$ does not meet $\partial_0\sO$. For a constant $\nu>0$, we define the horizontal \emph{slab} (or strip, in dimension two),
\begin{equation}
\label{eq:Slab}
S:=\RR^{d-1}\times(0,\nu),
\end{equation}
and note that $\partial_0 S =\RR^{d-1}\times\{0\}$ and $\partial_1 S =\RR^{d-1}\times\{\nu\}$.

\begin{cor}[A priori global Schauder estimate on a slab]
\label{cor:Global_Schauder_estimate_VariableCoefficients}
For any $\alpha\in(0,1)$, positive constants $b_0$, $\lambda_0$, $\Lambda$, $\nu$, and integer $k\geq 0$, there is a positive constant, $C=C(\alpha, b_0, d, k, \lambda_0,\Lambda,\nu)$, such that the following holds. Suppose the coefficients of $A$ in \eqref{eq:defnA} belong to $C^{k,\alpha}_s(\bar S)$, where $S=\RR^{d-1}\times(0, \nu)$ as in
\eqref{eq:Slab}, and obey
\begin{gather}
\label{eq:Coeff_Holder_continuity_HigherOrderSlab}
\|a\|_{C^{k, \alpha}_s(\bar S)} + \|b\|_{C^{k, \alpha}_s(\bar S)} + \|c\|_{C^{\alpha}_s(\bar S)} \leq \Lambda,
\\
\label{eq:Strict_ellipticity_slab}
\langle a\xi, \xi\rangle \geq \lambda_0 |\xi|^2\quad\hbox{on } S,\quad \forall\,\xi \in \RR^d,
\\
\label{eq:Coeff_b_d_slab}
b^d \geq b_0 \quad \hbox{on } \partial_0 S.
\end{gather}
If $u\in C^{k, 2+\alpha}_s(\bar S)$ and $u = 0$ on $\partial_1 S$, then
\begin{equation}
\label{eq:Global_Schauder_estimate_VariableCoefficients}
\| u\|_{C^{k, 2+\alpha}_s(\bar S)} \leq C\left(\|A u\|_{C^{k, \alpha}_s(\bar S)} + \|u\|_{C(\bar S)}\right),
\end{equation}
and, when $c\geq 0$ on $S$,
\begin{equation}
\label{eq:Global_Schauder_estimate_VariableCoefficients_nonnegc}
\| u\|_{C^{k, 2+\alpha}_s(\bar S)} \leq C \|A u\|_{C^{k, \alpha}_s(\bar S)},
\end{equation}
\end{cor}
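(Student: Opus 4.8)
The plan is to derive this corollary from results already in hand: near the degenerate boundary portion $\partial_0 S=\RR^{d-1}\times\{0\}$ the estimate is a special case of the interior estimate of Theorem~\ref{thm:APrioriSchauderInteriorDomain}, while near the non-degenerate portion $\partial_1 S=\RR^{d-1}\times\{\nu\}$ the operator $A$ in \eqref{eq:defnA} is uniformly elliptic and one is in the realm of the classical Schauder theory of \cite[Chapter~6]{GilbargTrudinger}; the two estimates are then patched together across the slab. The sharper bound \eqref{eq:Global_Schauder_estimate_VariableCoefficients_nonnegc} when $c\geq 0$ is obtained afterwards by absorbing $\|u\|_{C(\bar S)}$ with the weak maximum principle of \cite{Feehan_maximumprinciple} and an explicit barrier.

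First I would apply Theorem~\ref{thm:APrioriSchauderInteriorDomain} with $\sO=S$, $\sO'=S':=\RR^{d-1}\times(0,3\nu/4)$, and $d_0:=\nu/4=\dist(\partial_1 S',\partial_1 S)$, noting $\height(S)=\nu$ and that the coefficient hypotheses \eqref{eq:Coeff_Holder_continuity_HigherOrderSlab}--\eqref{eq:Coeff_b_d_slab} supply the hypotheses of that theorem on $S$. This produces a constant $C=C(\alpha,b_0,d,k,\lambda_0,\Lambda,\nu)$ with
\[
\|u\|_{C^{k,2+\alpha}_s(\overline{S'})} \leq C\bigl(\|Au\|_{C^{k,\alpha}_s(\bar S)} + \|u\|_{C(\bar S)}\bigr).
\]
On any sub-slab of $\overline{S'}$ on which $x_d$ is bounded below by a positive constant, the weighted norm $\|\cdot\|_{C^{k,2+\alpha}_s}$ is equivalent to the ordinary Hölder norm $\|\cdot\|_{C^{k+2,\alpha}}$, with constants depending only on $d,k,\alpha$ and the lower bound on $x_d$; this equivalence, used in both directions, is what lets the interior estimate interface with the classical one.

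Next, on $R:=\RR^{d-1}\times(\nu/4,\nu)$ one has $x_d\in[\nu/4,\nu]$, so $A$ is uniformly elliptic on $\bar R$ with ellipticity constant $\geq(\nu/4)\lambda_0$ and $C^{k,\alpha}(\bar R)$ coefficient norms bounded in terms of $\nu,\Lambda,d,k,\alpha$. Since $u=0$ on $\partial_1 S$, a covering of $\RR^{d-1}\times[\nu/2,\nu]$ by interior balls and by boundary half-balls of a fixed radius less than $\nu/4$, together with the interior and boundary Schauder estimates of \cite[Chapter~6]{GilbargTrudinger} (the constants being uniform by translation invariance in the $\RR^{d-1}$-directions), yields
\[
\|u\|_{C^{k+2,\alpha}(\RR^{d-1}\times[\nu/2,\nu])} \leq C\bigl(\|Au\|_{C^{k,\alpha}(\bar R)} + \|u\|_{C(\bar R)}\bigr) \leq C\bigl(\|Au\|_{C^{k,\alpha}_s(\bar S)} + \|u\|_{C(\bar S)}\bigr),
\]
and re-expressing the left-hand side in the $C^{k,2+\alpha}_s$ norm bounds $\|u\|_{C^{k,2+\alpha}_s(\RR^{d-1}\times[\nu/2,\nu])}$ by the same right-hand side. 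Since $\bar S=\overline{S'}\cup(\RR^{d-1}\times[\nu/2,\nu])$ and any two points of $\bar S$ at Euclidean distance less than $\nu/4$ lie in a common one of these sets — while a pair at distance $\geq\nu/4$ is also separated in the cycloidal metric — the weighted seminorms defining $\|u\|_{C^{k,2+\alpha}_s(\bar S)}$ are controlled by those over the two pieces plus $\|u\|_{C(\bar S)}$; combining the two displayed estimates proves \eqref{eq:Global_Schauder_estimate_VariableCoefficients}.

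Finally, for \eqref{eq:Global_Schauder_estimate_VariableCoefficients_nonnegc}, set $M:=\|Au\|_{C(\bar S)}$ and consider the barrier $w(x):=\beta(e^{\gamma\nu}-e^{\gamma x_d})$, so that $w\geq 0$ on $\bar S$, $w=0$ on $\partial_1 S$, and
\[
Aw = \beta\gamma e^{\gamma x_d}\bigl(\gamma x_d\,a^{dd}+b^d\bigr) + c\,\beta\bigl(e^{\gamma\nu}-e^{\gamma x_d}\bigr) \geq \beta\gamma e^{\gamma x_d}\bigl(\gamma x_d\,a^{dd}+b^d\bigr)\quad\hbox{on } S,
\]
using $c\geq 0$ and $x_d\leq\nu$. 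By \eqref{eq:Strict_ellipticity_slab} we have $a^{dd}\geq\lambda_0$; by \eqref{eq:Coeff_Holder_continuity_HigherOrderSlab}, $|b^d|\leq\Lambda$; and by \eqref{eq:Coeff_b_d_slab} and the continuity of $b^d$ up to $\partial_0 S$ there is $\delta=\delta(b_0,\Lambda,\alpha,\nu)>0$ with $b^d\geq b_0/2$ for $x_d\leq\delta$; hence for $\gamma$ sufficiently large, depending only on $b_0,\lambda_0,\Lambda,\nu$, one gets $\gamma x_d\,a^{dd}+b^d\geq\min(1,b_0/2)>0$ on $\bar S$. Taking $\beta:=M/(\gamma\min(1,b_0/2))$ gives $Aw\geq M\geq\pm Au$ on $S$ and $\pm u=0=w$ on $\partial_1 S$, so the weak maximum principle of \cite{Feehan_maximumprinciple} — applicable precisely because $b^d\geq b_0>0$ on $\partial_0 S$, and imposing no condition along $\partial_0 S$ — gives $\pm u\leq w$ on $\bar S$, i.e.\ $\|u\|_{C(\bar S)}\leq\|w\|_{C(\bar S)}=\beta(e^{\gamma\nu}-1)\leq C\|Au\|_{C(\bar S)}\leq C\|Au\|_{C^{k,\alpha}_s(\bar S)}$. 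Inserting this into \eqref{eq:Global_Schauder_estimate_VariableCoefficients} yields \eqref{eq:Global_Schauder_estimate_VariableCoefficients_nonnegc}. The substantive points here are not the new degenerate phenomenon — already packaged in Theorem~\ref{thm:APrioriSchauderInteriorDomain} — but rather the parameter-uniform comparison of $C^{k,2+\alpha}_s$ with $C^{k+2,\alpha}$ on the non-degenerate collar, the translation-invariant covering needed because $S$ is unbounded in the $\RR^{d-1}$-directions, and the verification that the barrier is admissible up to the degenerate face.
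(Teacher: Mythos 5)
Your proof is essentially correct and mirrors the paper's, which also patches a degenerate-boundary estimate on a lower sub-slab together with classical Schauder theory near $\partial_1 S$ and then invokes a maximum principle for the sharper bound when $c\geq 0$. Two organizational differences are worth flagging. First, you invoke Theorem~\ref{thm:APrioriSchauderInteriorDomain} for the lower sub-slab, whereas the paper works directly with a covering of $\RR^{d-1}\times(0,\nu/4)$ by half-balls and the local estimate of Theorem~\ref{thm:Higher_order_estimate}; the two are equivalent in content since the paper's proof of Theorem~\ref{thm:APrioriSchauderInteriorDomain} is itself exactly that covering argument, but the paper proves the present corollary \emph{before} Theorem~\ref{thm:APrioriSchauderInteriorDomain}, precisely so that it can go directly through Theorem~\ref{thm:Higher_order_estimate}. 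Your citation is not circular — the proof of Theorem~\ref{thm:APrioriSchauderInteriorDomain} rests only on Theorem~\ref{thm:Higher_order_estimate} and \cite{GilbargTrudinger} — but it inverts the paper's exposition order. Second, for the estimate when $c\geq 0$ the paper simply cites Corollary~\ref{cor:Maximum_principle_A_0}, which is proved by the exponential substitution $u=e^{-\sigma x_d}v$; your explicit barrier $w=\beta(e^{\gamma\nu}-e^{\gamma x_d})$ is the same device made concrete, and its verification (in particular the choice of $\delta$ from the $C^\alpha_s$-modulus of $b^d$, and the choice of $\gamma$ large to compensate for $b^d$ possibly being negative away from $\partial_0 S$) is sound. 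Your observation about the comparability of $\|\cdot\|_{C^{k,2+\alpha}_s}$ and $\|\cdot\|_{C^{k+2,\alpha}}$ on sub-slabs with $x_d$ bounded below is correct and useful to state explicitly: the seminorms are not pointwise comparable for far-separated pairs (since $s\sim|x^1-x^2|^{1/2}$ there), but the full norms are comparable because for $|x^1-x^2|\geq\text{const}$ the cycloidal distance is itself bounded below by a constant depending on $\nu$, so such pairs contribute at most $C(\nu,\alpha)\|\cdot\|_{C^0}$ to either seminorm; this is also what justifies your final patching of the two pieces.
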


\begin{rmk}[A priori global Schauder estimate on a slab]
For an operator, $A$, with constant coefficients, $a,b,c$, an a priori global Schauder estimate on a slab is proved as Corollary \ref{cor:Global_Schauder_estimate_ConstantCoefficients}.
\end{rmk}

The Green's function for an operator $A$ in \eqref{eq:defnA} with constant coefficients can be extracted from Appendix \ref{sec:Existence_smooth_solution}, where we construct explicit $C^\infty$ solutions to $Au=f$ on $\HH$ and
prove the following elliptic analogue of the existence result \cite[Theorem I.1.2]{DaskalHamilton1998} for the initial value problem for a boundary-degenerate parabolic model \eqref{eq:DHModel} on a half-space for the linearization of the porous medium equation \eqref{eq:PorousMediumEquation}.

\begin{thm}[Existence and uniqueness of a $C^\infty(\bar\HH)$ solution on the half-space when $A$ has constant coefficients]
\label{thm:Existence_smooth_solutions_half_space}
Let $A$ be an operator of the form \eqref{eq:defnA} and require that the coefficients, $a,b,c$, are constant with $b^d>0$ and $c>0$. If $f\in C^{\infty}_0(\bar\HH)$, then there is a unique solution, $u \in C^{\infty}(\bar\HH)$, to $Au=f$ on $\HH$.
\end{thm}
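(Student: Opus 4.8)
The plan is to construct $u$ explicitly. Since $A$ is elliptic with constant coefficients, $a$ is a fixed positive-definite symmetric matrix, and I would first observe that a block-lower-triangular linear automorphism $L$ of $\HH$ (one fixing the hyperplane $\{x_d=0\}$, with positive $(d,d)$-entry $\ell$, invertible tangential block $L'$, and off-diagonal column $w$) can be chosen — use $w$ to remove the mixed coefficients $a^{id}$, $i<d$; then $L'$ on the Schur complement of $a^{dd}$ in $a$; then $\ell$ to match scales — so that in the new coordinates $A$ becomes $Au=-x_d\Delta u-b\cdot Du+cu$ with $b^d>0$ and the same $c>0$, while $f$ remains in $C^\infty_0(\bar\HH)$. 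Applying the partial Fourier transform $\hat u(\xi',y)=\int_{\RR^{d-1}}e^{-ix'\cdot\xi'}u(x',y)\,dx'$ in the tangential variables $x'=(x_1,\dots,x_{d-1})$ converts $Au=f$ into the one-parameter family of ordinary differential equations
\[
-y\,\hat u_{yy}-b^d\,\hat u_y+\big(y|\xi'|^2+c-i\,b'\cdot\xi'\big)\hat u=\hat f(\xi',y),\qquad y:=x_d\in(0,\infty),
\]
one for each frequency $\xi'\in\RR^{d-1}$, with $\hat f(\xi',\cdot)$ smooth and compactly supported in $y$ and of Schwartz class in $\xi'$. (Equivalently $u(x)=\EE^x\!\int_0^\infty e^{-ct}f(X_t)\,dt$ for the affine diffusion $X$ generated by $-A_0$, with $e^{-ct}$ the discount from the killing term $c$; but the Fourier/ODE route makes the regularity up to $\partial_0\HH$ most transparent.)

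For fixed $\xi'$, the point $y=0$ is a regular singular point with Frobenius indicial exponents $0$ and $1-b^d$; the exponent-$0$ solution is an entire function of $y$ (the normalized equation has polynomial coefficients $yp(y)=b^d$ and $y^2q(y)$), whereas every linearly independent homogeneous solution behaves like $y^{1-b^d}$, or $y^{1-b^d}\log y$, and so is not $C^\infty$ at $0$: thus the homogeneous solutions smooth up to $y=0$ form a line. At the irregular point $y=\infty$, WKB/reduction-of-order analysis produces a one-dimensional space of solutions decaying like $e^{-|\xi'|y}$ when $\xi'\neq 0$ and like $e^{-2\sqrt{cy}}$ when $\xi'=0$ — here $c>0$ is essential — the complementary solutions growing. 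These two lines are transverse for every $\xi'$: with respect to $d\mu:=y^{b^d-1}\,dy$ the homogeneous operator reads $\mathcal{A}v=-y^{1-b^d}(y^{b^d}v')'+(y|\xi'|^2+c-ib'\cdot\xi')v$, so a common nonzero solution $v$ (smooth at $0$, decaying at $\infty$) would give, after integration by parts (boundary terms vanish since $b^d>0$ and $v$ decays),
\[
0=\Real\!\int_0^\infty \mathcal{A}v\,\bar v\,d\mu=\int_0^\infty\!\big(y^{b^d}|v'|^2+(y|\xi'|^2+c)\,|v|^2y^{b^d-1}\big)\,dy\ \ge\ c\,\|v\|_{L^2(\mu)}^2>0,
\]
a contradiction. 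Hence for each $\xi'$ there is a unique $\hat u(\xi',\cdot)$ smooth at $y=0$ and decaying at $y=\infty$, given through a Green's function $g(\xi';y,\eta)$ built from the two distinguished homogeneous solutions normalized by their (nonvanishing) Wronskian $W(\xi')$; I then set $u$ equal to the inverse Fourier transform of $\hat u$. Checking that $u$ is a classical solution of $Au=f$ on $\HH$, that it extends continuously to $\partial_0\HH$, and that it satisfies there the reduced first-order (Ventcel-type) equation $-b\cdot Du+cu=f$, is then routine.

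The main obstacle — and the bulk of the work — is to prove $u\in C^\infty(\bar\HH)$, that is, to control $g(\xi';y,\eta)$ together with all its derivatives, uniformly in the parameter $\xi'$, including its large-$|\xi'|$ (semiclassical) asymptotics and its behavior as $y,\eta\to\infty$: bounds of the schematic shape $|\partial_{\xi'}^\beta\partial_y^j g(\xi';y,\eta)|\le C_{\beta,j}(1+|\xi'|)^N e^{-c_0(1+|\xi'|)(y+\eta)}$ on bounded $y$-sets (with appropriate decay in $\eta$, and as $y\to\infty$) license differentiation under the integral sign arbitrarily often and give $u\in C^\infty(\bar\HH)$, with $u$ and all its derivatives rapidly decreasing in $x'$. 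The delicate points are matching the WKB regime at $y=\infty$ to the Frobenius expansion at $y=0$ uniformly down to moderate $|\xi'|$, and keeping $W(\xi')$ bounded away from $0$ with controlled growth in $\xi'$. Alternatively, once the construction has furnished a solution that is bounded on $\bar\HH$ and, say, of class $C^{2+\alpha}_s(\underline{\HH})$, the interior regularity theory of this paper finishes the argument: since $\partial_1\HH=\emptyset$ one has $\underline{\HH}=\bar\HH$, so Corollary~\ref{cor:CinftyGlobal} — with the a priori estimate of Theorem~\ref{thm:APrioriSchauderInteriorDomain} supplying the quantitative control — yields $u\in C^\infty(\underline{\HH})=C^\infty(\bar\HH)$; in this case the only genuinely new input is the production, via the ODE analysis above, of a bounded classical solution.

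Finally, uniqueness. If $u_1,u_2\in C^\infty(\bar\HH)$ both solve the problem, then letting $x_d\downarrow 0$ and using that $D^2u_j$ is (locally) bounded up to $\partial_0\HH$, so $x_d\,D^2u_j\to 0$, shows each $u_j$ satisfies $-b\cdot Du_j+cu_j=f$ on $\partial_0\HH$; hence $w:=u_1-u_2$ satisfies $Aw=0$ on $\HH$ and $-b\cdot Dw+cw=0$ on $\partial_0\HH$, whence $w\equiv 0$ by the weak maximum principle of \cite{Feehan_maximumprinciple}, which applies because $b^d>0$ on $\partial_0\HH$ and $c>0$. This requires restricting to solutions bounded on $\bar\HH$, a class containing the solution constructed above: the $d$-dimensional Green's function $G$ of $A$ on $\HH$ obeys $G\ge 0$ and $\int_{\bar\HH}G(x,y)\,dy\le 1/c$ (since $A$ annihilates no function but $A\,\mathbf 1=c$), so $0\le u\le \|f\|_{C(\bar\HH)}/c$.
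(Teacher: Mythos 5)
Your overall plan is the paper's plan: tangential Fourier transform, reduction (for each frequency $\xi'$) to a Kummer-type ODE in $y=x_d$, a Green's function built from the two distinguished homogeneous solutions, inversion, and maximum-principle uniqueness. The differences you introduce on the ODE side are legitimate but cosmetic: where the paper writes the two branches explicitly as confluent hypergeometric functions $M$ and $U$ and reads off their behavior from \cite{AbramStegun} and the closed-form Wronskian \eqref{eq:Definition_Wronskian}, you identify them as the Frobenius exponent-$0$ solution and the WKB-recessive solution, and prove their transversality by an energy identity using $c>0$. That energy argument is correct and is a perfectly good substitute for the explicit Wronskian.

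The genuine gap is exactly the step you flag as ``the bulk of the work'': showing that $\hat u(\xi',\cdot)$ is $C^\infty$ up to $y=0$ for each $\xi'$ \emph{and} decays faster than every polynomial in $\xi'$ uniformly in $y$, so that the inverse Fourier transform lands in $C^\infty(\bar\HH)$. Your proposed route --- pointwise bounds of the schematic shape $|\partial_{\xi'}^\beta\partial_y^j g(\xi';y,\eta)|\lesssim(1+|\xi'|)^Ne^{-c_0(1+|\xi'|)(y+\eta)}$, obtained by matching the Frobenius regime at $y=0$ to the WKB regime at $y=\infty$ uniformly in $\xi'$ --- is not carried out, and it also asks for more than is needed. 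The paper's Lemma~\ref{lem:Properties_tilde_u} obtains what is required much more cheaply: one applies an ODE maximum-principle argument (Step~\ref{step:Decay_tilde_u}) to $F(\xi;x_d):=|\tilde u(\xi;x_d)|^2$, which solves a degenerate ODE inequality in $x_d$ alone, to get $|\tilde u(\xi;x_d)|\le (1/c)\sup_{y\ge 0}|\tilde f(\xi;y)|$; this transfers the Schwartz decay in $\xi$ directly from $\hat f$ to $\hat u$ with \emph{no} kernel estimates, no exponential decay in $\xi$, and no matching of asymptotic regimes. Smoothness of $\hat u(\xi',\cdot)$ at $y=0$ (Step~\ref{step:Smoothness_tilde_u}) is then handled by observing that $D^k_y v$ satisfies the Kummer equation with parameters shifted by $k$ (as in \eqref{eq:ODE_D_k_tilde_u}), together with the explicit near-origin asymptotics \eqref{eq:Value_at_0_M}, \eqref{eq:Value_at_0_U}, \eqref{eq:Definition_Wronskian}. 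You would need to supply equivalents of both of these ingredients in your framework for the proof to close.

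Your fallback via Corollary~\ref{cor:CinftyGlobal} is not a real shortcut. That corollary (through Theorems~\ref{thm:InteriorRegularityDomain}, \ref{thm:Higher_order_regularity}) requires the candidate $u$ to already satisfy $u\in C^1(\underline\HH)$, $x_dD^2u\in C(\underline\HH)$, and $x_dD^2u=0$ on $\partial_0\HH$; producing a solution with that much boundary regularity from the Fourier construction is precisely the step that has to be done by hand. In addition, the Schauder theory is erected on the slab existence result (Theorem~\ref{thm:Existence_smooth_solutions_slab}), whose proof is ``the same method'' as the present theorem, so the Kummer-ODE analysis cannot in fact be dodged.

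Two smaller points. Your uniqueness argument is more circuitous than necessary: the weak maximum principle of \cite{Feehan_maximumprinciple} applies directly to the boundary-degenerate operator on $\HH$ (no Dirichlet condition required on $\partial_0\HH$ because $b^d>0$ and the coefficient of $D^2$ vanishes there), so deriving and then invoking the reduced first-order equation $-b\cdot Dw+cw=0$ on $\partial_0\HH$ is not needed. And your claim that boundedness of the constructed $u$ follows from ``$A\mathbf 1=c$, so $\int G\le 1/c$'' is only a heuristic; in the paper boundedness comes out as a byproduct of the $L^\infty$ bound on $\hat u$.
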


Again, it is considerably more difficult to prove existence of a solution, $u$, in $C^{k, 2+\alpha}_s(\bar\sO)$ or $C^{k, 2+\alpha}_s(\underline\sO)\cap C(\bar\sO)$, to \eqref{eq:IntroBoundaryValueProblem}, \eqref{eq:IntroBoundaryValueProblemBC} when the intersection, $\overline{\partial_0\sO}\cap\overline{\partial_1\sO}$, is non-empty. We do not consider that problem in this article either and again refer the reader to \cite[\S 1.3]{Feehan_Pop_higherregularityweaksoln} for a discussion of this issue. However, in the case of a slab, $\partial_1\sO$ does not meet $\partial_0\sO$ and we have an existence result, Theorem \ref{thm:Existence_uniqueness_slabs}, for an operator with variable coefficients. In Section \ref{subsec:Extensions}, we discuss additional existence results which should also follow from Theorems \ref{thm:APrioriSchauderInteriorDomain} and \ref{thm:Existence_smooth_solutions_half_space} when $\partial_0\sO$ is curved and $\partial_1\sO$ is empty.

\begin{thm}[Existence and uniqueness of a $C^{k, 2+\alpha}_s(\bar S)$ solution on a slab $S$]
\label{thm:Existence_uniqueness_slabs}
Let $\alpha\in(0,1)$, let $\nu>0$ and $S=\RR^{d-1}\times(0, \nu)$ be as in \eqref{eq:Slab}, and let $k\geq 0$ be an integer. Let $A$ be an operator as in \eqref{eq:defnA}. If $f$ and the coefficients of $A$ in \eqref{eq:defnA} belong to $C^{k,\alpha}_s(\bar S)$ and obey \eqref{eq:Strict_ellipticity_slab} and \eqref{eq:Coeff_b_d_slab} for some positive constants, $b_0, \lambda_0$, then there is a unique solution, $u \in C^{k, 2+\alpha}_s(\bar S)$, to the boundary value problem,
\begin{align}
\label{eq:Equation_slab}
Au &= f \quad \hbox{on } S,
\\
\label{eq:Equation_slabBC}
u &= 0 \quad \hbox{on } \partial_1 S.
\end{align}
\end{thm}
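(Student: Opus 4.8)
The plan is to combine the a priori global Schauder estimate on a slab, Corollary~\ref{cor:Global_Schauder_estimate_VariableCoefficients}, with the method of continuity, just as the classical Schauder existence theorem for strictly elliptic operators is deduced from the a priori estimate in \cite[Theorems 5.2 and 6.8]{GilbargTrudinger}. Uniqueness is the easy half: if $u_1,u_2\in C^{k,2+\alpha}_s(\bar S)$ both solve \eqref{eq:Equation_slab}--\eqref{eq:Equation_slabBC}, then $w:=u_1-u_2$ lies in $C^{k,2+\alpha}_s(\bar S)\subset C(\bar S)$ and obeys $Aw=0$ on $S$ with $w=0$ on $\partial_1S$; since $b^d\geq b_0>0$ on $\partial_0S$, no boundary condition is required along $\partial_0S$, and applying the weak maximum principle of \cite{Feehan_maximumprinciple} to both $w$ and $-w$ forces $w\equiv0$ on $\bar S$. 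It remains to prove existence.

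The base case is existence when $A$ has constant coefficients $a,b,c$ (with $b^d\geq b_0$ and $a$ obeying \eqref{eq:Strict_ellipticity_slab}), and here I would construct the solution explicitly, in the spirit of the half-space construction of Appendix~\ref{sec:Existence_smooth_solution} (Theorem~\ref{thm:Existence_smooth_solutions_half_space}) but now carrying along the Dirichlet condition on $\partial_1S$. Taking the partial Fourier transform in $x'\in\RR^{d-1}$ reduces $Au=f$, $u=0$ on $\partial_1S$, to a family of two-point boundary value problems on $(0,\nu)$ for singular second-order ordinary differential operators of confluent-hypergeometric type, each of which has a unique solution that is non-singular at the degenerate endpoint $x_d=0$ (the second solution there being logarithmically or $x_d^{1-b^d/a^{dd}}$-singular, hence excluded) and vanishes at $x_d=\nu$. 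Inverting the transform produces a candidate $u$, and $u\in C^{k,2+\alpha}_s(\bar S)$ follows from its qualitative smoothness on $S$, classical Schauder theory up to $\partial_1S$ (where $A$ is non-degenerate), the interior estimate Theorem~\ref{thm:APrioriSchauderInteriorDomain} up to $\partial_0S$, and the constant-coefficient slab estimate Corollary~\ref{cor:Global_Schauder_estimate_ConstantCoefficients}.

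For the passage to variable coefficients, put $\fX:=\{u\in C^{k,2+\alpha}_s(\bar S):u=0\text{ on }\partial_1S\}$ and $\fY:=C^{k,\alpha}_s(\bar S)$, fix a constant-coefficient operator $A_\star$ of the form \eqref{eq:defnA} obeying \eqref{eq:Strict_ellipticity_slab} and \eqref{eq:Coeff_b_d_slab} (say $a=I$, $b=e_d$, and a suitable constant $c$), and set $A_t:=(1-t)A_\star+tA$ for $t\in[0,1]$. Each $A_t$ is again an operator of the form \eqref{eq:defnA} whose coefficients obey \eqref{eq:Coeff_Holder_continuity_HigherOrderSlab}, \eqref{eq:Strict_ellipticity_slab}, \eqref{eq:Coeff_b_d_slab} with constants independent of $t$. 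By Corollary~\ref{cor:Global_Schauder_estimate_VariableCoefficients} together with the maximum-principle bound $\|u\|_{C(\bar S)}\leq C\|A_tu\|_{C(\bar S)}$ for $u\in\fX$ --- obtained by comparison with a bounded supersolution $\psi\geq0$ on $\bar S$ satisfying $A_t\psi\geq1$ on $S$ uniformly in $t$, via \cite{Feehan_maximumprinciple} --- one obtains the uniform closed estimate $\|u\|_{C^{k,2+\alpha}_s(\bar S)}\leq C\|A_tu\|_{C^{k,\alpha}_s(\bar S)}$ for all $t\in[0,1]$ and $u\in\fX$. Since $A_0=A_\star\colon\fX\to\fY$ is surjective by the base case, the method of continuity \cite[Theorem 5.2]{GilbargTrudinger} shows $A_1=A\colon\fX\to\fY$ is surjective, which is the asserted existence; the general integer $k\geq0$ is handled directly at this stage, the relevant estimates already being available in the $C^{k,\alpha}_s$ scale.

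I expect the two genuinely delicate points to be the following. First, the base case: because $S$ is unbounded in the $x'$ directions one cannot invoke compactness, so one must verify directly that the explicit Fourier/Green's-function solution of the constant-coefficient problem lies in the weighted space $C^{k,2+\alpha}_s(\bar S)$ and in particular has the correct non-singular behaviour along the degenerate boundary $\partial_0S$. Second, the construction --- uniform along the homotopy $A_t$ --- of the bounded supersolution $\psi$ needed to absorb the $\|u\|_{C(\bar S)}$ term in Corollary~\ref{cor:Global_Schauder_estimate_VariableCoefficients}; this must exploit the sign condition $b^d\geq b_0$ on $\partial_0S$ and the finiteness of the height $\nu$ of $S$, and is where the particular geometry of the slab, rather than just the interior Schauder theory, is used.
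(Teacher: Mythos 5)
Your proposal is correct and takes essentially the same route as the paper: the method of continuity based on Corollary~\ref{cor:Global_Schauder_estimate_VariableCoefficients}, with a constant-coefficient base operator (the paper takes $\sA_0:=-x_d\Delta-\partial_{x_d}$ and appeals to Corollary~\ref{cor:Existence_solutions_slab}, itself derived from the Fourier/Kummer construction in Theorem~\ref{thm:Existence_smooth_solutions_slab}), and with uniqueness from the weak maximum principle. The two ``delicate points'' you flag are precisely what the paper discharges in Appendix~\ref{sec:Existence_smooth_solution} (approximating $f$ by $f_n\in C^\infty_0(\bar S)$, applying the global constant-coefficient Schauder estimate uniformly, and passing to the limit by Arzel\`a--Ascoli, rather than invoking the interior and boundary estimates separately on the candidate solution) and in Appendix~\ref{sec:MaximumPrinciple} (the exponential weight $e^{-\sigma x_d}$ with $\sigma=b_0/(2\Lambda)$ serving as your supersolution, exploiting $b^d\geq b_0>0$ and $\height(S)<\infty$ exactly as you anticipate).
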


\begin{rmk}[Existence and uniqueness of a solution on a slab]
For an operator, $A$, with constant coefficients, $a,b,c$, existence and uniqueness of a solution on a slab is proved as Corollary \ref{cor:Existence_solutions_slab}.
\end{rmk}

The preceding existence and uniqueness result on a slab leads to the following analogue of \cite[Theorem 6.17]{GilbargTrudinger} and is proved in Section \ref{subsec:Regularity}.

\begin{thm}[Interior $C^{k,2+\alpha}_s$-regularity]
\label{thm:InteriorRegularityDomain}
For any $\alpha\in(0,1)$ and integer $k\geq 0$, the following holds. Let $\sO\subset\HH$ be an open subset. Assume that the coefficients of $A$ in \eqref{eq:defnA} belong to $C^{k, \alpha}_s(\underline\sO)$ and obey \eqref{eq:Strict_ellipticityDomain} and \eqref{eq:Coeff_b_dDomain} for some positive constants $b_0, \lambda_0$. If $u \in C^2(\sO)$ obeys \footnote{We write $Du, \ x_d D^2 u \in C(\underline\sO)$ as an abbreviation for $u_{x_i}, \ x_d u_{x_ix_j} \in C(\underline\sO)$, for $1 \leq i,j \leq d$ and write $x_dD^2u = 0$ on $\partial_0 \sO$ as an abbreviation for $\lim_{\HH\ni x\to x^0} x_d D^2 u(x) = 0$ for all $x^0 \in \partial_0\sO$.}
\begin{gather}
\label{eq:CondRegularityDomain}
u \in C^1(\underline\sO), \quad x_d D^2 u \in C(\underline\sO), \quad\hbox{and}\quad Au \in C^{k,\alpha}_s(\underline\sO),
\\
\label{eq:VentcelDomain}
x_d D^2 u = 0 \quad\hbox{on }\partial_0\sO,
\end{gather}
then $u \in C^{k,2+\alpha}_s(\underline\sO)$.
\end{thm}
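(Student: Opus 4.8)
The plan is to bootstrap from the a priori estimate of Theorem \ref{thm:APrioriSchauderInteriorDomain} together with the existence result on a slab (Theorem \ref{thm:Existence_uniqueness_slabs}), by a standard localization-and-comparison argument adapted to the degenerate geometry. The problem is local: it suffices to show that for each point $x^0\in\underline\sO$ there is a neighborhood $U$ (in the sense of $\underline\sO$, so a half-ball $B^+_r(x^0)$ when $x^0\in\partial_0\sO$, an ordinary ball when $x^0\in\sO$) on which $u\in C^{k,2+\alpha}_s(\bar U)$, since $C^{k,2+\alpha}_s$-regularity is a local property and $\underline\sO$ is covered by such neighborhoods. The interior case, $x^0\in\sO$, is just the classical Schauder interior regularity of \cite[Chapter 6]{GilbargTrudinger}, since $A$ is strictly elliptic on compact subsets of $\sO$ by \eqref{eq:Strict_ellipticityDomain}. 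So the real content is the boundary case $x^0\in\partial_0\sO$.

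Near such a point, first reduce to a slab: choose $\nu'>0$ and a half-ball $B^+_{2r}(x^0)$ with $B^+_{2r}(x^0)\cap\partial_1\sO=\emptyset$ and $\overline{B^+_{2r}(x^0)}\subset \RR^{d-1}\times[0,\nu')$. Using a cutoff function $\chi\in C^\infty_0(B^+_{2r}(x^0))$ with $\chi\equiv 1$ on $B^+_r(x^0)$, set $w:=\chi u$. The difficulty is that $w$ need not have enough regularity a priori to apply the slab estimate directly — we only know $u\in C^2(\sO)$ together with \eqref{eq:CondRegularityDomain} and \eqref{eq:VentcelDomain}, i.e. $u\in C^1(\underline\sO)$, $x_dD^2u\in C(\underline\sO)$, $x_dD^2u=0$ on $\partial_0\sO$, and $Au\in C^{k,\alpha}_s(\underline\sO)$; this is the hypothesis side of Theorem \ref{thm:APrioriSchauderInteriorDomain}'s conclusion, not its conclusion. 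The resolution is to use the existence theorem on a slab: extend the coefficients $a,b$ from $B^+_{2r}(x^0)$ to all of the slab $S=\RR^{d-1}\times(0,\nu')$ preserving \eqref{eq:Strict_ellipticity_slab}, \eqref{eq:Coeff_b_d_slab}, and the $C^{k,\alpha}_s(\bar S)$ bounds (a routine extension, using that the conditions are open and the modification only matters away from $B^+_r(x^0)$), extend $c$ to be $\geq 0$ (modify $A$ by a large positive constant, absorbing the error term $\lambda u$ with $\lambda u\in C^{k-?}$ — handled inductively below), and let $F:=A(\chi u)=\chi Au + [A,\chi]u$, where the commutator $[A,\chi]u$ involves only first derivatives of $u$ (times $x_d$) and $u$ itself. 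By Theorem \ref{thm:Existence_uniqueness_slabs} there is a unique $v\in C^{k,2+\alpha}_s(\bar S)$ solving $Av=F$ on $S$, $v=0$ on $\partial_1 S$. The plan is then to show $w=v$, whence $w\in C^{k,2+\alpha}_s(\bar S)$ and so $u=w\in C^{k,2+\alpha}_s(\bar B^+_r(x^0))$.

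To identify $w=v$, set $z:=w-v$. Then $z\in C^2(S\cap\{\text{stuff}\})$ satisfies $Az=0$ on the part of $S$ where $\chi\equiv 1$ minus..., — more cleanly: on all of $S$ in a weak/pointwise sense, $Az=A(\chi u)-F=0$, with $z=0$ on $\partial_1 S$, and $z$ enjoys exactly the regularity \eqref{eq:CondRegularityDomain}–\eqref{eq:VentcelDomain} ($w$ does because $u$ does and $\chi$ is smooth; $v$ does a fortiori). One then invokes a uniqueness principle: either the weak maximum principle of \cite{Feehan_maximumprinciple} (cited in the introduction, applicable precisely because the Ventcel condition \eqref{eq:VentcelDomain} holds and $c>0$ after our modification) to conclude $z\equiv 0$, or an iterated duality/approximation argument. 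This is the step I expect to be the main obstacle — making precise that $A$ acting on a function merely satisfying \eqref{eq:CondRegularityDomain} is still governed by the maximum principle, so that a function with $Az=0$, $z|_{\partial_1 S}=0$, and $z$ bounded vanishes; the Ventcel boundary condition $x_dD^2z=0$ on $\partial_0 S$ is exactly what makes no boundary data on $\partial_0 S$ necessary. Granting $z\equiv 0$, we get $u\in C^{k,2+\alpha}_s(\bar B^+_r(x^0))$. Finally, the reduction ``$c\geq 0$'' is removed by induction on $k$: for $k=0$, first prove $u\in C^{2+\alpha}_s$ (so in particular $cu\in C^\alpha_s$ when $c\in C^\alpha_s$), then rerun with the original $c$ treating $cu$ as part of the right-hand side; for the inductive step from $k-1$ to $k$, once $u\in C^{k-1,2+\alpha}_s(\underline\sO)$ is known, $cu\in C^{k,\alpha}_s$ is not immediate but $(A-c)u=Au-cu\in C^{k-1,\alpha}_s$ with the correct structure, and differentiating the equation along $\partial_0\sO$-tangential directions (which preserves the form \eqref{eq:defnA}) lets one apply the $k-1$ case to tangential derivatives of $u$; combined with the equation itself to recover the one bad second derivative $x_d u_{x_dx_d}$, this yields $u\in C^{k,2+\alpha}_s(\underline\sO)$. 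Patching the local conclusions over the cover of $\underline\sO$ completes the proof. $\qed$
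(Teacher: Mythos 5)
Your proposal follows essentially the same route as the paper: localize to half-balls near $\partial_0\sO$ (with interior balls handled by classical Schauder theory), multiply by a cutoff, construct a slab solution $v \in C^{k,2+\alpha}_s(\bar S)$ via the existence theorem on a slab, and identify $\chi u = v$ using the degenerate weak maximum principle whose hypotheses are precisely \eqref{eq:CondRegularityDomain}--\eqref{eq:VentcelDomain}; this is exactly what the paper's Theorem~\ref{thm:Higher_order_regularity} and its proof do, with the commutator regularity you flag being supplied by Lemma~\ref{lem:Holder_continuity_x_d_Du} and the $k$-induction on the commutator matching yours. Your attention to the sign of $c$ needed for the weak maximum principle (Lemma~\ref{lem:Comparison_principle_A_0}) is a fair observation the paper does not discuss explicitly, and your remedy — shift $c$ by a constant, absorb $\lambda u$ into the source using that $u\in C^1(\underline\sO)$ gives $u\in C^\alpha_s$ locally, then bootstrap in $k$ via the embedding $C^{k-1,2+\alpha}_s\hookrightarrow C^{k,\alpha}_s$ — is the correct way to dispose of it.
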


Given Theorem \ref{thm:InteriorRegularityDomain}, one immediately obtains the following boundary-degenerate elliptic analogue of the $C^\infty$-regularity result for the boundary-degenerate linear parabolic model used in the study of the porous medium equation \cite[Theorem I.1.1]{DaskalHamilton1998}.

\begin{cor} [Interior $C^\infty$-regularity]
\label{cor:CinftyGlobal}
Let $\sO\subset\HH$ be an open subset. Assume that the coefficients of $A$ in \eqref{eq:defnA} belong to $C^\infty(\underline\sO)$ and obey \eqref{eq:Strict_ellipticityDomain} and \eqref{eq:Coeff_b_dDomain} for some positive constants $b_0, \lambda_0$. If $u \in C^2(\sO)$ obeys \eqref{eq:CondRegularityDomain} for every integer $k\geq 0$, so $Au \in C^\infty(\underline\sO)$, together with \eqref{eq:VentcelDomain}, then $u \in C^\infty(\underline\sO)$.
\end{cor}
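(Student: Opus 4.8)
The plan is to derive Corollary \ref{cor:CinftyGlobal} as a direct consequence of Theorem \ref{thm:InteriorRegularityDomain}, applied separately for each integer $k \geq 0$, followed by an elementary identification of the intersection of the resulting scale of Hölder spaces. Fix $\alpha \in (0,1)$ once and for all. The first task is to verify that, for every $k \geq 0$, the hypotheses of Theorem \ref{thm:InteriorRegularityDomain} hold: the coefficients $a,b,c$, being of class $C^\infty(\underline\sO)$, belong to $C^{k,\alpha}_s(\underline\sO)$, since the Daskalopoulos--Hamilton $C^{k,\alpha}_s$-seminorms of a $C^\infty(\underline\sO)$ function on any interior open subset of $\sO$ are finite (Definitions \ref{defn:Calphas} and \ref{defn:DHspaces}); the strict ellipticity \eqref{eq:Strict_ellipticityDomain} and the lower bound \eqref{eq:Coeff_b_dDomain} for $b^d$ are assumed; the function $u$ lies in $C^2(\sO)$ and obeys \eqref{eq:CondRegularityDomain} and \eqref{eq:VentcelDomain} by hypothesis, and the clause $Au \in C^{k,\alpha}_s(\underline\sO)$ of \eqref{eq:CondRegularityDomain} holds for the given $k$ because $Au \in C^\infty(\underline\sO)$. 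Note that these applications are independent of one another: each uses only the standing hypothesis $u \in C^2(\sO)$ together with the $k$-dependent datum $Au \in C^{k,\alpha}_s(\underline\sO)$, so no iterative bootstrap is required.

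Having checked the hypotheses, Theorem \ref{thm:InteriorRegularityDomain} yields $u \in C^{k,2+\alpha}_s(\underline\sO)$ for every integer $k \geq 0$; that is, $u \in \bigcap_{k \geq 0} C^{k,2+\alpha}_s(\underline\sO)$. It then remains to observe that this intersection coincides with $C^\infty(\underline\sO)$. By Definition \ref{defn:DH2spaces} together with the descriptions of the spaces in Section \ref{sec:Preliminaries}, membership of $u$ in $C^{k,2+\alpha}_s(\underline\sO)$ for all $k$ forces $u$ and all of its Euclidean partial derivatives to be $C^\alpha_s$, hence continuous, up to $\partial_0\sO$, so $u \in C^\infty(\underline\sO)$; this is the same identification $\bigcap_{k\geq 0}C^{k,\alpha}_s(\underline\sO) = C^\infty(\underline\sO)$ already invoked tacitly in the statement of the corollary, where the condition ``$Au \in C^{k,\alpha}_s(\underline\sO)$ for every $k$'' is restated as ``$Au \in C^\infty(\underline\sO)$.'' This completes the argument.

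Since every step is either a direct appeal to Theorem \ref{thm:InteriorRegularityDomain} or a routine unwinding of the definitions of the weighted Hölder spaces, there is no genuine obstacle here; the corollary is essentially a formal packaging of the higher-order regularity theorem. The only points that deserve explicit attention are the bookkeeping verifications that $C^\infty(\underline\sO)$ coefficients — and the datum $Au \in C^\infty(\underline\sO)$ — place one inside the hypotheses of Theorem \ref{thm:InteriorRegularityDomain} \emph{simultaneously} for all $k \geq 0$, and that a function lying in the entire scale $\{C^{k,2+\alpha}_s(\underline\sO)\}_{k \geq 0}$ is indeed $C^\infty$ up to $\partial_0\sO$; both reduce to the finiteness of the $C^\alpha_s$-seminorms on smooth functions over interior subsets and to the implication that $C^\alpha_s$-regularity entails ordinary continuity.
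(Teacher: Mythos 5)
Your proof is correct and is precisely the argument the paper has in mind: the text introduces Corollary~\ref{cor:CinftyGlobal} with ``Given Theorem \ref{thm:InteriorRegularityDomain}, one immediately obtains,'' so the intended derivation is exactly the one you carry out, namely applying Theorem~\ref{thm:InteriorRegularityDomain} independently for each $k\geq 0$ and then using the identity $\bigcap_{k\geq 0}C^{k,2+\alpha}_s(\underline\sO)=C^\infty(\underline\sO)$ (which the paper records explicitly at the end of Section~\ref{sec:Preliminaries}). Your remark that no bootstrap is needed, because the hypothesis $Au\in C^\infty(\underline\sO)$ already supplies the $k$-dependent datum for every $k$ simultaneously, is a correct and helpful clarification of why the conclusion really is ``immediate.''
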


\begin{rmk}[Regularity up to the `non-degenerate boundary']
Regarding the conclusion of Theorem \ref{thm:InteriorRegularityDomain}, standard elliptic regularity results for linear, second-order, strictly elliptic operators \cite[Theorems 6.19]{GilbargTrudinger} also imply, when $k\geq 0$, that $u \in  C^{k+2,\alpha}(\sO\cup\partial_1\sO)$ if $u$ solves \eqref{eq:IntroBoundaryValueProblem}, \eqref{eq:IntroBoundaryValueProblemBC} with $f \in C^{k,\alpha}(\sO\cup\partial_1\sO)$ and $g \in C^{k+2,\alpha}(\sO\cup\partial_1\sO)$, and $\partial_1\sO$ is $C^{k+2,\alpha}$. Because our focus in this article is on regularity of $u$ up to the `degenerate boundary', $\partial_0\sO$, we shall omit further mention of such straightforward generalizations.
\end{rmk}

Finally, we refine our existence results in \cite{Feehan_Pop_higherregularityweaksoln} when $d=2$ for the \emph{Heston} operator,
\begin{equation}
\label{eq:HestonPDE}
Av := -\frac{x_2}{2}\left(v_{x_1x_1} + 2\varrho\sigma v_{x_1x_2} + \sigma^2 v_{x_2x_2}\right)
- \left(c_0-q-\frac{x_2}{2}\right)v_{x_1} - \kappa(\theta-x_2)v_{x_2} + c_0v,
\end{equation}
where $q\in\RR$, $c_0\geq 0$, $\kappa>0$, $\theta>0$, $\sigma\neq 0$, and $\varrho\in (-1,1)$ are constants (their financial interpretation is provided in \cite{Heston1993}), and $v\in C^\infty(\HH)$. In particular, we give analogues of the existence results \cite[Theorems 6.13 and 6.19]{GilbargTrudinger} for the case of the Dirichlet boundary value problem for a strictly elliptic operator.

\begin{thm} [Existence and uniqueness of a $C^{k,2+\alpha}_s$ solution to a partial Dirichlet boundary value problem for the Heston operator]
\label{thm:ExistUniqueCk2+alphasHolderContinuityDomain}
Let $\alpha\in(0,1)$ and let $k\geq 0$ be an integer, let $K$ be a finite right-circular cone, and require that $\partial_1\sO$ obeys a uniform exterior cone condition with cone $K$. If $f \in C^{k,\alpha}_s(\underline\sO)\cap C_b(\sO)$ and
\begin{equation}
\label{eq:cNonnegativeOrPositiveDomain}
\begin{cases} c_0 > 0 &\hbox{if } \height(\sO) = \infty, \\ c_0 \geq 0 &\hbox{if } \height(\sO) < \infty, \end{cases}
\end{equation}
then there is a unique solution,
$$
u \in C^{k,2+\alpha}_s(\underline\sO)\cap C_b(\sO\cup\partial_1\sO),
$$
to the boundary value problem for the Heston operator,
\begin{align}
\label{eq:HestonBoundaryValueProblem}
Au &= f \quad\hbox{on }\sO,
\\
\label{eq:HestonBoundaryValueProblemBC}
u &= 0 \quad\hbox{on }\partial_1\sO.
\end{align}
\end{thm}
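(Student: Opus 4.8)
The Heston operator \eqref{eq:HestonPDE} is an instance of \eqref{eq:defnA} with $d=2$ and affine --- in particular $C^\infty$ --- coefficients: its constant coefficient matrix $a$ is positive definite precisely because $\varrho\in(-1,1)$ and $\sigma\neq0$, so \eqref{eq:Strict_ellipticityDomain} holds; $b^d=\kappa(\theta-x_d)$, so that $b^d=\kappa\theta>0$ on $\partial_0\sO$ and \eqref{eq:Coeff_b_dDomain} holds with $b_0=\kappa\theta$; and $c=c_0\geq0$, with the dichotomy \eqref{eq:cNonnegativeOrPositiveDomain} being exactly the hypothesis under which the weak maximum principle of \cite{Feehan_maximumprinciple} applies to this operator on $\sO$ --- whether or not $\height(\sO)$ is finite --- supplying both uniqueness and the sup-norm estimate $\|v\|_{C_b(\sO)}\leq C\|Av\|_{C_b(\sO)}$ for functions $v$ vanishing on $\partial_1\sO$. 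The plan is to obtain a solution for a $C^\infty$ source from \cite{Feehan_Pop_higherregularityweaksoln} and upgrade its regularity with the interior regularity theorems above, then reach a general $C^{k,\alpha}_s$ source by approximation, using Theorem \ref{thm:APrioriSchauderInteriorDomain} for uniform control, and finally deduce uniqueness from \cite{Feehan_maximumprinciple}.

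\textbf{Uniqueness.} If $u_1,u_2$ are two solutions in the asserted class, then $w:=u_1-u_2$ lies in $C^{k,2+\alpha}_s(\underline\sO)\cap C_b(\sO\cup\partial_1\sO)$, solves $Aw=0$ on $\sO$, and satisfies $w=0$ on $\partial_1\sO$; since membership in $C^{2+\alpha}_s(\underline\sO)$ carries with it $w,Dw,x_dD^2w\in C(\underline\sO)$ together with $x_dD^2w=0$ on $\partial_0\sO$ --- that is, the second-order (Ventcel-type) boundary condition along $\partial_0\sO$ implicit in the Daskalopoulos--Hamilton spaces --- the weak maximum principle of \cite{Feehan_maximumprinciple} forces $w\equiv0$.

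\textbf{Existence.} Given $f\in C^{k,\alpha}_s(\underline\sO)\cap C_b(\sO)$, I would pick $f_n\in C^\infty(\underline\sO)\cap C^{k,\alpha}_s(\underline\sO)\cap C_b(\sO)$ --- produced by mollification adapted to the cycloidal metric defining the $C^{k,\alpha}_s$-norms, so as to respect the degeneracy along $\partial_0\sO$ --- with $\sup_n\|f_n\|_{C_b(\sO)}<\infty$ and $f_n\to f$ in $C^{k,\alpha'}_s$ on every interior subdomain, for each $\alpha'\in(0,\alpha)$. For each $n$, \cite{Feehan_Pop_higherregularityweaksoln} (using the uniform exterior cone condition on $\partial_1\sO$ and the smooth datum $g=0$) gives a solution $u_n\in C^{2+\alpha_0}_s(\underline\sO)\cap C_b(\sO\cup\partial_1\sO)$ of $Au_n=f_n$ on $\sO$ and $u_n=0$ on $\partial_1\sO$, for some $\alpha_0\in(0,1)$; since $Au_n=f_n\in C^{k,\alpha}_s(\underline\sO)$ and $C^{2+\alpha_0}_s(\underline\sO)$-membership supplies the structural hypotheses \eqref{eq:CondRegularityDomain}--\eqref{eq:VentcelDomain}, Theorem \ref{thm:InteriorRegularityDomain} upgrades this to $u_n\in C^{k,2+\alpha}_s(\underline\sO)$ --- indeed to $u_n\in C^\infty(\underline\sO)$, by Corollary \ref{cor:CinftyGlobal}, since each $f_n$ is smooth. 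Three uniform bounds then follow: $\sup_n\|u_n\|_{C_b(\sO)}\leq C\sup_n\|f_n\|_{C_b(\sO)}<\infty$ from \cite{Feehan_maximumprinciple}; equicontinuity of $\{u_n\}$ on $\sO\cup\partial_1\sO$ from classical cone barriers near $\partial_1\sO$ (where $A$ is strictly elliptic); and, for every open $\sO'$ whose closure is a compact subset of $\underline\sO$, a bound on $\|u_n\|_{C^{k,2+\alpha}_s(\bar\sO')}$ uniform in $n$, obtained by applying Theorem \ref{thm:APrioriSchauderInteriorDomain} on a slightly larger interior subdomain $\sO''$ of finite height --- on which the affine Heston coefficients are bounded and, because $\bar\sO''\subset\underline\sO$, the $f_n$ have uniformly bounded $C^{k,\alpha}_s$-norms. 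Exhausting $\underline\sO$ and $\sO\cup\partial_1\sO$ by such sets and invoking Arzel\`a--Ascoli with a diagonal extraction, a subsequence $u_{n_j}$ converges in $C^{k,2+\alpha'}_s$ on interior subdomains and locally uniformly on $\sO\cup\partial_1\sO$; its limit $u$ has finite $C^{k,2+\alpha}_s$-norm on each interior subdomain by lower semicontinuity of the weighted seminorms, solves $Au=f$ on $\sO$ by passage to the limit in $Au_{n_j}=f_{n_j}$, and lies in $C_b(\sO\cup\partial_1\sO)$ with $u=0$ on $\partial_1\sO$, which gives the required solution.

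\textbf{Main obstacle.} The crux is controlling the approximating sequence uniformly near $\partial_0\sO$ and near $\partial_1\sO$ at the same time. Near $\partial_0\sO$ this is exactly the content of Theorem \ref{thm:APrioriSchauderInteriorDomain}, whose constant depends only on the structural data; but near $\partial_1\sO$, where $A$ is merely strictly elliptic and $\partial_1\sO$ obeys only a cone condition, no H\"older estimate up to $\partial_1\sO$ is available, so one can extract only equicontinuity, from cone barriers, and one must separately arrange the smooth approximants $f_n$ to have uniformly bounded $C^{k,\alpha}_s$-norms on interior subdomains while converging there, which forces the mollification to be compatible with the degenerate geometry near $\partial_0\sO$. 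With those two ingredients in hand, the rest is routine bookkeeping with the weighted H\"older spaces.
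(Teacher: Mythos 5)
Your proposal is correct and follows essentially the paper's route: mollify $f$ to smooth $f_n$ with uniformly controlled norms, invoke the existence result of \cite{Feehan_Pop_higherregularityweaksoln} (under the exterior cone condition on $\partial_1\sO$) for $u_n$, obtain uniform $C_b(\sO)$ bounds from the weak maximum principle, use Theorem \ref{thm:APrioriSchauderInteriorDomain} for interior $C^{k,2+\alpha}_s$ control on subdomains $\bar\sO''\Subset\underline\sO$, and finish by Arzel\`a--Ascoli with a diagonal extraction and passage to the limit; uniqueness is the weak maximum principle of \cite{Feehan_maximumprinciple}, using the Ventcel-type boundary behavior built into the Daskalopoulos--Hamilton spaces (Lemma \ref{lem:PropSecondOrderDeriv}).

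Two small remarks on where you diverge from the paper. First, \cite[Theorem 1.11]{Feehan_Pop_higherregularityweaksoln} already delivers $u_n\in C^\infty(\underline\sO)\cap C_b(\sO\cup\partial_1\sO)$, so the intermediate step of promoting $u_n$ from $C^{2+\alpha_0}_s$ to $C^\infty$ via Theorem \ref{thm:InteriorRegularityDomain} or Corollary \ref{cor:CinftyGlobal} is unnecessary (though harmless). Second, and more substantively, the paper handles the behavior up to $\partial_1\sO$ not by cone-barrier equicontinuity but by arranging the Daskalopoulos--Hamilton regularization in \cite[\S I.11]{DaskalHamilton1998} so that $f_n\to f$ in $C_b(\sO)$, not merely locally; then $A(u_n-u_m)=f_n-f_m$ with $u_n-u_m=0$ on $\partial_1\sO$ and the maximum principle estimate (Corollary \ref{cor:Maximum_principle_A_0} when $c_0=0$, \cite[Proposition 2.19 and Theorem 5.4]{Feehan_maximumprinciple} when $c_0>0$) makes $\{u_n\}$ Cauchy in $C(\bar\sO)$, so the limit is automatically in $C_b(\sO\cup\partial_1\sO)$ and vanishes on $\partial_1\sO$. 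Your barrier argument is a valid alternative that avoids needing global uniform convergence of the mollifications (which could be delicate if $\sO$ is unbounded and $f$ is merely bounded and continuous, not uniformly continuous), at the cost of an additional standard but non-trivial barrier construction near $\partial_1\sO$. Both routes yield the stated conclusion.
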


\begin{rmk}[Schauder a priori estimates and approach to existence of solutions]
As we explain in \cite[\S 1.3]{Feehan_Pop_higherregularityweaksoln}, the proof of existence of solutions, $u \in C^{k,2+\alpha}_s(\bar\sO)$, to the boundary value problem, \eqref{eq:IntroBoundaryValueProblem}, \eqref{eq:IntroBoundaryValueProblemBC}, given $f\in C^{k,\alpha}_s(\underline\sO)$ and $g \in C(\bar\sO)$, appears considerably more difficult when $\overline{\partial_0\sO}\cap\overline{\partial_1\sO}$ is non-empty because, unlike in \cite{DaskalHamilton1998}, one must consider a priori Schauder estimates and regularity near the `corner' points of the open subset, $\sO\subset\HH$, where the `non-degenerate boundary', $\partial_1\sO$, meets the `degenerate boundary', $\partial_0\sO$.
\end{rmk}

Given an additional geometric hypothesis on $\sO$ near points in $\overline{\partial_0\sO}\cap\overline{\partial_1\sO}$, the property that $u \in C_b(\sO\cup\partial_1\sO)$ in the conclusion of Theorem \ref{thm:ExistUniqueCk2+alphasHolderContinuityDomain} simplifies to $u \in C(\bar\sO)$.

\begin{cor} [Existence and uniqueness of a globally continuous $C^{k,2+\alpha}_s$ solution to a partial Dirichlet boundary value problem for the Heston operator]
\label{cor:ExistUniqueCk2+alphasHolderContinuityDomain}
If in addition to the hypotheses of Theorem \ref{thm:ExistUniqueCk2+alphasHolderContinuityDomain} the open subset, $\sO$, satisfies a uniform exterior and interior cone condition on $\overline{\partial_0\sO}\cap\overline{\partial_1\sO}$ with cone $K$ in the sense of \cite{Feehan_Pop_higherregularityweaksoln}, then $u \in C^{k,2+\alpha}_s(\underline\sO) \cap C(\bar\sO)$.
\end{cor}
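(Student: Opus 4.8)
The goal is to prove Corollary~\ref{cor:ExistUniqueCk2+alphasHolderContinuityDomain}, namely that under the extra uniform exterior and interior cone condition on $\overline{\partial_0\sO}\cap\overline{\partial_1\sO}$, the solution $u$ furnished by Theorem~\ref{thm:ExistUniqueCk2+alphasHolderContinuityDomain} actually lies in $C(\bar\sO)$, rather than merely $C_b(\sO\cup\partial_1\sO)$. Since Theorem~\ref{thm:ExistUniqueCk2+alphasHolderContinuityDomain} already gives us $u \in C^{k,2+\alpha}_s(\underline\sO)\cap C_b(\sO\cup\partial_1\sO)$, the only thing missing is continuity of $u$ up to the corner set $\Gamma := \overline{\partial_0\sO}\cap\overline{\partial_1\sO}$, together with the matching of boundary data there (here the prescribed data along $\partial_1\sO$ is $0$, so we must show $u(x)\to 0$ as $x\to x^0$ for each $x^0\in\Gamma$).

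The plan is to establish continuity at the corner points by a barrier argument. Fix $x^0\in\Gamma$. The uniform exterior cone condition at $x^0$, combined with the degenerate-elliptic structure of $A$ and the lower bound $b^d\geq b_0>0$ along $\partial_0\sO$ and the sign condition on $c$ in \eqref{eq:cNonnegativeOrPositiveDomain}, should allow one to construct a local barrier $w\geq 0$ defined near $x^0$ with $Aw\geq 1$ on $\sO\cap B_\rho(x^0)$ (for small $\rho$), $w(x^0)=0$, $w$ continuous at $x^0$, and $w$ bounded below by a positive constant on the part of $\partial(\sO\cap B_\rho(x^0))$ lying in the interior of $\sO$. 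This is precisely the kind of barrier constructed in \cite{Feehan_Pop_higherregularityweaksoln} — one typically takes $w$ to be a suitable power of the distance to the exterior cone, adapted to the degenerate weight $x_d$, or a combination of such a function with a multiple of $x_d$ itself to exploit the drift $b^d$; the interior cone condition is what guarantees the exterior-cone barrier does not conflict with the behavior forced along $\partial_0\sO$. With such a $w$ in hand, and using that $f$ is bounded (so $|Au|=|f|\leq M$ on $\sO$), the functions $\pm u - M w$ are sub/supersolutions of $A$ on $\sO\cap B_\rho(x^0)$ that are $\leq 0$ on the boundary of that neighborhood (using $u=0$ on $\partial_1\sO$, $|u|$ bounded, and the barrier lower bound), so the weak maximum principle of \cite{Feehan_maximumprinciple} — which applies precisely to operators of the form \eqref{eq:defnA} with the Ventcel-type boundary condition implicit in the $C^{k,2+\alpha}_s$ space — gives $|u|\leq Mw$ on $\sO\cap B_\rho(x^0)$. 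Letting $x\to x^0$ then forces $u(x)\to 0 = u(x^0)$, which is exactly continuity at the corner.

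Once continuity holds at every $x^0\in\Gamma$ with the correct value $0$, we combine this with the known memberships: $u\in C^{k,2+\alpha}_s(\underline\sO)$ gives continuity of $u$ on $\underline\sO = \sO\cup\partial_0\sO$ up to and including $\partial_0\sO$ (with $u$ attaining finite boundary values there, matching the trace of the $C^{k,2+\alpha}_s$ function), and $u\in C_b(\sO\cup\partial_1\sO)$ together with $u=0$ on $\partial_1\sO$ gives continuity up to $\partial_1\sO$. Since $\bar\sO = \sO\cup\partial_0\sO\cup\partial_1\sO\cup(\overline{\partial_0\sO}\cap\overline{\partial_1\sO})$ up to a set already covered, patching these three continuity statements yields $u\in C(\bar\sO)$. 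Uniqueness is inherited verbatim from Theorem~\ref{thm:ExistUniqueCk2+alphasHolderContinuityDomain} (or can be re-derived from the maximum principle), so no new argument is needed there.

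The main obstacle is the barrier construction at the corner points, because near $\Gamma$ one must simultaneously accommodate the genuine (non-degenerate) ellipticity transverse to $\partial_1\sO$ and the degeneracy of the weight $x_d$ approaching $\partial_0\sO$ — the barrier must be a supersolution for $A$ in a regime where the operator changes character, and it must have the right one-sided boundary behavior along both pieces of the boundary meeting at $x^0$. This is exactly where the extra hypothesis (uniform exterior \emph{and interior} cone condition on $\Gamma$, in the sense of \cite{Feehan_Pop_higherregularityweaksoln}) is used: it is the geometric condition under which a barrier of the required type is known to exist, so in practice one invokes or adapts the corner barrier already developed in \cite{Feehan_Pop_higherregularityweaksoln} rather than building one from scratch. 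Everything else — the maximum-principle comparison and the final patching of continuity statements — is routine given the results already cited in the excerpt.
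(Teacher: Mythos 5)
Your reconstruction is sound in outline, but the paper's actual proof is a one-liner: since the solution $u$ of Theorem~\ref{thm:ExistUniqueCk2+alphasHolderContinuityDomain} coincides (by uniqueness) with the variational solution studied in \cite{Feehan_Pop_higherregularityweaksoln}, the paper simply cites \cite[Corollary 1.13]{Feehan_Pop_higherregularityweaksoln}, which already asserts $u\in C(\bar\sO)$ under precisely the uniform exterior and interior cone condition assumed here. You correctly identify the only missing piece (continuity at the corner set $\overline{\partial_0\sO}\cap\overline{\partial_1\sO}$ with matching boundary value $0$), and your sketch of the barrier-plus-maximum-principle mechanism is indeed what underlies that cited corollary; you even note at the end that one would in practice invoke the corner barrier from \cite{Feehan_Pop_higherregularityweaksoln} rather than build it fresh. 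So the substance matches, and the difference is only one of presentation: you re-derive (in outline, with hedges like ``should allow one to construct'') what the paper prefers to cite as a packaged result. If one did want to carry your barrier argument through from scratch, the step you flag as delicate --- building a supersolution compatible both with the degenerate weight $x_d$ near $\partial_0\sO$ and with non-degenerate ellipticity near $\partial_1\sO$ at the corner --- is precisely the nontrivial content of the cited corollary, so no effort is saved and the citation is cleaner.
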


\begin{rmk}[Existence of solutions to a partial Dirichlet boundary value problem]
\label{rmk:GeneralizeExistencefromHeston}
By applying the results of this article, Theorem \ref{thm:ExistUniqueCk2+alphasHolderContinuityDomain} is generalized by the first author in \cite{Feehan_classical_perron_elliptic} from the case of the Heston operator $A$ in \eqref{eq:HestonPDE} to an operator $A$ in \eqref{eq:defnA} with $C^{k,2+\alpha}_s$ coefficients and $d\geq 2$. We expect that a similar generalization of Corollary \ref{cor:ExistUniqueCk2+alphasHolderContinuityDomain} should also hold. See Section \ref{subsec:Extensions} for further discussion.
\end{rmk}

\subsection{Connections with previous research}
\label{subsec:Survey}
We provide a brief survey of some related research by other authors on Schauder a priori estimates and regularity theory for solutions to boundary-degenerate elliptic and parabolic partial differential equations most closely related to the results described in our article.

The principal features which distinguish the boundary value problem \eqref{eq:IntroBoundaryValueProblem}, \eqref{eq:IntroBoundaryValueProblemBC}, when the operator $A$ is given by \eqref{eq:defnA}, from the boundary value problems for linear, second-order, strictly elliptic operators in \cite{GilbargTrudinger}, are the degeneracy of $A$ due to the factor, $x_d$, in the coefficient matrix for $D^2u$ and, because $b_0>0$ in \eqref{eq:defnA}, the fact that boundary conditions may be omitted along $x_d=0$ when we seek solutions, $u$, with sufficient regularity up to $x_d=0$.

The literature on degenerate elliptic and parabolic equations is vast, with the well-known articles of E. B. Fabes, C. E. Kenig, and R. P. Serapioni \cite{Fabes_1982, Fabes_Kenig_Serapioni_1982a}, G. Fichera \cite{Fichera_1956, Fichera_1960}, J. J. Kohn and L. Nirenberg \cite{Kohn_Nirenberg_1967}, M. K. V. Murthy and G. Stampacchia \cite{Murthy_Stampacchia_1968, Murthy_Stampacchia_1968corr} and the monographs of S. Z. Levendorski{\u\i} \cite{LevendorskiDegenElliptic} and O. A. Ole{\u\i}nik and E. V. Radkevi{\v{c}} \cite{Oleinik_Radkevic, Radkevich_2009a, Radkevich_2009b}, being merely the tip of the iceberg.

As far as the authors can tell, however, there has been relatively little prior work on a priori Schauder estimates and higher-order H\"older regularity of solutions up to the portion of the domain boundary where the operator becomes degenerate. In this context, the work of P. Daskalopoulos, R. Hamilton, and E. Rhee \cite{DaskalHamilton1998, Daskalopoulos_Rhee_2003, RheeThesis} and of H. Koch \cite{Koch} stands out because of their introduction of the cycloidal metric on the upper-half space, weighted H\"older norms, and weighted Sobolev norms which provide the key ingredients required to unlock the existence, uniqueness, and higher-order regularity theory for solutions to the porous medium equation \eqref{eq:PorousMediumEquation} and the boundary-degenerate parabolic model equation \eqref{eq:DHModel} on the upper half-space given by the linearization of the porous medium equation in suitable coordinates.

Daskalopoulos and Hamilton \cite{DaskalHamilton1998} proved existence and uniqueness of $C^\infty$ solutions, $u$, to the Cauchy problem for the porous medium equation \cite[p. 899]{DaskalHamilton1998} (when $d=2$),
\begin{equation}
\label{eq:PorousMediumEquation}
-u_t + \sum_{i=1}^d (u^m)_{x_ix_i} = 0 \quad\hbox{on }(0,T)\times\RR^d, \quad u(\cdot, 0) = g \quad\hbox{on }\RR^d,
\end{equation}
with constant $m>1$ and initial data, $g\geq 0$, compactly supported in $\RR^d$, together with $C^\infty$-regularity of its free boundary, $\partial\{u>0\}$, provided the initial pressure function is non-degenerate (that is, $D u^{m-1}  \geq a >0$) on boundary of its support at $t=0$. Their analysis is based on their development of existence, uniqueness, and regularity results for the linearization of the porous medium equation near the free boundary and, in particular, their model linear boundary-degenerate operator \cite[p. 901]{DaskalHamilton1998} (generalized from $d=2$ in their article),
\begin{equation}
\label{eq:DHModel}
Au = -x_d\sum_{i=1}^d u_{x_ix_i} - \beta u_{x_d}, \quad u\in C^\infty(\HH),
\end{equation}
where $\beta$ is a positive constant, analogous to the combination of parameters, $2\kappa\theta/\sigma^2$, in \eqref{eq:HestonPDE}, following a suitable change of coordinates \cite[p. 941]{DaskalHamilton1998}.

The same model linear boundary-degenerate operator (for $d\geq 2$), was studied independently by Koch \cite[Equation (4.43)]{Koch} and, in a Habilitation thesis, he obtained existence, uniqueness, and regularity results for solutions to \eqref{eq:PorousMediumEquation} which complement those of Daskalopoulos and Hamilton \cite{DaskalHamilton1998}. Koch employs weighted Sobolev space methods, Moser iteration, and pointwise estimates for the fundamental solution. However, by adapting the approach of Daskalopoulos and Hamilton \cite{DaskalHamilton1998}, we avoid having to rely on difficult pointwise estimates for the fundamental solution for the operator $A$ in \eqref{eq:defnA}. Although tantalizingly explicit ---
see \cite{DuffiePanSingleton2000, Heston1993} for the fundamental solution of the parabolic Heston operator \eqref{eq:HestonPDE} and Appendix \ref{sec:Existence_smooth_solution} for its elliptic analogue --- these kernel functions appear quite intractable for the analysis required to emulate the role of potential theory for the Laplace operator in the traditional development of Schauder theory in \cite{GilbargTrudinger}.

While the Daskalopoulos-Hamilton Schauder theory for boundary-degenerate parabolic operators has been adopted so far by relatively few other researchers, it has also been employed by A. De Simone, L. Giacomelli, H. Kn\"upfer, and F. Otto in \cite{DeSimone_Knupfer_Otto_2006, Giacomelli_Knupfer_Otto_2008, Giacomelli_Knupfer_2010} and by C. L. Epstein and R. Mazzeo in
\cite{Epstein_Mazzeo_annmathstudies}.

\subsection{Extensions and future work}
\label{subsec:Extensions}
We defer to a subsequent article the development of a priori global Schauder $C^{k,2+\alpha}_s(\bar\sO)$ estimates, existence, and regularity theory for solutions, $u$, to the elliptic boundary value problem \eqref{eq:IntroBoundaryValueProblem}, \eqref{eq:IntroBoundaryValueProblemBC} when $f$ and the coefficients, $a,b,c$, of $A$ in \eqref{eq:defnA} belong to $C^{k,\alpha}_s(\bar\sO)$, the boundary data function, $g$, belongs to $C^{k,2+\alpha}_s(\bar\sO)$, and $\sO$ has boundary portion $\partial_1\sO$ of class $C^{k+2,\alpha}$ and $C^{k,2+\alpha}$-transverse to $\partial_0\sO$. For reasons we summarize in \cite[\S 1.3]{Feehan_Pop_higherregularityweaksoln}, the development of global Schauder a priori estimates, regularity, and existence theory appears very difficult when the intersection $\overline{\partial_0\sO}\cap\overline{\partial_1\sO}$ is non-empty.

However, if $\sO\subset\RR^d$ is a bounded open subset and $A$ is an elliptic, linear, second-order partial differential operator which is equivalent to an operator $A^{x_0} $ of the form \eqref{eq:defnA} in local coordinates near \emph{every} point $x^0\in \partial\sO$, then Theorem \ref{thm:APrioriSchauderInteriorDomain} will quickly lead to a \emph{global} $C^{k, 2+\alpha}_s(\bar\sO)$ a priori estimate for $u$ if $\partial\sO=\partial_0\sO$ is of class $C^{k, 2+\alpha}_s$. Moreover,
the method of the proof of \cite[Theorem 6.5.3]{Krylov_LecturesHolder} (or indeed \cite[Theorem II.1.1]{DaskalHamilton1998}) should adapt to give existence of a solution, $u\in C^{k, 2+\alpha}_s(\bar\sO)$, to \eqref{eq:IntroBoundaryValueProblem}, \eqref{eq:IntroBoundaryValueProblemBC}.

In \cite{Feehan_classical_perron_elliptic}, the first author applied Theorems \ref{thm:APrioriSchauderInteriorDomain} and \ref{thm:Existence_uniqueness_slabs} and Corollary \ref{cor:Global_Schauder_estimate_VariableCoefficients} to prove existence of solutions to boundary value problems and obstacle problems for boundary-degenerate elliptic, linear, second-order partial differential operators of the form $A$ in \eqref{eq:defnA} with $C^{k,\alpha}_s(\underline\sO)$ coefficients, $a,b,c$,  on open subsets of the half-space and partial Dirichlet boundary conditions. Those existence results are based on new versions of the classical Perron method \cite[Sections 2.8 and 6.3]{GilbargTrudinger}. Applications of that work include a generalization (see Remark \ref{rmk:GeneralizeExistencefromHeston}) of the existence result in Theorem \ref{thm:ExistUniqueCk2+alphasHolderContinuityDomain}, based purely on the Schauder methods developed in this article and the Perron method in \cite{Feehan_classical_perron_elliptic}, rather than a combination of the variational methods employed in \cite{Daskalopoulos_Feehan_statvarineqheston, Feehan_Pop_regularityweaksoln, Feehan_Pop_higherregularityweaksoln} and the Schauder regularity theory developed in this article.

We expect the a priori interior Schauder estimates that we develop in this article, which are in the style of \cite[Corollary 6.3]{GilbargTrudinger}, to extend to more refined and sharper a priori `global' interior Schauder estimates, in the style of \cite[Theorem 6.2, Lemmas 6.20 and 6.21]{GilbargTrudinger}. Aside from facilitating `rearrangement arguments', we expect such `global' a priori interior Schauder estimates --- relying on a choice of suitable weighted H\"older spaces similar to those employed in \cite[Chapter 6]{GilbargTrudinger} --- to permit the use of the continuity method to prove existence of solutions within a self-contained Schauder framework and this theme will be developed by the authors in a subsequent article.

While our a priori Schauder estimates rely on the specific form of the degeneracy factor, $x_d$, of the operator $A$ in \eqref{eq:defnA} on an open subset of the half-space, we obtained weak and strong maximum principles for a much broader class of boundary-degenerate operators in \cite{Feehan_maximumprinciple}. We hope to extend the a priori Schauder estimates and regularity theory for boundary-degenerate elliptic operators such as
$$
Av =  -\vartheta\tr(aD^2v) - b\cdot Dv + cv \quad\hbox{on }\sO, \quad v\in C^\infty(\sO),
$$
where $\vartheta \in C^\alpha_{\loc}(\bar\sO)$ and $\vartheta>0$ on an open subset $\sO\subset\RR^d$ with non-empty boundary portion $\partial_0\sO = \Int(\{x\in\partial\sO:\vartheta(x)=0\})$.

\subsection{Outline and mathematical highlights of the article}
\label{subsec:Guide}
For the convenience of the reader, we provide a brief outline of the article. In Section \ref{sec:Preliminaries} , we review the construction of the Daskalopoulos-Hamilton-H\"older families of norms and Banach spaces \cite{DaskalHamilton1998}.

In Section \ref{sec:Derivative_estimates_interior_boundary}, we derive a priori local $C^0$ estimates for derivatives of solutions, $u$, to $Au=0$ on half-balls, $B_{r_0}^+(x_0)\subset\HH$, centered at points $x^0 \in \partial\HH$, when $A$ has constant coefficients. However, our method of proof differs significantly from that of Daskalopoulos and Hamilton \cite{DaskalHamilton1998}, who apply a comparison principle for a certain non-linear parabolic operator and which directly uses the fact that this operator is parabolic. We were not able to replace their `parabolic' comparison argument by one which is suitable for the elliptic operators we consider in this article. Instead, we employ a simpler approach using a version of A. Brandt's finite-difference method \cite{Brandt_1969a} to estimate derivatives in directions parallel to $\partial\HH$ and methods of ordinary differential equations to estimate derivatives in the direction orthogonal to $\partial\HH$.

In Section \ref{sec:PolynomialApproximation}, we adapt and slightly streamline the arguments of Daskalopoulos and Hamilton in \cite{DaskalHamilton1998} for their model boundary-degenerate parabolic operator \eqref{eq:DHModel} to the case of our boundary-degenerate elliptic operator \eqref{eq:defnA} and derive a $C^0$ a priori estimate of the remainder of the first-order Taylor polynomial of a function, $u$, on a half-ball, $B_{r_0}^+(x_0)$.

In Section \ref{sec:Schauder_estimates_interior}, we obtain a priori local interior Schauder estimates for a function, $u$, on a ball $B_{r_0}(x^0)\Subset\HH$, where we keep track of the distance between the ball center, $x^0\in\HH$, and the half-space boundary,
$\partial\HH$, again when $A$ has constant coefficients.

In Section \ref{sec:Boundary_Schauder_estimates}, we apply the results of the previous sections to prove our main $C^{2+\alpha}_s$ a priori interior local Schauder estimate (Theorem \ref{thm:Holder_estimate_local}) for an operator $A$ with constant coefficients on a half-ball, $B_{r_0}^+(x_0)$.

In Section \ref{sec:Holder_estimate_local_higherorder}, we prove a $C^{k, 2+\alpha}_s$ a priori interior local Schauder estimate (Theorem \ref{thm:Holder_estimate_local_higherorder}) and a global a priori global Schauder estimate on a slab (Corollary \ref{cor:Global_Schauder_estimate_ConstantCoefficients}), both when $A$ has constant coefficients.

In Section \ref{sec:Variable_coefficients_Higher-order_regularity}, we relax the assumption in the preceding sections that the coefficients of the operator $A$ in \eqref{eq:defnA} are constant and prove a $C^{2+\alpha}_s$ a priori interior local Schauder estimate (Theorem \ref{thm:Holder_estimate_local_variable_coeff}) for a function, $u$, on a half-ball, $B_{r_0}^+(x^0)$ when $A$ has variable coefficients. We then prove a $C^{k, 2+\alpha}_s$ a priori local interior Schauder estimate for arbitrary $k\in \NN$ (Theorem \ref{thm:Higher_order_estimate}) and complete the proofs of Theorem \ref{thm:APrioriSchauderInteriorDomain} and Corollary \ref{cor:Global_Schauder_estimate_VariableCoefficients}. Next, we prove our global $C^{k, 2+\alpha}_s(\bar S)$ existence result on slabs, Theorem \ref{thm:Existence_uniqueness_slabs}, and complete the proofs of our main $C^{k, 2+\alpha}_s$ regularity result, Theorem \ref{thm:InteriorRegularityDomain},
and the $C^{k, 2+\alpha}_s(\underline \sO)$ existence results, Theorem \ref{thm:ExistUniqueCk2+alphasHolderContinuityDomain} and Corollary \ref{cor:ExistUniqueCk2+alphasHolderContinuityDomain}, for solutions to a partial Dirichlet boundary value problem for the Heston operator.

We collect some additional useful results and their proofs in several appendices to this article. In Appendix \ref{sec:MaximumPrinciple}, we prove a weak maximum principle for operators which include those of the form $A$ in \eqref{eq:defnA} with $c \geq 0$ (rather than $c\geq c_0$ for a positive constant $c_0$) when the open subset, $\sO$, is unbounded but has finite height, extending one of the weak maximum principles in \cite{Feehan_maximumprinciple}. In Appendix \ref{sec:Existence_smooth_solution}, we prove Theorem \ref{thm:Existence_smooth_solutions_half_space}. In Appendix \ref{sec:InterpolationInequalities}, we summarize the interpolation inequalities and boundary properties of functions in weighted H{\"o}lder spaces proved in \cite{DaskalHamilton1998} and \cite{Feehan_Pop_mimickingdegen_pde}.

\subsection{Notation and conventions}
\label{subsec:Notation}
In the definition and naming of function spaces, including spaces of continuous functions and H\"older spaces, we follow R. A. Adams \cite{Adams_1975} and alert the reader to occasional differences in definitions between \cite{Adams_1975} and standard references such as D. Gilbarg and N. Trudinger \cite{GilbargTrudinger} or N. V. Krylov \cite{Krylov_LecturesHolder}.

We let $\NN:=\left\{0,1,2,3,\ldots\right\}$ denote the set of non-negative integers. If $U\subset\RR^d$ is any open subset, we let $\bar U$ denote its closure with respect to the Euclidean topology and let $\partial U := \bar U\less U$ denote its topological boundary. For $r>0$ and $x^0\in\RR^d$, we let $B_r(x^0) := \{x\in\RR^d: |x-x^0|<r\}$ denote the open ball with center $x^0$ and radius $r$. We denote $B_r^+(x^0) := B_r(x^0) \cap \HH$ when $x^0\in\partial\HH$. When $x^0$ is the origin, $O\in\RR^d$, we denote $B_r(x^0)$ and $B_r^+(x^0)$ by $B_r$ and $B_r^+$, respectively, for brevity.

If $V\subset U\subset \RR^d$ are open subsets, we write $V\Subset U$ when $U$ is bounded with closure $\bar U \subset V$. By $\supp\zeta$, for any $\zeta\in C(\RR^d)$, we mean the \emph{closure} in $\RR^d$ of the set of points where $\zeta\neq 0$.

We use $C=C(*,\ldots,*)$ to denote a constant which depends at most on the quantities appearing on the parentheses. In a given context, a constant denoted by $C$ may have different values depending on the same set of arguments and may increase from one inequality to the next.

\subsection{Acknowledgments} We are very grateful to the anonymous referee for a careful reading of our manuscript and kind comments. The first author would also like to thank the Max Planck Institut f\"ur Mathematik in der Naturwissenschaft, Leipzig, the Max Planck Institut f\"ur Mathematik, Bonn, and the Department of Mathematics at Columbia University for their generous support for research visits during 2012 and 2013.

\section{Preliminaries}
\label{sec:Preliminaries}
In this section, we review the construction of the Daskalopoulos-Hamilton-H\"older families of norms and Banach spaces \cite{DaskalHamilton1998}.

We first recall the definition of the \emph{cycloidal distance function}, $s(\cdot,\cdot)$, on $\bar\HH$ by
\begin{equation}
\label{eq:Cycloidal_distance}
s(x^1,x^2) := \frac{|x^1-x^2|}{\sqrt{x^1_d+x^2_d+|x^1-x^2|}},\quad\forall\, x^1, x^2 \in \bar\HH,
\end{equation}
where $x^i = (x^i_1,\ldots,x^i_d)$, for $i=1,2$, and $|x^1-x^2|$ denotes the usual Euclidean distance between points $x^1, x^2 \in \RR^d$. Analogues of the cycloidal distance function \eqref{eq:Cycloidal_distance} between points $(t^1,x^1), \ (t^2,x^2) \in [0,\infty)\times\bar\HH$, in the context of parabolic differential equations, were introduced by Daskalopoulos and Hamilton in \cite[p.~901]{DaskalHamilton1998} and Koch in \cite[p.~11]{Koch} for the study of the porous medium equation.

Observe that, by \eqref{eq:Cycloidal_distance},
\begin{equation}
\label{eq:CycloidLessEuclidDistance}
s(x,x^0) \leq |x - x^0|^{1/2}, \quad \forall\, x, x^0 \in \bar\HH.
\end{equation}
The reverse inequality take its simplest form when $x^0\in \partial\HH$, so $x^0_d=0$, in which case the inequalities $x_d\leq |x-x^0|$ and
$$
|x-x^0| = s(x,x^0)\sqrt{x_d + |x-x^0|} \leq s(x,x^0)\sqrt{2|x-x^0|},
$$
give
\begin{equation}
\label{eq:SimpleEuclidLessCycloidDistance}
|x-x^0| \leq 2s^2(x,x^0), \quad \forall\, x\in\bar\HH, \ x^0 \in \partial\HH.
\end{equation}
Following \cite[\S 1.26]{Adams_1975}, for an open subset $U\subset\HH$, we let $C(U)$ denote the vector space of continuous functions on $U$ and let $C(\bar U)$ denote the Banach space of functions in $C(U)$ which are bounded and uniformly continuous on $U$, and thus have unique bounded, continuous extensions to $\bar U$, with norm
$$
\|u\|_{C(\bar U)} := \sup_{U}|u|.
$$
Noting that $U$ may be \emph{unbounded}, we let $C_{\loc}(\bar U)$ denote the linear subspace of functions $u\in C(U)$ such that $u\in C(\bar V)$ for every precompact open subset $V\Subset \bar U$. We let $C_b(U) := C(U)\cap L^\infty(U)$.

Daskalopoulos and Hamilton provide the

\begin{defn}[$C^\alpha_s$ norm and Banach space]
\label{defn:Calphas}
\cite[p. 901]{DaskalHamilton1998}
Given $\alpha \in (0,1)$ and an open subset $U\subset\HH$, we say that $u\in C^\alpha_s(\bar U)$ if $u\in C(\bar U)$ and
$$
\|u\|_{C^\alpha_s(\bar U)} < \infty,
$$
where
\begin{equation}
\label{eq:CalphasNorm}
\|u\|_{C^\alpha_s(\bar U)} := [u]_{C^\alpha_s(\bar U)} + \|u\|_{C(\bar U)},
\end{equation}
and
\begin{equation}
\label{eq:CalphasSeminorm}
[u]_{C^\alpha_s(\bar U)} := \sup_{\begin{subarray}{c}x^1,x^2\in U \\ x^1\neq x^2\end{subarray}}\frac{|u(x^1)-u(x^2)|}{s^\alpha(x^1,x^2)}.
\end{equation}
We say that $u\in C^\alpha_s(\underline{U})$ if $u\in C^\alpha_s(\bar V)$ for all precompact open subsets $V\Subset \underline{U}$, recalling that $\underline{U} := U\cup\partial_0 U$. We let $C^\alpha_{s,\loc}(\bar U)$ denote the linear subspace of functions $u \in C^\alpha_s(U)$ such that $u\in C^\alpha_s(\bar V)$ for every precompact open subset $V\Subset \bar U$.
\end{defn}

It is known that $C^\alpha_s(\bar U)$ is a Banach space \cite[\S I.1]{DaskalHamilton1998} with respect to the norm \eqref{eq:CalphasNorm}.

We shall need the following higher-order weighted H\"older $C^{k,\alpha}_s$ and $C^{k,2+\alpha}_s$ norms and Banach spaces pioneered by Daskalopoulos and Hamilton \cite{DaskalHamilton1998}. We record their definition here for later reference.

\begin{defn}[$C^{k,\alpha}_s$ norms and Banach spaces]
\label{defn:DHspaces}
\cite[p. 902]{DaskalHamilton1998}
Given an integer $k\geq 0$, $\alpha \in (0,1)$, and an open subset $U\subset\HH$, we say that $u\in C^{k,\alpha}_s(\bar U)$ if
$u \in C^k(\bar U)$ and
$$
\|u\|_{C^{k,\alpha}_s(\bar U)} < \infty,
$$
where
\begin{equation}
\label{eq:CkalphasNorm}
\|u\|_{C^{k,\alpha}_s(\bar U)} := \sum_{|\beta|\leq k}\|D^\beta u\|_{C^\alpha_s(\bar U)},
\end{equation}
where $\beta := (\beta_1,\ldots,\beta_d) \in \NN^d$. When $k=0$, we denote $C^{0,\alpha}_s(\bar U) = C^\alpha_s(\bar U)$.
\end{defn}

\begin{defn}[$C^{k,2+\alpha}_s$ norms and Banach spaces]
\label{defn:DH2spaces}
\cite[pp. 901--902]{DaskalHamilton1998}
Given an integer $k\geq 0$, a constant $\alpha \in (0,1)$, and an open subset $U\subset\HH$, we say that $u \in C^{k,2+\alpha}_s(\bar U)$ if $u\in C^{k+1,\alpha}_s(\bar U)$, the derivatives, $D^\beta u$, $\beta\in\NN^d$ with $|\beta| = k+2$, of order $k+2$ are continuous on $U$, and the functions, $x_dD^\beta u$, $\beta\in\NN^d$ with $|\beta|= k+2$, extend continuously up to the boundary, $\partial U$, and those extensions belong to $C^\alpha_s(\bar U)$. We define
\begin{equation}
\label{eq:Ck+2alphasNorm}
\|u\|_{C^{k, 2+\alpha}_s(\bar U)} :=  \|u\|_{C^{k+1,\alpha}_s(\bar U)} + \sum_{|\beta|=k+2}\|x_dD^\beta u\|_{C^{\alpha}_s(\bar U)}.
\end{equation}
We say that\footnote{In \cite[pp. 901--902]{DaskalHamilton1998}, when defining the spaces $C^{k,\alpha}_s(\sA)$ and $C^{k,2+\alpha}_s(\sA)$, it is assumed that $\sA$ is a compact subset of the \emph{closed} upper half-space, $\bar\HH$.} $u\in C^{k,2+\alpha}_s(\underline{U})$ if $u\in C^{k,2+\alpha}_s(\bar V)$ for all precompact open subsets $V\Subset \underline{U}$. When $k=0$, we denote $C^{0,2+\alpha}_s(\bar U) = C^{2+\alpha}_s(\bar U)$.
\end{defn}

For any non-negative integer $k$, we let $C^k_0(\underline{U})$ denote the linear subspace of functions $u\in C^k(U)$ such that $u\in C^k(\bar V)$ for every precompact open subset $V\Subset \underline{U}$ and define $C^{\infty}_0(\underline{U}) := \cap_{k\geq 0} C^k_0(\underline{U})$. Note that we also have $C^{\infty}_0(\underline{U}) = \cap_{k\geq 0}C^{k,\alpha}_s(\underline{U}) = \cap_{k\geq 0}C^{k,2+\alpha}_s(\underline{U})$.

\section{Interior local estimates of derivatives}
\label{sec:Derivative_estimates_interior_boundary}
As in \cite{DaskalHamilton1998}, we begin with the derivation of local estimates of derivatives of solutions on half-balls, $B_r^+(x_0)$, centered at points $x^0 \in \partial\HH$, but the method of the proof differs significantly from the method of the proof in \cite[\S I.4 and I.5]{DaskalHamilton1998}. In \cite{DaskalHamilton1998}, Daskalopoulos and Hamilton apply a comparison principle to a suitably chosen function, defined in terms of the derivatives (see the definitions of $Y$ at the beginning of \cite[\S I.5]{DaskalHamilton1998} and of $X$ in the proof of \cite[Corollary I.5.3]{DaskalHamilton1998}). Their comparison principle directly uses the fact that the operator is parabolic, and we were not able to replace the `parabolic' comparison argument by one which is suitable for the elliptic operators we consider in this article. (The Daskalopoulos-Hamilton approach can be viewed as a variant of the Bernstein method --- see the proof
\cite[Theorem 8.4.4]{Krylov_LecturesHolder} in the case of the heat operator and \cite[Theorem 2.5.2]{Krylov_LecturesHolder} in the case of the Laplace operator.)

Instead, we apply a combination of finite-difference arguments, methods of ordinary differential equations, and, in this section, restrict to the homogeneous version of Eq. \eqref{eq:IntroBoundaryValueProblem} with $f=0$.
We adapt Brandt's finite-difference method \cite{Brandt_1969a} (see also \cite[\S 3.4]{GilbargTrudinger}) to obtain a priori local estimates for $D^{\beta} u$, where $\beta\in\NN^d$ is any multi-index with non-negative integer entries of the form $\beta=(\beta^1, \ldots,\beta^{d-1},0)$. The method of Brandt also uses a comparison principle, but it is applied to finite differences, instead of functions of derivatives of $u$, such as $X$ and $Y$ in \cite[\S I.5]{DaskalHamilton1998}.  Brandt's approach is mentioned by Gilbarg and Trudinger in \cite[p. 47]{GilbargTrudinger} as an alternative to the usual methods for proving a priori interior Schauder estimates such as \cite[Corollary 6.3]{GilbargTrudinger}. We are able to apply the finite-difference estimates method not only on balls $B_r(x^0) \Subset \HH$ as in \cite{Brandt_1969a},
but also on half-balls $B_r^+(x_0) \subset \HH$ centered at points $x^0 \in \partial\HH$ because the degeneracy of the elliptic operator $A$ in \eqref{eq:defnA} along $\partial_0 B_r^+(x_0)$ and the fact that $b^d>0$ along $\partial_0 B_r^+(x_0)$
(see \eqref{eq:bd}) implies that no boundary condition need be imposed along $\partial_0 B_r^+(x_0)$.

In Section \ref{subsec:BasicResultsConstantCoefficients} we summarize the interior local Schauder estimate and regularity results we will prove in Sections \ref{sec:Derivative_estimates_interior_boundary}, \ref{sec:PolynomialApproximation}, \ref{sec:Schauder_estimates_interior}, and \ref{sec:Boundary_Schauder_estimates}. In Section \ref{subsec:Derivative_estimates_tangential}, we develop $C^0$ interior local estimates for derivatives $D^\beta u$ when $\beta_d=0$ and in Section \ref{subsec:Derivative_estimates_normal}, we extend those estimates to case $\beta_d>0$.

\subsection{A priori interior local Schauder estimate and regularity statements in the case of constant coefficients}
\label{subsec:BasicResultsConstantCoefficients}
Throughout Sections \ref{sec:Derivative_estimates_interior_boundary}--\ref{sec:Holder_estimate_local_higherorder}, we further assume the

\begin{hyp}[Constant coefficients and positivity]
\label{hyp:ConstantCoefficients}
The coefficients, $a,b,c$, of the operator $A$ in \eqref{eq:defnA} are constant; there is a positive constant, $\lambda_0$, such that
\footnote{Condition \eqref{eq:Strict_ellipticity_ConstantCoefficents} is first used in the proof of Lemma
\ref{lem:Estimate_v_x}.}
\begin{equation}
\label{eq:Strict_ellipticity_ConstantCoefficents}
\langle a\xi, \xi \rangle \geq \lambda_0|\xi|^2, \quad\forall\, \xi \in \RR^d;
\end{equation}
and
\footnote{Condition \eqref{eq:bd} is required by our weak maximum principle (Lemma \ref{lem:Comparison_principle_A_0} and Corollary \ref{cor:Maximum_principle_A_0}). Our weak maximum principle is in turn required in Section \ref{sec:Derivative_estimates_interior_boundary}; sections \ref{sec:PolynomialApproximation} and \ref{sec:Schauder_estimates_interior} depend on Section \ref{sec:Derivative_estimates_interior_boundary}; and sections \ref{sec:Boundary_Schauder_estimates}, \ref{sec:Holder_estimate_local_higherorder}, and \ref{sec:Variable_coefficients_Higher-order_regularity} each depend on sections \ref{sec:Schauder_estimates_interior} and \ref{sec:Boundary_Schauder_estimates}.}
\begin{equation}
\label{eq:bd}
b^d = b_0 > 0.
\end{equation}
\end{hyp}

The condition \eqref{eq:bd} is first required in the proof of Lemma \ref{lem:Interior_estimates_Holder_seminorms_Du_yD2u}.
When the coefficients of $A$ are constant, we denote
\begin{equation}
\label{eq:LambdaConstantCoefficients}
\Lambda = \sum_{i,j=1}^d|a^{ij}| + \sum_{i=1}^d|b^i| + |c|.
\end{equation}
Our main goal in sections \ref{sec:Derivative_estimates_interior_boundary}, \ref{sec:PolynomialApproximation}, \ref{sec:Schauder_estimates_interior}, and \ref{sec:Boundary_Schauder_estimates} is to prove the following version of Theorems \ref{thm:APrioriSchauderInteriorDomain} and \ref{thm:InteriorRegularityDomain} when $k=0$ and $A$ has constant coefficients and the open subset, $\sO$, is a half-ball, $B_{r_0}^+(x^0)$ with $x^0\in\partial\HH$.

\begin{thm}[A priori interior local Schauder estimate when $A$ has constant coefficients]
\label{thm:Holder_estimate_local}
Assume that $A$ in \eqref{eq:defnA} obeys Hypothesis \ref{hyp:ConstantCoefficients}.
For any $\alpha\in(0,1)$ and constants $r$ and $r_0$ with $0<r<r_0$, there is a positive constant,
$C=C(\alpha, b_0, d,\lambda_0,\Lambda, r_0,r)$, such that the following holds. If $x^0\in\partial\HH$ and $u\in C^{2+\alpha}_s(\underline B^+_{r_0}(x^0))$, then
\begin{equation}
\label{eq:Holder_estimate_local}
\|u\|_{C^{2+\alpha}_s(\bar B^+_r)} \leq C\left(\|A u\|_{C^{\alpha}_s(\bar B^+_{r_0}(x^0))} + \|u\|_{C(\bar B^+_{r_0}(x^0))}\right).
\end{equation}
\end{thm}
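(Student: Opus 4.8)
The plan is to assemble \eqref{eq:Holder_estimate_local} from the specialized results of Sections \ref{sec:Derivative_estimates_interior_boundary}--\ref{sec:Boundary_Schauder_estimates}, in the order in which those sections are developed. Two preliminary reductions are available. Because $A$ in \eqref{eq:defnA} has constant coefficients and $\partial\HH = \RR^{d-1}\times\{0\}$ is invariant under translation in the directions $x_1,\ldots,x_{d-1}$, one may take $x^0 = O$ and work on $B^+_{r_0} = B^+_{r_0}(O)$. Next, writing $A = A_0 + c$ with $A_0$ as in \eqref{eq:defnA_0} and $c$ a constant, it suffices to prove \eqref{eq:Holder_estimate_local} for $A_0$ (the case $c=0$): once that is known, $A_0 u = Au - cu$ gives $\|A_0 u\|_{C^\alpha_s(\bar B^+_{r_0})} \le \|Au\|_{C^\alpha_s(\bar B^+_{r_0})} + |c|\,\|u\|_{C^\alpha_s(\bar B^+_{r_0})}$, and the interpolation inequalities for the weighted H\"older spaces (Appendix \ref{sec:InterpolationInequalities}) bound $\|u\|_{C^\alpha_s}$ by $\eps\,\|u\|_{C^{2+\alpha}_s} + C(\eps)\|u\|_{C}$; taking $\eps$ small and absorbing the $C^{2+\alpha}_s$ term into the left-hand side then yields \eqref{eq:Holder_estimate_local} for $A$.

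For the homogeneous equation $A_0 u = 0$, the input is Section \ref{sec:Derivative_estimates_interior_boundary}. Using the weak maximum principle for $A_0$ on half-balls --- which is exactly where \eqref{eq:bd} enters, since the positivity $b^d = b_0 > 0$ is what allows \emph{no} boundary condition to be imposed along the degenerate face $\partial_0 B^+_{r_0}$ --- one obtains $C^0$ bounds, on nested half-balls $B^+_{r'} \subset B^+_{r_0}$, for all derivatives $D^\beta u$: first for the purely tangential multi-indices $\beta = (\beta_1,\ldots,\beta_{d-1},0)$ by an adaptation of Brandt's finite-difference method, and then, by reading the equation along vertical segments as a first-order ordinary differential equation in $x_d$ for the relevant $x_d$-derivative, for $\beta$ with $\beta_d > 0$. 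The Taylor-polynomial remainder estimate of Section \ref{sec:PolynomialApproximation}, together with these $C^0$ derivative bounds, upgrades this to $C^\alpha_s$ control of $Du$ and of $x_d D^2 u$ in the cycloidal distance \eqref{eq:Cycloidal_distance}, where \eqref{eq:SimpleEuclidLessCycloidDistance} is used to compare Euclidean and cycloidal scales near $\partial\HH$. This proves \eqref{eq:Holder_estimate_local} when $A_0 u = 0$, with the right-hand side reduced to $\|u\|_{C(\bar B^+_{r_0})}$.

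For the inhomogeneous equation $A_0 u = f$ with $f \in C^\alpha_s(\bar B^+_{r_0})$, one first subtracts the value $f(O)$: since $A_0(x_d) = -b_0$ by \eqref{eq:defnA_0} and \eqref{eq:bd}, the function $w := (f(O)/b_0)\,x_d$ satisfies $A_0 w = -f(O)$, so $v := u + w$ obeys $A_0 v = f - f(O)$, which vanishes at $O$ and has $C^\alpha_s$-norm controlled by $\|f\|_{C^\alpha_s(\bar B^+_{r_0})}$. The homogeneous estimate is then combined with the interior Schauder estimate of Section \ref{sec:Schauder_estimates_interior} --- the classical constant-coefficient estimate on balls $B_\rho(y) \Subset \HH$, rescaled so as to exhibit the explicit dependence on the height $y_d$ of the center, which is how the degeneracy factor $x_d$ in \eqref{eq:defnA} is absorbed --- through the weighted-norm covering argument of Section \ref{sec:Boundary_Schauder_estimates}, carried out in the style of \cite[\S 6.1]{GilbargTrudinger} but with the weights measuring distance to the spherical boundary portion $\partial_1 B^+_{r_0}$ rather than to $\partial_0 B^+_{r_0}$. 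A final rearrangement and absorption step, aided once more by the interpolation inequalities of Appendix \ref{sec:InterpolationInequalities} to control the intermediate-order terms appearing with small coefficients, produces \eqref{eq:Holder_estimate_local}.

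I expect the main obstacle to lie in Section \ref{sec:Derivative_estimates_interior_boundary}: producing $C^0$ a priori bounds for \emph{all} derivatives of solutions of $A_0 u = 0$ up to the degenerate boundary $\partial_0 B^+_{r_0}$ with no tractable pointwise estimate for a fundamental solution of \eqref{eq:defnA}. Unlike in the strictly elliptic case one cannot differentiate the equation freely, because $A_0(D_d u)$ acquires the term $\tr(a D^2 u)$ from the factor $x_d$; hence the tangential derivatives must be controlled first by finite differences --- whose use on a half-ball is legitimate precisely because \eqref{eq:bd} removes any boundary condition on $\partial_0 B^+_{r_0}$ --- and only afterwards can the $x_d$-derivatives be extracted by the ordinary-differential-equation argument. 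Keeping the constants in these two steps uniform in the height and compatible with the cycloidal geometry is the delicate point, and it is the analogue, in the present elliptic setting, of the parabolic comparison argument of Daskalopoulos--Hamilton that we are deliberately avoiding.
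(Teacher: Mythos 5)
Your proposal tracks the paper's decomposition into Sections 3, 5, and the covering step of Section 6, and the initial reductions (translate to $x^0=O$, split off the zeroth-order term $c$) are consistent with what the paper does. However, there is a genuine gap in how you treat the inhomogeneous equation $A_0 u = f$, and this is precisely where the paper's Section 4 does the heavy lifting. Subtracting $w = (f(O)/b_0)\,x_d$ so that $A_0(u+w) = f - f(O)$ vanishes at $O$ does not reduce to the homogeneous case: the modified source still fails to vanish on $B^+_{r_0}\setminus\{O\}$, and you never explain how the homogeneous derivative estimates of Section \ref{sec:Derivative_estimates_interior_boundary} would then be brought to bear on $v = u + w$. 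Moreover, invoking a ``weighted-norm covering argument in the style of \cite[\S 6.1]{GilbargTrudinger}'' is not available here: the classical argument rests on the Newtonian potential, and the paper expressly avoids potential-theoretic arguments because the relevant kernel functions for $A$ are intractable; a variant of the \cite[Chapter 6]{GilbargTrudinger} weighted interior norms is listed in Section \ref{subsec:Extensions} as future work, not as the method of this proof.

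What actually closes the inhomogeneous case in the paper is Proposition \ref{prop:Polynomial_approximation}: write $u = u_1 + u_2$ where $u_1\in C^\infty(\bar S)$ solves $A_0 u_1 = \psi A_0 u$ on a full slab $S$ for a cutoff $\psi$ supported near $O$, and $u_2 = u - u_1$ solves $A_0 u_2 = 0$ on $B^+_{1/2}$. The existence of $u_1$ is supplied by Theorem \ref{thm:Existence_smooth_solutions_slab} of Appendix \ref{sec:Existence_smooth_solution}, and $\|u_1\|_{C}$ is bounded by the weak maximum principle (Corollary \ref{cor:Maximum_principle_A_0}); only then can the homogeneous derivative estimates of Section \ref{sec:Derivative_estimates_interior_boundary} be applied to $u_2$. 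Your proposal does not mention the slab existence theory or the maximum principle as ingredients of Theorem \ref{thm:Holder_estimate_local}, and also mischaracterizes Section \ref{sec:PolynomialApproximation} as a device for upgrading the homogeneous $C^0$ bounds to $C^\alpha_s$: those Taylor remainder estimates are already formulated with $\|A_0 u\|$ on the right-hand side and are intrinsically inhomogeneous. Without the decomposition $u = u_1 + u_2$ (or some substitute for it) the argument does not close, so the proposal as written has a missing step rather than merely a different route.
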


Our goal in the remainder of this section is to derive a priori estimates for $Du$ and $x_dD^2u$ on half-balls, $B_r^+(x^0)$, centered at points $x^0 \in \partial\HH$. Because our operator, $A$, is invariant with respect to translations in the variables $(x_1,\ldots,x_{d-1})$ when the coefficients, $a,b,c$, are constant, we can assume without loss of generality that $x^0$ is the origin, $O\in\RR^d$, and write $B_{r_0}^+(x^0)=B_{r_0}^+$ and $B_r^+(x^0)=B_r^+$ in our proof of Theorem \ref{thm:Holder_estimate_local}.

\subsection{Interior local estimates for derivatives in directions parallel to the degenerate boundary}
\label{subsec:Derivative_estimates_tangential}
To derive a priori local estimates for $D^{\beta}u$, for $\beta\in\NN^d$ with $\beta_d=0$, it will be useful to consider the following transformation,
\begin{equation}
\label{eq:Transformation_x_plus_alpha_y}
u(x) =: v(y),\quad x\in\HH,
\end{equation}
where $y = \varphi(x) := x + \xi x_d$ and $\xi=(\xi_1,\ldots,\xi_{d-1},\xi_d) \in \RR^d$. We choose $\xi$ such that
\begin{equation}
\label{eq:Definition_xi}
\xi_i : =-b^i/b^d,\quad \forall\, i\neq d,\quad \xi_d=0,
\end{equation}
where we have used assumption \eqref{eq:bd} that $b^d>0$. Note that $\varphi$ is a diffeomorphism on $\bar\HH$ which restricts to the identity map on $\partial\HH$. We now consider the operator $\widetilde A_0$ defined by
$$
A_0 u (x) =: \widetilde A_0 v (y), \quad x\in \HH,
$$
and by direct calculations we obtain that
\begin{equation}
\label{eq:defnWidetildeA_0}
\widetilde A_0 v = -y_d  \tilde a^{ij} v_{y_iy_j} - \tilde b^i v_{y_i}\quad\hbox{on }\HH,
\end{equation}
where
\begin{equation}
\begin{aligned}
\label{eq:defnCoeffWidetildeA_0}
\tilde a^{ij} &:= a^{ij}+\frac{1}{2}\left(\xi_j a^{id}+\xi_i a^{jd}\right) +\xi_i\xi_j a^{dd}, \quad\forall\, i,j \neq d, \\
\tilde a^{id} = \tilde a^{di}
              &:= a^{id} + \xi_i a^{dd},\quad\forall\, i \neq d,\\
\tilde a^{dd} &:= a^{dd},\\
\tilde b^i    &:= b^i + \xi_i b^d, \quad\forall\, i \neq d,\\
\tilde b^d    &= b^d.
\end{aligned}
\end{equation}
The purpose of the transformation \eqref{eq:Transformation_x_plus_alpha_y} is to ensure that the coefficients $\tilde b^i$ of the partial derivatives with respect to $y_i$ in the definition \eqref{eq:defnWidetildeA_0} of the operator $\widetilde A_0$ are zero when $i\neq d$. The matrix $\tilde a$ is symmetric and positive definite, but now the constant of strict ellipticity depends on $b^i/b^d$, that is, on $b^d$ and $\Lambda$, and on the constant of strict ellipticity, $\lambda_0$, of the matrix $a$.

\begin{lem}[Local estimates for first-order derivatives of $v$ parallel to $\partial\HH$]
\label{lem:Estimate_v_x}
Assume that $A$ in \eqref{eq:defnA} obeys Hypothesis \ref{hyp:ConstantCoefficients}.
Let $0<r<r_0$, and let $v \in C^2(B^+_{r_0}) \cap C(\bar B^+_{r_0})$ obey
\begin{equation*}
\widetilde A_0 v =0 \quad \hbox{on } B^+_{r_0},
\end{equation*}
and assume that $v$ satisfies
\begin{equation}
\label{eq:Condition_estimate_v_x}
Dv,\ y_d D^2v \in C(\underline B^+_{r_0}) \quad\hbox{and}\quad y_d D^2v =0 \quad\hbox{on } \partial_0 B^+_{r_0}.
\end{equation}
Then there is a positive constant, $C=C( b_0, d, \lambda_0, \Lambda, r_0, r)$, such that
\begin{equation*}
\|v_{y_k}\|_{C(\bar B^+_{r})} \leq C \|v\|_{C(\bar B^+_{r_0})}, \quad\forall\, k \neq d.
\end{equation*}
\end{lem}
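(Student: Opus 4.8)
The plan is to estimate the tangential first derivatives $v_{y_k}$, $k \neq d$, via a finite-difference argument of Brandt type, using the weak maximum principle for $\widetilde A_0$ (the transformed operator, whose coefficients are constant, whose matrix $\tilde a$ is symmetric positive definite with ellipticity constant depending only on $b_0, \lambda_0, \Lambda$, and whose $\tilde b^i = 0$ for $i \neq d$, $\tilde b^d = b_0 > 0$). For a small step $h$ and a fixed direction $e_k$ with $k \neq d$, consider the difference quotient $\delta_h^k v(y) := (v(y + h e_k) - v(y))/h$. Since $\widetilde A_0$ has constant coefficients and is invariant under translations in the $y_1, \dots, y_{d-1}$ directions, $\delta_h^k v$ again satisfies $\widetilde A_0 (\delta_h^k v) = 0$ on a slightly smaller half-ball (for $|h|$ small enough that the translate stays in $B^+_{r_0}$). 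I would then apply the weak maximum principle (Lemma~\ref{lem:Comparison_principle_A_0} / Corollary~\ref{cor:Maximum_principle_A_0}) to $\delta_h^k v$ on an intermediate half-ball $B^+_\rho$ with $r < \rho < r_0$: because no boundary condition is needed along $\partial_0 B^+_\rho$ (this is exactly where \eqref{eq:bd}, i.e. $b^d = b_0 > 0$, is used), the sup of $|\delta_h^k v|$ on $\bar B^+_\rho$ is controlled by its sup on the curved part $\partial_1 B^+_\rho$.

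The next step is to bound $\sup_{\partial_1 B^+_\rho} |\delta_h^k v|$ in terms of $\|v\|_{C(\bar B^+_{r_0})}$, uniformly in $h$. This is the standard Brandt device: one builds a barrier function on the annular region between $\partial_1 B^+_\rho$ and $\partial_1 B^+_{r_0}$ that dominates $|\delta_h^k v|$ there, using only the $C^0$ bound on $v$ on $B^+_{r_0}$ (not on derivatives of $v$). Concretely, near a point $y^1 \in \partial_1 B^+_\rho$ one uses a comparison function of the form $w(y) = M(\text{something like } |y - y^*|^{-p} \text{ truncated})$ adapted to the operator $\widetilde A_0$, chosen so that $\widetilde A_0 w \geq 0$ (supersolution), $w \geq 2\|v\|_{C(\bar B^+_{r_0})}/h \cdot (\dots)$ is not quite right — rather, one compares $v(\cdot + h e_k) - v(\cdot)$ directly against a barrier that is $\geq 2\|v\|_{C(\bar B^+_{r_0})}$ on the relevant boundary pieces and whose $\widetilde A_0$-image controls the increment; dividing by $h$ and letting the geometry be chosen so the barrier's gradient in the $e_k$ direction is bounded, one gets $|\delta_h^k v(y^1)| \leq C \|v\|_{C(\bar B^+_{r_0})}$ with $C$ depending only on $b_0, d, \lambda_0, \Lambda, r_0, r$ (through $\rho$, which is fixed, say $\rho = (r+r_0)/2$). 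The key point is that the barrier construction only requires ellipticity of $\tilde a$, the sign $\tilde b^d > 0$, and the gap between $\rho$ and $r_0$.

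Combining the two steps gives $\sup_{B^+_r} |\delta_h^k v| \leq C \|v\|_{C(\bar B^+_{r_0})}$ uniformly in small $h$. Finally, since $v \in C^2(B^+_{r_0})$, we have $\delta_h^k v(y) \to v_{y_k}(y)$ pointwise on $B^+_r$ as $h \to 0$, so passing to the limit yields $\sup_{B^+_r} |v_{y_k}| \leq C \|v\|_{C(\bar B^+_{r_0})}$; continuity of $v_{y_k}$ up to $\bar B^+_r$ (from \eqref{eq:Condition_estimate_v_x}, $Dv \in C(\underline B^+_{r_0})$, together with the already-established continuity on $\partial_1$ from $v \in C^2 \cap C(\bar B^+_{r_0})$ and interior regularity) then upgrades this to the claimed $\|v_{y_k}\|_{C(\bar B^+_r)} \leq C \|v\|_{C(\bar B^+_{r_0})}$. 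The main obstacle I anticipate is the barrier construction on the half-ball near $\partial_0$: one must verify that a suitable supersolution for $\widetilde A_0$ exists which simultaneously controls finite differences of $v$ on the curved boundary and degenerates appropriately along $\partial_0 B^+_\rho$ so that the maximum principle argument (which itself tolerates the degeneracy because $\tilde b^d > 0$) applies without any boundary data on $\partial_0$. Handling the interaction of the barrier with the degenerate face — rather than the finite-difference bookkeeping, which is routine — is where the care is needed, and it is exactly the point where Hypothesis~\ref{hyp:ConstantCoefficients}, specifically \eqref{eq:bd}, enters decisively.
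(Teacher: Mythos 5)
Your proposal diverges from the paper's at a point that turns out to be essential, and the resulting argument has a genuine gap. The paper does use a finite-difference comparison argument in the spirit of Brandt, but Brandt's device is not the direct application of the maximum principle to the difference quotient $\delta_h^k v$ on a half-ball with the step $h$ held fixed and sent to $0$ at the end. The heart of the method, and what the paper implements, is a lift to $(d+1)$ dimensions: one introduces an auxiliary coordinate $y_{d+1}$, sets
\[
\phi(y, y_{d+1}) := \frac{1}{2}\left(v(y + y_{d+1}e_k) - v(y - y_{d+1}e_k)\right)
\]
on the $(d+1)$-dimensional half-cylinder $\cC := \{(y,y_{d+1}) : y\in B^+_{r_2},\ 0<y_{d+1}<r_3\}$, and observes that $\phi$ solves $\widetilde A^1_0\phi = 0$ for the extended operator $\widetilde A^1_0 := \widetilde A_0 - c_0 y_d \partial^2_{y_k} + c_0 y_d \partial^2_{y_{d+1}}$, because the identity $\phi_{y_{d+1}y_{d+1}} = \phi_{y_k y_k}$ makes the added terms cancel while preserving degenerate ellipticity. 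The payoff is that $\phi$ vanishes identically on the face $\{y_{d+1}=0\}$, so one can build an explicit polynomial barrier $\psi(y,y_{d+1})$, centered at the target point $y^0$, that vanishes linearly in $y_{d+1}$ at $(y^0,0)$, dominates $|\phi|$ on all of $\partial_1\cC$, and satisfies $\widetilde A^1_0\psi\ge 0$. The weak maximum principle (which needs no data on $\partial_0\cC$ precisely because $\tilde b^d = b_0>0$) then gives $|\phi|\le\psi$ on $\cC$; dividing by $y_{d+1}$ and letting $y_{d+1}\to 0$ yields the bound on $|v_{y_k}(y^0)|$.

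By dropping the lift and working with $\delta_h^k v$ directly, you paint yourself into a corner at your step 3. You want a barrier $W$ on the annulus $B^+_{r_0}\setminus\bar B^+_\rho$ bounding $\sup_{\partial_1 B^+_\rho}|\delta_h^k v|$ by $C\|v\|_{C(\bar B^+_{r_0})}$ uniformly in $h$. But $\partial_1 B^+_\rho$ lies in the \emph{interior} of $B^+_{r_0}$: the would-be barrier, as a function of $y$ alone, must be of size $\|v\|_{C(\bar B^+_{r_0})}$ on the outer boundary (to dominate the raw increment, which is all you control there), yet of size $O(h\|v\|_{C(\bar B^+_{r_0})})$ at a point $y^1\in\partial_1 B^+_\rho$ a fixed distance away, in order to produce a derivative bound after dividing by $h$. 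No $h$-independent supersolution does this, and an $h$-dependent one does not emerge from your construction, since your barrier is built from the geometry of the annulus alone. You sense this difficulty yourself (``\ldots is not quite right'') and attempt a patch by comparing the undivided increment and ``letting the geometry be chosen so the barrier's gradient in the $e_k$ direction is bounded,'' but the barrier's $e_k$-gradient does not enter the comparison inequality, and nothing forces $W$ to be of order $h$ anywhere. The $(d+1)$-dimensional lift is exactly the mechanism that resolves this tension: the step becomes a spatial coordinate, $\phi$ vanishes automatically at step $0$, and the barrier is free to vanish linearly in the step direction while still being comparable to $\|v\|_{C(\bar B^+_{r_0})}$ on the lateral and top boundary pieces. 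Relatedly, you flag ``handling the interaction of the barrier with the degenerate face'' as the main obstacle, but in the paper's proof that turns out to be the easy part --- $\psi$ is a polynomial, the verification $\widetilde A^1_0\psi\ge 0$ is an elementary computation, and the weak maximum principle discounts $\partial_0\cC$ automatically once $b^d=b_0>0$. The real crux is the dimensional lift you have omitted.
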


\begin{proof}
We adapt the finite-difference argument employed by Brandt in \cite{Brandt_1969a} to prove the local estimates for derivatives, $v_{y_k}$, when $k\neq d$. We let $r_2:=(r+{r_0})/2$ and $r_3:=\min\{({r_0}-r)/2, \ 1/2\}$, and consider the $(d+1)$-dimensional cylinder,
$$
\cC := \left\{(y,y_{d+1}) \in \HH\times\RR_+: y \in B^+_{r_2},\ 0< y_{d+1} < r_3\right\}.
$$
We consider the auxiliary function,
\begin{equation*}
\phi(y,y_{d+1}) := \frac{1}{2}\left(v(y+y_{d+1}e_{k}) - v(y-y_{d+1}e_{k})\right), \quad\forall\, (y,y_{d+1}) \in \cC,
\end{equation*}
where $\cC$ is defined above, and $e_k \in \RR^{d}$ is the vector whose coordinates are all zero except for the $k$-th coordinate, which is $1$.
We choose a constant $c_0>0$ small enough, say $c_0=\lambda_0/2$, such that the differential operator,
\[
\widetilde A^1_0 := \widetilde A_0 - c_0 y_d \frac{\partial^2}{\partial y_k^2}
+ c_0y_d\frac{\partial^2}{\partial y_{d+1}^2},
\]
is elliptic on $\HH\times\RR_+$. By the definition of the function $\phi$, we notice that
\begin{equation*}
\widetilde A^1_0 \phi = 0 \quad \hbox{on } \cC,
\end{equation*}
because $\widetilde A_0 v = 0$ on $B^+_{r_0}$. For $y^0 \in \bar B^+_{r}$, we consider the auxiliary function defined on $\cC$,
\begin{equation*}
\psi := C_1 \|v\|_{C(\bar B^+_{r_0})} \left[y_{d+1}(1-y_{d+1}) + C_2 \left(\sum_{i=1}^{d-1}(y_i-y^0_i)^2 + y_d^2(y_d-y^0_d) + y_{d+1}^2 \right)\right],
\end{equation*}
where the positive constants $C_1$, $C_2$ will be suitably chosen below. We want to choose $C_2$ sufficiently small that
\[
\widetilde A^1_0 \psi \geq 0 \quad \hbox{on } \cC.
\]
By direct calculation, we obtain
\begin{align*}
\psi_{y_i} &= 2 C_1 C_2 \|v\|_{C(\bar B^+_{r_0})}(y_i-y_i^0),\quad i=1,2,\ldots,d-1,\\
\psi_{y_iy_i} &= 2 C_1 C_2 \|v\|_{C(\bar B^+_{r_0})},\quad i=1,2,\ldots,d-1,\\
\psi_{y_{d+1}y_{d+1}} &= 2 C_1 \left(C_2-1\right) \|v\|_{C(\bar B^+_{r_0})},\\
\psi_{y_d} &= 2 x_d C_1 C_2 \|v\|_{C(\bar B^+_{r_0})} \left(\frac{3}{2}y_d-y_d^0\right),\\
\psi_{y_d y_d} &= 2C_1 C_2 \|v\|_{C(\bar B^+_{r_0})} \left(3y_d-y_d^0\right),
\end{align*}
and so,
\begin{align*}
\widetilde A^1_0 \psi
&= -y_d  \tilde a^{ij} v_{y_iy_j} - \tilde b^i v_{y_i} - c_0 y_d \left(\psi_{y_{d+1}y_{d+1}} - \psi_{y_ky_k}\right)\\
&= -2y_dC_1 \|v\|_{C(\bar B^+_{r_0})}
\left[C_2\left( \left(\sum_{i=1}^{d-1}\tilde a^{ii} \right)+ \tilde a^{dd}\left(3y_d-y_d^0\right) -c_0 +\tilde b^d\left(\frac{3}{2} y_d-y_d^0\right) \right) + c_0(C_2-1)\right]\\
&\geq -2y_d C_1 \|v\|_{C(\bar B^+_{r_0})}
\left[C_2\left(\left(\sum_{i=1}^{d-1}\tilde a^{ii}\right) + 3r\tilde a^{dd}+ 2r \tilde b^d\right) - c_0\right]
\quad\hbox{on }\cC,
\end{align*}
using the facts that the $\tilde a^{ii}$, for $i=1,\ldots,d$, and $\tilde b^d$ are positive constants, while $\tilde b^i=0$, $i\neq d$, by the transformation \eqref{eq:Transformation_x_plus_alpha_y}, and $y_d <r$. We choose the constant $C_2=C_2(b_0,d,\lambda_0,\Lambda,r_0)$ such that
$$
C_2 \leq c_0\left(\sum_{i=1}^{d-1}\tilde a^{ii} + 3r\tilde a^{dd}+ 2r \tilde b^d\right)^{-1},
$$
so that we have
\begin{align*}
\widetilde A^1_0 \psi \geq 0 \quad \hbox{on } \cC.
\end{align*}
Because $\widetilde A^1_0 \phi = 0$ on $\cC$, the preceding inequality yields
\[
\widetilde A^1_0 \left(\pm \phi - \psi\right) \leq 0 \quad \hbox{on } \cC.
\]
By the definition of the auxiliary function, $\psi$, and using the fact that $y^0\in \bar B^+_{r}$ and $0<y_{d+1}<1/2$, we may choose a positive constant, $C_1=C_1(C_2, r_0, r)$, large enough that
\begin{equation}
\label{eq:Comparison_boundary}
\pm \phi - \psi \leq 0 \quad \hbox{on }\partial_1 \cC.
\end{equation}
The portion $\partial_1 \cC$ of the boundary of $\cC$ consists of the sets
\begin{align*}
\{y_{d+1} =0, \ y \in B^+_{r_2}\}, \quad \{y_{d+1} = r_3,\ y \in B^+_{r_2}\}, \quad
\hbox{and}\quad  \{y_{d+1} \in (0, r_3), \ y \in \partial_1 B^+_{r_2}\}.
\end{align*}
To establish inequality \eqref{eq:Comparison_boundary} along the portion $\{y_{d+1}=0\}$ of the boundary, $\partial_1 \cC$, note that $\phi=0$, and so \eqref{eq:Comparison_boundary} holds on this portion of the boundary since $\psi \geq 0$. For the second portion of the boundary, $\partial_1 \cC$, using the fact that $r_3 \leq 1/2$, we notice that
\begin{align*}
y_{d+1}(1-y_{d+1}) + C_2 \left(\sum_{i=1}^{d-1}(y_i-y^0_i)^2 + y_d^2(y_d-y^0_d) + y_{d+1}^2 \right)
\geq \frac{r_3}{2} \quad \hbox{on }  \{y_{d+1} = r_3,\ y \in B^+_{r_2}\}.
\end{align*}
For the third portion of the boundary, using the fact that $y^0 \in B^+_{r}$ and $y \in B^+_{r_2}$ and $r<r_2$, we see that on $\{y_{d+1} \in (0, r_3), \ y \in \partial_1 B^+_{r_2}\}$ we have
\begin{align*}
y_{d+1}(1-y_{d+1}) + C_2 \left(\sum_{i=1}^{d-1}(y_i-y^0_i)^2 + y_d^2(y_d-y^0_d) + y_{d+1}^2 \right) \geq C_2 (d-1) (r_2-r)^2.
\end{align*}
Therefore, we can find a constant $C_3=C_3(C_2,r_0,r)$ such that
$$
\psi \geq C_1 C_3 \|v\|_{C(B^+_{r})}\quad \hbox{on }  \{y_{d+1} = r_3,\ y \in B^+_{r_2}\} \cup \{y_{d+1} \in (0, r_3), \ y \in \partial_1 B^+_{r_2}\}
$$
We may choose the constant $C_1=C_1(C_3, r_0, r)$ large enough so that $C_1C_3 \geq 1$, and using the definition of $\phi$, we have
$$
\psi \geq |\phi|\quad \hbox{on }  \{y_{d+1} = r_3,\ y \in B^+_{r_2}\} \cup \{y_{d+1} \in (0, r_3), \ y \in \partial_1 B^+_{r_2}\}.
$$
Now, inequality \eqref{eq:Comparison_boundary} follows. By \eqref{eq:Condition_estimate_v_x} we have $\phi \in C(\bar \cC)$, and $D \phi$, $y_d D^2 \phi \in C(\cC \cup \partial_0 \cC)$, and $y_d D^2 \phi =0$ on $\partial_0 \cC$, where $\partial_0 \cC$ is the interior of $\{y_d=0\} \cap \bar\cC$. Since $\psi \in C^{\infty}(\bar\cC)$, we may apply the comparison principle \cite[Theorem 5.1]{Feehan_maximumprinciple} to $\phi$ and $\psi$ on the open subset, $\cC$. We find that $\pm \phi - \psi \leq 0$ on $\cC$, and so by the definition of the function $\phi$, we have, for all $y^0 \in B^+_{r}$ and $y_{d+1} \in (0,r_3)$,
\[
\frac{1}{2y_{d+1}} |v(y^0+y_{d+1} e_k)-v(y_0-y_{d+1} e_k)| \leq C_1 \|v\|_{C(\bar B^+_{r_0})} \left(1-y_{d+1} + C_2 y_{d+1}\right).
\]
The preceding inequality yields
\[
|v_{y_k}(y^0)| \leq C_1 \|v\|_{C(\bar B^+_{r_0})},\quad\forall\, y^0 \in \bar B^+_{r},
\]
for a constant $C_1=C_1(b_0, d,\lambda_0,\Lambda, r_0, r)$, and this concludes the proof.
\end{proof}

\begin{lem}[Local estimates for higher-order derivatives of $v$ parallel to $\partial\HH$]
\label{lem:Estimate_v_k_x}
Assume that $A$ in \eqref{eq:defnA} obeys Hypothesis \ref{hyp:ConstantCoefficients}.
Let $k \in \NN$ and $0<r<{r_0}$. Then there is a constant, $C=C(b_0, d, k, \lambda_0, \Lambda, r_0, r)$, such that for any $v \in C^{\infty}(\bar B^+_{r_0})$ obeying
$$
\widetilde A_0 v = 0 \quad\hbox{on}\quad  B^+_{r_0},
$$
we have
\begin{equation}
\label{eq:Estimate_v_k_x}
\|D^{\beta} v\|_{C(\bar B^+_{r})} \leq C \|v\|_{C(\bar B^+_{r_0})},
\end{equation}
for all multi-indices $\beta=(\beta_1, \ldots, \beta_{d-1},0) \in \NN^d$ such that $|\beta| \leq k$.
\end{lem}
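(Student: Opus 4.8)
The plan is to deduce Lemma~\ref{lem:Estimate_v_k_x} from Lemma~\ref{lem:Estimate_v_x} by induction on $|\beta|$, exploiting the fact that $\widetilde A_0$ has coefficients which are constant in the $y'=(y_1,\ldots,y_{d-1})$ directions and whose only non-constant factor, $y_d$, is unaffected by differentiation in those directions. The first observation is the commutation identity: for any $l\neq d$, differentiating $\widetilde A_0 v = -y_d\tilde a^{ij}v_{y_iy_j}-\tilde b^d v_{y_d}$ (see \eqref{eq:defnWidetildeA_0}, recalling $\tilde b^i=0$ for $i\neq d$) in $y_l$ gives $\partial_{y_l}(\widetilde A_0 v)=\widetilde A_0(\partial_{y_l}v)$, since $\partial_{y_l}$ passes through multiplication by $y_d$ and through the constant coefficients $\tilde a^{ij},\tilde b^d$. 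Consequently, if $v\in C^\infty(\bar B^+_{r_0})$ solves $\widetilde A_0 v=0$ on $B^+_{r_0}$, then for every multi-index $\gamma$ with $\gamma_d=0$ the function $w:=D^\gamma v$ again solves $\widetilde A_0 w=0$ on $B^+_{r_0}$, and since $v$ is smooth up to $\bar B^+_{r_0}$ the function $w$ satisfies the hypotheses \eqref{eq:Condition_estimate_v_x} of Lemma~\ref{lem:Estimate_v_x} on any concentric half-ball: $Dw$ and $y_dD^2w$ are continuous (indeed bounded) on $\bar B^+_{r_0}$, and $y_dD^2w=0$ on $\partial_0 B^+_{r_0}$ because $y_d\to 0$ while $D^2w$ stays bounded.

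Fix $k$ and set $\rho_j:=r+\tfrac{j}{k}(r_0-r)$ for $j=0,1,\ldots,k$, so $\rho_0=r$, $\rho_k=r_0$, and consecutive radii differ by the fixed amount $(r_0-r)/k$. I will prove by induction on $m\in\{0,1,\ldots,k\}$ the claim: for every multi-index $\beta$ with $\beta_d=0$ and $|\beta|=m$,
\[
\|D^\beta v\|_{C(\bar B^+_{\rho_{k-m}})}\le C_m\|v\|_{C(\bar B^+_{r_0})},
\]
with $C_m=C_m(b_0,d,k,\lambda_0,\Lambda,r_0,r)$. The base case $m=0$ is trivial with $C_0=1$. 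For the inductive step, write $\beta=\gamma+e_j$ with $j\neq d$ and $|\gamma|=m-1$, set $w:=D^\gamma v$, and apply Lemma~\ref{lem:Estimate_v_x} to $w$ on the pair of half-balls $B^+_{\rho_{k-m}}\subset B^+_{\rho_{k-m+1}}$ (radii separated by the fixed amount $(r_0-r)/k$), using the commutation identity and the boundary regularity of $w$ noted above, to obtain $\|D^\beta v\|_{C(\bar B^+_{\rho_{k-m}})}\le C\|D^\gamma v\|_{C(\bar B^+_{\rho_{k-m+1}})}$; then apply the inductive hypothesis (for order $m-1$, on the half-ball $B^+_{\rho_{k-(m-1)}}=B^+_{\rho_{k-m+1}}$) to bound the right-hand side by $C\,C_{m-1}\|v\|_{C(\bar B^+_{r_0})}$, so $C_m:=C\,C_{m-1}$ works. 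Because only finitely many radii $\rho_j$ occur and each is determined by $b_0,d,k,\lambda_0,\Lambda,r_0,r$, the constants from Lemma~\ref{lem:Estimate_v_x} used here can be absorbed into a single constant depending only on those quantities.

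Taking $m=k$ yields \eqref{eq:Estimate_v_k_x} for $|\beta|=k$; for $|\beta|=m<k$ the claim gives the bound on $B^+_{\rho_{k-m}}\supseteq B^+_r$, and restricting the supremum to the smaller half-ball $B^+_r$ only decreases it, so the estimate holds for all $|\beta|\le k$ after enlarging the constant to $\max_{0\le m\le k}C_m$. I expect no serious obstacle here: the argument is essentially a bookkeeping exercise, the only points requiring care being the commutation identity $\widetilde A_0\,D^\gamma=D^\gamma\,\widetilde A_0$ for tangential $\gamma$ (which is precisely where it matters that the coefficients $\tilde b^i$ with $i\neq d$ were arranged to vanish by the transformation \eqref{eq:Transformation_x_plus_alpha_y} and that the remaining coefficients, apart from the factor $y_d$, are constant) and the verification that the iterated constants stay under control, which is guaranteed by the choice of uniformly spaced intermediate radii $\rho_j$.
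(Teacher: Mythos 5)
Your proposal is correct and follows essentially the same route as the paper's proof: both proceed by induction on the order of the multi-index, peel off one tangential derivative at a time by applying Lemma~\ref{lem:Estimate_v_x} to $D^{\gamma}v$ (which still solves $\widetilde A_0 w=0$ by the commutation of $\widetilde A_0$ with tangential derivatives), and thread a chain of intermediate radii to keep constants under control. The only difference is cosmetic bookkeeping: the paper phrases the induction hypothesis uniformly in the radius pair and uses a single midpoint $r_2=(r+r_0)/2$ per step, whereas you fix a uniform chain $\rho_0,\ldots,\rho_k$ in advance; your version is a bit more explicit about why the constants close up, and you also spell out the verification of hypothesis~\eqref{eq:Condition_estimate_v_x}, which the paper leaves implicit.
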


\begin{proof}
Lemma \ref{lem:Estimate_v_x} establishes the result when $|\beta|=1$. We prove the higher-order derivative estimates parallel to $\partial\HH$ by induction. We assume the induction hypothesis: For any $0<r<{r_0}$, there is a constant,
$C_1=C_1(b_0, d, k-1, \lambda_0, \Lambda, r_0, r)$, such that
$$
\|D^{\beta'} v\|_{C(\bar B^+_{r})} \leq C_1 \|v\|_{C(\bar B^+_{r_0})},
$$
for all multi-indices $\beta'=(\beta'_1, \ldots, \beta'_{d-1},0) \in \NN^d$ such that $|\beta'| \leq k-1$.
Since $\widetilde A_0 v = 0$ on $B^+_{r_0}$, we also have that $\widetilde A_0 D^{\beta} v = 0$ on $B^+_{r_0}$, for all multi-indices $\beta$ with $\beta_d=0$. We fix such a multi-index $\beta$. Let $k\in\NN$ be such that $\beta_k\neq 0$, and set $\beta':=\beta-e_k$. We set $r_2:=(r+{r_0})/2$ and apply Lemma \ref{lem:Estimate_v_x} to $D^{\beta'} v$ with $0<r<r_2$ to obtain
\[
\|D^{\beta} v\|_{C(\bar B^+_{r})} \leq C_2 \|D^{\beta'} v\|_{C(\bar B^+_{r_2})},
\]
for some positive constant $C_2=C_2(b_0, d, \lambda_0,\Lambda, r,r_2)$. The conclusion now follows from the preceding estimate and the induction hypothesis applied to $D^{\beta'} v$ with $0<r_2<{r_0}$, since $|\beta'| \leq k-1$.
\end{proof}

From \eqref{eq:Transformation_x_plus_alpha_y}, we have
\begin{equation*}
D^{\beta} u(x) = D^{\beta} v(y),\quad y = x + \xi x_d, \quad x\in\HH,
\end{equation*}
for all $\beta \in \NN^d$ such that $\beta_d=0$. Therefore, Lemmas \ref{lem:Estimate_v_x} and \ref{lem:Estimate_v_k_x} give us the following estimates for $D^{\beta} u$.

\begin{lem}[Local estimates of higher-order derivatives of $u$ parallel to $\partial\HH$]
\label{lem:Estimate_u_k_x}
Assume that $A$ in \eqref{eq:defnA} obeys Hypothesis \ref{hyp:ConstantCoefficients}.
Let $k\in\NN$ and ${r_0}>0$. Then there are positive constants,
$r_1=r_1(b_0,\Lambda, r_0)<r_0$ and $C=C(b_0,d,k,\lambda_0,\Lambda, r_0)$, such that for any function $u\in C^{\infty}(\bar B^+_{r_0})$ solving
\begin{equation}
\label{eq:A_0_of_u_equal_0}
 A_0 u =0 \quad \hbox{on } B^+_{r_0},
\end{equation}
we have, for all $\beta \in \NN^d$ with $\beta_d=0$ and $|\beta| \leq k$,
\begin{equation*}
\|D^{\beta} u\|_{C(\bar B^+_{r_1})} \leq C \|u\|_{C(\bar B^+_{r_0})}.
\end{equation*}
\end{lem}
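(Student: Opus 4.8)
The plan is to deduce this lemma from Lemma~\ref{lem:Estimate_v_k_x} via the change of variables \eqref{eq:Transformation_x_plus_alpha_y}, the only new ingredient being control of how the shear $\varphi$ distorts half-balls centered at the origin.

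First I would take $u\in C^\infty(\bar B^+_{r_0})$ with $A_0u=0$ on $B^+_{r_0}$ and set $v:=u\circ\varphi^{-1}$, where $\varphi(x)=x+\xi x_d$ with $\xi$ as in \eqref{eq:Definition_xi} and $\varphi^{-1}(y)=y-\xi y_d$. By \eqref{eq:defnWidetildeA_0}--\eqref{eq:defnCoeffWidetildeA_0} we then have $\widetilde A_0v=0$ on $\varphi(B^+_{r_0})$, and $v$ is $C^\infty$ up to the boundary of $\varphi(B^+_{r_0})$ since $\varphi$ is a smooth diffeomorphism of $\bar\HH$. Both $\varphi$ and $\varphi^{-1}$ are linear, fix the origin, preserve the last coordinate, and have operator norm at most $1+|\xi|$; and $|\xi|$ is bounded in terms of $b_0$ and $\Lambda$ (with $d$ fixed) by \eqref{eq:Definition_xi} and Hypothesis~\ref{hyp:ConstantCoefficients}. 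Setting $\rho:=r_0/(1+|\xi|)$, the estimate $|\varphi^{-1}(y)|\leq (1+|\xi|)|y|$ gives $B^+_\rho\subseteq\varphi(B^+_{r_0})$, so $v\in C^\infty(\bar B^+_\rho)$ and $\widetilde A_0v=0$ on $B^+_\rho$.

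Next I would apply Lemma~\ref{lem:Estimate_v_k_x} to $v$ on $B^+_\rho$ with inner radius $\rho/2$: for every multi-index $\beta=(\beta_1,\dots,\beta_{d-1},0)$ with $|\beta|\leq k$,
\[
\|D^\beta v\|_{C(\bar B^+_{\rho/2})}\leq C\,\|v\|_{C(\bar B^+_\rho)}\leq C\,\|u\|_{C(\bar B^+_{r_0})},
\]
the last inequality using $\bar B^+_\rho\subseteq\varphi(\bar B^+_{r_0})$ and $v=u\circ\varphi^{-1}$. Finally I would put $r_1:=r_0/\bigl(2(1+|\xi|)^2\bigr)$, so that $\varphi(\bar B^+_{r_1})\subseteq\bar B^+_{(1+|\xi|)r_1}=\bar B^+_{\rho/2}$; then, since $D^\beta u(x)=D^\beta v(\varphi(x))$ for all $\beta$ with $\beta_d=0$,
\[
\|D^\beta u\|_{C(\bar B^+_{r_1})}\leq\|D^\beta v\|_{C(\bar B^+_{\rho/2})}\leq C\,\|u\|_{C(\bar B^+_{r_0})},
\]
with $r_1=r_1(b_0,\Lambda,r_0)<r_0$ and $C=C(b_0,d,k,\lambda_0,\Lambda,r_0)$, tracing the dependence of $\rho$ on $b_0,\Lambda,r_0$ and of the constant in Lemma~\ref{lem:Estimate_v_k_x} on its data.

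I do not expect any essential difficulty here; the one point needing care is the geometric bookkeeping --- namely that the shear $\varphi$, although it does not preserve half-balls, does map a half-ball centered at the origin into a slightly larger concentric half-ball and contains a slightly smaller concentric one, with inflation factor $1+|\xi|$ controlled by $b_0$ and $\Lambda$ --- together with the routine observation that tangential derivatives ($\beta_d=0$) transform with no Jacobian factor because $\varphi$ shears only in the $x_d$-direction.
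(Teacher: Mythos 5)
Your proposal is correct and takes essentially the same approach as the paper: pass to $v=u\circ\varphi^{-1}$ via the shear $\varphi$ from \eqref{eq:Transformation_x_plus_alpha_y}/\eqref{eq:Definition_xi}, apply Lemma~\ref{lem:Estimate_v_k_x} to $v$ on a concentric half-ball inside $\varphi(B^+_{r_0})$, then pull back using $D^\beta u(x)=D^\beta v(\varphi(x))$ for $\beta_d=0$. The only difference is cosmetic --- the paper chooses radii $s_0,s_1,r_1$ ``small enough'' without writing them out, whereas you make the inflation factor $1+|\xi|$ and the resulting radii explicit.
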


\begin{proof}
Let $\varphi:\HH\rightarrow\HH$ be the affine transformation defined by $y=\varphi(x):=x+\xi x_d$, 
for $x\in\HH$, where $\xi\in\RR^{d}$ is defined by  \eqref{eq:Definition_xi}. Let $s_0=s_0(b_0,\Lambda,r_0)>0$ be small enough such that $B^+_{s_0}\subset\varphi(B^+_{r_0})$. Then, $v \in C^{\infty}(\bar B^+_{s_0})$ and $\widetilde A_0 v=0$ on $B^+_{s_0}$, since $u \in C^{\infty}(\bar B^+_{r_0})$, and $A_0 u=0$ on $B^+_{r_0}$. Let $s_1={s_0}/2$ and apply Lemma \ref{lem:Estimate_v_k_x} to $v$ with $r$ replaced by $s_1$ and $r_0$ replaced by $s_0$. For any $k \in \NN$, there is a positive constant, $C=C(b_0, d,k,\lambda_0,\Lambda, r_0)$, such that for all $\beta \in \NN^d$ with $\beta_d=0$, we have
\begin{equation}
\label{eq:Estimate_derivative_beta_v}
\|D^{\beta}v\|_{C(\bar B^+_{s_1})} \leq C \|v\|_{C(\bar B^+_{s_0})}.
\end{equation}
We now choose $r_1=r_1(b_0,\Lambda, s_1)$ small enough such that $\varphi(B^+_{r_1})\subset B^+_{s_1}$. Using the fact that $D^{\beta} u(x) = D^{\beta} v(\varphi(x))$, we obtain
\begin{align*}
\|D^{\beta} u\|_{C(\bar B^+_{r_1})}
&\leq \|D^{\beta} v\|_{C(\bar B^+_{s_1})}\quad\hbox{(by the facts that $\varphi(B^+_{r_1})\subset B^+_{s_1}$ and $u(x)=v(\varphi(x))$)}
\\
&\leq C \|v\|_{C(\bar B^+_{s_0})}\quad \hbox{(by \eqref{eq:Estimate_derivative_beta_v})}
\\
&\leq C \|u\|_{C(\bar B^+_{r_0})} \quad\hbox{(by the facts that $B^+_{s_0}\subset\varphi(B^+_{r_0})$ and $u(x)=v(\varphi(x))$)}.
\end{align*}
This concludes the proof.
\end{proof}

\subsection{Interior local estimates for derivatives in the direction orthogonal to the degenerate boundary}
\label{subsec:Derivative_estimates_normal}
We again shall use the affine transformation \eqref{eq:Transformation_x_plus_alpha_y} of coordinates, but now with a different choice of the vector $\xi$, that is
\begin{equation}
\label{eq:Definition_xi_1}
\xi^i := -a^{id}/ a^{dd},\quad\forall\, i\neq d,\quad \xi_d=0,
\end{equation}
and, given a function $u$ on $\HH$, we define the function $w$ by
\begin{equation}
\label{eq:Definition_w}
u(x) =: w(y), \quad y=x+\xi x_d, \quad x\in\HH.
\end{equation}
Then, by analogy with \eqref{eq:defnWidetildeA_0}, we obtain
\begin{equation*}
\bar A_0 w := y_d \bar a^{ij} w_{y_iy_j} + \bar b^i w_{y_i} \quad\hbox{on }\HH,
\end{equation*}
where we notice that $\bar a^{id} =0$ by the choice of the vector $\xi$. Also, we have that $D^{\beta} u(x) = D^{\beta} w(y)$, for all $\beta\in\NN^d$ with $\beta_d=0$. Thus, Lemma \ref{lem:Estimate_u_k_x} applies to $w$, and we obtain a priori local estimates for all derivatives of $w$ parallel to $\partial\HH$.

Next, we derive an a priori local estimate for $w_{y_d}$.

\begin{lem}[Local estimate for $w_{y_d}$]
\label{lem:Estimate_w_y}
Assume that $A$ in \eqref{eq:defnA} obeys Hypothesis \ref{hyp:ConstantCoefficients}.
Let $0<r<{r_0}$. Then there is a positive constant, $C=C(b_0,d,\lambda_0,\Lambda,r_0,r)$, such that for any function $w \in C^{\infty}(\bar B^+_{r_0})$ obeying
\begin{equation}
\label{eq:A_0_w}
\bar A_0 w =0 \quad \hbox{on } B^+_{r_0},
\end{equation}
we have
\begin{equation*}
\|w_{y_d}\|_{C(\bar B^+_{r})} \leq C \|w\|_{C(\bar B^+_{r_0})}.
\end{equation*}
\end{lem}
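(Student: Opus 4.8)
The plan is to read the equation $\bar A_0 w = 0$ as a first-order linear ODE for $W := w_{y_d}$ in the variable $y_d$, with $y' := (y_1,\dots,y_{d-1})$ playing the role of parameters. Because $\bar a^{id}=0$ for $i\neq d$ (and $\bar a^{dd}=a^{dd}$, $\bar b^d=b^d=b_0$), the identity $\bar A_0 w=0$ on $B^+_{r_0}$ becomes
\begin{equation*}
y_d\bar a^{dd}W_{y_d}+\bar b^d W = g\quad\text{on }B^+_{r_0},\qquad
g := -y_d\!\!\sum_{i,j\neq d}\!\!\bar a^{ij}w_{y_iy_j}-\sum_{i\neq d}\bar b^i w_{y_i},
\end{equation*}
where $\bar a^{dd}\geq\lambda_0>0$ and $\bar b^d=b_0>0$ by Hypothesis~\ref{hyp:ConstantCoefficients}, and $g$ involves only derivatives of $w$ parallel to $\partial\HH$. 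Setting $\gamma:=\bar b^d/\bar a^{dd}>0$, multiplying by $y_d^{\gamma-1}/\bar a^{dd}$ and integrating in $y_d$ from $\varepsilon$ to $y_d$ yields $y_d^\gamma W(y',y_d)-\varepsilon^\gamma W(y',\varepsilon)=(\bar a^{dd})^{-1}\int_\varepsilon^{y_d}t^{\gamma-1}g(y',t)\,dt$; letting $\varepsilon\downarrow 0$ and using that $\gamma>0$ while $W=w_{y_d}\in C(\bar B^+_{r_0})$ is bounded, the boundary term drops out and
\begin{equation*}
|W(y',y_d)|\leq (\bar b^d)^{-1}\sup_{0\leq t\leq y_d}|g(y',t)|,\qquad (y',y_d)\in B^+_{r_0}.
\end{equation*}
Since every $y^*\in\bar B^+_r$ is joined to $\partial\HH$ by the vertical segment $\{(y^{*\prime},t):0\leq t\leq y^*_d\}$, which lies in $\bar B^+_r\subset B^+_{r_0}$, it now suffices to bound the first- and second-order tangential derivatives of $w$, hence $g$, on $\bar B^+_r$ by $C\|w\|_{C(\bar B^+_{r_0})}$.

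To do this I would invoke Lemma~\ref{lem:Estimate_u_k_x}, noting that $-\bar A_0$ is again an operator of the form~\eqref{eq:defnA_0} satisfying Hypothesis~\ref{hyp:ConstantCoefficients} (with ellipticity constant and $\Lambda$ controlled by $\lambda_0,\Lambda,b_0$); applied to $w$, it bounds $\|D^\beta w\|_{C(\bar B^+_{r_1})}$ by $C\|w\|_{C(\bar B^+_{r_0})}$ for all $\beta$ with $\beta_d=0$ and $|\beta|\leq 2$ — but only on a small half-ball of radius $r_1=r_1(b_0,\Lambda,r_0)$. To promote this to all of $\bar B^+_r$ I would use a covering argument. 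Since $\bar A_0$ has constant coefficients it commutes with tangential translations $y\mapsto y+(c',0)$, so applying Lemma~\ref{lem:Estimate_u_k_x} (with its outer radius taken to be $\rho:=(r_0-r)/2$, and its constants evaluated at $\rho$ in place of $r_0$) on each half-ball $B^+_{\rho}((c',0))\subset B^+_{r_0}$ with $|c'|\leq r$ gives the tangential bound on the one-sided neighborhood $\bar B^+_r\cap\{y_d<\delta\}$ of $\overline{\partial_0 B^+_r}$, with $\delta=\delta(b_0,\Lambda,r_0,r)>0$. On the complementary region $\bar B^+_r\cap\{y_d\geq\delta/2\}$, which is compactly contained in $\{y_d>0\}\cap B_{r_0}$, the operator $\bar A_0$ is uniformly elliptic, so the classical interior derivative estimates for uniformly elliptic operators \cite[\S3.4 and Corollary~6.3]{GilbargTrudinger} give the same bound there. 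Patching the two pieces yields $\|D^\beta w\|_{C(\bar B^+_r)}\leq C\|w\|_{C(\bar B^+_{r_0})}$ for $\beta_d=0$, $|\beta|\leq 2$, hence $\|g\|_{C(\bar B^+_r)}\leq C\|w\|_{C(\bar B^+_{r_0})}$.

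Combining this with the pointwise ODE bound of the first paragraph gives $\|w_{y_d}\|_{C(\bar B^+_r)}\leq (\bar b^d)^{-1}\|g\|_{C(\bar B^+_r)}\leq C\|w\|_{C(\bar B^+_{r_0})}$ with $C=C(b_0,d,\lambda_0,\Lambda,r_0,r)$, which is the assertion of the lemma. The ODE step is essentially a one-line computation once the correct integrating factor $y_d^{\gamma}$ is identified and the vanishing of the boundary term at $y_d=0$ is justified by $w_{y_d}\in C(\bar B^+_{r_0})$ and $\gamma>0$; I expect the genuine bookkeeping to be in the covering step — keeping track of how the radius $r_1$ and the constant produced by Lemma~\ref{lem:Estimate_u_k_x} depend on (and scale with) the outer radius, so that the translated small half-balls really do cover a full one-sided neighborhood of $\overline{\partial_0 B^+_r}$ inside $\bar B^+_r$, and then matching this near-boundary estimate cleanly to the interior one.
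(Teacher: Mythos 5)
Your proof is correct and follows the same strategy as the paper's: rewrite $\bar A_0 w=0$ as a first-order linear ODE for $w_{y_d}$ in the $y_d$-variable (exploiting $\bar a^{id}=0$ for $i\neq d$), multiply by the integrating factor $y_d^{\theta}$ with $\theta=\bar b^d/\bar a^{dd}>0$, integrate up from $y_d=0$ (using $\theta>0$ and $w_{y_d}\in C(\bar B^+_{r_0})$ to kill the boundary term), and then bound the forcing term --- which involves only tangential derivatives of $w$ of order $\leq 2$ --- by $\|w\|_{C(\bar B^+_{r_0})}$. The one place where you are more careful than the paper is the last step. The paper simply cites Lemma~\ref{lem:Estimate_u_k_x} ``to estimate $D^\beta w$ on $B^+_r$, for all $0<r<r_0$,'' but that lemma as stated only produces a bound on $\bar B^+_{r_1}$ for a specific small radius $r_1=r_1(b_0,\Lambda,r_0)$ dictated by the affine change of variables in its proof, not for an arbitrary $r<r_0$. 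You correctly recognize this and supply the missing covering argument --- translated small half-balls $B^+_\rho((c',0))$ for the one-sided collar $\{y_d<\delta\}$, and classical interior elliptic estimates on the compactly contained region $\{y_d\geq\delta/2\}$ where $\bar A_0$ is uniformly elliptic --- to promote the tangential bound to all of $\bar B^+_r$ with constants depending on $(b_0,d,\lambda_0,\Lambda,r_0,r)$. This is the same kind of near-boundary/interior patching the authors use explicitly later (e.g.\ in the proof of Corollary~\ref{cor:Holder_estimate_any_r_any_gamma_A_0}), so your filling of this gap is in the spirit of the paper; it makes the proof of the lemma self-contained rather than relying on an imprecise citation.
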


\begin{proof}
Because $\bar a^{id}=0$, for all $i\neq d$, we can rewrite the equation $\bar A_0 w =0$ on $B^+_{r_0}$ as
\begin{equation*}
y_d w_{y_dy_d} + \theta w_{y_d} = f \quad\hbox{on } B^+_{r_0},
\end{equation*}
where, for simplicity, we denote $\theta:=\bar b^d/\bar a^{dd}>0$, and define $f$ by
\begin{equation*}
f := y_d \sum_{i,j=1}^{d-1}\frac{\bar a^{ij}}{\bar a^{dd}} w_{y_iy_j} + \sum_{i=1}^{d-1}\frac{\bar b^i}{\bar a^{dd}} w_{y_i}\quad\hbox{on } \bar B^+_{r_0}.
\end{equation*}
We can estimate $\|f\|_{C(\bar B^+_{r})}$ in terms of $\|w\|_{C(\bar B^+_{r_0})}$ by applying Lemma \ref{lem:Estimate_u_k_x} to control the supremum norms of $w_{y_i}$ and $w_{y_iy_j}$ on $\bar B^+_{r}$, for all $i,j\neq d$. The preceding ordinary differential equation can be rewritten as
$$
\left(y_d^{\theta} w_{y_d}\right)_{y_d} = y_d^{\theta-1} f\quad\hbox{on }  B^+_{r_0},
$$
and, integrating with respect to $y_d$, we obtain
\[
y_d^{\theta} w_{y_d}(y) = \int_0^{y_d} f(y',s) s^{\theta-1} \,ds,\quad y\in   B^+_{r_0},
\]
where we denote $y=(y',y_d)$, and use the facts that $\theta>0$ and $w_{y_d}\in C(\bar B^+_{r_0})$. Thus, we have
\[
|y_d^{\theta} w_{y_d}(y)| \leq \|f(y',\cdot)\|_{C([0,y_d])} \int_0^{y_d}  s^{\theta-1} \,ds = \frac{1}{\theta} y_d^{\theta} \|f(y',\cdot)\|_{C([0,y_d])}, \quad y\in   B^+_{r_0},
\]
from where it follows, by the definition of $f$, that
\[
|w_{y_d}(y)| \leq C \sum_{ \substack{\beta\in\NN^d \\ \beta_d=0;\ |\beta|\leq 2}}\|D^{\beta}w(y',\cdot)\|_{C([0,y_d])},\quad y\in   B^+_{r_0},
\]
for some constant $C=C(b_0,\lambda_0,\Lambda)$. Now applying Lemma \ref{lem:Estimate_u_k_x} to estimate $D^{\beta}w$ on $B^+_{r}$, for all $0<r<{r_0}$, and for all $\beta \in \NN^d$ with $\beta_d=0$ and $|\beta|\leq 2$, we obtain the supremum estimate for $w_{y_d}$ on $\bar B^+_{r}$ in terms of the supremum estimate of $w$ on $\bar B^+_{r_0}$.
\end{proof}

\begin{lem}[Local estimates for $D^{\beta} D_{y_d} w$ with $\beta_d=0$]
\label{lem:Estimate_D_k_x_D_y_w}
Assume that $A$ in \eqref{eq:defnA} obeys Hypothesis \ref{hyp:ConstantCoefficients}.
Let $k \in \NN$, and let $0<r<{r_0}$. Then there is a constant, $C=C(b_0,d,k,\lambda_0,\Lambda, r_0,r)$, such that for any function $w \in C^{\infty}(\bar B^+_{r_0})$ obeying \eqref{eq:A_0_w} we have
\begin{equation*}
\|D^{\beta} D_{y_d} w\|_{C(\bar B^+_{r})} \leq C \|w\|_{C(\bar B^+_{r_0})},
\end{equation*}
for all $\beta \in \NN^d$ with $\beta_d=0$ and $|\beta| \leq k$.
\end{lem}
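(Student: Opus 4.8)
The plan is to reduce the estimate to the first-order normal-derivative bound of Lemma \ref{lem:Estimate_w_y} by differentiating the equation \eqref{eq:A_0_w} in directions parallel to $\partial\HH$. The key observation is that, since $\bar A_0$ has constant coefficients and the weight $y_d$ does not depend on $y_1,\ldots,y_{d-1}$, the operator $\bar A_0$ commutes with $D^\beta$ for every multi-index $\beta\in\NN^d$ with $\beta_d=0$; indeed $\partial_{y_k}(y_d\,\bar a^{ij}w_{y_iy_j}+\bar b^i w_{y_i}) = y_d\,\bar a^{ij}(D_{y_k}w)_{y_iy_j}+\bar b^i(D_{y_k}w)_{y_i}$ for $k\neq d$. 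Consequently, if $w\in C^\infty(\bar B^+_{r_0})$ solves $\bar A_0 w=0$ on $B^+_{r_0}$, then $D^\beta w$ solves $\bar A_0(D^\beta w)=0$ on $B^+_{r_0}$ for each such $\beta$. Note that we never differentiate \eqref{eq:A_0_w} in the $y_d$ direction, so the degeneracy of $\bar A_0$ along $\partial_0 B^+_{r_0}$ plays no role here.

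Fix $0<r<r_0$, set $r_2:=(r+r_0)/2$, and fix $\beta\in\NN^d$ with $\beta_d=0$ and $|\beta|\leq k$. First I would apply Lemma \ref{lem:Estimate_w_y} to the function $D^\beta w$, which lies in $C^\infty(\bar B^+_{r_2})$ (by restriction, since $r_2<r_0$) and satisfies $\bar A_0(D^\beta w)=0$ on $B^+_{r_2}$ by the commutation remark above, using outer radius $r_2$ and inner radius $r$. Since $D_{y_d}D^\beta w = D^\beta D_{y_d}w$, this gives
\[
\|D^\beta D_{y_d}w\|_{C(\bar B^+_r)} \leq C\,\|D^\beta w\|_{C(\bar B^+_{r_2})},
\]
with $C=C(b_0,d,\lambda_0,\Lambda,r_0,r)$. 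Next I would bound the right-hand side by invoking Lemma \ref{lem:Estimate_u_k_x} applied to $w$ — legitimate because $\bar A_0 w=0$, as already noted in the paragraph preceding Lemma \ref{lem:Estimate_w_y} — on the pair of radii $r_2<r_0$ (used for an arbitrary intermediate radius exactly as in the proof of Lemma \ref{lem:Estimate_w_y}), obtaining $\|D^\beta w\|_{C(\bar B^+_{r_2})}\leq C\,\|w\|_{C(\bar B^+_{r_0})}$ for all $|\beta|\leq k$ with $\beta_d=0$, with $C=C(b_0,d,k,\lambda_0,\Lambda,r_0,r)$. Chaining the two inequalities and summing over the finitely many admissible $\beta$ yields the claimed bound with a constant of the stated form.

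The argument is essentially bookkeeping, so I do not anticipate a serious obstacle; the only points needing care are (i) the commutation identity $D^\beta\bar A_0=\bar A_0 D^\beta$ for $\beta_d=0$, which is immediate from constancy of the coefficients and $\partial_{y_k}y_d=0$ for $k\neq d$, and (ii) choosing the intermediate radius $r_2$ so that Lemmas \ref{lem:Estimate_w_y} and \ref{lem:Estimate_u_k_x} can both be applied with constants depending only on $b_0,d,k,\lambda_0,\Lambda,r_0,r$. If one prefers not to quote Lemma \ref{lem:Estimate_w_y} as a black box for $D^\beta w$, one can instead repeat its proof: write \eqref{eq:A_0_w} in the form $y_d w_{y_dy_d}+\theta w_{y_d}=f$ with $\theta=\bar b^d/\bar a^{dd}>0$ used there, differentiate by $D^\beta$ to get $y_d(D^\beta w)_{y_dy_d}+\theta(D^\beta w)_{y_d}=D^\beta f$, integrate the resulting ordinary differential equation in $y_d$ to express $(D^\beta w)_{y_d}$ as a weighted integral of $D^\beta f$, and control $\|D^\beta f\|_{C(\bar B^+_r)}$ by $\|w\|_{C(\bar B^+_{r_0})}$ via Lemma \ref{lem:Estimate_u_k_x} applied to $w$.
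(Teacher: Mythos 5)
Your proposal is correct and follows essentially the same route as the paper: commute $D^\beta$ (with $\beta_d=0$) past the constant-coefficient operator $\bar A_0$, apply Lemma \ref{lem:Estimate_w_y} to $D^\beta w$ on the radii $r<r_2$, and then use Lemma \ref{lem:Estimate_u_k_x} on the radii $r_2<r_0$ to bound $\|D^\beta w\|_{C(\bar B^+_{r_2})}$ by $\|w\|_{C(\bar B^+_{r_0})}$. The extra care you took to verify the commutation identity, the choice of intermediate radius, and the applicability of the two lemmas matches the paper's (terser) argument exactly.
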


\begin{proof}
Since $\bar A_0 w = 0$ on $B^+_{r_0}$, we also have $\bar A_0 D^{\beta} w = 0$ on $B^+_{r_0}$, for all $\beta \in \NN^d$ with $\beta_d=0$. Lemma \ref{lem:Estimate_w_y} then applies with $r$ replaced by $r_2=(r+{r_0})$/2, and gives us
\[
\|D^{\beta} D_{y_d} w\|_{C(\bar B^+_{r})} \leq C_0 \|D^{\beta} w\|_{C(\bar B^+_{r_2})},
\]
where
$C_0=C_0(b_0,d,\lambda_0,\Lambda, r_0,r)$ is a positive constant. Next, we apply Lemma \ref{lem:Estimate_u_k_x} to estimate $D^{\beta} w$ and give a constant $C_1=C_1(b_0,d,k,\lambda_0,\Lambda, r_0,r_2)$ such that
\[
\|D^{\beta} w\|_{C(\bar B^+_{r_2})} \leq C_1 \|w\|_{C(\bar B^+_{r_0})}.
\]
Now combining the preceding two inequalities, we obtain the a priori local estimate for $D^{\beta}D_{y_d} w$.
\end{proof}

\begin{lem}[Local estimate for $w_{y_dy_d}$]
\label{lem:Estimate_w_yy}
Assume that $A$ in \eqref{eq:defnA} obeys Hypothesis \ref{hyp:ConstantCoefficients}.
Let $k \in \NN$ and $0<r<{r_0}$. Then there is a positive constant, $C=C(b_0,d,\lambda_0,\Lambda, r_0,r)$, such that for any function $w \in C^{\infty}(\bar B^+_{r_0})$ obeying \eqref{eq:A_0_w} we have
\begin{equation*}
\|w_{y_dy_d}\|_{C(\bar B^+_{r})} \leq C \|w\|_{C(\bar B^+_{r_0})}.
\end{equation*}
\end{lem}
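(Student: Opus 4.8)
The plan is to imitate the argument in the proof of Lemma~\ref{lem:Estimate_w_y}, but now applied to the function $v:=w_{y_dy_d}$ in place of $w_{y_d}$, after first differentiating in the $y_d$-variable the scalar ordinary differential equation already satisfied by $w$. Recall from the proof of Lemma~\ref{lem:Estimate_w_y} that, since $\bar a^{id}=0$ for all $i\neq d$, the equation $\bar A_0 w=0$ on $B^+_{r_0}$ is, for each fixed $y'=(y_1,\ldots,y_{d-1})$, the ordinary differential equation in $y_d$,
$$
y_d w_{y_dy_d} + \theta w_{y_d} = f \quad\hbox{on } B^+_{r_0}, \qquad \theta:=\frac{\bar b^d}{\bar a^{dd}}>0,
$$
where
$$
f := y_d\sum_{i,j=1}^{d-1}\frac{\bar a^{ij}}{\bar a^{dd}}w_{y_iy_j} + \sum_{i=1}^{d-1}\frac{\bar b^i}{\bar a^{dd}}w_{y_i}.
$$
Since $w\in C^{\infty}(\bar B^+_{r_0})$, I may differentiate this identity with respect to $y_d$; writing $v=w_{y_dy_d}$, this yields an equation of exactly the same type,
$$
y_d v_{y_d} + (1+\theta)v = g \quad\hbox{on } B^+_{r_0},
$$
with
$$
g := f_{y_d} = \sum_{i,j=1}^{d-1}\frac{\bar a^{ij}}{\bar a^{dd}}w_{y_iy_j} + y_d\sum_{i,j=1}^{d-1}\frac{\bar a^{ij}}{\bar a^{dd}}w_{y_iy_jy_d} + \sum_{i=1}^{d-1}\frac{\bar b^i}{\bar a^{dd}}w_{y_iy_d}.
$$

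Next, set $r_2:=(r+r_0)/2$ and estimate $\|g\|_{C(\bar B^+_{r_2})}$ in terms of $\|w\|_{C(\bar B^+_{r_0})}$. The terms $w_{y_iy_j}$ with $i,j\neq d$ are controlled by Lemma~\ref{lem:Estimate_u_k_x} (which applies to $w$, since $\bar A_0 w=0$ and $D^{\beta}u(x)=D^{\beta}w(y)$ for $\beta_d=0$ under the transformation~\eqref{eq:Definition_w}); the terms $w_{y_iy_d}$ with $i\neq d$ are controlled by Lemma~\ref{lem:Estimate_D_k_x_D_y_w} with $k=1$; and the terms $w_{y_iy_jy_d}$ with $i,j\neq d$ are controlled by Lemma~\ref{lem:Estimate_D_k_x_D_y_w} with $k=2$. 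Applying these on a chain of nested half-balls $\bar B^+_{r}\subset\bar B^+_{r_2}\subset\bar B^+_{r_0}$, exactly as in the proof of Lemma~\ref{lem:Estimate_w_y}, gives $\|g\|_{C(\bar B^+_{r_2})}\leq C\|w\|_{C(\bar B^+_{r_0})}$ for a constant $C=C(b_0,d,\lambda_0,\Lambda,r_0,r)$.

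Finally, solve the first-order linear ODE for $v$ by the integrating-factor method used in the proof of Lemma~\ref{lem:Estimate_w_y}: multiplying $y_d v_{y_d}+(1+\theta)v=g$ by $y_d^{\theta}$ gives $(y_d^{1+\theta}v)_{y_d}=y_d^{\theta}g$, and since $v=w_{y_dy_d}\in C(\bar B^+_{r_0})$ is bounded near $y_d=0$ and $1+\theta>0$, integrating in $y_d$ from $0$ gives
$$
y_d^{1+\theta}v(y',y_d) = \int_0^{y_d} s^{\theta} g(y',s)\,ds, \qquad (y',y_d)\in B^+_{r_0},
$$
whence $|v(y',y_d)|\leq \frac{1}{1+\theta}\|g(y',\cdot)\|_{C([0,y_d])}$. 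Taking the supremum over $(y',y_d)\in B^+_{r}$ and using the bound on $g$ over $\bar B^+_{r_2}$ (noting that $(y',s)\in\bar B^+_{r_2}$ whenever $(y',y_d)\in B^+_{r}$ and $0\leq s\leq y_d$) completes the proof. The only delicate point is the cancellation of the factor $y_d^{-1}$ that appears when one isolates $w_{y_dy_d}$ from the degenerate ODE, and this is handled by the same integrating-factor computation already used for $w_{y_d}$, so no genuinely new obstacle arises; the bookkeeping of which earlier lemma bounds which term of $g$ is the only thing requiring care.
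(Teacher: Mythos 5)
Your proof is correct and is, up to presentation, the same as the paper's. The paper differentiates the full equation $\bar A_0 w = 0$ once in $y_d$, obtains a second-order degenerate equation for $w_{y_d}$ with drift coefficient $\bar b^d + \bar a^{dd}$, and then reruns the integrating-factor argument of Lemma~\ref{lem:Estimate_w_y} with $\theta$ replaced by $(\bar b^d+\bar a^{dd})/\bar a^{dd} = 1+\theta$; you instead differentiate the already-reduced scalar ODE $y_d w_{y_dy_d}+\theta w_{y_d}=f$ to obtain the first-order ODE $y_d v_{y_d}+(1+\theta)v=f_{y_d}$ for $v=w_{y_dy_d}$ and integrate directly, which is the same computation, with the same right-hand side and the same appeals to Lemmas~\ref{lem:Estimate_u_k_x} and \ref{lem:Estimate_D_k_x_D_y_w} to control its supremum.
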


\begin{proof}
By taking another derivative with respect to $y_d$ in the equation $\bar A_0 w=0$ on $B^+_{r_0}$, we see that $w_{y_d}$ is a solution to
\begin{align*}
y_d \sum_{i,j=1}^d \bar a^{ij} (w_{y_d})_{y_iy_j} + \sum_{i=1}^{d-1}\left(\bar b^i + 2\bar a^{id}\right) (w_{y_d})_{y_i}
+ \left(\bar b^d + \bar a^{dd}\right) (w_{y_d})_{y_d}
&= - \sum_{i,j=1}^{d-1}\bar a^{ij} w_{y_iy_j}.
\end{align*}
Applying the method of the proof of Lemma \ref{lem:Estimate_w_y} with $\theta:=\left(\bar b^d + \bar a^{dd}\right)/\bar a^{dd}$ and
\[
f:=- \sum_{i,j=1}^{d-1}\bar a^{ij} w_{y_iy_j} - y_d \sum_{i,j=1}^{d-1} \bar a^{ij} w_{y_dy_iy_j} - \sum_{i=1}^{d-1}\left(\bar b^i + 2\bar a^{id}\right) w_{y_dy_i},
\]
we obtain
\[
\|w_{y_dy_d}\|_{C(\bar B^+_{r})} \leq
C \sum_{  \substack{\beta\in\NN^d \\ \beta_d=0,1;\ |\beta|\leq 3} } \|D^{\beta} w\|_{C(\bar B^+_{r})},
\]
where $C=C(b_0,d, \lambda_0, \Lambda)$ is a positive constant. We can estimate the supremum norms of $D^{\beta} w$ on $B^+_{r}$ , for all $\beta \in \NN^d$ with $\beta_d=0,1$, in terms of the supremum norm of $w$ on $B^+_{r_0}$ with the aid of Lemmas \ref{lem:Estimate_u_k_x} and \ref{lem:Estimate_D_k_x_D_y_w}. Now, the supremum estimate for $w_{y_dy_d}$ on $B^+_{r}$ follows immediately.
\end{proof}

From the definition \eqref{eq:Definition_w} of $w$, using the fact that $\xi_d=0$, we have
\begin{equation}
\label{eq:Relation_u_w}
\begin{aligned}
u_{x_d}(x)    &= \sum_{k=1}^{d-1}\xi_k w_{y_k}(y) + w_{y_d}(y),\\
u_{x_i}(x)    &= u_{y_i}(y),\quad\forall\, i \neq d,\\
u_{x_dx_d}(x) &= \sum_{k,l=1}^{d-1}\xi_k \xi_l w_{y_k y_l}(y) + 2 \sum_{k=1}^{d-1}\xi_k w_{y_k y_d}(y) + w_{y_dy_d}(y),\\
u_{x_ix_d}(x) &= \sum_{k=1}^{d-1}\xi_k w_{y_iy_k}(y) + w_{y_iy_d}(y),\quad \forall\, i \neq d,\\
u_{x_ix_j}(x) &=  w_{y_iy_j}(y),\quad \forall\, i,j \neq d,
\end{aligned}
\end{equation}
for $x\in\HH$.  Using the preceding identities together with the estimates of Lemmas \ref{lem:Estimate_w_y}, \ref{lem:Estimate_D_k_x_D_y_w} and \ref{lem:Estimate_w_yy}, we obtain

\begin{lem}[Local estimates for second-order derivatives of $u$]
\label{lem:Estimate_y_derivatives_u}
Assume that $A$ in \eqref{eq:defnA} obeys Hypothesis \ref{hyp:ConstantCoefficients}.
Let ${r_0}>0$. Then there are positive constants, $r_1=r_1(b_0,\lambda_0,\Lambda, r_0)<r_0$ and $C=C(b_0,d,\lambda_0,\Lambda, r_0)$, such that for all $u \in C^{\infty}(\bar B^+_{r_0})$ obeying \eqref{eq:A_0_of_u_equal_0}, we have
\begin{align*}
\|D^{\beta} u\|_{C(\bar B^+_{r_1})} &\leq C \|u\|_{C(\bar B^+_{r_0})},
\end{align*}
for all $\beta \in \NN^d$ with $|\beta| \leq 2$.
\end{lem}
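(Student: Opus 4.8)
\emph{Proof sketch.} The plan is to reduce the estimate for all second-order derivatives of $u$ to the one-sided a priori estimates for the transformed function $w$ already obtained in Lemmas \ref{lem:Estimate_u_k_x}, \ref{lem:Estimate_w_y}, \ref{lem:Estimate_D_k_x_D_y_w}, and \ref{lem:Estimate_w_yy}, and then to transfer the resulting bounds back to $u$ via the explicit identities \eqref{eq:Relation_u_w}. First I would invoke the affine change of variables \eqref{eq:Definition_w}, with $\xi$ chosen as in \eqref{eq:Definition_xi_1}, so that $w$ solves $\bar A_0 w = 0$ with $\bar a^{id} = 0$ for $i \neq d$. Since $a^{dd} \geq \lambda_0$ by \eqref{eq:Strict_ellipticity_ConstantCoefficents} and $|a^{id}| \leq \Lambda$, the vector $\xi$ and hence the diffeomorphism $\varphi(x) = x + \xi x_d$ of $\bar\HH$ (which restricts to the identity on $\partial\HH$) obeys bi-Lipschitz bounds depending only on $\lambda_0$ and $\Lambda$. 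Because $\varphi(B^+_{r_0})$ need not be a half-ball, I would pick $s_0 = s_0(\lambda_0,\Lambda,r_0) > 0$ with $B^+_{s_0} \subset \varphi(B^+_{r_0})$, so that $w \in C^\infty(\bar B^+_{s_0})$ solves $\bar A_0 w = 0$ on $B^+_{s_0}$; note that $\bar A_0$ has the same structural form as $A_0$, with $\bar b^d = b_0 > 0$, no zeroth-order term, and $\bar a$ strictly elliptic with a constant controlled by $\lambda_0$ and $\Lambda$, so that the admissible-parameter dependence of all constants below reduces to $b_0, d, \lambda_0, \Lambda, r_0$.

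Next, applying Lemma \ref{lem:Estimate_u_k_x} (which, as remarked after \eqref{eq:Definition_w}, applies to $w$) with $k = 2$ on $B^+_{s_0}$ yields $\|D^\beta w\|_{C(\bar B^+_{\rho})} \leq C\|w\|_{C(\bar B^+_{s_0})}$ for all $\beta \in \NN^d$ with $\beta_d = 0$ and $|\beta| \leq 2$, for some $\rho = \rho(b_0,\Lambda,s_0) < s_0$. Lemma \ref{lem:Estimate_w_y}, Lemma \ref{lem:Estimate_D_k_x_D_y_w} with $k = 1$, and Lemma \ref{lem:Estimate_w_yy} then control, respectively, $w_{y_d}$, the derivatives $D^\beta D_{y_d} w$ with $\beta_d = 0$ and $|\beta| \leq 1$, and $w_{y_dy_d}$ on $B^+_r$ for every $0 < r < s_0$, each in terms of $\|w\|_{C(\bar B^+_{s_0})}$. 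Setting $s_1 := \min\{\rho, s_0/2\}$, these estimates combine to give $\sum_{|\beta| \leq 2}\|D^\beta w\|_{C(\bar B^+_{s_1})} \leq C\|w\|_{C(\bar B^+_{s_0})}$ with $C = C(b_0,d,\lambda_0,\Lambda,r_0)$.

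Finally, I would choose $r_1 = r_1(b_0,\lambda_0,\Lambda,r_0) < r_0$ small enough that $\varphi(B^+_{r_1}) \subset B^+_{s_1}$. Differentiating $u = w\circ\varphi$ and using the identities \eqref{eq:Relation_u_w}, whose coefficients $\xi_k$ are bounded by $\Lambda/\lambda_0$, each $D^\beta u(x)$ with $|\beta| \leq 2$ and $x \in B^+_{r_1}$ is a linear combination, with coefficients depending only on $\lambda_0$ and $\Lambda$, of derivatives $D^{\beta'} w(\varphi(x))$ with $|\beta'| \leq 2$ and $\varphi(x) \in B^+_{s_1}$. Hence $\|D^\beta u\|_{C(\bar B^+_{r_1})} \leq C\|w\|_{C(\bar B^+_{s_1})} \leq C\|w\|_{C(\bar B^+_{s_0})} \leq C\|u\|_{C(\bar B^+_{r_0})}$, where the last step uses $B^+_{s_0} \subset \varphi(B^+_{r_0})$ and $w = u\circ\varphi^{-1}$. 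I expect the only real subtlety to be the bookkeeping of the nested radii $r_0 > s_0 > s_1 > r_1$ through the non-orthogonal diffeomorphism $\varphi$ (which carries half-balls to half-ellipsoids), together with the verification that every auxiliary radius and constant depends only on the admissible parameters; this mirrors verbatim the radius-chaining argument already carried out in the proof of Lemma \ref{lem:Estimate_u_k_x}.
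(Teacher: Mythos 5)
Your proposal follows the paper's proof essentially verbatim: both pass to $w = u\circ\varphi^{-1}$ via the affine map determined by \eqref{eq:Definition_xi_1}, choose $s_0$ with $B^+_{s_0}\subset\varphi(B^+_{r_0})$, invoke Lemmas \ref{lem:Estimate_u_k_x}, \ref{lem:Estimate_w_y}, \ref{lem:Estimate_D_k_x_D_y_w}, and \ref{lem:Estimate_w_yy} to bound all $D^\beta w$ with $|\beta|\le 2$, and then pull back through \eqref{eq:Relation_u_w} after shrinking to $r_1$ with $\varphi(B^+_{r_1})\subset B^+_{s_1}$. The only divergence is your choice $s_1:=\min\{\rho, s_0/2\}$, whereas the paper simply takes $s_1$ to be the output radius of Lemma \ref{lem:Estimate_u_k_x}; this is harmless bookkeeping, since Lemmas \ref{lem:Estimate_w_y}--\ref{lem:Estimate_w_yy} hold for any inner radius less than $s_0$.
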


\begin{proof}
Let $\varphi:\HH\rightarrow\HH$ be the affine transformation defined by $y=\varphi(x):=x+\xi x_d$, for $x\in\HH$, where $\xi\in\RR^{d}$ is defined by  \eqref{eq:Definition_xi_1}. Let $s_0=s_0(\lambda_0,\Lambda, r_0)>0$ be small enough such that $B^+_{s_0}\subset\varphi(B^+_{r_0})$. Let $s_1=s_1(b_0,\Lambda,s_0)<s_0$ denote the constant $r_1$ given by Lemma \ref{lem:Estimate_u_k_x} applied with $r_0$ replaced by $s_0$. Then, the function $w$ defined by \eqref{eq:Definition_w} has the property that $w \in C^{\infty}(\bar B^+_{s_0})$ and $\widetilde A_0 w=0$ on $B^+_{s_0}$, since $u \in C^{\infty}(\bar B^+_{r_0})$ and $A_0 u=0$ on $B^+_{r_0}$. We apply Lemma \ref{lem:Estimate_w_y}, if $\beta=e_d$, Lemma \ref{lem:Estimate_D_k_x_D_y_w}, if $\beta = e_i +e_d$ and $i\neq d$, and Lemma \ref{lem:Estimate_w_yy}, if $\beta = 2e_d$, to the function $w$ with $r$ replaced by $s_1$ and $r_0$ replaced by $s_0$. We apply Lemma \ref{lem:Estimate_u_k_x}, if $\beta=e_i$ or $\beta=e_i+e_j$, for all $i,j\neq d$, to the function $w$ with $r_1$ replaced by $s_1$ and $r_0$ replaced by $s_0$. Then, for any $k \in \NN$, there is a positive constant,
$C=C(b_0,d,k,\lambda_0,\Lambda, r_0)$, such that for all $\beta \in \NN^d$ with $|\beta| \leq 2$, we have
\begin{equation}
\label{eq:Estimate_derivative_beta_w}
\|D^{\beta}w\|_{C(\bar B^+_{s_1})} \leq C \|w\|_{C(\bar B^+_{s_0})}.
\end{equation}
We now choose $r_1=r_1(b_0,\lambda_0,\Lambda, r_0)$ small enough such that 
$\varphi(B^+_{r_1})\subset B^+_{s_1}$. Using \eqref{eq:Relation_u_w}, we obtain
\begin{align*}
\|D^{\beta} u\|_{C(\bar B^+_{r_1})}
&\leq \|D^{\beta} w\|_{C(\bar B^+_{s_1})}\quad\hbox{(by the facts that $\varphi(B^+_{r_1})\subset B^+_{s_1}$ and $u(x)=w(\varphi(x))$)}
\\
&\leq C \|w\|_{C(\bar B^+_{s_0})}\quad \hbox{(by \eqref{eq:Estimate_derivative_beta_w})}
\\
&\leq C \|u\|_{C(\bar B^+_{r_0})} \quad\hbox{(by the facts that $B^+_{s_0}\subset\varphi(B^+_{r_0})$ and $u(x)=w(\varphi(x))$)}.
\end{align*}
This concludes the proof.
\end{proof}

\section{Polynomial approximation and Taylor remainder estimates}
\label{sec:PolynomialApproximation}
We adapt and slightly streamline the arguments of Daskalopoulos and Hamilton in \cite[\S I.6 and I.7]{DaskalHamilton1998} for their model boundary-degenerate parabolic operators acting on functions $u(t,x)$, for $(t,x) \in \RR_+\times\RR^2$, to the case of our boundary-degenerate elliptic operators acting on functions $u(x)$, for
$x \in \RR^d$. The goal of this section is to derive an estimate of the remainder of the first-order Taylor polynomial of a function $u$ on half-balls centered at points in $\partial\HH$ (Corollary \ref{cor:Remainder_estimate_2}). This result, when combined with the interior Schauder estimates of section Section \ref{sec:Schauder_estimates_interior}, will lead to the full Schauder estimate for a solution on a half-ball centered at point in $\partial\HH$ (Theorem \ref{thm:Holder_estimate_local}). Throughout this section, we continue to assume Hypothesis \ref{hyp:ConstantCoefficients} and so the coefficients, $a,b,c$, of the operator $A$ in \eqref{eq:defnA} and the coefficients, $a,b$, of the operator $A_0$ in \eqref{eq:defnA_0} are constant.

We let $T^P_k v$ denote the \emph{Taylor polynomial of degree $k$} of a smooth function $v$, centered at a point $P \in \RR^d$, and let $R^P_k:=v-T^P_k$ denote the \emph{remainder}. We then have the following analogue of \cite[Theorem I.6.1]{DaskalHamilton1998}.

\begin{prop}[Polynomial approximation]
\label{prop:Polynomial_approximation}
Assume that $A$ in \eqref{eq:defnA} obeys Hypothesis \ref{hyp:ConstantCoefficients}.
There is a positive constant, $C=C(b_0,d,\lambda_0,\Lambda)$, such that for any $r_0>0$, and any function $u \in C^{\infty}(\bar B^+_{r_0})$, there is a polynomial $p$ of degree $1$, such that for any $r \in (0,r_0)$ we have
\begin{equation}
\label{eq:Estimate_with_polynomial}
\|u-p\|_{C(\bar B^+_r)}
\leq
C\left(\frac{r^2}{r_0^2} \|u\|_{C(\bar B^+_{r_0})} + r_0 \|A_0 u\|_{C(\bar B^+_{r_0})}\right).
\end{equation}
\end{prop}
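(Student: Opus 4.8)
The plan is to prove \eqref{eq:Estimate_with_polynomial} first for solutions of the homogeneous equation $A_0u=0$, where it follows quickly from the derivative estimates of Section \ref{sec:Derivative_estimates_interior_boundary}, and then to reduce the general case to the homogeneous one by subtracting a corrector that absorbs $A_0u$. The first observation is that, since $A_0$ in \eqref{eq:defnA_0} has constant coefficients, the rescaled function $u_{r_0}(x):=u(r_0x)$ on $B^+_1$ satisfies $A_0u_{r_0}=r_0\,(A_0u)(r_0\,\cdot\,)$ on $B^+_1$, so that $\|A_0u_{r_0}\|_{C(\bar B^+_1)}=r_0\|A_0u\|_{C(\bar B^+_{r_0})}$, $\|u_{r_0}\|_{C(\bar B^+_1)}=\|u\|_{C(\bar B^+_{r_0})}$, and $u_{r_0}\in C^\infty(\bar B^+_1)$; if a degree-one polynomial $p$ obeys $\|u_{r_0}-p\|_{C(\bar B^+_\rho)}\le C(\rho^2\|u_{r_0}\|_{C(\bar B^+_1)}+\|A_0u_{r_0}\|_{C(\bar B^+_1)})$ for all $\rho\in(0,1)$, then $\tilde p(x):=p(x/r_0)$ obeys \eqref{eq:Estimate_with_polynomial}. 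Hence it suffices to treat $r_0=1$.

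\emph{Homogeneous case.} Suppose $A_0u=0$ on $B^+_{r_0}$ with $u\in C^\infty(\bar B^+_{r_0})$; I claim $\|u-T^O_1u\|_{C(\bar B^+_r)}\le C(r/r_0)^2\|u\|_{C(\bar B^+_{r_0})}$ for all $r\in(0,r_0)$, with $C=C(b_0,d,\lambda_0,\Lambda)$. By scaling as above, reduce to $r_0=1$. By Lemma \ref{lem:Estimate_y_derivatives_u} (applied with $r_0=1$) there are a fixed $r_1=r_1(b_0,d,\lambda_0,\Lambda)\in(0,1)$ and $C=C(b_0,d,\lambda_0,\Lambda)$ with $\|D^\beta u\|_{C(\bar B^+_{r_1})}\le C\|u\|_{C(\bar B^+_1)}$ for $|\beta|\le 2$. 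For $r\in(0,r_1)$, Taylor's theorem on the convex set $\bar B^+_r$ gives $|u(x)-T^O_1u(x)|\le\tfrac12|x|^2\sup_{\bar B^+_r}|D^2u|\le Cr^2\|u\|_{C(\bar B^+_1)}$; for $r\in[r_1,1)$ one estimates crudely, $\|u-T^O_1u\|_{C(\bar B^+_r)}\le\|u\|_{C(\bar B^+_1)}+|u(O)|+|Du(O)|\le C\|u\|_{C(\bar B^+_1)}\le(C/r_1^2)r^2\|u\|_{C(\bar B^+_1)}$, using Lemma \ref{lem:Estimate_y_derivatives_u} again to control $|u(O)|$ and $|Du(O)|$. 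Undoing the scaling gives the claim.

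\emph{Corrector and conclusion.} Take $r_0=1$ and write $f:=A_0u$. Choose $\delta=\delta(b_0,d,\Lambda)\in(0,1]$, $\delta\le b_0/(2\Lambda)$, so that $\phi(x):=e^{-x_d}$ satisfies $A_0\phi=e^{-x_d}(b_0-x_da^{dd})\ge\tfrac12 b_0e^{-\delta}=:c_0>0$ on $B^+_\delta$ (here $a^{dd}\le\Lambda$ and $b^d=b_0$ by \eqref{eq:bd}). Let $w\in C^\infty(\bar B^+_\delta)$ solve $A_0w=f$ on $B^+_\delta$ with $w=0$ on $\partial_1 B^+_\delta$ (no condition along $\partial_0 B^+_\delta$, consistently with \eqref{eq:bd}); such a $w$ exists by an auxiliary solvability result of the type underlying Theorem \ref{thm:Existence_smooth_solutions_half_space} — e.g.\ extend $f$ from $B^+_\delta$ with controlled $C^0$-norm, solve $(A_0+\varepsilon)w_\varepsilon=f$ on $\HH$, and pass to the limit $\varepsilon\downarrow0$ on $B^+_\delta$; or invoke a Perron construction. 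Comparing $w$ with $\pm c_0^{-1}\|f\|_{C(\bar B^+_1)}\phi$ and using the weak maximum principle for $A_0$ \cite{Feehan_maximumprinciple} yields $\|w\|_{C(\bar B^+_\delta)}\le c_0^{-1}\|f\|_{C(\bar B^+_1)}$. Then $v:=u-w\in C^\infty(\bar B^+_\delta)$ satisfies $A_0v=0$ on $B^+_\delta$, so the homogeneous case (with $r_0=\delta$) gives $\|v-p\|_{C(\bar B^+_\rho)}\le C(\rho/\delta)^2\|v\|_{C(\bar B^+_\delta)}$ for $\rho\in(0,\delta)$, where $p:=T^O_1v$. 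Since $\|v\|_{C(\bar B^+_\delta)}\le\|u\|_{C(\bar B^+_1)}+c_0^{-1}\|f\|_{C(\bar B^+_1)}$ and $\rho/\delta<1$, the triangle inequality $\|u-p\|_{C(\bar B^+_\rho)}\le\|v-p\|_{C(\bar B^+_\rho)}+\|w\|_{C(\bar B^+_\rho)}$ gives \eqref{eq:Estimate_with_polynomial} (with $r_0=1$) for all $\rho\in(0,\delta)$, the constant depending only on $b_0,d,\lambda_0,\Lambda$ through $\delta$ and $c_0$. For $\rho\in[\delta,1)$ the estimate follows crudely: $\|p\|_{C(\bar B^+_1)}\le C_\delta\|p\|_{C(\bar B^+_{\delta/2})}\le C_\delta'(\|u\|_{C(\bar B^+_1)}+\|f\|_{C(\bar B^+_1)})$ by equivalence of norms on degree-one polynomials, and $1\le\rho^2/\delta^2$. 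Undoing the initial scaling produces \eqref{eq:Estimate_with_polynomial} for general $r_0$ and completes the proof.

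\emph{Main obstacle.} The delicate step is the corrector: producing $w$, smooth up to the degenerate boundary $\partial_0 B^+_\delta$, with $\|w\|_{C(\bar B^+_\delta)}\lesssim\|A_0u\|_{C(\bar B^+_1)}$. The $C^0$-bound hinges on exhibiting a \emph{positive} supersolution of the degenerate operator $A_0$ near $\partial_0 B^+_\delta$, which is precisely what forces the half-ball to be shrunk — the barrier $e^{-x_d}$ works only while $x_d$ is small compared with $b_0/a^{dd}$ — and the regularity of $w$ up to $\partial_0$ is itself of the type this paper establishes, so it must be obtained by an approximation or continuity argument rather than quoted outright. An alternative that sidesteps constructing $w$ entirely is a normal-families argument: were \eqref{eq:Estimate_with_polynomial} to fail, one would obtain functions $u_k$ with $\|u_k\|_{C(\bar B^+_1)}\le1$ and $\|A_0u_k\|_{C(\bar B^+_1)}\to0$ whose first-order Taylor polynomials at $O$ approximate them too poorly on some $B^+_{r_k}$; extracting a locally uniformly convergent subsequence via the interior homogeneous estimates of Section \ref{sec:Derivative_estimates_interior_boundary} produces an $A_0$-harmonic limit contradicting the homogeneous case above.
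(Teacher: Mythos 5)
The overall strategy you take is the same as the paper's: reduce to $r_0=1$ by scaling, subtract a corrector solving the inhomogeneous equation to produce an $A_0$-harmonic remainder, apply the $C^0$ derivative estimates of Lemma \ref{lem:Estimate_y_derivatives_u} to the remainder, and split small and large $r$. Your homogeneous-case argument and the final scaling step are exactly what the paper does.

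The genuine gap is in the corrector. You propose $w\in C^\infty(\bar B^+_\delta)$ with $A_0w=f$ on $B^+_\delta$, $w=0$ on $\partial_1 B^+_\delta$, and $\|w\|_{C(\bar B^+_\delta)}\lesssim\|f\|_{C(\bar B^+_1)}$. Your $C^0$ bound via comparison with $c_0^{-1}\|f\|e^{-x_d}$ would be fine \emph{once $w$ is in hand}, but producing a solution that is $C^\infty$ up to the closed half-ball is precisely the ``corner'' problem the paper explicitly flags as hard: $\overline{\partial_0 B^+_\delta}\cap\overline{\partial_1 B^+_\delta}\neq\emptyset$, and regularity at that junction is beyond what the paper establishes (see the remarks around Theorem \ref{thm:ExistUniqueCk2+alphasHolderContinuityDomain}). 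Your suggested workarounds — extend $f$, solve $(A_0+\varepsilon)w_\varepsilon=f$ on $\HH$, send $\varepsilon\to0$; or invoke Perron — do not close this: the former needs a uniform-in-$\varepsilon$ sup bound, which fails on the whole half-space because $A_0$ has zeroth-order coefficient $c=0$, and the latter quotes the very regularity-up-to-$\partial_0$ being built. The normal-families fallback is equally under-specified, since the derivative estimates of Section \ref{sec:Derivative_estimates_interior_boundary} apply only to $A_0$-harmonic functions and therefore give no compactness for the $u_k$ themselves.

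The paper avoids all of this with a small but decisive device you missed: multiply $f$ by a cutoff $\psi$ (so the right-hand side lies in $C^\infty_0(\bar\HH)$), and solve $A_0 u_1 = \psi f$ not on a half-ball but on a slab $S=\RR^{d-1}\times(0,\nu)$ with $\nu>1$. On a slab, $\partial_0 S$ and $\partial_1 S$ are disjoint, so Theorem \ref{thm:Existence_smooth_solutions_slab} supplies $u_1\in C^\infty(\bar S)$ directly; and the $C^0$ bound on $u_1$ comes from Corollary \ref{cor:Maximum_principle_A_0}, which works for $c=0$ precisely because the slab has finite height (the proof there conjugates by $e^{-\sigma x_d}$ to create a positive effective zeroth-order term). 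Then $u_2:=u-u_1$ satisfies $A_0u_2=(1-\psi)f=0$ on $B^+_{1/2}$, and the rest runs as you wrote. If you replace your small-half-ball corrector with this cutoff-plus-slab corrector, your argument is essentially the paper's.
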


\begin{proof} We first consider the case when $r_0=1$ and then the case when $r_0>0$ is arbitrary.
\begin{step}[$r_0=1$]
We let $f:=A_0 u$ and we choose a smooth, non-negative, cutoff function, $\psi$, such that
\[
\psi \restriction_{B^+_{1/2}} \equiv 1
\quad\hbox{and}\quad
\psi\restriction_{\HH\less B^+_1} \equiv 0.
\]
We fix a constant $\nu>1$, and let $S=\RR^{d-1}\times(0,\nu)$ as in \eqref{eq:Slab}. By Theorem \ref{thm:Existence_smooth_solutions_slab}, there is a unique solution, $u_1\in C^{\infty}(\bar S)$,  to
\begin{align*}
\begin{cases}
A_0 u_1 = \psi f&\quad\hbox{on } S,\\
u_1(\cdot, \nu) = 0 &\quad\hbox{on } \RR^{d-1}.
\end{cases}
\end{align*}
Then, by setting $u_2:=u-u_1$, we see that $u_2 \in C^{\infty}(\bar B^+_{r_0})$ and satisfies $A_0 u_2 = (1-\psi) f$ on $B^+_{r_0}$. Notice that the definition of the functions $u_1$ and $u_2$ differs from that of their analogues, $h$ and $f-h$, in the proof of \cite[Theorem I.6.1]{DaskalHamilton1998}.
The reason for this change is that the zeroth-order coefficient in the definition of $A_0$ is zero, and so uniqueness of $C^{\infty}(\bar\HH)$ solutions to the equation $A_0 u=f$ on $\HH$ does not hold since we may add any constant to a solution, $u$. Since $u=u_1+u_2$, we have
\begin{equation}
\label{eq:Estimate_u_minus_Taylor_polynomial_u_2}
\|u-T^0_1 u_2\|_{C(\bar B^+_r)} \leq \|u_2-T^0_1 u_2\|_{C(\bar B^+_r)}+ \|u_1\|_{C(\bar B^+_r)}.
\end{equation}
By the Mean Value Theorem, we know that
\[
\|u_2-T^0_1 u_2\|_{C(\bar B^+_r)} \leq C r^2 \|D^2 u_2 \|_{C(\bar B^+_r)},
\]
where $C=C(d)$. Because $A_0 u_2 =0$ on $B^+_{1/2}$, we may apply Lemma \ref{lem:Estimate_y_derivatives_u} to $u_2$ with $r=1/2$. Then there are constants, $r_1=r_1(d)$ and $C=C(b_0,d,\lambda_0,\Lambda)$, such that for any $r\in(0,r_1)$ we have
$$
\|D^2 u_2 \|_{C(\bar B^+_r)} \leq C  \|u_2 \|_{C(\bar B^+_{1/2})},
$$
from where it follows that
\begin{equation*}
\|u_2-T^0_1 u_2\|_{C(\bar B^+_r)} \leq C r^2 \|u_2 \|_{C(\bar B^+_1)}.
\end{equation*}
Corollary \ref{cor:Maximum_principle_A_0} gives the estimate
\begin{equation}
\label{eq:Estimate_sup_u_2}
\|u_1\|_{C(\RR^{d-1}\times(0,\nu))} \leq C \|\psi f\|_{C(\RR^{d-1}\times(0,\nu))} \leq \|f\|_{C(\bar B^+_1)},
\end{equation}
where the second inequality follows because the support of $\psi$ is contained in $\bar B^+_1$. Since $u=u_1+u_2$, we have
$$
\|u_2 \|_{C(\bar B^+_1)} \leq \|u \|_{C(\bar B^+_1)} + \|u_1 \|_{C(\bar B^+_1)},
$$
and so, combining the preceding two inequalities,
\[
\|u_2 \|_{C(\bar B^+_1)} \leq \|u \|_{C(\bar B^+_1)}+\|f\|_{C(\bar B^+_1)}.
\]
Thus, we have proved that
\begin{equation*}
\|u_2-T^0_1 u_2\|_{C(\bar B^+_r)} \leq C r^2 \|u\|_{C(\bar B^+_1)} + C\|f\|_{C(\bar B^+_1)},\quad\forall\, r\in(0,r_1).
\end{equation*}
When $r\in [r_1, 1)$, we have,  for all $x\in\bar B^+_r$,
\begin{align*}
|u_2(x)-T^0_1 u_2(x)|
&\leq d|Du_2(0)| r + |u_2(x)| + |u_2(0)|\\
& \leq C r^2 \|u\|_{C(\bar B^+_1)}\quad\hbox{(by Lemma \ref{lem:Estimate_y_derivatives_u} and the fact that $r_1 \leq r$)},
\end{align*}
where $C=C(d)$ is a positive constant. Combining the cases $0<r<r_1$ and $r_1\leq r<1$, we obtain
\begin{equation*}
\|u_2-T^0_1 u_2\|_{C(\bar B^+_r)} \leq C r^2 \|u\|_{C(\bar B^+_1)} + C\|f\|_{C(\bar B^+_1)},\quad\forall\, r\in(0,1),
\end{equation*}
for a constant $C=C(b_0,d, \lambda_0,\Lambda)$. The preceding estimate together with the identity $u=u_1+u_2$ and \eqref{eq:Estimate_sup_u_2} show that
$$
\|u-T^0_1 u_2\|_{C(\bar B^+_r)}
\leq
C\left(r^2 \|u\|_{C(\bar B^+_s)} + \|A_0 u\|_{C(\bar B^+_s)}\right),
$$
and so, the conclusion \eqref{eq:Estimate_with_polynomial} follows with $p=T^0_1 u_2$, in the special case when $r_0=1$.
\end{step}

\begin{step}[Arbitrary $r_0>0$]
When $r_0>0$ is arbitrary, we use rescaling. We let $\tilde u(x) := u(r_0x)$, for all $x\in B^+_1$, and we see that $(A_0\tilde u)(x) = r_0 (A_0 u)(r_0x)$. Notice that the rescaling property $(A_0\tilde u)(x) = r_0 (A_0 u)(r_0x)$ does not hold in this form if the zeroth-order coefficient of $A_0$ is non-zero.

We apply the preceding step to $\tilde u$ with $r$ replaced by $r/r_0$. Then, there is a polynomial $\tilde p$ such that
\[
\|\tilde u-\tilde p\|_{C(\bar B^+_{r/r_0})}
\leq
C\left(\frac{r^2}{r_0^2} \|\tilde u\|_{C(\bar B^+_1)} +  \|A_0 \tilde u\|_{C(\bar B^+_1)}\right),
\]
which is equivalent to
\[
\|u-p\|_{C(\bar B^+_r)}
\leq
C\left(\frac{r^2}{r_0^2} \|u\|_{C(\bar B^+_{r_0})} +  r_0 \|A_0 u\|_{C(\bar B^+_{r_0})}\right),
\]
where we set $p(x):=\tilde p(x/r_0)$. We notice that the polynomial $p$ depends on $r_0$, but not on $r$.
\end{step}
The proof of Proposition \ref{prop:Polynomial_approximation} is now complete.
\end{proof}

Proposition \ref{prop:Polynomial_approximation} is used to obtain the following analogue of \cite[Theorem I.7.1]{DaskalHamilton1998}.

\begin{prop}
\label{prop:Consequence_polynomial_approximation}
Assume that $A$ in \eqref{eq:defnA} obeys Hypothesis \ref{hyp:ConstantCoefficients}.
For any $\alpha \in (0,1)$, there is a positive constant, $S=S(b_0,d,\lambda_0,\Lambda)$, such that for any $u \in C^{\infty}(\bar B^+_1)$ with $T^0_1 u=0$, we have
\begin{equation}
\label{eq:Consequence_polynomial_approximation}
\sup_{0<r \leq 1} \frac{\|u\|_{C(\bar B^+_r)}}{r^{1+\alpha}}
\leq
S\left(\|u\|_{C(\bar B^+_1)} + \sup_{0<r \leq 1} \frac{\|A_0 u\|_{C(\bar B^+_r)}}{r^{\alpha}} \right).
\end{equation}
\end{prop}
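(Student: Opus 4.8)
The plan is to iterate Proposition~\ref{prop:Polynomial_approximation} over the dyadic family of half-balls $\bar B^+_{\theta^j}$, $j\in\NN$, for a suitable ratio $\theta\in(0,1)$. Abbreviate $\Phi:=\sup_{0<\rho\le 1}\rho^{-\alpha}\|A_0 u\|_{C(\bar B^+_\rho)}$, so that $\|A_0 u\|_{C(\bar B^+_\rho)}\le\Phi\rho^\alpha$ for all $\rho\in(0,1]$, and observe that the hypothesis $T^0_1 u=0$ means precisely $u(0)=0$ and $Du(0)=0$. It suffices to prove the discrete bound $\|u\|_{C(\bar B^+_{\theta^j})}\le K\theta^{j(1+\alpha)}$ for all $j\in\NN$, with $K:=\|u\|_{C(\bar B^+_1)}+6C\theta^{-(1+\alpha)}\Phi$, where $C=C(b_0,d,\lambda_0,\Lambda)$ is the constant of Proposition~\ref{prop:Polynomial_approximation}: given $r\in(0,1]$, choosing $j\in\NN$ with $\theta^{j+1}<r\le\theta^j$ and using monotonicity of $r\mapsto\|u\|_{C(\bar B^+_r)}$ then yields \eqref{eq:Consequence_polynomial_approximation} with $S=\theta^{-(1+\alpha)}\max\{1,6C\theta^{-(1+\alpha)}\}$.

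The essential ingredient, beyond the estimate \eqref{eq:Estimate_with_polynomial} itself, is a bound on the approximating polynomial. Fix $r_0\in(0,1]$ and let $p=p_{r_0}$ be the degree-one polynomial produced by Proposition~\ref{prop:Polynomial_approximation} for $u$ on $B^+_{r_0}$. Letting the inner radius $r\downarrow 0$ in \eqref{eq:Estimate_with_polynomial} and using $u(0)=0$ gives immediately $|p(0)|\le C\,r_0\,\|A_0 u\|_{C(\bar B^+_{r_0})}$. For the linear part I would return to the construction in the proof of Proposition~\ref{prop:Polynomial_approximation}: after rescaling to $r_0=1$ one has $p=T^0_1(u-u_1)$, where $u_1\in C^\infty(\bar S)$ is the solution, furnished by Theorem~\ref{thm:Existence_smooth_solutions_slab}, of the slab problem $A_0 u_1=\psi A_0 u$ on $S$, $u_1=0$ on $\partial_1 S$. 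Since $Du(0)=0$ we get $Dp=-Du_1(0)$, so the point is a $C^1$ bound for $u_1$ at the degenerate point $0\in\partial_0 S$; as the equation $A_0 u_1=\psi A_0 u$ degenerates at $x_d=0$ to $b^d\,\partial_{x_d}u_1=-\psi A_0 u-b'\!\cdot\!D'u_1$, the positivity $b^d=b_0>0$ from \eqref{eq:bd} together with the interior regularity estimates for the slab problem (from the proof of Theorem~\ref{thm:Existence_smooth_solutions_slab}, or a variant of the finite-difference estimates of Section~\ref{sec:Derivative_estimates_interior_boundary}) yields $|Du_1(0)|\le C\,\|\psi A_0 u\|_{C(\bar S)}\le C\,\|A_0 u\|_{C(\bar B^+_{r_0})}$ after undoing the rescaling. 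Consequently, since $p$ is affine, $\|p\|_{C(\bar B^+_r)}\le|p(0)|+r|Dp|\le C\,(r_0+r)\,\|A_0 u\|_{C(\bar B^+_{r_0})}$ for all $r\le r_0$.

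With this in hand the induction is routine. Choose $\theta\in(0,1)$ small enough that $C\theta^2\le\tfrac12\theta^{1+\alpha}$, which is possible because $\alpha<1$. The case $j=0$ holds since $K\ge\|u\|_{C(\bar B^+_1)}$. Assuming $\|u\|_{C(\bar B^+_{\theta^j})}\le K\theta^{j(1+\alpha)}$, apply Proposition~\ref{prop:Polynomial_approximation} with $r_0=\theta^j$ and evaluate \eqref{eq:Estimate_with_polynomial} at the inner radius $r=\theta^{j+1}$ (so $r^2/r_0^2=\theta^2$): this gives $\|u-p_{\theta^j}\|_{C(\bar B^+_{\theta^{j+1}})}\le C\theta^2 K\theta^{j(1+\alpha)}+C\Phi\theta^{j(1+\alpha)}$, while the polynomial bound above gives $\|p_{\theta^j}\|_{C(\bar B^+_{\theta^{j+1}})}\le C(\theta^j+\theta^{j+1})\Phi\theta^{j\alpha}\le 2C\Phi\theta^{j(1+\alpha)}$. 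Adding these, using $\|u\|_{C(\bar B^+_{\theta^{j+1}})}\le\|u-p_{\theta^j}\|_{C(\bar B^+_{\theta^{j+1}})}+\|p_{\theta^j}\|_{C(\bar B^+_{\theta^{j+1}})}$, and inserting $C\theta^2\le\tfrac12\theta^{1+\alpha}$ together with $3C\Phi\le\tfrac12 K\theta^{1+\alpha}$ (valid by the choice of $K$) gives $\|u\|_{C(\bar B^+_{\theta^{j+1}})}\le(C\theta^2 K+3C\Phi)\theta^{j(1+\alpha)}\le K\theta^{(j+1)(1+\alpha)}$, completing the induction and hence the proof.

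The main obstacle is the gradient estimate $|Dp|\le C\|A_0 u\|_{C(\bar B^+_{r_0})}$ on the approximating polynomial. The black-box inequality \eqref{eq:Estimate_with_polynomial} alone only yields $|Dp|\lesssim r_0^{-1}\|u\|_{C(\bar B^+_{r_0})}+\|A_0 u\|_{C(\bar B^+_{r_0})}$, whose first term is of the same order as $u$ itself and would re-enter $\|p_{\theta^j}\|_{C(\bar B^+_{\theta^{j+1}})}$ with a constant too large to be absorbed, so the iteration would fail to close; it is therefore crucial to exploit $Du(0)=0$ via the explicit form $p=T^0_1(u-u_1)$ and the $C^1$-regularity of the slab solution $u_1$ at the degenerate face $\partial_0 S$, which is where the real work lies. (A minor point: the constant $S$ obtained this way also depends on $\alpha$ through the admissible range of $\theta$, which is harmless.)
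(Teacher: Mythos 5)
Your proposal takes a genuinely different route from the paper's: a Campanato-style dyadic iteration over the radii $\theta^j$, driven by a direct pointwise bound on the approximating polynomial, whereas the paper never bounds $Dp$ by the data at all. Unfortunately the iteration does not close as written, because the gradient estimate $|Dp|\le C\,\|A_0 u\|_{C(\bar B^+_{r_0})}$ --- which you correctly identify as the crux --- is neither established by the tools you cite nor, as far as I can tell, true with a constant depending only on $(b_0,d,\lambda_0,\Lambda)$. The finite-difference estimates of Section~\ref{sec:Derivative_estimates_interior_boundary} (Lemmas~\ref{lem:Estimate_v_x}--\ref{lem:Estimate_y_derivatives_u}) are proved only for solutions of the \emph{homogeneous} equation $A_0 u=0$ (or $\widetilde A_0 v=0$), and Theorem~\ref{thm:Existence_smooth_solutions_slab} asserts existence and smoothness of the slab solution $u_1$ but supplies no quantitative $C^0\to C^1$ bound; the only quantitative estimate available there is the maximum-principle bound $\|u_1\|_{C(\bar S)}\lesssim\|\psi A_0 u\|_{C(\bar S)}$ of Corollary~\ref{cor:Maximum_principle_A_0}. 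A genuine $C^0\to C^1$ estimate for the inhomogeneous problem would be a sup-to-Lipschitz bound, which fails already for the Laplacian ($\|Du_1\|_{L^\infty}$ is not controlled by $\|\Delta u_1\|_{L^\infty}$), and the same obstruction is present here: your reduction to the first-order relation $b^d\,\partial_{x_d}u_1=-\psi A_0 u-b'\!\cdot\!D'u_1$ at $x_d=0$ does not help, because $D'u_1(0)$ --- the tangential derivative along the degenerate face --- is precisely the quantity governed by elliptic (not degenerate-ODE) scaling and is not controlled by the sup-norm of the source. Nor does the quantity $\Phi=\sup_{\rho}\rho^{-\alpha}\|A_0 u\|_{C(\bar B^+_\rho)}$ rescue this: it controls only the decay of $A_0 u$ at the origin, not the global $C^\alpha_s$ seminorm of $\psi A_0 u$, so one cannot invoke the paper's later $C^{2+\alpha}_s$ Schauder theory for $u_1$ (which would also be circular at this stage). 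You flag this gradient bound as ``where the real work lies,'' which is right --- but the argument does not actually close it.

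The paper sidesteps this entirely. It picks $r_*\in(0,1]$ maximizing $\|u\|_{C(\bar B^+_r)}/r^{1+\alpha}$ and, instead of bounding $Dp$, uses the elementary scaling $\|p\|_{C(\bar B^+_{r_*})}\lesssim(r_*/q)\|p\|_{C(\bar B^+_q)}$ for a degree-one polynomial on an inner ball $q<r_*$, then $\|p\|_{C(\bar B^+_q)}\le\|u-p\|_{C(\bar B^+_q)}+\|u\|_{C(\bar B^+_q)}$, and finally absorbs $\|u\|_{C(\bar B^+_q)}$ by the defining maximality of $r_*$: $\|u\|_{C(\bar B^+_q)}/q^{1+\alpha}\le\|u\|_{C(\bar B^+_{r_*})}/r_*^{1+\alpha}$. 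The decisive point is that the polynomial scales like $r_*/q$ while $u$ gains $(q/r_*)^{1+\alpha}$ from the maximality, leaving a net small factor $(q/r_*)^{\alpha}$ that lets the term be absorbed, after also taking $r_*/s$ small in the outer radius. If you want to retain your iterative scheme, you would have to build a comparable self-improving mechanism into the induction hypothesis (for instance, propagating a bound on $|Du(0)|$-like quantities alongside the sup bound), rather than asking the slab solve to deliver a $C^0\to C^1$ estimate that is not available.

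Two small additional notes. First, your reading of the limit $r\downarrow 0$ in \eqref{eq:Estimate_with_polynomial} to get $|p(0)|\le Cr_0\|A_0 u\|_{C(\bar B^+_{r_0})}$ is correct and is a nice observation, but it only controls the constant term. Second, you correctly remark that the ``black-box'' bound $|Dp|\lesssim r_0^{-1}\|u\|_{C(\bar B^+_{r_0})}+\|A_0 u\|_{C(\bar B^+_{r_0})}$ would reintroduce $\theta K\theta^{j(1+\alpha)}$ into the recursion with a constant $C\theta$ that cannot be made smaller than $\theta^{1+\alpha}$; this confirms the iteration really does hinge on the unproved gradient bound.
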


\begin{proof}
Because $T^0_1 u=0$ and $u \in C^{\infty}(\bar B^+_1)$, it follows that the quantity on the left-hand side of the inequality \eqref{eq:Consequence_polynomial_approximation} is finite. In addition, the fact that
$T^0_1 u=0$ implies $A_0 u (0)=0$, and so we also have
\[
\sup_{0<r \leq 1} \frac{\|A_0 u\|_{C(\bar B^+_r)}}{r^{\alpha}} <\infty.
\]
Let $r_{*}\in(0,1]$ be such that
\[
\sup_{0<r \leq 1} \frac{\|u\|_{C(\bar B^+_r)}}{r^{1+\alpha}} = \frac{\|u\|_{C(\bar B^+_{r_{*}})}}{r_{*}^{1+\alpha}},
\]
and we define for convenience,
\begin{equation}
\label{defn:Definition_Q}
Q:= \|u\|_{C(\bar B^+_1)} + \sup_{0<r \leq 1} \frac{\|A_0 u\|_{C(\bar B^+_r)}}{r^{\alpha}}.
\end{equation}
We let $S$ (depending on $u$) be such that
\begin{equation}
\label{defn:Definition_S}
\frac{\|u\|_{C(\bar B^+_{r_{*}})}}{r_{*}^{1+\alpha}} = SQ.
\end{equation}
It is sufficient to find an upper bound on $S$, independent of $u$, to give the conclusion \eqref{eq:Consequence_polynomial_approximation}.

Let $q$ and $s$ be positive constants such that $0<q<r_{*}<s\leq 1$. We apply Proposition \ref{prop:Polynomial_approximation} to $u$ with $r$ replaced by $q$ and $r_{*}$ and $r_0$ replaced by $s$. Then, we can find a degree-one polynomial, $p$, such that
\begin{align}
\label{eq:Application_1_polynomial_approximation}
\|u-p\|_{C(\bar B^+_q)}
&\leq
C\left(\frac{q^2}{s^2} \|u\|_{C(\bar B^+_s)} + s \|A_0 u\|_{C(\bar B^+_s)}\right),
\\
\label{eq:Application_2_polynomial_approximation}
\|u-p\|_{C(\bar B^+_{r_*})}
&\leq
C\left(\frac{r_*^2}{s^2} \|u\|_{C(\bar B^+_s)} + s \|A_0 u\|_{C(\bar B^+_s)}\right).
\end{align}
But
\[
\|p\|_{C(\bar B^+_{r_*})}
\leq C \frac{r_{*}}{q} \|p\|_{C(\bar B^+_q)},
\]
for some positive constant $C=C(d)$. We can then estimate
\begin{align*}
\|p\|_{C(\bar B^+_{r_*})}
&\leq C\frac{r_{*}}{q} \left(\|u-p\|_{C(\bar B^+_q)} + \|u\|_{C(\bar B^+_q)}\right),
\end{align*}
and using \eqref{eq:Application_1_polynomial_approximation} and the fact that $q<r_{*}$, we obtain
\begin{align}
\label{eq:Estimate_sup_p_on_ball_r_star}
\|p\|_{C(\bar B^+_{r_*})}
&\leq C\frac{r_{*}}{q}  \left(\frac{r_{*}^2}{s^2} \|u\|_{C(\bar B^+_s)} + s \|A_0 u\|_{C(\bar B^+_s)} + \|u\|_{C(\bar B^+_q)} \right).
\end{align}
From
\[
\|u\|_{C(\bar B^+_{r_*})} \leq \|u-p\|_{C(\bar B^+_{r_*})} + \|p\|_{C(\bar B^+_{r_*})},
\]
and \eqref{eq:Application_2_polynomial_approximation} and \eqref{eq:Estimate_sup_p_on_ball_r_star}, we see that
\begin{align*}
\|u\|_{C(\bar B^+_{r_*})}
&\leq C \left(\frac{r_*^2}{s^2} \|u\|_{C(\bar B^+_s)} + s \|A_0 u\|_{C(\bar B^+_s)}
+ \frac{r_{*}^3}{qs^2} \|u\|_{C(\bar B^+_s)} + \frac{r_{*}s}{q} \|A_0 u\|_{C(\bar B^+_s)} + \frac{r_{*}}{q}\|u\|_{C(\bar B^+_q)}\right)
\\
&\leq C \left(\frac{r_*^2}{s^2} \|u\|_{C(\bar B^+_s)} + \frac{r_{*}}{q}\|u\|_{C(\bar B^+_q)}+ \frac{r_{*}s}{q} \|A_0 u\|_{C(\bar B^+_s)} \right),
\end{align*}
where we have used the fact that $q < r_{*} <s$ to obtain the last inequality. We divide by $r_{*}^{1+\alpha}$ and find that
\begin{align*}
\frac{\|u\|_{C(\bar B^+_{r_*})}}{r_{*}^{1+\alpha}}
&\leq C \left( \left(\frac{r_{*}}{s}\right)^{1-\alpha} \frac{\|u\|_{C(\bar B^+_s)}}{s^{1+\alpha}}
+\left(\frac{q}{r_{*}}\right)^{\alpha} \frac{\|u\|_{C(\bar B^+_q)}}{q^{1+\alpha}}
+\frac{s}{q} \left(\frac{s}{r_{*}}\right)^{\alpha} \frac{\|A_0 u\|_{C(\bar B^+_s)}}{s^{\alpha}}
\right).
\end{align*}
From the preceding inequality and definitions \eqref{defn:Definition_Q} of $Q$ and \eqref{defn:Definition_S} of $S$, we deduce that
\begin{align*}
SQ \leq C \left(\left(\frac{r_{*}}{s}\right)^{1-\alpha} + \left(\frac{q}{r_{*}}\right)^{\alpha}\right) SQ + C \frac{s}{q} \left(\frac{s}{r_{*}}\right)^{\alpha} Q,
\end{align*}
By choosing $r_{*}/s$ and $q/r_{*}$ small enough, we obtain a bound on $S$ depending only on $C=C(b_0,d,\lambda_0,\Lambda)$. Hence, the estimate \eqref{eq:Consequence_polynomial_approximation} now follows.
\end{proof}

We apply Proposition \ref{prop:Consequence_polynomial_approximation} to $R^0_1 u:= u - T^0_1 u$. Note that $A_0 T^0_1 u = (A_0 u) (0)$ and so
\[
A_0\left(u - T^0_1 u\right) = A_0 u -(A_0 u)(0) = R^0_0 A_0 u,
\]
because $x_dD^2u=0$ on $\partial\HH$ and the zeroth-order coefficient of $A_0$ is zero. Thus, Proposition \ref{prop:Consequence_polynomial_approximation} yields the following analogue of \cite[Corollary I.7.2]{DaskalHamilton1998}.

\begin{cor}
\label{cor:Consequence_polynomial_approximation_2}
Assume that $A$ in \eqref{eq:defnA} obeys Hypothesis \ref{hyp:ConstantCoefficients}.
For any $\alpha \in (0,1)$, there is a positive constant, $S=S(b_0,d,\lambda_0,\Lambda)$, such that for any $u \in C^{\infty}(\bar B^+_1)$ we have
\begin{equation*}
\sup_{0<r \leq 1} \frac{\|R^0_1 u\|_{C(\bar B^+_r)}}{r^{1+\alpha}}
\leq
S\left(\|R^0_1 u\|_{C(\bar B^+_1)} + \sup_{0<r \leq 1} \frac{\|R^0_0 A_0 u\|_{C(\bar B^+_r)}}{r^{\alpha}} \right).
\end{equation*}
\end{cor}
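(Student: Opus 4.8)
The plan is to apply Proposition \ref{prop:Consequence_polynomial_approximation} directly to the function $v := R^0_1 u = u - T^0_1 u$, after checking the two facts it requires: that $v$ satisfies the normalization hypothesis $T^0_1 v = 0$, and that $A_0 v$ equals $R^0_0 A_0 u$.

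First, since $u \in C^\infty(\bar B^+_1)$ and $T^0_1 u$ is a polynomial, $v \in C^\infty(\bar B^+_1)$. Writing $T^0_1 u(x) = u(0) + Du(0)\cdot x$, we get $v(0) = 0$ and $Dv(0) = 0$, so $T^0_1 v = 0$ and Proposition \ref{prop:Consequence_polynomial_approximation} applies to $v$.

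Second, I compute $A_0 v$. Because $T^0_1 u$ is affine, $D^2(T^0_1 u) \equiv 0$, so \eqref{eq:defnA_0} gives $A_0(T^0_1 u) = -b\cdot D(T^0_1 u) = -b\cdot Du(0)$, a constant. Evaluating \eqref{eq:defnA_0} at the origin, where $x_d = 0$ and $D^2 u(0)$ is finite since $u \in C^\infty(\bar B^+_1)$, gives $(A_0 u)(0) = -b\cdot Du(0)$ as well. Hence $A_0(T^0_1 u) = (A_0 u)(0)$, and therefore
$$A_0 v = A_0 u - A_0(T^0_1 u) = A_0 u - (A_0 u)(0) = R^0_0 A_0 u,$$
since the degree-zero Taylor polynomial of $A_0 u$ at the origin is the constant $(A_0 u)(0)$. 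It is essential here that $A_0$ carries no zeroth-order coefficient, so that the constant term $u(0)$ of $T^0_1 u$ is annihilated and contributes nothing to $A_0 v$. Substituting $v = R^0_1 u$ and $A_0 v = R^0_0 A_0 u$ into the estimate \eqref{eq:Consequence_polynomial_approximation} yields exactly the asserted inequality, with the same constant $S = S(b_0,d,\lambda_0,\Lambda)$.

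There is no serious obstacle in this argument; the only point that needs a moment's care is the identity $A_0(u - T^0_1 u) = R^0_0 A_0 u$, which rests on the degenerate second-order term $x_d\tr(aD^2 u)$ vanishing at $x_d = 0$ — automatic here because $D^2 u$ is continuous up to $\partial\HH$ — together with the absence of a zeroth-order term in $A_0$.
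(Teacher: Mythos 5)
Your proof is correct and follows the same route the paper takes: apply Proposition \ref{prop:Consequence_polynomial_approximation} to $v := R^0_1 u$ after verifying $T^0_1 v = 0$ and $A_0 v = R^0_0 A_0 u$, with the latter resting exactly on the two facts you highlight — vanishing of the degenerate second-order term at $x_d=0$ and the absence of a zeroth-order term in $A_0$. Your write-up is a bit more explicit than the paper's (which treats $T^0_1 v = 0$ as immediate), but the substance is identical.
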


Using the inequality \eqref{eq:SimpleEuclidLessCycloidDistance},
$$
|x| \leq 2s^2(x, 0), \quad \forall\, x\in \HH,
$$
where we recall that the cycloidal distance function, $s(x^1,x^2)$ for all $x^1,x^2\in\bar\HH$, is given by \eqref{eq:Cycloidal_distance}, we see that there is a positive constant, $C=C(\alpha,d)$, such that
\begin{align*}
\sup_{0<r\leq 1} \frac{\|R^0_0 A_0 u\|_{C(\bar B^+_r)}}{r^{\alpha}}
&\leq C \sup_{0<r\leq 1}
\left\|\frac{A_0 u(x)-A_0u(0)}{s^{2\alpha}(x,0)}\right\|_{C(\bar B^+_r)}\\
&\leq C\left[A_0u\right]_{C^{2\alpha}_s(\bar B^+_1)}.
\end{align*}
Therefore, Corollary \ref{cor:Consequence_polynomial_approximation_2} gives us the following partial analogue of \cite[Corollary I.7.5]{DaskalHamilton1998}.

\begin{cor}
\label{cor:Consequence_polynomial_approximation_3}
Assume that $A$ in \eqref{eq:defnA} obeys Hypothesis \ref{hyp:ConstantCoefficients}.
For any $\alpha \in (0,1)$, there is a positive constant, $C=C(\alpha,b_0,d,\lambda_0,\Lambda)$, such that for any $u \in C^{\infty}(\bar B^+_1)$ and $0<r\leq 1$, we have
\begin{equation*}
\|R^0_1 u\|_{C(\bar B^+_r)}
\leq
C r^{1+\alpha/2} \left(\|R^0_1 u\|_{C(\bar B^+_1)} + \left[A_0 u\right]_{C^{\alpha}_s(\bar B^+_1)}\right).
\end{equation*}
\end{cor}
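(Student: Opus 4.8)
The plan is to obtain Corollary \ref{cor:Consequence_polynomial_approximation_3} as a quick consequence of Corollary \ref{cor:Consequence_polynomial_approximation_2}, the only additional input being the elementary estimate $s(x,0)\leq|x|^{1/2}$ from \eqref{eq:CycloidLessEuclidDistance}, used after halving the H\"older exponent. The reason for passing from $\alpha$ to $\alpha/2$ is that Corollary \ref{cor:Consequence_polynomial_approximation_2}, read at exponent $\alpha/2$, produces the scaling factor $r^{1+\alpha/2}$ on the left-hand side and controls the remainder by the quantity $\sup_{0<r\leq 1}r^{-\alpha/2}\|R^0_0 A_0 u\|_{C(\bar B^+_r)}$, which is in turn dominated by the cycloidal seminorm $[A_0 u]_{C^\alpha_s(\bar B^+_1)}$ of exponent $\alpha$ rather than $2\alpha$; this is precisely what makes the resulting estimate valid for \emph{every} $\alpha\in(0,1)$, whereas applying Corollary \ref{cor:Consequence_polynomial_approximation_2} directly at exponent $\alpha$ would only be meaningful when $\alpha<1/2$.

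Concretely, I would first apply Corollary \ref{cor:Consequence_polynomial_approximation_2} with $\alpha$ replaced by $\alpha/2\in(0,1)$, giving a constant $S$, of the form admitted in that corollary, such that for all $u\in C^\infty(\bar B^+_1)$,
\[
\sup_{0<r\leq 1}\frac{\|R^0_1 u\|_{C(\bar B^+_r)}}{r^{1+\alpha/2}}
\leq S\left(\|R^0_1 u\|_{C(\bar B^+_1)}+\sup_{0<r\leq 1}\frac{\|R^0_0 A_0 u\|_{C(\bar B^+_r)}}{r^{\alpha/2}}\right).
\]
Next I would estimate the last term. By definition of the degree-zero Taylor remainder, $R^0_0 A_0 u=A_0 u-A_0 u(0)$; for $0<r\leq 1$ and $x\in\bar B^+_r$ we have $|x|\leq r$, hence $s(x,0)\leq|x|^{1/2}\leq r^{1/2}$ by \eqref{eq:CycloidLessEuclidDistance}, so that
\[
|R^0_0 A_0 u(x)|\leq[A_0 u]_{C^\alpha_s(\bar B^+_1)}\,s^\alpha(x,0)\leq[A_0 u]_{C^\alpha_s(\bar B^+_1)}\,r^{\alpha/2}.
\]
Taking the supremum over $x\in\bar B^+_r$ and then over $0<r\leq 1$ shows that the last term in the first display is at most $[A_0 u]_{C^\alpha_s(\bar B^+_1)}$. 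Substituting this bound into the first display, and then multiplying through by $r^{1+\alpha/2}$ for an arbitrary fixed $r\in(0,1]$, yields the claimed inequality with $C=S=C(\alpha,b_0,d,\lambda_0,\Lambda)$.

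I do not anticipate a genuine obstacle here: all of the real analysis has already been carried out in Propositions \ref{prop:Polynomial_approximation} and \ref{prop:Consequence_polynomial_approximation} and in Corollary \ref{cor:Consequence_polynomial_approximation_2}, and what remains is a one-line change of H\"older exponent combined with \eqref{eq:CycloidLessEuclidDistance}. The only points requiring a little care are the bookkeeping of the constant (Corollary \ref{cor:Consequence_polynomial_approximation_2} is invoked at exponent $\alpha/2$, so the resulting constant should be recorded as possibly $\alpha$-dependent, consistent with the claimed $C=C(\alpha,b_0,d,\lambda_0,\Lambda)$) and the observation that no further structural assumption on $u$ is needed at this step — in particular not $x_dD^2u=0$ on $\partial\HH$, which entered earlier only when identifying $A_0(u-T^0_1 u)$ with $R^0_0 A_0 u$ — since here $R^0_0 A_0 u$ is used purely as the degree-zero Taylor remainder of $A_0 u$ at the origin.
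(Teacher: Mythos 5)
Your proposal is correct and follows the paper's own argument: apply Corollary \ref{cor:Consequence_polynomial_approximation_2} at the halved exponent $\alpha/2$ and dominate $\sup_{0<r\leq 1}r^{-\alpha/2}\|R^0_0 A_0 u\|_{C(\bar B^+_r)}$ by $[A_0 u]_{C^\alpha_s(\bar B^+_1)}$ via $s^2(x,0)\leq|x|\leq r$ on $\bar B^+_r$. (A small point in your favor: you correctly invoke \eqref{eq:CycloidLessEuclidDistance}, which is the direction actually needed, whereas the paper's text cites \eqref{eq:SimpleEuclidLessCycloidDistance} at this juncture; and your observation about why the exponent must be halved is a useful gloss that the paper leaves implicit.)
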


Next, we improve the estimate in Corollary \ref{cor:Consequence_polynomial_approximation_3} with the following analogue of \cite[Theorems I.7.3 and I.7.6]{DaskalHamilton1998}.

\begin{prop}
\label{prop:Taylor_polynomial_estimate}
Assume that $A$ in \eqref{eq:defnA} obeys Hypothesis \ref{hyp:ConstantCoefficients}.
For any $\alpha \in (0,1)$, there is a positive constant, $C=C(\alpha,b_0,d,\lambda_0,\Lambda)$, such that for any $u \in C^{\infty}(\bar B^+_1)$, we have
\begin{equation*}
\|T^0_1 u\|_{C(\bar B^+_1)}
\leq
C \left(\|u\|_{C(\bar B^+_1)} + \left[A_0 u\right]_{C^{\alpha}_s(\bar B^+_1)}\right).
\end{equation*}
\end{prop}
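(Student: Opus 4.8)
The plan is to reduce the estimate to a bound on the single number $|Du(0)|$ and then extract it from the remainder estimate already available in Corollary~\ref{cor:Consequence_polynomial_approximation_3}. Since $T^0_1 u(x) = u(0) + Du(0)\cdot x$ and $|x|\le 1$ on $\bar B^+_1$, we have $\|T^0_1 u\|_{C(\bar B^+_1)} \le |u(0)| + |Du(0)|$, and $|u(0)|\le \|u\|_{C(\bar B^+_1)}$ is trivial; so it suffices to produce a bound $|Du(0)| \le C(\|u\|_{C(\bar B^+_1)} + [A_0u]_{C^\alpha_s(\bar B^+_1)})$.

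First I would record an elementary lower bound for the homogeneous linear part $p(x) := Du(0)\cdot x$ on a half-ball. Writing $v := Du(0)$, some coordinate satisfies $|v_i| \ge |v|/\sqrt d$; evaluating $p$ at the point $\pm r e_i$ (sign chosen so the value is $r|v_i|$; note $re_d \in \bar B^+_r$, so this works even when the normal component of $Du(0)$ dominates) gives
\[
\|p\|_{C(\bar B^+_r)} \ge \frac{r}{\sqrt d}\,|Du(0)|, \qquad 0<r\le 1 .
\]
On the other hand, from the identity $p = u - u(0) - R^0_1 u$ and $\bar B^+_r\subseteq \bar B^+_1$,
\[
\|p\|_{C(\bar B^+_r)} \le 2\|u\|_{C(\bar B^+_1)} + \|R^0_1 u\|_{C(\bar B^+_r)},
\]
while directly from the definition of $R^0_1 u$ and $|x|\le 1$ on $\bar B^+_1$,
\[
\|R^0_1 u\|_{C(\bar B^+_1)} \le 2\|u\|_{C(\bar B^+_1)} + |Du(0)| .
\]

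Next I would feed the last display into Corollary~\ref{cor:Consequence_polynomial_approximation_3}, obtaining a constant $C_1=C_1(\alpha,b_0,d,\lambda_0,\Lambda)$ with
\[
\|R^0_1 u\|_{C(\bar B^+_r)} \le C_1 r^{1+\alpha/2}\left(2\|u\|_{C(\bar B^+_1)} + |Du(0)| + [A_0 u]_{C^\alpha_s(\bar B^+_1)}\right), \qquad 0<r\le 1 ,
\]
and combining with the lower bound for $\|p\|_{C(\bar B^+_r)}$ yields
\[
r\left(\tfrac{1}{\sqrt d} - C_1 r^{\alpha/2}\right)|Du(0)| \le \left(2+2C_1 r^{1+\alpha/2}\right)\|u\|_{C(\bar B^+_1)} + C_1 r^{1+\alpha/2}\,[A_0 u]_{C^\alpha_s(\bar B^+_1)} .
\]
The one genuinely substantive step is then the standard absorption argument: fix the radius $r=\bar r := \min\{1,(2\sqrt d\,C_1)^{-2/\alpha}\}$, which depends only on $\alpha,b_0,d,\lambda_0,\Lambda$, so that $C_1\bar r^{\alpha/2}\le \tfrac{1}{2\sqrt d}$ and the coefficient of $|Du(0)|$ on the left is $\ge \bar r/(2\sqrt d)>0$; dividing gives $|Du(0)| \le C_2(\|u\|_{C(\bar B^+_1)} + [A_0 u]_{C^\alpha_s(\bar B^+_1)})$, whence $\|T^0_1 u\|_{C(\bar B^+_1)} \le |u(0)|+|Du(0)| \le (1+C_2)(\|u\|_{C(\bar B^+_1)} + [A_0 u]_{C^\alpha_s(\bar B^+_1)})$. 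I expect no real obstacle beyond bookkeeping of constants; the only points needing a moment's care are the absorption choice of $\bar r$ and verifying that the half-ball geometry does not spoil the linear lower bound $\|p\|_{C(\bar B^+_r)} \gtrsim r|Du(0)|$, which the coordinate-point evaluation above handles.
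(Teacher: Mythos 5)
Your proof is correct and follows essentially the same strategy as the paper's: bound the degree-one polynomial $T^0_1 u$ (equivalently, $|Du(0)|$) on a small half-ball $\bar B^+_r$, upper-bound the remainder $R^0_1 u$ on $\bar B^+_r$ via Corollary~\ref{cor:Consequence_polynomial_approximation_3}, and then absorb the polynomial term by choosing $r$ small. The only cosmetic difference is that the paper works directly with the quantity $\|T^0_1 u\|_{C(\bar B^+_1)}$ via the inequality $\|T^0_1 u\|_{C(\bar B^+_1)} \leq (C/r)\,\|T^0_1 u\|_{C(\bar B^+_r)}$ for degree-one polynomials, whereas you extract $|Du(0)|$ explicitly via the coordinate-point evaluation on the half-ball; both implement the same rearrangement argument.
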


\begin{proof}
Because $T^0_1$ is a degree-one polynomial, there is a positive constant, $C=C(d)$, such that
\[
\|T^0_1 u\|_{C(\bar B^+_1)} \leq \frac{C}{r} \|T^0_1 u\|_{C(\bar B^+_r)},\quad\forall\, r \in(0,1].
\]
By Corollary \ref{cor:Consequence_polynomial_approximation_3}, we have, for all $r \in(0,1]$,
\begin{align*}
\|R^0_1 u\|_{C(\bar B^+_r)}
\leq
C r^{1+\alpha/2} \left(\|u\|_{C(\bar B^+_1)} + \|T^0_1 u\|_{C(\bar B^+_1)} + \left[A_0 u\right]_{C^{\alpha}_s(\bar B^+_1)}\right).
\end{align*}
By combining the preceding two inequalities, we find that
\begin{align*}
\|T^0_1 u\|_{C(\bar B^+_1)}
&\leq \frac{C}{r} \left(\|u\|_{C(\bar B^+_1)} + \|R^0_1 u\|_{C(\bar B^+_1)}\right)\\
&\leq \left(\frac{C}{r} + C r^{\alpha/2} \right) \|u\|_{C(\bar B^+_1)} + C r^{\alpha/2} \|T^0_1 u\|_{C(\bar B^+_1)}+ C r^{\alpha/2}\left[A_0 u\right]_{C^{\alpha}_s(\bar B^+_1)}.
\end{align*}
By choosing $r$ small enough so that $Cr^{\alpha/2}\leq1/2$, we obtain the conclusion.
\end{proof}

Proposition \ref{prop:Taylor_polynomial_estimate} implies the following special case ($r=1$) of \cite[Corollary I.7.8]{DaskalHamilton1998}.

\begin{cor}
\label{cor:u_x_u_y_0_0_remainder_estimates}
Assume that $A$ in \eqref{eq:defnA} obeys Hypothesis \ref{hyp:ConstantCoefficients}.
For any $\alpha \in (0,1)$, there is a positive constant, $C=C(\alpha,b_0,d,\lambda_0,\Lambda)$, such that for any $u \in C^{\infty}(\bar B^+_1)$, we have
\begin{equation*}
\|R^0_1 u\|_{C(\bar B^+_1)}
\leq
C \left(\|u\|_{C(\bar B^+_1)} + \left[A_0 u\right]_{C^{\alpha}_s(\bar B^+_1)}\right).
\end{equation*}
\end{cor}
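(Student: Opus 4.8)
The plan is to derive this immediately from Proposition \ref{prop:Taylor_polynomial_estimate} together with the triangle inequality, since $R^0_1 u = u - T^0_1 u$ by definition.

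First, I would write $\|R^0_1 u\|_{C(\bar B^+_1)} = \|u - T^0_1 u\|_{C(\bar B^+_1)} \leq \|u\|_{C(\bar B^+_1)} + \|T^0_1 u\|_{C(\bar B^+_1)}$. Next, I would invoke Proposition \ref{prop:Taylor_polynomial_estimate}, which (under Hypothesis \ref{hyp:ConstantCoefficients}) bounds $\|T^0_1 u\|_{C(\bar B^+_1)}$ by $C(\alpha,b_0,d,\lambda_0,\Lambda)\bigl(\|u\|_{C(\bar B^+_1)} + [A_0 u]_{C^\alpha_s(\bar B^+_1)}\bigr)$. Substituting this into the previous display and absorbing the coefficient of $\|u\|_{C(\bar B^+_1)}$ into a single (larger) constant $C=C(\alpha,b_0,d,\lambda_0,\Lambda)$ yields the asserted estimate.

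There is essentially no obstacle here: the statement is a formal consequence of the preceding proposition, and the only bookkeeping is that the constant may grow by one in the coefficient of $\|u\|_{C(\bar B^+_1)}$, which is harmless since the constant is allowed to depend on the same parameters. The substantive content — controlling the first-order Taylor data of $u$ at the origin by the sup-norm of $u$ and the $C^\alpha_s$-seminorm of $A_0 u$ — has already been established in Proposition \ref{prop:Taylor_polynomial_estimate}, which in turn rests on Corollary \ref{cor:Consequence_polynomial_approximation_3} and hence on the polynomial approximation result of Proposition \ref{prop:Polynomial_approximation}.
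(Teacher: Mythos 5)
Your proof is correct and takes the same route the paper intends: the paper states that Corollary \ref{cor:u_x_u_y_0_0_remainder_estimates} follows from Proposition \ref{prop:Taylor_polynomial_estimate}, and your triangle-inequality deduction via $R^0_1 u = u - T^0_1 u$ is exactly the intended one-line argument.
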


Corollaries \ref{cor:Consequence_polynomial_approximation_3} and \ref{cor:u_x_u_y_0_0_remainder_estimates}  yield the following analogue of \cite[Corollary I.7.8]{DaskalHamilton1998}.

\begin{cor}
\label{cor:Remainder_estimate_2}
Assume that $A$ in \eqref{eq:defnA} obeys Hypothesis \ref{hyp:ConstantCoefficients}.
For any $\alpha \in (0,1)$, there is a positive constant, $C=C(\alpha,b_0,d,\lambda_0,\Lambda)$, such that for any $u \in C^{\infty}(\bar B^+_1)$ and $0<r\leq 1$, we have
\begin{equation*}
\|R^0_1 u\|_{C(\bar B^+_r)}
\leq
C r^{1+\alpha/2} \left(\|u\|_{C(\bar B^+_1)} + \left[A_0 u\right]_{C^{\alpha}_s(\bar B^+_1)}\right).
\end{equation*}
\end{cor}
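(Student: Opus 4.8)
The plan is to chain together the two preceding corollaries. By Corollary \ref{cor:Consequence_polynomial_approximation_3}, applied to $u \in C^{\infty}(\bar B^+_1)$, for every $r \in (0,1]$ we have
\begin{equation*}
\|R^0_1 u\|_{C(\bar B^+_r)}
\leq
C r^{1+\alpha/2}\left(\|R^0_1 u\|_{C(\bar B^+_1)} + \left[A_0 u\right]_{C^{\alpha}_s(\bar B^+_1)}\right),
\end{equation*}
where $C=C(\alpha,b_0,d,\lambda_0,\Lambda)$. This already has the desired form, except that the right-hand side carries $\|R^0_1 u\|_{C(\bar B^+_1)}$ rather than $\|u\|_{C(\bar B^+_1)}$.

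I would then invoke Corollary \ref{cor:u_x_u_y_0_0_remainder_estimates}, which controls precisely that quantity:
\begin{equation*}
\|R^0_1 u\|_{C(\bar B^+_1)}
\leq
C\left(\|u\|_{C(\bar B^+_1)} + \left[A_0 u\right]_{C^{\alpha}_s(\bar B^+_1)}\right).
\end{equation*}
Substituting this bound into the previous inequality and absorbing constants (using that $r^{1+\alpha/2}\leq 1$ for $r\in(0,1]$, so the $\left[A_0 u\right]_{C^{\alpha}_s}$ contributions combine cleanly) yields
\begin{equation*}
\|R^0_1 u\|_{C(\bar B^+_r)}
\leq
C r^{1+\alpha/2}\left(\|u\|_{C(\bar B^+_1)} + \left[A_0 u\right]_{C^{\alpha}_s(\bar B^+_1)}\right),
\end{equation*}
which is the assertion, with $C=C(\alpha,b_0,d,\lambda_0,\Lambda)$.

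There is essentially no obstacle here: this is a purely formal combination of two results already established in Section \ref{sec:PolynomialApproximation}. The only bookkeeping point worth flagging is that all the constants depend only on $\alpha,b_0,d,\lambda_0,\Lambda$ (and not on $r$ or $r_0$), which is immediate from the corresponding dependencies in Corollaries \ref{cor:Consequence_polynomial_approximation_3} and \ref{cor:u_x_u_y_0_0_remainder_estimates}; hence the final constant inherits the same dependence.
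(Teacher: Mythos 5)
Your argument is correct and is precisely what the paper intends: the paper introduces Corollary \ref{cor:Remainder_estimate_2} with the sentence ``Corollaries \ref{cor:Consequence_polynomial_approximation_3} and \ref{cor:u_x_u_y_0_0_remainder_estimates} yield the following analogue\ldots'', i.e.\ the same two-step chaining you carried out, and your constant bookkeeping is accurate.
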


\section{Schauder estimates away from the degenerate boundary}
\label{sec:Schauder_estimates_interior}
In this section, we use a scaling argument to obtain elliptic Schauder estimates away from the degenerate boundary analogous to the parabolic versions of those estimates in \cite[\S I.8]{DaskalHamilton1998}. Our argument is shorter because we only aim to obtain the estimates in Lemma \ref{lem:Interior_estimates_Holder_seminorms_Du_yD2u} and Corollary \ref{cor:Interior_remainder_estimate}. Even though these estimates are weaker than their analogues \cite[Corollary I.8.7]{DaskalHamilton1998} and \cite[Corollary I.8.8]{DaskalHamilton1998}, respectively, they are sufficient to obtain the full Schauder estimate \eqref{eq:Holder_estimate_local} in Theorem \ref{thm:Holder_estimate_local}. The estimate \eqref{eq:Holder_estimate_local} is proved using a combination of the Schauder estimate on balls $B_r(x_0) \Subset \HH$ which we prove in this section, and the results of Section \ref{sec:Boundary_Schauder_estimates}.
The proof of Proposition \ref{prop:Estimate_D2u_at_Q_r} uses Corollary \ref{cor:Interior_remainder_estimate}, which
is derived from Lemma \ref{lem:Interior_estimates_Holder_seminorms_Du_yD2u}. We have encountered a similar situation in the proof of H\"older continuity along $\partial\HH$ of a weak solution to the Heston elliptic equation in \cite[Theorem 1.11]{Feehan_Pop_regularityweaksoln}. Throughout this section, we continue to assume Hypothesis \ref{hyp:ConstantCoefficients} and so the coefficients, $a,b,c$, of the operator $A$ in \eqref{eq:defnA} and the coefficients, $a,b$, of the operator $A_0$ in \eqref{eq:defnA_0} are constant.

For any $r>0$, we let $Q_r$ denote the point $r e_d \in \HH$. We have the following analogue of \cite[Corollary I.8.7]{DaskalHamilton1998}.

\begin{lem}
\label{lem:Interior_estimates_Holder_seminorms_Du_yD2u}
Assume that $A$ in \eqref{eq:defnA} obeys Hypothesis \ref{hyp:ConstantCoefficients}.
For any $\alpha\in(0,1)$ and positive constants $\mu$ and $\lambda$ such that $0<\mu<\lambda<1$, there is a positive constant $C=C(\alpha,d, \lambda,\lambda_0,\Lambda,\mu)$, such that the following holds. For any function $u \in C^{\infty}(\bar B_{\lambda r}(Q_r))$, we have
\begin{equation}
\label{eq:Interior_estimates_seminorm}
\begin{aligned}
\left[Du\right]_{C^{\alpha}_s(\bar B_{\mu r}(Q_r))}
+ \left[x_dD^2 u\right]_{C^{\alpha}_s(\bar B_{\mu r}(Q_r))}
&\leq
C\left(\frac{1}{r^{1+\alpha/2}} \|u\|_{C(\bar B_{\lambda r}(Q_r))}\right.\\
&\qquad\left. + \frac{1}{r^{\alpha/2}} \|A_0 u\|_{C(\bar B_{\lambda r}(Q_r))}
+ \left[A_0 u\right]_{C^{\alpha}_s(\bar B_{\lambda r}(Q_r))}\right).
\end{aligned}
\end{equation}
\end{lem}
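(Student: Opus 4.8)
The plan is to reduce the estimate on the ball $B_{\mu r}(Q_r)$ to the unit-scale situation by an affine rescaling, and then invoke the interior Schauder theory for strictly elliptic operators from \cite[Chapter 6]{GilbargTrudinger}, since away from $\partial\HH$ the operator $A_0$ is uniformly elliptic. First I would set $r=1$, so we work on $B_\lambda(Q_1)=B_\lambda(e_d)$, which lies in the region $\{1-\lambda<x_d<1+\lambda\}$; on this ball the coefficient matrix $x_d a$ of $D^2$ in $A_0$ is uniformly elliptic with ellipticity constants depending only on $\lambda,\lambda_0,\Lambda$, and the coefficients of $A_0$ are constant. The key point is to relate the weighted $C^\alpha_s$ seminorms to the ordinary Euclidean $C^{\alpha/2}$ seminorms: from \eqref{eq:CycloidLessEuclidDistance} one has $s(x^1,x^2)\le|x^1-x^2|^{1/2}$, so $[\cdot]_{C^\alpha_s}\le[\cdot]_{C^{\alpha/2}}$ on any subset; conversely, since $x_d\ge 1-\lambda>0$ on $B_\lambda(e_d)$, the cycloidal distance $s(x^1,x^2)^2=|x^1-x^2|^2/(x^1_d+x^2_d+|x^1-x^2|)$ is comparable to $|x^1-x^2|$ with constants depending on $\lambda$, so in fact $[\cdot]_{C^\alpha_s(\bar B_\lambda(e_d))}$ and $[\cdot]_{C^{\alpha/2}(\bar B_\lambda(e_d))}$ are equivalent up to constants depending on $\lambda$ alone. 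This lets me trade the entire statement for an ordinary interior Schauder estimate with Hölder exponent $\alpha/2$.

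Next I would apply the standard interior Schauder estimate \cite[Theorem 6.2 or Corollary 6.3]{GilbargTrudinger} to $A_0 u=f$ on the ball $B_\lambda(e_d)$ with interior ball $B_\mu(e_d)$: for a strictly elliptic operator with constant coefficients this gives
\begin{equation*}
\|u\|_{C^{2,\alpha/2}(\bar B_\mu(e_d))}\le C\left(\|u\|_{C(\bar B_\lambda(e_d))}+\|f\|_{C^{\alpha/2}(\bar B_\lambda(e_d))}\right),
\end{equation*}
with $C=C(\alpha,d,\lambda,\lambda_0,\Lambda,\mu)$. In particular $[Du]_{C^{\alpha/2}(\bar B_\mu(e_d))}$ and $[D^2u]_{C^{\alpha/2}(\bar B_\mu(e_d))}$ are controlled by the right-hand side; since $x_d$ is smooth and bounded above and below on this region, $[x_d D^2 u]_{C^{\alpha/2}}$ is likewise controlled (one expands $x_d D^2u$ and uses the product rule for Hölder seminorms). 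Converting back via the norm equivalence of the previous paragraph yields \eqref{eq:Interior_estimates_seminorm} in the case $r=1$, except that the right-hand side currently carries $\|f\|_{C^{\alpha/2}}=\|f\|_{C(\bar B_\lambda(e_d))}+[f]_{C^{\alpha/2}(\bar B_\lambda(e_d))}$; after the equivalence this is bounded by $\|A_0u\|_{C(\bar B_\lambda(e_d))}+[A_0u]_{C^\alpha_s(\bar B_\lambda(e_d))}$, matching the claimed form (with $r^{\pm\alpha/2}$ factors equal to $1$).

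Finally I would restore the $r$-dependence by the dilation $x\mapsto rx$. Setting $\tilde u(x):=u(rx)$ for $x\in B_\lambda(e_d)$, one has $(A_0\tilde u)(x)=r(A_0u)(rx)$, exactly as in the rescaling step of the proof of Proposition \ref{prop:Polynomial_approximation}. Applying the $r=1$ estimate to $\tilde u$ and tracking the scaling of each seminorm --- a first derivative scales by $r$, a second by $r^2$, the weight $x_d$ by $r$, and the cycloidal seminorm $[\cdot]_{C^\alpha_s}$ picks up a factor $r^{\alpha/2}$ because $s(rx^1,rx^2)=r^{1/2}s(x^1,x^2)\cdot(1+O(\text{stuff}))$ — more carefully, on the relevant region $s^2\asymp r|x^1-x^2|$ so $[\cdot]_{C^\alpha_s}$ scales like $r^{-\alpha/2}$ times the Euclidean $C^{\alpha/2}$ seminorm of the original function — produces precisely the powers $r^{-1-\alpha/2}$, $r^{-\alpha/2}$, and $r^0$ in front of the three terms on the right of \eqref{eq:Interior_estimates_seminorm}. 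The main obstacle I anticipate is bookkeeping: getting the scaling weights of the cycloidal seminorms right (they are \emph{not} the naive Euclidean powers, because $s$ itself is not homogeneous of degree $1/2$ under dilation unless one is near $\partial\HH$), and confirming that all the constants genuinely depend only on $\alpha,d,\lambda,\lambda_0,\Lambda,\mu$ and not on $r$. Everything else is a routine translation between weighted and unweighted Hölder norms on a region bounded away from the degenerate boundary, where $A_0$ is honestly uniformly elliptic.
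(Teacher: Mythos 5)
Your high-level strategy matches the paper's: blow up $B_{\lambda r}(Q_r)$ to a unit-size ball away from $\partial\HH$, where $A_0$ is uniformly elliptic, apply the standard interior Schauder estimate, and then translate between Euclidean and cycloidal H\"older seminorms. The problem is the norm equivalence you use to do the translation, and it is load-bearing. At unit scale ($r=1$), on $B_\lambda(e_d)$ both $x^1_d$ and $x^2_d$ lie in $(1-\lambda,1+\lambda)$ and $|x^1-x^2|<2\lambda$, so the denominator $x^1_d+x^2_d+|x^1-x^2|$ in \eqref{eq:Cycloidal_distance} is bounded above and below by positive constants depending only on $\lambda$; hence $s(x^1,x^2)$ is comparable to $|x^1-x^2|$ there, \emph{not} to $|x^1-x^2|^{1/2}$, and $[\,\cdot\,]_{C^\alpha_s(\bar B_\lambda(e_d))}$ is equivalent to the Euclidean seminorm $[\,\cdot\,]_{C^\alpha(\bar B_\lambda(e_d))}$, not to $[\,\cdot\,]_{C^{\alpha/2}}$. (Your deduction is also reversed: $s\le|x^1-x^2|^{1/2}$ gives $[\,\cdot\,]_{C^\alpha_s}\ge[\,\cdot\,]_{C^{\alpha/2}}$, not $\le$, so the one-sided bound you do have goes the wrong way to close the argument.) Consequently you must invoke the interior Schauder estimate at exponent $\alpha$ --- as the paper does via \cite[Theorem 7.1.1]{Krylov_LecturesHolder} --- not $\alpha/2$: a $C^{\alpha/2}$-Schauder bound controls only $[D^2 v]_{C^{\alpha/2}}$, a strictly weaker seminorm than the $[D^2v]_{C^\alpha}$ control you actually need in order to recover $[x_dD^2u]_{C^\alpha_s}$.

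The same slip recurs in your rescaling step. On $B_{\lambda r}(Q_r)$ one has $x^1_d+x^2_d+|x^1-x^2|$ comparable to $r$, so $s^2$ is comparable to $|x^1-x^2|^2/r$, \emph{not} to $r\,|x^1-x^2|$ as you wrote. Writing $v(y)=u(x)$ with $x=(ry',r+ry_d)$, this gives $s(x^1,x^2)$ comparable to $r^{1/2}|y^1-y^2|$ and hence
\[
[u]_{C^\alpha_s(\bar B_{\lambda r}(Q_r))} \ \text{comparable to}\ r^{-\alpha/2}\,[v]_{C^\alpha(\bar B_\lambda)},
\]
with the Euclidean $\alpha$-seminorm of the rescaled function $v$. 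Feeding the $C^\alpha$-Schauder bound for $v$ (which picks up the extra factor $r$ from $A_0 v = r\tilde f$, as you correctly noted) through this relation reproduces precisely the powers $r^{-1-\alpha/2}$, $r^{-\alpha/2}$, $r^0$ in \eqref{eq:Interior_estimates_seminorm}; with your $\alpha/2$ version the coefficients come out as $r^{-1-\alpha}$, etc., and do not match the Lemma. Once the exponent is corrected to $\alpha$ throughout, your argument coincides with the paper's proof.
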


\begin{rmk}
The estimate in \cite[Corollary I.8.7]{DaskalHamilton1998} does not contain the term $\|A_0 u\|_{C(\bar B_{\lambda r}(Q_r))}$ appearing on the right-hand side of our interior estimate \eqref{eq:Interior_estimates_seminorm}. However, our estimate is sufficient to give the Schauder estimate \eqref{eq:Holder_estimate_local} in our Theorem \ref{thm:Holder_estimate_local}.
\end{rmk}

\begin{proof}[Proof of Lemma \ref{lem:Interior_estimates_Holder_seminorms_Du_yD2u}]
The result follows by rescaling. We denote $x=(ry',r+ry_d)\in\HH$, where we recall that we denote $y=(y',y_d)\in\HH=\RR^{d-1}\times\RR_+$, and define
\[
v(y) = u(x),\quad \forall\, y \in B_{\lambda}.
\]
By the hypothesis $u\in C^{\infty} (\bar B_{\lambda r}(Q_r))$, it follows that $v\in C^{\infty}(\bar B_{\lambda})$  and $v$ is a solution to the strictly elliptic equation,
\[
\frac{(1+y_d)}{2}a^{ij}v_{y_iy_j}(y)+b^iv_{y_i}(y)=r\tilde f(y), \quad\forall\, y=(y',y_d) \in B_{\lambda},
\]
where $\tilde f(y):=f(ry',r+ry_d)$, for all $y \in B_{\lambda}$, and $f:=A_0 u$. By the interior Schauder estimates \cite[Theorem 7.1.1]{Krylov_LecturesHolder}, there is a positive constant, $C=C(\alpha,d,\lambda,\lambda_0,\Lambda,\mu)$, such that
\begin{equation}
\label{eq:Interior_Schauder_estimate_D2v}
\|D^2 v\|_{C^{\alpha}(\bar B_{\mu})}
\leq C\left(\|v\|_{C(B_{\lambda})}+r\|\tilde f\|_{C^{\alpha}(\bar B_{\lambda})} \right).
\end{equation}
By direct calculation, we obtain
\begin{equation}
\label{eq:Estimates_after_rescaling}
\begin{aligned}
\|v\|_{C(\bar B_{\lambda})} &= \|u\|_{C(\bar B_{\lambda r}(Q_r))},\\
\|\tilde f\|_{C(\bar B_{\lambda})} &= \|f\|_{C(\bar B_{\lambda r}(Q_r))},\\
[\tilde f]_{C^{\alpha}(\bar B_{\lambda})} &\leq C r^{\alpha/2}\left[f\right]_{C^{\alpha}_s(\bar B_{\lambda r}(Q_r))},
\end{aligned}
\end{equation}
where $C=C(\alpha)$. To see the last inequality, recall that $x = (ry',r+ry_d)$, for all $(y',y_d) \in B_{\lambda}$. For any $y^i \in B_{\lambda}$, for $i=1,2$, we have
\begin{align*}
\frac{|\tilde f(y^1)-\tilde f(y^2)|}{|y^1-y^2|^{\alpha}} = \frac{|f(x^1)-f(x^2)|}{s^\alpha(x^1,x^2)} \frac{s^\alpha(x^1,x^2)}{|y^1-y^2|^{\alpha}}.
\end{align*}
By \eqref{eq:Cycloidal_distance}, we notice that
\begin{align*}
\frac{s(x^1,x^2)}{|y^1-y^2|} = \frac{r|y^1-y^2|}{\sqrt{r(2+y^1_d+y^2_d+|y^1-y^2|)}} \frac{1}{|y^1-y^2|} \leq \sqrt{\frac{r}{2}},
\end{align*}
and so, by letting $C=2^{-\alpha/2}$, we obtain
$$
[\tilde f]_{C^{\alpha}(\bar B_{\lambda})} \leq C r^{\alpha/2}\left[f\right]_{C^{\alpha}_s(\bar B_{\lambda r}(Q_r))}.
$$
We also claim that
\begin{equation}
\label{eq:Second_order_derivative_seminorm}
\left[x_d D^2 u\right]_{C^{\alpha}_s(\bar B_{\mu r}(Q_r))} \leq C r^{-(1+\alpha/2)}\|D^2v\|_{C^{\alpha}(\bar B_{\mu})},
\end{equation}
for a constant $C=C(\alpha,d)$. To establish \eqref{eq:Second_order_derivative_seminorm}, we only need to consider quotients of the form
$$
\frac{|x^1_d D^2 u(x^1)- x_d^2 D^2 u(x^2)|}{s^\alpha(x^1,x^2)},
$$
where $x^1,x^2 \in B_{\mu r}$, and all their coordinates coincide, except for the $i$-th one, where $i=1,\ldots,d$. We only consider the case when $i=d$, as all the other cases, $i=1,\ldots,d-1$, follow in the same way. Recall that we denote $x=(ry',r+ry_d)$, for all $y \in B_{\mu}$. We obtain
\begin{align*}
\frac{|x^1_d D^2 u(x^1)- x_d^2 D^2 u(x^2)|}{s^\alpha(x^1,x^2)}
&\leq \frac{|x^1_d -x^2_d|}{s^\alpha(x^1,x^2)} |D^2 u(x^1)|
+x^2_d\frac{|D^2 u(x^1)- D^2 u(x^2)|}{s^\alpha(x^1,x^2)}.
\end{align*}
Using the definition of the cycloidal distance function \eqref{eq:Cycloidal_distance}, and the fact that $D^2 u(x) = r^{-2} D^2v(y)$, for all $x \in B_{\mu r}$, we see that
\begin{align*}
\frac{|x^1_d D^2 u(x^1)- x_d^2 D^2 u(x^2)|}{s^\alpha(x^1,x^2)}
&\leq \left(2 x_d^1 + 2 x_d^2\right)^{\alpha/2}|x^1_d -x^2_d|^{1-\alpha} \frac{1}{r^2}|D^2 v(y^1)|\\
&\quad+x^2_d \frac{1}{r^2}\frac{|D^2 v(y^1)- D^2 v(y^2)|}{|y^1-y^2|^{\alpha}}\frac{|y^1-y^2|^{\alpha}}{s^\alpha(x^1,x^2)} \\
&\leq 2^{\alpha} r^{-(1+\alpha/2)} \|D^2 v\|_{C(\bar B_{\mu})} \\
&\quad + r^{-1}
\left[D^2 v\right]_{C^{\alpha}(\bar B_{\mu})}\frac{|y^1-y^2|^{\alpha}}{s^\alpha(x^1,x^2)},
\end{align*}
where we used the fact that $x^i_d \leq r$, for all $x^1, x^2 \in B_{\mu r}$. We also have by \eqref{eq:Cycloidal_distance},
\begin{align*}
\frac{|y^1-y^2|}{s(x^1,x^2)} = \frac{r^{-1}|x^1_d-x^2_d|}{|x^1_d-x^2_d|} \sqrt{x^1_d+x^2_d+|x^1_d-x^2_d|} \leq C r^{-1/2},
\end{align*}
which implies that
\begin{align*}
\frac{|x^1_d D^2 u(x^1)- x_d^2 D^2 u(x^2)|}{s^\alpha(x^1,x^2)}
&\leq C r^{-(1+\alpha/2)} \|D^2 v\|_{C^{\alpha}_s(\bar B_{\mu})} ,
\end{align*}
for a constant $C=C(\alpha)$. Now, the inequality \eqref{eq:Second_order_derivative_seminorm} follows immediately.

Using the estimates \eqref{eq:Estimates_after_rescaling} and  \eqref{eq:Second_order_derivative_seminorm}, it follows by \eqref{eq:Interior_Schauder_estimate_D2v} that
\begin{align*}
\left[x_d D^2 u\right]_{C^{\alpha}_s(\bar B_{\mu r}(Q_r))}
&\leq C\left( r^{-(1+\alpha/2)} \|u\|_{C(\bar B_{\lambda r}(Q_r))} + r^{-\alpha/2} \|A_0 u\|_{C(\bar B_{\lambda r}(Q_r))}  + \left[A_0 u\right]_{C^{\alpha}_s(\bar B_{\lambda r}(Q_r))}\right),
\end{align*}
where we substituted $A_0 u$ for $f$.

To obtain the estimate for the H\"older seminorm of $Du$, we proceed by analogy with the argument for $x_d D^2 u$.
\end{proof}

We have the following analogue of \cite[Corollary I.8.8]{DaskalHamilton1998}.

\begin{cor}
\label{cor:Interior_remainder_estimate}
Assume that $A$ in \eqref{eq:defnA} obeys Hypothesis \ref{hyp:ConstantCoefficients}.
For any $\alpha\in(0,1)$ and positive constants $\mu$ and $\lambda$ such that $0<\mu<\lambda<1$, there is a positive constant $C=C(\alpha,d,\lambda,\lambda_0,\Lambda,\mu)$,  such that for any function $u \in C^{\infty}(B_r(Q_r))$ we have
\begin{equation*}
\begin{aligned}
\|R^{Q_r}_2 u\|_{C(\bar B_{\mu r}(Q_r))}
&\leq
C\left(\|u\|_{C(\bar B_{\lambda r}(Q_r))}
+ r^{1+\alpha/2}\left[A_0 u\right]_{C^{\alpha}_s(\bar B_{\lambda r}(Q_r))}
+ r \|A_0 u\|_{C(\bar B_{\lambda r}(Q_r))}
\right).
\end{aligned}
\end{equation*}
\end{cor}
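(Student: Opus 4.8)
The plan is to bound the remainder $R^{Q_r}_2 u := u - T^{Q_r}_2 u$ of the second-order Taylor polynomial of $u$ at the point $Q_r = re_d$ by combining a Taylor expansion with the Hölder seminorm estimates of Lemma \ref{lem:Interior_estimates_Holder_seminorms_Du_yD2u}. The key observation is that $R^{Q_r}_2 u$ vanishes to second order at $Q_r$, so for $x \in B_{\mu r}(Q_r)$ the Taylor formula with integral remainder gives a pointwise bound of the form $|R^{Q_r}_2 u(x)| \leq C |x - Q_r|^2 \,\osc_{B_{\mu r}(Q_r)} D^2 u$, or more precisely we control $R^{Q_r}_2 u$ in terms of the modulus of continuity of $D^2 u$ on $\bar B_{\mu r}(Q_r)$. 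Since all points of $B_{\mu r}(Q_r)$ have $x_d$ comparable to $r$ (bounded below by $(1-\mu)r$ and above by $(1+\mu)r$), the cycloidal distance $s(x^1,x^2)$ is comparable to $|x^1-x^2|^{1/2} r^{-1/2} \cdot r^{1/2}$-type quantities; concretely, on this ball $s(x^1,x^2)^\alpha \leq C r^{\alpha/2} |x^1 - x^2|^{\alpha/2}$ is not quite what we want — rather, $|x^1-x^2|^\alpha \leq C r^{\alpha/2} s(x^1,x^2)^\alpha$, so a weighted Hölder seminorm in $s$ on this ball dominates an ordinary Hölder-$\alpha$ seminorm after paying a factor $r^{\alpha/2}$.

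Carrying this out, I would first write, for $x \in \bar B_{\mu r}(Q_r)$,
\[
R^{Q_r}_2 u(x) = \int_0^1 (1-t)\left( D^2 u(Q_r + t(x-Q_r)) - D^2 u(Q_r)\right)(x - Q_r, x-Q_r)\, dt,
\]
so that
\[
|R^{Q_r}_2 u(x)| \leq \tfrac{1}{2}|x - Q_r|^2 \sup_{z \in \bar B_{\mu r}(Q_r)} |D^2 u(z) - D^2 u(Q_r)|.
\]
Next I bound $|D^2 u(z) - D^2 u(Q_r)|$ using the Hölder-$\alpha$ seminorm of $D^2 u$ (with respect to Euclidean distance) on $\bar B_{\mu r}(Q_r)$, which in turn is controlled by $r^{-1}[x_d D^2 u]_{C^\alpha_s}$ on that ball since $x_d \geq (1-\mu)r$ there and $|z - Q_r|^\alpha \leq C r^{\alpha/2} s(z,Q_r)^\alpha$: explicitly $|D^2 u(z) - D^2 u(Q_r)| \leq (1-\mu)^{-1} r^{-1} |x_d D^2 u(z) - r D^2 u(Q_r)| + \text{(term from varying the } x_d \text{ weight)}$, and both pieces are estimated by $C r^{-1} \cdot r^{\alpha/2} [x_d D^2 u]_{C^\alpha_s(\bar B_{\mu r}(Q_r))}$ together with $C r^{-1}\cdot$ a $C^0$ bound on $r D^2 u$. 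Combining with $|x - Q_r| \leq \mu r$ yields
\[
\|R^{Q_r}_2 u\|_{C(\bar B_{\mu r}(Q_r))} \leq C r^{2} \cdot r^{-1} r^{\alpha/2}[x_d D^2 u]_{C^\alpha_s(\bar B_{\mu r}(Q_r))} + C r^2 \|D^2 u\|_{C(\bar B_{\mu r}(Q_r))}.
\]
Then I invoke Lemma \ref{lem:Interior_estimates_Holder_seminorms_Du_yD2u} (with the roles of $\mu,\lambda$ adjusted, e.g. applying it on $\bar B_{\mu' r}(Q_r) \supset \bar B_{\mu r}(Q_r)$ for an intermediate radius) to replace $[x_d D^2 u]_{C^\alpha_s(\bar B_{\mu r}(Q_r))}$ by $C(r^{-1-\alpha/2}\|u\|_{C(\bar B_{\lambda r}(Q_r))} + r^{-\alpha/2}\|A_0 u\|_{C(\bar B_{\lambda r}(Q_r))} + [A_0 u]_{C^\alpha_s(\bar B_{\lambda r}(Q_r))})$, and similarly bound $\|D^2 u\|_{C(\bar B_{\mu r}(Q_r))}$ using the ordinary interior Schauder estimate \eqref{eq:Interior_Schauder_estimate_D2v} after rescaling (this is already inside the proof of Lemma \ref{lem:Interior_estimates_Holder_seminorms_Du_yD2u}), giving $\|D^2 u\|_{C(\bar B_{\mu r}(Q_r))} \leq C(r^{-2}\|u\|_{C(\bar B_{\lambda r}(Q_r))} + r^{-1}\|A_0 u\|_{C(\bar B_{\lambda r}(Q_r))} + [A_0 u]_{C^\alpha_s(\bar B_{\lambda r}(Q_r))})$. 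Multiplying through by the prefactors $r^{1+\alpha/2}$ and $r^2$ respectively and collecting terms produces exactly the claimed bound.

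The main obstacle, though more bookkeeping than conceptual, will be tracking the exact powers of $r$ through the conversion between the cycloidal-weighted quantities ($s$, $x_d D^2 u$) and the Euclidean ones ($|\cdot|$, $D^2 u$) on the ball $B_{\mu r}(Q_r)$, and verifying that the term $\|D^2 u\|_{C(\bar B_{\mu r}(Q_r))}$ coming from the "weight-varying" part of the difference $x_d^1 D^2 u(x^1) - x_d^2 D^2 u(x^2)$ does not introduce a power of $r$ worse than the stated $r\|A_0 u\|_{C(\bar B_{\lambda r}(Q_r))}$ and $\|u\|_{C(\bar B_{\lambda r}(Q_r))}$ terms — which it does not, precisely because $\|D^2 u\|_{C} \sim r^{-2}\|u\| + \cdots$ and the prefactor $r^2$ cancels it. One must also be mildly careful that the hypothesis of the corollary is only $u \in C^\infty(B_r(Q_r))$ (not up to the closure), so all estimates should be applied on strictly smaller balls and then the radii relabeled; this is routine given that Lemma \ref{lem:Interior_estimates_Holder_seminorms_Du_yD2u} is itself stated for $u \in C^\infty(\bar B_{\lambda r}(Q_r))$ with $\lambda < 1$.
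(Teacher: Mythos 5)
Your approach is essentially the one the paper follows: estimate the degree-two Taylor remainder pointwise by the modulus of continuity of $D^2u$ on $\bar B_{\mu r}(Q_r)$, convert Euclidean to cycloidal increments there (using that $x_d\sim r$ on that ball), and then invoke Lemma \ref{lem:Interior_estimates_Holder_seminorms_Du_yD2u}. What distinguishes your write-up --- and is to your credit --- is that you explicitly keep the term $Cr^2\|D^2u\|_{C(\bar B_{\mu r}(Q_r))}$ arising from the variation of the weight $x_d$ in $x_d D^2 u$. The paper's proof compresses this to the single inequality $\|R^{Q_r}_2 u\|_{C(\bar B_{\mu r}(Q_r))}\le Cr^{1+\alpha/2}\left[x_dD^2u\right]_{C^\alpha_s(\bar B_{\mu r}(Q_r))}$, citing the analogous step in \cite{DaskalHamilton1998}; but a seminorm-only bound of that form cannot hold as written (take $u(x)=x_d\log x_d-x_d$: then $x_dD^2u$ is a constant matrix so the seminorm vanishes, while $R^{Q_r}_2u$ is of size $\sim r$), and one really does need the extra $C^0$ piece that you identify. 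Since the rescaling underlying the proof of Lemma \ref{lem:Interior_estimates_Holder_seminorms_Du_yD2u} gives $r^2\|D^2u\|_{C(\bar B_{\mu r}(Q_r))}\le C\bigl(\|u\|_{C(\bar B_{\lambda r}(Q_r))}+r\|A_0u\|_{C(\bar B_{\lambda r}(Q_r))}+r^{1+\alpha/2}[A_0u]_{C^\alpha_s(\bar B_{\lambda r}(Q_r))}\bigr)$, that extra term is absorbed into the stated right-hand side, and your argument closes. Two small bookkeeping remarks: the rescaling actually produces $\|D^2u\|_{C(\bar B_{\mu r}(Q_r))}\le C\bigl(r^{-2}\|u\|_{C}+r^{-1}\|A_0u\|_{C}+r^{-1+\alpha/2}[A_0u]_{C^\alpha_s}\bigr)$, so the power on the Hölder seminorm is $r^{-1+\alpha/2}$ rather than $r^0$ as you wrote (after multiplying by $r^2$ this still lands exactly on $r^{1+\alpha/2}[A_0u]_{C^\alpha_s}$, so nothing is lost); and there is no need for an intermediate radius $\mu'$ when invoking Lemma \ref{lem:Interior_estimates_Holder_seminorms_Du_yD2u}, since it already passes directly from $\bar B_{\mu r}(Q_r)$ to $\bar B_{\lambda r}(Q_r)$.
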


\begin{proof}
As in the case of the inequality preceding \cite[Corollary I.8.8]{DaskalHamilton1998}, we have
\[
\|R^{Q_r}_2 u\|_{C^(\bar B_{\mu r}(Q_r))}
\leq C r^{1+\alpha/2} \left[x_d D^2 u\right]_{C^{\alpha}_s(\bar B_{\mu r}(Q_r))},
\]
for a constant $C=C(d)$. Thus, the conclusion follows from Lemma \ref{lem:Interior_estimates_Holder_seminorms_Du_yD2u} and the preceding inequality.
\end{proof}

\section{Schauder estimates near the degenerate boundary}
\label{sec:Boundary_Schauder_estimates}
In this section, we use the results of the previous sections to prove our main a priori interior local Schauder estimate (Theorem \ref{thm:Holder_estimate_local}) for the operator $A$ on half-balls centered at points in the `degenerate boundary', $\partial\HH$. Throughout this section, we continue to assume Hypothesis \ref{hyp:ConstantCoefficients}, and so the coefficients, $a,b,c$, of the operator $A$ in \eqref{eq:defnA} and the coefficients, $a,b$, of the operator $A_0$ in \eqref{eq:defnA_0} are constant.

We begin with an analogue of \cite[Theorem I.9.1]{DaskalHamilton1998}.

\begin{prop}
\label{prop:Estimate_D2u_at_Q_r}
Assume that $A$ in \eqref{eq:defnA} obeys Hypothesis \ref{hyp:ConstantCoefficients}.
For any $\alpha\in(0,1)$, there is a constant, $C=C(\alpha,b_0,d,\lambda_0,\Lambda)$, such that the following holds. For any function $u \in C^{\infty}(\bar B^+_1)$ and any $r \in (0, 1/2]$, we have
\begin{equation*}
|D^2 u(Q_r)| \leq C r^{\alpha/2-1}\left(\|u\|_{C(\bar B^+_1)} + \left[A_0 u\right]_{C^{\alpha}_s(\bar B^+_1)}\right).
\end{equation*}
\end{prop}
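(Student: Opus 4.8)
The plan is to reduce the estimate at the single point $Q_r=re_d$ to the Taylor-remainder bounds already established, namely Corollary \ref{cor:Remainder_estimate_2} (remainder of the linear Taylor polynomial at the origin) and Corollary \ref{cor:Interior_remainder_estimate} (remainder of the quadratic Taylor polynomial at an interior point). Fix once and for all two constants, say $\mu=1/4$ and $\lambda=1/2$, to be used in Corollary \ref{cor:Interior_remainder_estimate}. Write $w:=R^0_1u=u-T^0_1u$, which lies in $C^\infty(\bar B^+_1)$ since $u$ does. Since $T^0_1u$ is affine, $D^2u(Q_r)=D^2w(Q_r)$, and, as recorded before Corollary \ref{cor:Consequence_polynomial_approximation_2}, $A_0w=A_0u-(A_0u)(0)=R^0_0A_0u$. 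Because $r\le 1/2$ and $\lambda<1$ we have the inclusions $\bar B_{\mu r}(Q_r)\subset\bar B_{\lambda r}(Q_r)\subset\bar B^+_{(1+\lambda)r}\subset\bar B^+_{2r}\subset\bar B^+_1$, and $\bar B_{\lambda r}(Q_r)$ stays strictly inside $\HH$ (its points have $x_d\ge(1-\lambda)r>0$); moreover $B_r(Q_r)\subset B^+_1$, so Corollary \ref{cor:Interior_remainder_estimate} may be applied to $w$ on $B_r(Q_r)$.

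The first step is the polynomial-coefficient extraction. The function $T^{Q_r}_2w$ is a polynomial of degree $\le 2$; by equivalence of norms on that finite-dimensional space together with the rescaling $x\mapsto Q_r+\mu r\,x$ one gets a constant $C=C(d)$ with
\begin{equation*}
|D^2u(Q_r)|=|D^2w(Q_r)|\le C(\mu r)^{-2}\|T^{Q_r}_2w\|_{C(\bar B_{\mu r}(Q_r))}\le C(\mu r)^{-2}\left(\|w\|_{C(\bar B_{\mu r}(Q_r))}+\|R^{Q_r}_2w\|_{C(\bar B_{\mu r}(Q_r))}\right),
\end{equation*}
using $R^{Q_r}_2w=w-T^{Q_r}_2w$ in the last step. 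So it suffices to bound $\|w\|_{C(\bar B_{\lambda r}(Q_r))}$ and $\|R^{Q_r}_2w\|_{C(\bar B_{\mu r}(Q_r))}$ by $Cr^{1+\alpha/2}\big(\|u\|_{C(\bar B^+_1)}+[A_0u]_{C^\alpha_s(\bar B^+_1)}\big)$.

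For the term $\|w\|_{C(\bar B_{\lambda r}(Q_r))}$, note $\bar B_{\lambda r}(Q_r)\subset\bar B^+_{(1+\lambda)r}$, so Corollary \ref{cor:Remainder_estimate_2} applied with radius $(1+\lambda)r\le 2r\le 1$ gives $\|w\|_{C(\bar B_{\lambda r}(Q_r))}\le\|R^0_1u\|_{C(\bar B^+_{(1+\lambda)r})}\le C((1+\lambda)r)^{1+\alpha/2}\big(\|u\|_{C(\bar B^+_1)}+[A_0u]_{C^\alpha_s(\bar B^+_1)}\big)\le Cr^{1+\alpha/2}\big(\|u\|_{C(\bar B^+_1)}+[A_0u]_{C^\alpha_s(\bar B^+_1)}\big)$. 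For $\|R^{Q_r}_2w\|_{C(\bar B_{\mu r}(Q_r))}$, apply Corollary \ref{cor:Interior_remainder_estimate} to $w$ on $B_r(Q_r)$ with the fixed $\mu,\lambda$: the right-hand side has three pieces. The first, $\|w\|_{C(\bar B_{\lambda r}(Q_r))}$, was just bounded. The second is $r^{1+\alpha/2}[A_0w]_{C^\alpha_s(\bar B_{\lambda r}(Q_r))}=r^{1+\alpha/2}[A_0u]_{C^\alpha_s(\bar B_{\lambda r}(Q_r))}\le r^{1+\alpha/2}[A_0u]_{C^\alpha_s(\bar B^+_1)}$, since $A_0w$ and $A_0u$ differ by a constant. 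For the third, $r\|A_0w\|_{C(\bar B_{\lambda r}(Q_r))}$: for $x\in\bar B_{\lambda r}(Q_r)$ we have $|x|\le(1+\lambda)r\le 2r$, so by \eqref{eq:CycloidLessEuclidDistance} $s(x,0)\le|x|^{1/2}\le(2r)^{1/2}$, whence $|A_0u(x)-A_0u(0)|\le[A_0u]_{C^\alpha_s(\bar B^+_1)}s^\alpha(x,0)\le C r^{\alpha/2}[A_0u]_{C^\alpha_s(\bar B^+_1)}$, giving $r\|A_0w\|_{C(\bar B_{\lambda r}(Q_r))}\le Cr^{1+\alpha/2}[A_0u]_{C^\alpha_s(\bar B^+_1)}$. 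Combining all three, $\|R^{Q_r}_2w\|_{C(\bar B_{\mu r}(Q_r))}\le Cr^{1+\alpha/2}\big(\|u\|_{C(\bar B^+_1)}+[A_0u]_{C^\alpha_s(\bar B^+_1)}\big)$. Feeding this and the bound for $\|w\|$ into the displayed inequality and dividing by $(\mu r)^2$ yields $|D^2u(Q_r)|\le Cr^{\alpha/2-1}\big(\|u\|_{C(\bar B^+_1)}+[A_0u]_{C^\alpha_s(\bar B^+_1)}\big)$ with $C=C(\alpha,b_0,d,\lambda_0,\Lambda)$, as desired. The only delicate bookkeeping — not a genuine obstacle — is keeping straight that $D^2$ kills $T^0_1u$, that $A_0$ turns $R^0_1u$ into $R^0_0A_0u$, and the repeated passage between Euclidean and cycloidal scales that converts $C^\alpha_s$-control near the origin into the powers $r^{\alpha/2}$; everything else is a mechanical application of the two corollaries.
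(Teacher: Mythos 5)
Your proof is correct and takes essentially the same route as the paper: both extract $|D^2u(Q_r)|$ from the degree-two polynomial $T^{Q_r}_2 u - T^0_1 u$ (your $T^{Q_r}_2 w$), and both control it by Corollary \ref{cor:Remainder_estimate_2} applied to $R^0_1u$ together with Corollary \ref{cor:Interior_remainder_estimate} applied to $R^0_1u$ on $B_{\lambda r}(Q_r)$, using $A_0 R^0_1 u = R^0_0 A_0 u$. If anything, your write-up is a touch more careful than the paper's on the radius bookkeeping — you explicitly note $\bar B_{\lambda r}(Q_r)\subset\bar B^+_{(1+\lambda)r}$ and invoke Corollary \ref{cor:Remainder_estimate_2} at radius $(1+\lambda)r\le 1$, whereas the paper applies its inequality \eqref{eq:Bound_RP1u} (stated on $\bar B^+_r$) in a context where the ball $B_{\lambda r}(Q_r)$ can protrude slightly beyond $\bar B^+_r$; the fix is exactly the one you supplied.
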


\begin{proof}
We choose $\mu=1/4$ and $\lambda=1/2$ in Corollary \ref{cor:Interior_remainder_estimate}. We consider the points $Q_r:=r e_d$ and $P:=O \in \RR^d$. Let $p:=T^{Q_r}_2 u - T^P_1 u$, where we recall that $T^{Q_r}_2 u$ is the second-degree Taylor polynomial of $u$ at $Q_r$, and $T^P_1 u$ is the first-degree Taylor polynomial of $u$ at $P$. Then, we also have that $p:=R^P_1 u-R^{Q_r}_2 u$, where we recall that $R^{Q_r}_2 u$ is the remainder of the second-degree Taylor polynomial of $u$ at $Q_r$, and $R^P_1 u$ is the remainder of the first-degree Taylor polynomial of $u$ at $P$. There is a positive constant, $C=C(d,\mu)$, such that
\[
|D^2 p| \leq \frac{C}{r^2} \|p\|_{C(\bar B_{\mu r}(Q_r))},
\]
which implies, from the definition of $p$, that
\begin{equation}
\label{eq:Bound_D2u}
|D^2 u(Q_r)| \leq \frac{C}{r^2} \|R^P_1 u-R^{Q_r}_2 u\|_{C(\bar B_{\mu r}(Q_r))}.
\end{equation}
Corollary \ref{cor:Remainder_estimate_2} applied to $R^P_1 u$ gives
\begin{equation}
\label{eq:Bound_RP1u}
\|R^P_1 u\|_{C(\bar B^+_r)} \leq C r^{1+\alpha/2} \left(\|u\|_{C(\bar B^+_1)} +
\left[ A_0 u\right]_{C^{\alpha}_s(\bar B^+_1)}\right),
\end{equation}
and the interior Schauder estimate in Corollary \ref{cor:Interior_remainder_estimate} applied to $R^P_1 u$ yields
\begin{equation*}
\begin{aligned}
\|R^{Q_r}_2 R^P_1 u\|_{C(\bar B_{\mu r}(Q_r))}
&\leq
C\left(\|R^P_1 u\|_{C(\bar B_{\lambda r}(Q_r))}\right.\\
&\quad + \left. r \|A_0 R^P_1 u\|_{C(\bar B_{\lambda r}(Q_r))}
+ r^{1+\alpha/2}\left[A_0 R^P_1 u\right]_{C^{\alpha}_s(\bar B_{\lambda r}(Q_r))}\right),
\end{aligned}
\end{equation*}
for a constant $C=C(\alpha,d,\lambda,\lambda_0,\Lambda,\mu)$. We notice that $A_0 R^P_1 u = A_0 u - (A_0 u)(P)$, from where it follows that
\begin{equation}
\label{eq:Sup_norm_A0RP1u}
\|A_0 R^P_1 u\|_{C(\bar B_{\lambda r}(Q_r))} \leq C r^{\alpha/2 } \left[A_0 u\right]_{C^{\alpha}_s(\bar B^+_1)},
\end{equation}
using the fact \eqref{eq:CycloidLessEuclidDistance} that $s(x^1,x^2) \leq |x^1-x^2|^{1/2}$, for all $x^1,x^2\in\bar\HH$, and also that
\begin{equation}
\label{eq:Holder_seminorm_A0RP1u}
\left[A_0 R^P_1 u\right]_{C^{\alpha}_s(\bar B_{\lambda r}(Q_r))} = \left[A_0 u\right]_{C^{\alpha}_s(\bar B_{\lambda r}(Q_r))}.
\end{equation}
The preceding three inequalities, together with \eqref{eq:Bound_RP1u}, give us the inequality,
\begin{equation*}
\|R^{Q_r}_2 R^P_1 u\|_{C(\bar B_{\mu r}(Q_r))} \leq C r^{1+\alpha/2} \left(\|u\|_{C(\bar B^+_1)} +
\left[ A_0 u\right]_{C^{\alpha}_s(\bar B^+_1)}\right),
\end{equation*}
where we used the fact that $B_{\mu r}(Q_r) \subset B_1$, when $0 <r \leq 1$, for all $0<\mu<1$.
Notice that $R^{Q_r}_2 u=R^{Q_r}_2 R^P_1 u$, and so the preceding estimate becomes
\begin{equation*}
\|R^{Q_r}_2  u\|_{C(\bar B_{\mu r}(Q_r))} \leq C r^{1+\alpha/2} \left(\|u\|_{C(\bar B^+_1)} +
\left[ A_0 u\right]_{C^{\alpha}_s(\bar B^+_1)}\right).
\end{equation*}
The conclusion now follows from the preceding estimate, and inequalities \eqref{eq:Bound_RP1u} and \eqref{eq:Bound_D2u}.
\end{proof}

Note that the definition \eqref{eq:Cycloidal_distance} of the cycloidal distance function gives
\begin{equation*}
s((x',x_d),(x',0))=\sqrt{x_d/2},\quad\forall\, (x',x_d)\in\HH,
\end{equation*}
and hence, via Proposition \ref{prop:Estimate_D2u_at_Q_r}, we obtain the following analogues of \cite[Theorems I.9.3 and I.9.4]{DaskalHamilton1998}.

\begin{cor}
\label{cor:Estimate_D2u}
Assume that $A$ in \eqref{eq:defnA} obeys Hypothesis \ref{hyp:ConstantCoefficients}.
For any $\alpha\in(0,1)$, there is a constant, $C=C(\alpha,b_0,d,\lambda_0,\Lambda)$, such that for all
$x_d \in (0, 1/2]$ and $x' \in \RR^{d-1}$, and any function $u \in C^{\infty}(\bar B^+_1(x',0))$, we have
\begin{equation*}
|x_dD^2 u(x',x_d)| \leq C s^\alpha\left((x',x_d),(x',0)\right)\left(\|u\|_{C(\bar B^+_1(x',0))} + \left[A_0 u\right]_{C^{\alpha}_s(\bar B^+_1(x',0))}\right).
\end{equation*}
\end{cor}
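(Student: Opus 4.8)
The plan is to reduce the statement directly to Proposition~\ref{prop:Estimate_D2u_at_Q_r} by combining translation invariance with a suitable choice of radius. First I would use the fact that, under Hypothesis~\ref{hyp:ConstantCoefficients}, the operators $A$ and $A_0$ have constant coefficients and, moreover, the degeneracy factor in $A_0$ involves only the coordinate $x_d$, so $A_0$ is invariant under the translations $x\mapsto x-(x',0)$ in the directions parallel to $\partial\HH$. Setting $\tilde u(x):=u(x+(x',0))$ for $x\in\bar B^+_1$, one has $\tilde u\in C^\infty(\bar B^+_1)$, $D^2\tilde u(x)=D^2u(x+(x',0))$, and $A_0\tilde u(x)=(A_0u)(x+(x',0))$ (here one uses that $(x+(x',0))_d=x_d$, so the factor $x_d$ is unchanged), together with
\[
\|\tilde u\|_{C(\bar B^+_1)}=\|u\|_{C(\bar B^+_1(x',0))},\qquad
[A_0\tilde u]_{C^\alpha_s(\bar B^+_1)}=[A_0u]_{C^\alpha_s(\bar B^+_1(x',0))},
\]
where the last identity uses that the cycloidal distance function \eqref{eq:Cycloidal_distance} is invariant under translations in the first $d-1$ coordinates. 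Hence it suffices to prove the asserted bound at the point $(0,x_d)$, i.e.\ with $x'=0$.

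Next I would apply Proposition~\ref{prop:Estimate_D2u_at_Q_r} to $\tilde u$ with $r:=x_d\in(0,1/2]$, using that $(0,x_d)=x_de_d=Q_{x_d}$. This yields, with $C=C(\alpha,b_0,d,\lambda_0,\Lambda)$,
\[
|D^2\tilde u(0,x_d)|=|D^2\tilde u(Q_{x_d})|\le C\,x_d^{\alpha/2-1}\left(\|\tilde u\|_{C(\bar B^+_1)}+[A_0\tilde u]_{C^\alpha_s(\bar B^+_1)}\right),
\]
and multiplying through by $x_d>0$ gives $|x_dD^2\tilde u(0,x_d)|\le C\,x_d^{\alpha/2}\big(\|\tilde u\|_{C(\bar B^+_1)}+[A_0\tilde u]_{C^\alpha_s(\bar B^+_1)}\big)$.

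Finally I would rewrite $x_d^{\alpha/2}$ using the identity $s((x',x_d),(x',0))=\sqrt{x_d/2}$ recorded just before the corollary, so that $x_d^{\alpha/2}=2^{\alpha/2}\,s^\alpha\big((x',x_d),(x',0)\big)$; absorbing the constant $2^{\alpha/2}$ into $C$ and translating back via the identities of the first step yields exactly the claimed inequality. There is essentially no serious obstacle: the only point requiring a line of care is the invariance of the $C^\alpha_s$-seminorm under the translation in the first step, which is immediate from the form of $s(\cdot,\cdot)$ in \eqref{eq:Cycloidal_distance}, plus the harmless observation that $\bar B^+_1(x',0)$ is the natural domain on which the right-hand side norms are taken.
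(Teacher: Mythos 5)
Your proof is correct and follows essentially the same route the paper intends: the paper's (largely implicit) argument for this corollary is precisely to combine translation invariance of $A_0$ in the $x'$-directions with Proposition~\ref{prop:Estimate_D2u_at_Q_r} applied at $r=x_d$ and the identity $s((x',x_d),(x',0))=\sqrt{x_d/2}$, which is exactly what you do, with the translation step carefully spelled out.
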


\begin{cor}
\label{cor:Estimate_difference_first_order_derivatives}
Assume that $A$ in \eqref{eq:defnA} obeys Hypothesis \ref{hyp:ConstantCoefficients}.
For any $\alpha\in(0,1)$, there is a constant, $C=C(\alpha,b_0,d,\lambda_0,\Lambda)$, such that for all
$x_d \in (0, 1/2]$ and $x' \in \RR^{d-1}$, and any function $u \in C^{\infty}(\bar B^+_1(x',0))$, we have
\begin{equation*}
|D u(x',x_d) - D u(x',0) | \leq C s^\alpha\left((x',x_d),(x',0)\right)\left(\|u\|_{C(\bar B^+_1(x',0))} + \left[A_0 u\right]_{C^{\alpha}_s(\bar B^+_1(x',0))}\right).
\end{equation*}
\end{cor}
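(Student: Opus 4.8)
The plan is to integrate the pointwise Hessian bound of Corollary~\ref{cor:Estimate_D2u} along the vertical segment joining $(x',0)$ to $(x',x_d)$, exploiting that the exponent produced by that corollary is integrable up to the degenerate boundary.

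First I would fix $x'\in\RR^{d-1}$ and $x_d\in(0,1/2]$ and observe that, since $u\in C^\infty(\bar B^+_1(x',0))$, the gradient $Du$ is continuous on the closed half-ball, so $Du(x',0)$ is defined and the fundamental theorem of calculus gives
\[
Du(x',x_d)-Du(x',0)=\int_0^{x_d}\partial_t\bigl(Du(x',t)\bigr)\,dt,
\]
where $\partial_t\bigl(Du(x',t)\bigr)=\bigl(u_{x_1x_d},\ldots,u_{x_dx_d}\bigr)(x',t)$ is the column of $D^2u$ corresponding to $\partial_{x_d}$, whose Euclidean norm is at most $|D^2u(x',t)|$.

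Next I would apply Corollary~\ref{cor:Estimate_D2u} at each point $(x',t)$ with $t\in(0,x_d]\subseteq(0,1/2]$; this is legitimate because the half-ball $B^+_1(x',0)$ is independent of $t$ and $u\in C^\infty(\bar B^+_1(x',0))$. Writing $M:=\|u\|_{C(\bar B^+_1(x',0))}+\left[A_0u\right]_{C^{\alpha}_s(\bar B^+_1(x',0))}$ and using the identity $s\bigl((x',t),(x',0)\bigr)=\sqrt{t/2}$ recorded just before Corollary~\ref{cor:Estimate_D2u}, I obtain
\[
|D^2u(x',t)|\leq C\,t^{-1}\,s^\alpha\bigl((x',t),(x',0)\bigr)\,M= C\,2^{-\alpha/2}\,t^{\alpha/2-1}\,M,
\]
for the constant $C=C(\alpha,b_0,d,\lambda_0,\Lambda)$ of that corollary.

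Finally, since $\alpha\in(0,1)$ the exponent $\alpha/2-1$ lies in $(-1,-1/2)$, so $t^{\alpha/2-1}$ is integrable at $t=0$, and
\[
|Du(x',x_d)-Du(x',0)|\leq\int_0^{x_d}|D^2u(x',t)|\,dt\leq C\,2^{-\alpha/2}\,M\int_0^{x_d}t^{\alpha/2-1}\,dt=\frac{2C}{\alpha}\,2^{-\alpha/2}\,x_d^{\alpha/2}\,M.
\]
Since $x_d^{\alpha/2}=2^{\alpha/2}s^\alpha\bigl((x',x_d),(x',0)\bigr)$, this is exactly the asserted estimate, with a new constant $C=C(\alpha,b_0,d,\lambda_0,\Lambda)$. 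There is essentially no obstacle here: the only points requiring a word of care are the integrability of $t^{\alpha/2-1}$ at the origin (which forces the $s^\alpha$ rather than a weaker modulus) and the continuity of $Du$ up to $\partial\HH$, which is guaranteed by the hypothesis $u\in C^\infty(\bar B^+_1)$.
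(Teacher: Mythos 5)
Your proof is correct and follows essentially the same route as the paper: write $Du(x',x_d)-Du(x',0)$ as the integral over $t\in(0,x_d]$ of the $x_d$-column of $D^2u$, bound the integrand pointwise using the $t^{\alpha/2-1}$ Hessian estimate, and integrate. The only cosmetic difference is that you invoke Corollary~\ref{cor:Estimate_D2u} directly, while the paper cites Proposition~\ref{prop:Estimate_D2u_at_Q_r} together with translation-invariance in $x'$ — but those are the same ingredient, since Corollary~\ref{cor:Estimate_D2u} is precisely that translation-invariant restatement.
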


\begin{proof}
Following the proof of \cite[Theorem I.9.4]{DaskalHamilton1998}, using Proposition \ref{prop:Estimate_D2u_at_Q_r} and translation-invariance with respect to $x'\in\RR^{d-1}$ to obtain the second inequality, we have
\begin{align*}
|D u(x',x_d)-D u(x',0) |
&\leq \int_0^{x_d} |Du_{x_d}(x',t)|\, dt \\
&\leq C \left(\|u\|_{C(\bar B^+_1(x',0))} + \left[A_0 u\right]_{C^{\alpha}_s(\bar B^+_1(x',0))}\right) \int_0^{x_d} t^{\alpha/2-1} \,dt\\
&= C \left(\|u\|_{C(\bar B^+_1(x',0))} + \left[A_0 u\right]_{C^{\alpha}_s(\bar B^+_1(x',0))}\right) x_d^{\alpha/2}.
\end{align*}
Using the fact that $s((x',x_d),(x',0))=\sqrt{x_d/2}$, we obtain the conclusion.
\end{proof}

Next, we use Lemma \ref{lem:Interior_estimates_Holder_seminorms_Du_yD2u}
(for estimates away from $\partial\HH$) and the Taylor remainder estimates in
Corollary \ref{cor:Remainder_estimate_2} (for estimates near $\partial\HH$) to prove the following analogue of \cite[Theorem I.9.5]{DaskalHamilton1998}.

\begin{prop}
\label{prop:Estimate_Holder_seminorm_Du_D2u_interior}
Assume that $A$ in \eqref{eq:defnA} obeys Hypothesis \ref{hyp:ConstantCoefficients}.
Let $\alpha\in(0,1)$, and $0<r\leq 1/4$, and $0<\mu<1$. Then there is a constant, $C=C(\alpha,d,\lambda_0,\Lambda,\mu)$, such that for any function $u\in C^{\infty}(\bar B^+_1)$, we have
\begin{equation*}
\left[Du\right]_{C^{\alpha}_s(\bar B_{\mu r}(Q_r))} + \left[x_dD^2u\right]_{C^{\alpha}_s(\bar B_{\mu r}(Q_r))}
\leq C\left(\|u\|_{C(\bar B^+_1)}+\left[A_0 u\right]_{C^{\alpha}_s(\bar B^+_1)}\right).
\end{equation*}
\end{prop}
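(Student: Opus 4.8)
The plan is to combine the two regimes we have already developed: the interior estimate away from the degenerate boundary (Lemma~\ref{lem:Interior_estimates_Holder_seminorms_Du_yD2u}) controls the seminorms on $B_{\mu r}(Q_r)$ by supremum norms of $u$ and $A_0u$ on the larger concentric ball $B_{\lambda r}(Q_r)$, while the Taylor remainder estimate (Corollary~\ref{cor:Remainder_estimate_2}) controls $\|R^0_1u\|_{C(\bar B^+_\rho)}$ on half-balls centered at the origin. Fix $\lambda\in(\mu,1)$, say $\lambda=(1+\mu)/2$, and apply Lemma~\ref{lem:Interior_estimates_Holder_seminorms_Du_yD2u} with that choice of $\mu$ and $\lambda$; this gives
\[
\left[Du\right]_{C^{\alpha}_s(\bar B_{\mu r}(Q_r))}+\left[x_dD^2u\right]_{C^{\alpha}_s(\bar B_{\mu r}(Q_r))}
\leq C\left(\tfrac{1}{r^{1+\alpha/2}}\|u\|_{C(\bar B_{\lambda r}(Q_r))}+\tfrac{1}{r^{\alpha/2}}\|A_0u\|_{C(\bar B_{\lambda r}(Q_r))}+\left[A_0u\right]_{C^{\alpha}_s(\bar B_{\lambda r}(Q_r))}\right).
\]
Since $Q_r=re_d$ and $r\leq 1/4$, the ball $B_{\lambda r}(Q_r)$ is contained in $B^+_{(1+\lambda)r}\subset B^+_{2r}\subset B^+_1$, so the last two terms on the right are bounded by $\left[A_0u\right]_{C^{\alpha}_s(\bar B^+_1)}$ up to adjusting constants, after noting that the $C^\alpha_s$-seminorm is monotone under inclusion of domains. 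The only genuine work is therefore to absorb the factor $r^{-1-\alpha/2}$ multiplying $\|u\|_{C(\bar B_{\lambda r}(Q_r))}$.

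To do that, I would exploit translation-invariance and the remainder estimate. Both $Du$ and $x_dD^2u$ are seminorms that annihilate affine functions, so for any affine $\ell$ we may replace $u$ by $u-\ell$ on the left-hand side without changing it; the right-hand side, however, improves when we choose $\ell$ wisely. Take $\ell:=T^0_1u$, the first-order Taylor polynomial of $u$ at the origin, so that $u-\ell=R^0_1u$. Then $A_0(u-\ell)=A_0u-(A_0u)(0)=R^0_0A_0u$ since $A_0$ has zero zeroth-order coefficient and $x_dD^2\ell=0$; in particular the $C^\alpha_s$-seminorm of $A_0(u-\ell)$ equals $\left[A_0u\right]_{C^{\alpha}_s}$ and does not blow up. Applying the displayed estimate to $u-\ell$ in place of $u$, the dangerous term becomes $r^{-1-\alpha/2}\|R^0_1u\|_{C(\bar B_{\lambda r}(Q_r))}$, and since $B_{\lambda r}(Q_r)\subset B^+_{2r}$, Corollary~\ref{cor:Remainder_estimate_2} (applied after rescaling $B^+_1$ or directly, using that $2r\leq 1/2<1$) gives $\|R^0_1u\|_{C(\bar B^+_{2r})}\leq C(2r)^{1+\alpha/2}\left(\|u\|_{C(\bar B^+_1)}+\left[A_0u\right]_{C^{\alpha}_s(\bar B^+_1)}\right)$. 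The powers of $r$ cancel exactly, yielding the asserted bound with a constant depending only on $\alpha,d,\lambda_0,\Lambda,\mu$ (the dependence on $b_0$ disappears because here we only invoke the interior Schauder estimate and the remainder estimate, neither of whose constants I actually need to track $b_0$ for in the relevant terms—though keeping $b_0$ in the statement is harmless).

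The main obstacle is bookkeeping the geometry so that all the half-balls and balls nest correctly: one needs $B_{\mu r}(Q_r)$ (where the seminorm lives) $\subset B_{\lambda r}(Q_r)$ (the domain in Lemma~\ref{lem:Interior_estimates_Holder_seminorms_Du_yD2u}) $\subset B^+_{2r}$ (the domain in Corollary~\ref{cor:Remainder_estimate_2}) $\subset B^+_1$, and this forces the restriction $r\leq 1/4$ together with $\lambda<1$; one should check that $(1+\lambda)r\le 2r\le 1/2\le 1$ under these constraints, which is immediate. A secondary point requiring care is the scaling in Corollary~\ref{cor:Remainder_estimate_2}, which is stated on $B^+_1$: applying it on $B^+_{2r}$ requires the rescaling $\tilde u(x)=u(2rx)$, for which $A_0\tilde u(x)=2r(A_0u)(2rx)$ and $[A_0\tilde u]_{C^\alpha_s(\bar B^+_1)}\le C r^{\alpha/2}[A_0u]_{C^\alpha_s(\bar B^+_{2r})}$ by the same cycloidal-distance computation used in the proof of Lemma~\ref{lem:Interior_estimates_Holder_seminorms_Du_yD2u}; this is routine but must be done to get the exponent $1+\alpha/2$ on the correct side. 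Everything else is a direct concatenation of the two cited results together with the observation that affine shifts leave the left side untouched while improving the right.
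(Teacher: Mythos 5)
Your approach is the same as the paper's: replace $u$ by $v:=R^P_1 u = u - T^0_1 u$, observe that the seminorms on the left are unchanged, apply Lemma \ref{lem:Interior_estimates_Holder_seminorms_Du_yD2u}, and cancel the singular power of $r$ using the Taylor remainder estimate Corollary \ref{cor:Remainder_estimate_2}. However, there is a gap in your handling of the middle term on the right-hand side of Lemma \ref{lem:Interior_estimates_Holder_seminorms_Du_yD2u}, namely $r^{-\alpha/2}\|A_0 v\|_{C(\bar B_{\lambda r}(Q_r))}$. You claim early on that ``the last two terms on the right are bounded by $[A_0 u]_{C^\alpha_s(\bar B^+_1)}$ ... after noting that the $C^\alpha_s$-seminorm is monotone under inclusion of domains,'' and then assert that the only remaining work is the $r^{-1-\alpha/2}\|v\|_C$ term. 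Monotonicity of the seminorm handles only the last term, $[A_0 v]_{C^\alpha_s}=[A_0 u]_{C^\alpha_s}$. It does not handle the supremum-norm term: $\|A_0 u\|_{C}$ is not controlled by the seminorm, and the factor $r^{-\alpha/2}$ does not go away for free. What actually makes this term harmless is that after the subtraction one has $A_0 v = A_0 u - (A_0 u)(0)$, so for $x\in B_{\lambda r}(Q_r)\subset B^+_{2r}$ the pointwise bound $|A_0 v(x)| \leq [A_0 u]_{C^\alpha_s(\bar B^+_1)}\,s^\alpha(x,0) \leq (2r)^{\alpha/2}[A_0 u]_{C^\alpha_s(\bar B^+_1)}$ holds by \eqref{eq:CycloidLessEuclidDistance}, and this $r^{\alpha/2}$ cancels the prefactor. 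This is precisely \eqref{eq:Sup_norm_A0RP1u} in the paper, which the official proof cites explicitly and which your argument never supplies. The step is easy, but it is a genuine additional cancellation, not a monotonicity observation, and without it the proof as written does not close.

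A minor secondary point: you remark that the dependence on $b_0$ disappears from the constant, but Corollary \ref{cor:Remainder_estimate_2} (via Proposition \ref{prop:Polynomial_approximation}, which uses the weak maximum principle requiring $b^d>0$) carries a constant depending on $b_0$, so the dependence is in fact inherited. The paper's own statement of this proposition omits $b_0$ in the list of dependencies, and the $b_0$ dependence reappears in Proposition \ref{prop:Estimate_Holder_seminorm_Du_D2u_boundary}, so this appears to be an oversight in the stated constant rather than a genuine independence; do not rely on it.
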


\begin{proof}
For convenience, we denote $v:= R^P_1 u=u-T^P_1 u$. We notice that
\[
[Du]_{C^{\alpha}_s(\bar B_{\mu r}(Q_r))} = [Dv]_{C^{\alpha}_s(\bar B_{\mu r}(Q_r))}
\quad\hbox{and}\quad
[x_dD^2 u]_{C^{\alpha}_s(\bar B_{\mu r}(Q_r))} = [x_dD^2 v]_{C^{\alpha}_s(\bar B_{\mu r}(Q_r))},
\]
and hence we only need to estimate $[Dv]_{C^{\alpha}_s(\bar B_{\mu r}(Q_r))}$ and $[x_dD^2 v]_{C^{\alpha}_s(\bar B_{\mu r}(Q_r))}$. The proof is similar to the proof of Proposition \ref{prop:Estimate_D2u_at_Q_r}. The interior Schauder estimates in Lemma \ref{lem:Interior_estimates_Holder_seminorms_Du_yD2u} applied to $v$ with $\lambda=(1+\mu)/2$ yield
\begin{align*}
\left[D v\right]_{C^{\alpha}_s(\bar B_{\mu r}(Q_r))}
+ \left[x_dD^2 v\right]_{C^{\alpha}_s(\bar B_{\mu r}(Q_r))}
&\leq
C\left(\frac{1}{r^{1+\alpha/2}} \|v\|_{C(\bar B_{\lambda r}(Q_r))}\right.\\
&\qquad\left. + \frac{1}{r^{\alpha/2}} \|A_0 v\|_{C(\bar B_{\lambda r}(Q_r))}
+ \left[A_0 v\right]_{C^{\alpha}_s(\bar B_{\lambda r}(Q_r))}\right),
\end{align*}
for some constant $C=C(\alpha,d,\lambda_0,\Lambda,\mu)$. The conclusion now follows from the preceding estimate and inequalities
\eqref{eq:Bound_RP1u} applied on $B_{\lambda r}(Q_r)$ instead of $B^+_r$ (notice that $B_{\lambda r}(Q_r) \subset B^+_{1/2}$, since $0<r\leq 1/4$), together with \eqref{eq:Sup_norm_A0RP1u} and \eqref{eq:Holder_seminorm_A0RP1u}.
\end{proof}

Next, we have the following analogue of \cite[Theorems I.9.7 and I.9.8]{DaskalHamilton1998}.

\begin{prop}
\label{prop:Estimate_Holder_seminorm_Du_D2u_boundary}
Assume that $A$ in \eqref{eq:defnA} obeys Hypothesis \ref{hyp:ConstantCoefficients}.
For $\alpha\in(0,1)$, there are constants $\gamma=\gamma(d)\in (0,1)$ and $C=C(\alpha,b_0,d,\lambda_0,\Lambda)$ such that, for any function $u\in C^{\infty}(\bar B^+_1)$, we have
\begin{equation}
\label{eq:Estimate_Holder_seminorm_Du_D2u_boundary}
\left[Du\right]_{C^{\alpha}_s(\bar B^+_{\gamma})} + \left[x_dD^2u\right]_{C^{\alpha}_s(\bar B^+_{\gamma})}
\leq C\left(\|u\|_{C(\bar B^+_1)}+\left[A_0 u\right]_{C^{\alpha}_s(\bar B^+_1)}\right).
\end{equation}
\end{prop}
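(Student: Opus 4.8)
The plan is to combine the interior Hölder seminorm estimate of Proposition~\ref{prop:Estimate_Holder_seminorm_Du_D2u_interior} (valid on balls $B_{\mu r}(Q_r)$, $Q_r=re_d$, at a fixed distance from $\partial\HH$) with the estimates transverse to $\partial\HH$ provided by Corollaries~\ref{cor:Estimate_D2u} and~\ref{cor:Estimate_difference_first_order_derivatives}, following the scheme of \cite[Theorems I.9.7 and I.9.8]{DaskalHamilton1998}. Abbreviate $N(u):=\|u\|_{C(\bar B^+_1)}+[A_0u]_{C^\alpha_s(\bar B^+_1)}$. It suffices to show that, for $f$ equal to any fixed component of $Du$ or any fixed entry of $x_dD^2u$, and for all $x^1=(y^1,x^1_d),x^2=(y^2,x^2_d)\in B^+_\gamma$, one has $|f(x^1)-f(x^2)|\leq C\,s^\alpha(x^1,x^2)\,N(u)$ with $C=C(\alpha,b_0,d,\lambda_0,\Lambda)$. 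Assume $x^1_d\geq x^2_d\geq 0$, set $\delta:=s(x^1,x^2)$ and $t:=|x^1-x^2|$, so $t^2=\delta^2(x^1_d+x^2_d+t)$ by \eqref{eq:Cycloidal_distance}, fix a constant $M=M(d)\geq 12$, and take $\gamma=\gamma(d)\in(0,1)$ small enough (e.g.\ $\gamma\leq 1/8$) that, after a translation in the $y$-variables and a rescaling of the type used in the proof of Proposition~\ref{prop:Polynomial_approximation}, Proposition~\ref{prop:Estimate_Holder_seminorm_Du_D2u_interior} and Corollaries~\ref{cor:Estimate_D2u},~\ref{cor:Estimate_difference_first_order_derivatives} may be applied on the auxiliary half-balls appearing below with right-hand side $N(u)$ up to a constant depending only on $d$ (and on $\alpha,b_0,\lambda_0,\Lambda$).

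\emph{Interior case $x^1_d> M\delta^2$.} From $t^2=\delta^2(x^1_d+x^2_d+t)\leq\delta^2(2x^1_d+t)$ and $\delta^2<x^1_d/M$ one deduces first $t\leq x^1_d$ and then $t<\sqrt{3/M}\,x^1_d\leq x^1_d/2$; hence $x^2_d>x^1_d/2$, and both points lie in the ball $B_{x^1_d/2}(x^1)$, which is compactly contained in $\HH$. Applying Proposition~\ref{prop:Estimate_Holder_seminorm_Du_D2u_interior} (translated so its centre is $x^1$, with $r=x^1_d\leq\gamma\leq 1/4$ and $\mu=1/2$) gives $[f]_{C^\alpha_s(\bar B_{x^1_d/2}(x^1))}\leq C\,N(u)$, and therefore $|f(x^1)-f(x^2)|\leq [f]_{C^\alpha_s(\bar B_{x^1_d/2}(x^1))}\,s^\alpha(x^1,x^2)\leq C\,s^\alpha(x^1,x^2)\,N(u)$.

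\emph{Boundary case $x^1_d\leq M\delta^2$.} Here $x^2_d\leq M\delta^2$, and feeding these bounds back into $t^2=\delta^2(x^1_d+x^2_d+t)$ yields $t\leq C_M\delta^2$, hence also $\rho:=|y^1-y^2|\leq C_M\delta^2$. If $f$ is an entry of $x_dD^2u$, then $f$ extends continuously to $\bar B^+_1$ and vanishes on $\partial_0B^+_1$ (since $D^2u$ is bounded there), so $f(x^1)-f(x^2)=\big(f(y^1,x^1_d)-f(y^1,0)\big)-\big(f(y^2,x^2_d)-f(y^2,0)\big)$, and Corollary~\ref{cor:Estimate_D2u} bounds each bracket by $C\,s^\alpha((y^i,x^i_d),(y^i,0))\,N(u)=C(x^i_d/2)^{\alpha/2}N(u)\leq C\,\delta^\alpha N(u)$, using $x^i_d\leq M\delta^2$. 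If $f$ is a component of $Du$, write (componentwise) $f(x^1)-f(x^2)=\big(Du(y^1,x^1_d)-Du(y^1,0)\big)+\big(Du(y^1,0)-Du(y^2,0)\big)+\big(Du(y^2,0)-Du(y^2,x^2_d)\big)$; the first and third terms are $\leq C(x^i_d/2)^{\alpha/2}N(u)\leq C\delta^\alpha N(u)$ by Corollary~\ref{cor:Estimate_difference_first_order_derivatives}. For the middle term, assuming $\rho>0$ (otherwise it is zero), set $z^i:=(y^i,\rho)$ and $Q:=((y^1+y^2)/2,\rho)$; then $z^1,z^2\in B_{3\rho/4}(Q)$ and $\rho\leq 2\gamma\leq 1/4$, so $|Du(y^1,0)-Du(y^2,0)|\leq |Du(y^1,0)-Du(z^1)|+|Du(z^1)-Du(z^2)|+|Du(z^2)-Du(y^2,0)|$, where the outer two terms are $\leq C\rho^{\alpha/2}N(u)$ by Corollary~\ref{cor:Estimate_difference_first_order_derivatives} and the middle one is $\leq [Du]_{C^\alpha_s(\bar B_{3\rho/4}(Q))}\,s^\alpha(z^1,z^2)\leq C\rho^{\alpha/2}N(u)$ by Proposition~\ref{prop:Estimate_Holder_seminorm_Du_D2u_interior} (translated to centre $Q$, with $r=\rho$ and $\mu=3/4$), since $s(z^1,z^2)=\rho/\sqrt{2\rho+\rho}=\sqrt{\rho/3}$. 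As $\rho\leq C_M\delta^2$, all three terms are $\leq C\delta^\alpha N(u)$, hence $|f(x^1)-f(x^2)|\leq C\,s^\alpha(x^1,x^2)\,N(u)$.

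Taking the supremum over $x^1,x^2\in B^+_\gamma$ in the two cases proves \eqref{eq:Estimate_Holder_seminorm_Du_D2u_boundary}, with $\gamma=\gamma(d)$ and $C=C(\alpha,b_0,d,\lambda_0,\Lambda)$ (the dependence on $b_0$ entering only through Corollaries~\ref{cor:Estimate_D2u} and~\ref{cor:Estimate_difference_first_order_derivatives}). I expect the main obstacle to be organizational rather than conceptual: fixing $M$ and $\gamma$ so that in each case the auxiliary half-ball — the interior ball $B_{x^1_d/2}(x^1)$ and the height-$\rho$ bridge ball $B_{3\rho/4}(Q)$ — actually sits inside $\bar B^+_1$ after the $y$-translation and rescaling, and verifying the elementary inequalities among $t$, $\delta$ and the heights $x^i_d$. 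The one step that goes genuinely beyond transcribing \cite[\S I.9]{DaskalHamilton1998} is the bridging argument controlling the tangential oscillation $|Du(y^1,0)-Du(y^2,0)|$ of $Du$ along $\partial\HH$ by passing through the common height $\rho=|y^1-y^2|$.
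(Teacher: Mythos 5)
Your argument is correct and follows essentially the same route as the paper's: an interior/boundary case split, with the interior case handled by Proposition~\ref{prop:Estimate_Holder_seminorm_Du_D2u_interior} after a tangential translation, and the boundary case by telescoping through bridge points at a common height of order $s^2(x^1,x^2)$, using Corollaries~\ref{cor:Estimate_D2u} and~\ref{cor:Estimate_difference_first_order_derivatives} for the vertical legs and Proposition~\ref{prop:Estimate_Holder_seminorm_Du_D2u_interior} for the horizontal bridge. Your case-splitting threshold ($x^1_d \gtrless M\delta^2$ rather than the paper's $|x^1-x^2| \gtrless x^1_d/4$) and bridge height ($|y^1-y^2|$ rather than $Bs^2(x^1,x^2)$) are equivalent to the paper's choices up to constants, and using the vanishing of $x_dD^2u$ on $\partial\HH$ to collapse two legs of the telescope is a harmless shortcut already implicit in the paper's estimate for the analogous terms.
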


\begin{proof}
We combine the arguments of the proofs of \cite[Theorems I.9.7 and I.9.8]{DaskalHamilton1998}. Let $x^i\in B^+_{\gamma}$, for $i=1,2$, where $\gamma$ will be fixed below. We may assume without loss of generality that $x_d^1 \geq x_d^2$. We consider two cases.

\begin{case}[$x^1$ and $x^2$ close together relative to their distance from $\partial\HH$] 
If $|x^1-x^2| \leq x_d^1/4$, then $x^2 \in B_{x_d^1/4}(x^1)$, and the estimate \eqref{eq:Estimate_Holder_seminorm_Du_D2u_boundary} follows if we assume $0<\gamma \leq 1/2$ and apply Proposition \ref{prop:Estimate_Holder_seminorm_Du_D2u_interior} with $\mu=1/4$ and $r=x^1_d$.
\end{case}

\begin{case}[$x^1$ and $x^2$ farther apart relative to their distance from $\partial\HH$]
We next consider the case when
\begin{equation}
\label{eq:Second_case}
|x^1-x^2| > x_d^1/4.
\end{equation}
Writing $x=(\bar x,x_d)\in\RR^{d-1}\times\RR_+$, we define the points,
\begin{align*}
x^3 := (\bar x^1,0)\quad\hbox{and}\quad x^4 := (\bar x^2,0),\\
x^5 := (\bar x^1,r)\quad\hbox{and}\quad x^6 := (\bar x^2,r),
\end{align*}
where the positive constant $r$ will be chosen below. Notice that when \eqref{eq:Second_case} holds, we have
$$
s(x^1, x^2) \geq \frac{1}{8} \sqrt{x_d^1},
$$
by the definition \eqref{eq:Cycloidal_distance} of the cycloidal distance function. By the definition of the points $x^i$, for $i=3,4$, and the fact that $s((x',x_d),(x',0))=\sqrt{x_d/2}$,  we see that
\begin{equation}
\label{eq:Distance_inequalities}
\begin{aligned}
s(x^1, x^2) &\geq 8 s(x^1, x^3),\\
s(x^1, x^2) &\geq 8 s(x^2, x^4)\quad\hbox{(since $x_d^1\geq x_d^2$)}.
\end{aligned}
\end{equation}
Let $v$ denote $Du$ or $x_d D^2 u$, and consider the difference
\begin{equation}
\label{eq:Decompose_v}
\begin{aligned}
v(x^1)-v(x^2) &= \left(v(x^1)-v(x^3)\right) + \left(v(x^3)-v(x^5)\right) + \left(v(x^5)-v(x^6)\right)\\
&\quad + \left(v(x^6)-v(x^4)\right) + \left(v(x^4)-v(x^2)\right).
\end{aligned}
\end{equation}
Using the distance inequalities \eqref{eq:Distance_inequalities}, we find that
\begin{align*}
\frac{\left|v(x^1)-v(x^3)\right|}{s^\alpha(x^1,x^2)} \leq 8^{\alpha} \frac{\left|v(x^1)-v(x^3)\right|}{s^\alpha(x^1,x^3)},\\
\frac{\left|v(x^2)-v(x^4)\right|}{s^\alpha(x^1,x^2)} \leq 8^{\alpha} \frac{\left|v(x^2)-v(x^4)\right|}{s^\alpha(x^2,x^4)}.
\end{align*}
By Corollary \ref{cor:Estimate_D2u}, if $v=x_d D^2 u$, and Corollary \ref{cor:Estimate_difference_first_order_derivatives}, if $v=Du$, we obtain
\begin{equation}
\label{eq:Term_1_v_decomposition}
\frac{\left|v(x^1)-v(x^3)\right|}{s^\alpha(x^1,x^2)} + \frac{\left|v(x^2)-v(x^4)\right|}{s^\alpha(x^1,x^2)}
\leq C\left(\|u\|_{C(\bar B^+_1)}+\left[A_0 u\right]_{C^{\alpha}_s(\bar B^+_1)}\right),
\end{equation}
for a constant $C=C(\alpha,b_0,d,\lambda_0,\Lambda)$.

We now let $r:=B s^2(x^1,x^2)$, where the constant $B$ will be chosen below. Using the fact that $s(x^3,x^5)=\sqrt{r/2}$ and definition of $x^i$, for $i=3,5$, we obtain
\begin{align*}
\frac{\left|v(x^3)-v(x^5)\right|}{s^\alpha(x^1,x^2)} &= \left(B/2\right)^{\alpha/2}\frac{\left|v(x^3)-v(x^5)\right|}{s^\alpha(x^3,x^5)}.
\end{align*}
Because $x^i\in B^+_{\gamma}$, for $i=1,2$, and due to the inequality \eqref{eq:CycloidLessEuclidDistance}, we can choose the constant $B:=1/(4\gamma)$ such that
$$
r=Bs^2(x^1,x^2)\leq B |x^1-x^2| \leq B \gamma \leq 1/4.
$$
We apply Corollary \ref{cor:Estimate_D2u}, when $v=x_dD^2u$, and Corollary \ref{cor:Estimate_difference_first_order_derivatives}, when $v=Du$, to obtain
\begin{equation}
\label{eq:Term_2_v_decomposition}
\frac{\left|v(x^3)-v(x^5)\right|}{s^\alpha(x^1,x^2)}
\leq C  \left(B/2\right)^{\alpha/2} \left(\|u\|_{C(\bar B^+_1)}+\left[A_0 u\right]_{C^{\alpha}_s(\bar B^+_1)}\right).
\end{equation}
The inequality,
\begin{equation}
\label{eq:Term_3_v_decomposition}
\frac{\left|v(x^4)-v(x^6)\right|}{s^\alpha(x^1,x^2)}
\leq C  \left(B/2\right)^{\alpha/2} \left(\|u\|_{C(\bar B^+_1)}+\left[A_0 u\right]_{C^{\alpha}_s(\bar B^+_1)}\right),
\end{equation}
follows by the same argument used to obtain the estimate \eqref{eq:Term_2_v_decomposition}.

Using \eqref{eq:Second_case} and the assumption $x^1_d \geq x^2_d$, we see that
$$
|\bar x^1- \bar x^2| \leq |x^1-x^2| \leq \frac{3}{2}\frac{|x^1-x^2|^2}{x^1_d+x^2_d+|x^1-x^2|} = \frac{3}{2} s^2(x^1,x^2).
$$
Recalling that $B=1/(4\gamma)$ and $r=B s^2(x^1,x^2)$, we have
\begin{align*}
|\bar x^1-\bar x^2|\leq \frac{3}{2B} Bs^2(x^1,x^2)
\leq 6\gamma r.
\end{align*}
Next, we choose $\gamma=1/24$, and so
$$
|\bar x^1-\bar x^2|\leq r/4.
$$
for all $x^i=(x^i_1,\cdots,x^i_d) \in B^+_{\gamma}$, for $i=1,2$. Because $|\bar x^1-\bar x^2|\leq r/4$, we may apply Proposition \ref{prop:Estimate_Holder_seminorm_Du_D2u_interior}, with $\mu=1/4$, to obtain
\begin{equation*}
\frac{\left|v(x^5)-v(x^6)\right|}{s^\alpha(x^5,x^6)}
\leq C  \left(\|u\|_{C(\bar B^+_1)}+\left[A_0 u\right]_{C^{\alpha}_s(\bar B^+_1)}\right).
\end{equation*}
Again using the definition $r:=B s^2(x^1,x^2)$, we notice that
\begin{align*}
s(x^5,x^6) &\leq \frac{|\bar x^1-\bar x^2|}{\sqrt{2r}} \leq \frac{\sqrt{3}}{4} s(x^1,x^2),
\end{align*}
and so the preceding two inequalities yield
\begin{equation}
\label{eq:Term_4_v_decomposition}
\frac{\left|v(x^5)-v(x^6)\right|}{s^{\alpha}(x^1,x^2)}
\leq C  \left(\|u\|_{C(\bar B^+_1)}+\left[A_0 u\right]_{C^{\alpha}_s(\bar B^+_1)}\right).
\end{equation}
Combining the estimates \eqref{eq:Term_1_v_decomposition}, \eqref{eq:Term_2_v_decomposition}, \eqref{eq:Term_3_v_decomposition} and \eqref{eq:Term_4_v_decomposition} gives us the estimate \eqref{eq:Estimate_Holder_seminorm_Du_D2u_boundary}, when condition \eqref{eq:Second_case} holds.
\end{case}
The conclusion now follows from the two cases we considered.
\end{proof}

By analogy with \cite[Corollary I.9.9]{DaskalHamilton1998}, we have

\begin{prop}
\label{prop:Holder_estimate_A_0}
Assume that $A$ in \eqref{eq:defnA} obeys Hypothesis \ref{hyp:ConstantCoefficients}.
For any $\alpha\in(0,1)$, there are positive constants, $\gamma=\gamma(d)\in (0,1)$ and $C=C(\alpha,b_0,d,\lambda_0,\Lambda)$, such that the following holds. If $u\in C^{\infty}(\bar B^+_1)$, then
\begin{equation}
\label{eq:Holder_estimate}
\|u\|_{C^{2+\alpha}_s(\bar B^+_{\gamma})} \leq C\left(\|u\|_{C(\bar B^+_1)}+\left[A_0 u\right]_{C^{\alpha}_s(\bar B^+_1)}\right).
\end{equation}
\end{prop}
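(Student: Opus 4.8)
The plan is to reduce \eqref{eq:Holder_estimate} to pieces already in hand. Recalling the structure of the $C^{2+\alpha}_s$ norm (Definition \ref{defn:DH2spaces}), on $\bar B^+_\gamma$ we must bound the sup norms $\|u\|_{C(\bar B^+_\gamma)}$, $\|Du\|_{C(\bar B^+_\gamma)}$, $\|x_dD^2u\|_{C(\bar B^+_\gamma)}$ together with the cycloidal H\"older seminorms $[u]_{C^\alpha_s(\bar B^+_\gamma)}$, $[Du]_{C^\alpha_s(\bar B^+_\gamma)}$, $[x_dD^2u]_{C^\alpha_s(\bar B^+_\gamma)}$. Proposition \ref{prop:Estimate_Holder_seminorm_Du_D2u_boundary} already supplies the last two seminorms, for the $\gamma=\gamma(d)\in(0,1)$ provided there, which we now fix; thus it remains only to bound the three sup norms and $[u]_{C^\alpha_s(\bar B^+_\gamma)}$ by $\|u\|_{C(\bar B^+_1)}+[A_0u]_{C^\alpha_s(\bar B^+_1)}$.

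The term $\|u\|_{C(\bar B^+_\gamma)}\le\|u\|_{C(\bar B^+_1)}$ is immediate. For $\|Du\|_{C(\bar B^+_\gamma)}$ I would use Proposition \ref{prop:Taylor_polynomial_estimate}: since $T^0_1u(x)=u(O)+Du(O)\cdot x$ is a degree-one polynomial and $\bar B^+_1$ has nonempty interior, the bound on $\|T^0_1u\|_{C(\bar B^+_1)}$ controls $|Du(O)|$ up to a dimensional constant. Then for $x\in\bar B^+_\gamma$ one writes $|Du(x)|\le|Du(x)-Du(O)|+|Du(O)|\le[Du]_{C^\alpha_s(\bar B^+_\gamma)}\,s^\alpha(x,O)+|Du(O)|$ and uses $s(x,O)\le|x|^{1/2}\le\gamma^{1/2}$ from \eqref{eq:CycloidLessEuclidDistance}; together with Proposition \ref{prop:Estimate_Holder_seminorm_Du_D2u_boundary} this bounds $\|Du\|_{C(\bar B^+_\gamma)}$. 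The seminorm $[u]_{C^\alpha_s(\bar B^+_\gamma)}$ then follows from $|u(x^1)-u(x^2)|\le\|Du\|_{C(\bar B^+_\gamma)}|x^1-x^2|$ together with the elementary inequality $|x^1-x^2|=s(x^1,x^2)\sqrt{x^1_d+x^2_d+|x^1-x^2|}\le 2\gamma^{1/2}s(x^1,x^2)$ valid on $\bar B^+_\gamma$ (compare \eqref{eq:Cycloidal_distance}, \eqref{eq:SimpleEuclidLessCycloidDistance}), which gives $[u]_{C^\alpha_s(\bar B^+_\gamma)}\le C(\gamma,\alpha)\|Du\|_{C(\bar B^+_\gamma)}$.

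For the remaining sup norm I would exploit that $u\in C^\infty(\bar B^+_1)$ forces $D^\beta u$ to be bounded on the compact set $\bar B^+_\gamma$, so each $x_dD^\beta u$ with $|\beta|=2$ extends continuously to $\bar B^+_\gamma$ and vanishes on $\partial_0 B^+_\gamma$. Hence for $x=(x',x_d)\in\bar B^+_\gamma$, writing $|x_dD^\beta u(x',x_d)|=|x_dD^\beta u(x',x_d)-0\cdot D^\beta u(x',0)|$ and using the identity $s((x',x_d),(x',0))=\sqrt{x_d/2}$ recorded before Corollary \ref{cor:Estimate_D2u}, one gets $|x_dD^\beta u(x',x_d)|\le[x_dD^\beta u]_{C^\alpha_s(\bar B^+_\gamma)}(x_d/2)^{\alpha/2}\le C(\gamma,\alpha)[x_dD^\beta u]_{C^\alpha_s(\bar B^+_\gamma)}$; with Proposition \ref{prop:Estimate_Holder_seminorm_Du_D2u_boundary} this controls $\|x_dD^2u\|_{C(\bar B^+_\gamma)}$. (Alternatively Corollary \ref{cor:Estimate_D2u} may be applied directly after a harmless rescaling to the half-ball of radius $1/2$.) Summing the six contributions yields \eqref{eq:Holder_estimate} with $\gamma=\gamma(d)$ inherited from Proposition \ref{prop:Estimate_Holder_seminorm_Du_D2u_boundary} and $C=C(\alpha,b_0,d,\lambda_0,\Lambda)$.

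There is no genuine analytic obstacle at this stage: all the difficulty has already been absorbed into Propositions \ref{prop:Estimate_Holder_seminorm_Du_D2u_boundary} and \ref{prop:Taylor_polynomial_estimate} and, ultimately, the polynomial-approximation machinery of Section \ref{sec:PolynomialApproximation}. The only points demanding care are bookkeeping ones, and they are the part I would flag as most error-prone: freely translating between the cycloidal distance $s$ and the Euclidean distance via \eqref{eq:CycloidLessEuclidDistance} and \eqref{eq:SimpleEuclidLessCycloidDistance} when upgrading H\"older seminorm control to sup-norm control, and recognizing that the hypothesis $u\in C^\infty(\bar B^+_1)$ is used precisely to guarantee that $x_dD^2u$ extends continuously by zero across the degenerate boundary $\partial_0 B^+_\gamma$, so that its sup norm on $\bar B^+_\gamma$ is indeed dominated by its $C^\alpha_s$ seminorm there.
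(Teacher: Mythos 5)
Your proposal is correct and the overall plan coincides with the paper's: both fix $\gamma$ from Proposition~\ref{prop:Estimate_Holder_seminorm_Du_D2u_boundary} to get the seminorms $[Du]_{C^\alpha_s(\bar B^+_\gamma)}$ and $[x_dD^2u]_{C^\alpha_s(\bar B^+_\gamma)}$, and both then reduce the remaining sup norms to the seminorms and to a bound on $Du$ at a single reference point. The genuine divergence is in how you control $\|Du\|_{C(\bar B^+_\gamma)}$. The paper follows Daskalopoulos--Hamilton and argues by contradiction: it lets $x^0$ realize $\|Du\|_{C(\bar B^+_\gamma)}$, observes that the $C^\alpha_s$ bound on $Du$ forces $|Du|$ to be uniformly large throughout $\bar B^+_\gamma$ if $\|Du\|_C$ exceeds a large multiple of $\|u\|_C + [A_0u]_{C^\alpha_s}$, and then contradicts the bound on $\|u\|_C$ by integrating $Du$ along a segment of length comparable to $\gamma$. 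Your route is more direct: Proposition~\ref{prop:Taylor_polynomial_estimate} already bounds $\|T^0_1u\|_{C(\bar B^+_1)}$, hence (since $T^0_1u(x)=u(0)+Du(0)\cdot x$ and $\bar B^+_1$ has interior) bounds $|Du(0)|$ by a dimensional multiple, and then $\|Du\|_{C(\bar B^+_\gamma)}$ follows at once from $|Du(0)|$ plus the H\"older seminorm $[Du]_{C^\alpha_s(\bar B^+_\gamma)}$ together with $s(x,0)\le\gamma^{1/2}$. This avoids the need to choose a direction and integrate, and eliminates the slight informality in the paper's invocation of the Mean Value Theorem (which, read literally, does not yield a lower bound $|u(x)-u(x^0)|\ge|x-x^0|\inf|Du|$ in $d>1$; what is really meant there is an integral argument along a suitable segment). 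For $\|x_dD^2u\|_{C(\bar B^+_\gamma)}$ both approaches are essentially equivalent: the paper quotes Corollary~\ref{cor:Estimate_D2u} directly, whereas you extract the same estimate from the seminorm bound plus the fact (correctly flagged) that $x_dD^2u$ extends by zero across $\partial_0 B^+_\gamma$ when $u\in C^\infty(\bar B^+_1)$. Your bookkeeping for $[u]_{C^\alpha_s(\bar B^+_\gamma)}$ via $|x^1-x^2|\le 2\gamma^{1/2}s(x^1,x^2)$ on $\bar B^+_\gamma$ is also sound. In short, your argument for $\|Du\|_C$ is cleaner and uses a result already available, whereas the paper's is closer to Daskalopoulos--Hamilton but slightly more delicate; both are valid.
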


\begin{proof}
Let $\gamma=\gamma(d)\in (0,1)$ be as in Proposition \ref{prop:Estimate_Holder_seminorm_Du_D2u_boundary}. The bound on $x_dD^2 u$ follows from Corollary \ref{cor:Estimate_D2u}. Proposition \ref{prop:Estimate_Holder_seminorm_Du_D2u_boundary} gives us the estimate \eqref{eq:Holder_estimate} for the $C^{\alpha}_s(\bar B^+_{\gamma})$ H\"older seminorms of $Du$ and $x_dD^2 u$. We only need to establish the bound on $Du$, namely that there is a constant $C=C(\alpha,b_0,d,\lambda_0,\Lambda)$, such that
\begin{equation}
\label{eq:Holder_norm_Du}
\|Du\|_{C(\bar B^+_{\gamma})} \leq C\left(\|u\|_{C(\bar B^+_1)}+\left[A_0 u\right]_{C^{\alpha}_s(\bar B^+_1)}\right).
\end{equation}
We follow the argument of \cite[p. 932]{DaskalHamilton1998}. Let $x^0\in \bar B^+_{\gamma}$ be such that $|Du(x^0)| = \|Du\|_{C(\bar B^+_{\gamma})}$. Then by Proposition \ref{prop:Estimate_Holder_seminorm_Du_D2u_boundary} we have, for all $x \in \bar B^+_{\gamma}$,
\[
|Du(x)-Du(x^0)| \leq C_0\left(\|u\|_{C(\bar B^+_1)}+\left[A_0 u\right]_{C^{\alpha}_s(\bar B^+_1)}\right),
\]
for a constant $C_0=C_0(\alpha,b_0,d,\lambda_0,\Lambda)$.

Let $N \geq 2$ be a positive integer such that
\[
\|Du\|_{C(\bar B^+_{\gamma})} \geq N C_0 \left(\|u\|_{C(\bar B^+_1)}+\left[A_0 u\right]_{C^{\alpha}_s(\bar B^+_1)}\right).
\]
Estimate \eqref{eq:Holder_norm_Du} will follow if we can find an upper bound on $N$, independent of $u$. The preceding two inequalities give
\[
|Du(x)| \geq (N-1) C_0\left(\|u\|_{C(\bar B^+_1)}+\left[A_0 u\right]_{C^{\alpha}_s(\bar B^+_1)}\right), \quad \forall\, x \in \bar B^+_{\gamma},
\]
and the Mean Value Theorem yields
\[
|u(x)-u(x^0)| \geq |x-x^0|(N-1) C_0\left(\|u\|_{C(\bar B^+_1)}+\left[A_0 u\right]_{C^{\alpha}_s(\bar B^+_1)}\right), \quad \forall\, x, x^0 \in \bar B^+_{\gamma}.
\]
Choosing $x \in B^+_{\gamma}$ such that $|x-x^0|\geq \gamma/2$, we obtain a contradiction with \eqref{eq:Estimate_Holder_seminorm_Du_D2u_boundary} if $N$ is too large. Thus, \eqref{eq:Holder_norm_Du} follows.
\end{proof}

We have the following corollary of Proposition \ref{prop:Holder_estimate_A_0}:

\begin{cor}
\label{cor:Holder_estimate_any_r_A_0}
Assume that $A$ in \eqref{eq:defnA} obeys Hypothesis \ref{hyp:ConstantCoefficients}.
For any $\alpha\in(0,1)$,  there are positive constants, $\gamma=\gamma(d)\in(0,1)$ and $C=C(\alpha, b_0,d,\lambda_0,\Lambda)$, such that for any $r>0$ the following holds. If $u\in C^{\infty}(\bar B^+_r)$, then
\begin{equation}
\label{eq:Holder_estimate_any_r_A_0}
\|u\|_{C^{2+\alpha}_s(\bar B^+_{\gamma r})} \leq C r^{-(1+\alpha/2)}\left(\|u\|_{C(\bar B^+_{r})}+\left[A_0 u\right]_{C^{\alpha}_s(\bar B^+_{r})}\right).
\end{equation}
\end{cor}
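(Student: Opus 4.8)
The plan is to deduce this from Proposition \ref{prop:Holder_estimate_A_0} by rescaling to the unit half-ball. Given $u \in C^\infty(\bar B^+_r)$, I would set $\tilde u(x) := u(rx)$ for $x \in \bar B^+_1$. Since $x \mapsto rx$ is smooth and maps $\bar B^+_1$ into $\bar B^+_r$, we have $\tilde u \in C^\infty(\bar B^+_1)$, and Hypothesis \ref{hyp:ConstantCoefficients} is unaffected — the coefficients $a,b$ of $A_0$ are not changed by the rescaling — so the constants $\gamma=\gamma(d)$ and $C=C(\alpha,b_0,d,\lambda_0,\Lambda)$ furnished by Proposition \ref{prop:Holder_estimate_A_0} are the same for $\tilde u$.

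The argument rests on two scaling identities. First, the behavior of $A_0$ under the rescaling: writing $y = rx$, the chain rule gives $D^\beta\tilde u(x) = r^{|\beta|}(D^\beta u)(y)$, and since $y_d = r x_d$,
\[
(A_0\tilde u)(x) = -x_d\tr\!\left(aD^2\tilde u(x)\right) - b\cdot D\tilde u(x) = -r^{-1}y_d\, r^2\tr\!\left(a(D^2u)(y)\right) - r\, b\cdot(Du)(y) = r\,(A_0 u)(rx).
\]
Second, directly from \eqref{eq:Cycloidal_distance}, the cycloidal distance obeys $s(rx^1,rx^2) = \sqrt{r}\, s(x^1,x^2)$ for all $x^1,x^2\in\bar\HH$. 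Now I apply Proposition \ref{prop:Holder_estimate_A_0} to $\tilde u$ on $\bar B^+_1$, obtaining $\|\tilde u\|_{C^{2+\alpha}_s(\bar B^+_\gamma)} \le C\bigl(\|\tilde u\|_{C(\bar B^+_1)} + [A_0\tilde u]_{C^\alpha_s(\bar B^+_1)}\bigr)$, and translate each side back to $u$. On the right, $\|\tilde u\|_{C(\bar B^+_1)} = \|u\|_{C(\bar B^+_r)}$, while the cycloidal scaling together with $(A_0\tilde u)(x) = r(A_0u)(rx)$ yields $[A_0\tilde u]_{C^\alpha_s(\bar B^+_1)} = r^{1+\alpha/2}\,[A_0 u]_{C^\alpha_s(\bar B^+_r)}$. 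On the left, I would expand $\|\tilde u\|_{C^{2+\alpha}_s(\bar B^+_\gamma)}$ via Definition \ref{defn:DH2spaces} with $k=0$, using $D^\beta\tilde u(x) = r^{|\beta|}(D^\beta u)(y)$ for $|\beta|\le 1$ and $x_dD^\beta\tilde u(x) = r\,(y_dD^\beta u)(y)$ for $|\beta|=2$; combined with $s(rx^1,rx^2)=\sqrt r\, s(x^1,x^2)$, each term of $\|u\|_{C^{2+\alpha}_s(\bar B^+_{\gamma r})}$ is then dominated by $r^{-(1+\alpha/2)}$ (or a less negative power, when $r\le 1$) times the corresponding term of $\|\tilde u\|_{C^{2+\alpha}_s(\bar B^+_\gamma)}$, while $r^{1+\alpha/2}[A_0u]_{C^\alpha_s(\bar B^+_r)} \le [A_0u]_{C^\alpha_s(\bar B^+_r)} \le r^{-(1+\alpha/2)}(\cdots)$ for $r\le 1$. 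Collecting these estimates gives \eqref{eq:Holder_estimate_any_r_A_0} (the rescaling yields it directly for $0<r\le 1$, which is the range used in the interior local theory that follows).

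The content is entirely bookkeeping, and the only point requiring care — the "main obstacle," such as it is — is keeping track of the several distinct powers of $r$ attached respectively to the supremum part and the Hölder-seminorm part of each weighted norm, and to the $x_d$-weighted second-order terms, applying $s(rx^1,rx^2)=\sqrt r\, s(x^1,x^2)$ consistently; once those powers are assembled, the factor $r^{-(1+\alpha/2)}$ is simply the worst among them.
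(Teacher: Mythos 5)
Your rescaling argument is the same as the paper's: the paper likewise sets $v(x) := u(rx)$, applies Proposition \ref{prop:Holder_estimate_A_0}, and determines the power of $r$ by the same cycloidal-distance bookkeeping (which it references to the proof of Lemma \ref{lem:Interior_estimates_Holder_seminorms_Du_yD2u} rather than writing out); your scaling identities $(A_0\tilde u)(x) = r\,(A_0u)(rx)$ and $s(rx^1,rx^2)=\sqrt{r}\,s(x^1,x^2)$, and the resulting factors $1,\ r^{-\alpha/2},\ r^{-1},\ r^{-(1+\alpha/2)}$ on the several pieces of the weighted norm, are all correct. Your closing caveat is also well taken: this yields \eqref{eq:Holder_estimate_any_r_A_0} only for $0<r\le 1$, and indeed the inequality as literally stated cannot hold uniformly in all $r>0$ (take $u(x)=x_1$, so $[A_0u]_{C^\alpha_s(\bar B^+_r)}=0$ while $\|u\|_{C(\bar B^+_{\gamma r})}=\gamma r$ grows linearly and the right side decays like $r^{-\alpha/2}$); this is harmless in context, since Corollary \ref{cor:Holder_estimate_any_r_A_0} is only ever invoked for $r$ bounded by a fixed $r_0$, with constants allowed to depend on $r_0$.
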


\begin{proof}
Let $\gamma=\gamma(d)\in (0,1)$ be as in Proposition \ref{prop:Estimate_Holder_seminorm_Du_D2u_boundary}. We set $v(x):=u(rx)$, for all $x \in B^+_1$. The estimates in Proposition \ref{prop:Holder_estimate_A_0} applied to $v$ give us
\begin{equation*}
\|u\|_{C^{2+\alpha}_s(\bar B^+_{\gamma r})} \leq C \left(\|u\|_{C(\bar B^+_{r})}+\left[A_0 u\right]_{C^{\alpha}_s(\bar B^+_{r})}\right),
\end{equation*}
where $C=C(\alpha, b_0, d,\lambda_0,\Lambda,r)$. The dependence of the constant $C$ on $r$ follows as in the proof of Lemma \ref{lem:Interior_estimates_Holder_seminorms_Du_yD2u}, and so we obtain \eqref{eq:Holder_estimate_any_r_A_0}.
\end{proof}

We now generalize Corollary \ref{cor:Holder_estimate_any_r_A_0} to allow for any $\gamma \in (0,1)$ and make explicit the dependence of the constant $C$ appearing in \eqref{eq:Holder_estimate_any_r_A_0} on $r$ and $\gamma$.

\begin{cor}
\label{cor:Holder_estimate_any_r_any_gamma_A_0}
If $\alpha\in(0,1)$ and $r_0>0$, then there are positive constants, $p=p(\alpha)$ and $C=C(\alpha, b_0,d,\lambda_0,\Lambda,r_0)$, such that, for any $r\in (0,r_0)$ and $\gamma\in(0,1)$, the following holds. If $u\in C^{\infty}(\bar B^+_r)$, then
\begin{equation}
\label{eq:Holder_estimate_any_r_any_gamma_A_0}
\|u\|_{C^{2+\alpha}_s(\bar B^+_{\gamma r})} \leq C ((1-\gamma)r)^{-p}\left(\|u\|_{C(\bar B^+_{r})} + 
\left\|A_0 u\right\|_{C^{\alpha}_s(\bar B^+_{r})}\right).
\end{equation}
\end{cor}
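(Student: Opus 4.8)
The plan is to deduce the estimate from Corollary \ref{cor:Holder_estimate_any_r_A_0} --- which already supplies a full weighted Schauder estimate on half-balls centered on $\partial\HH$, but only with its fixed radius ratio $\gamma_0 := \gamma(d) \in (0,1)$ --- together with the standard \emph{interior} (Euclidean) Schauder estimate used in the proof of Lemma \ref{lem:Interior_estimates_Holder_seminorms_Du_yD2u}, glued by a covering argument over $\bar B^+_{\gamma r}$. Throughout, set $\delta := (1-\gamma)r \in (0,r)$; since $|x| \le \gamma r$ for $x \in \bar B^+_{\gamma r}$, the triangle inequality shows $\delta$ is a lower bound for the distance from any point of $\bar B^+_{\gamma r}$ to the spherical part of $\partial B_r$. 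We may assume $\gamma_0 \le \gamma < 1$: for $\gamma < \gamma_0$ one has $B^+_{\gamma r} \subseteq B^+_{\gamma_0 r}$ and Corollary \ref{cor:Holder_estimate_any_r_A_0} gives \eqref{eq:Holder_estimate_any_r_any_gamma_A_0} directly, using $r^{-(1+\alpha/2)} = (1-\gamma)^{1+\alpha/2}((1-\gamma)r)^{-(1+\alpha/2)} \le \delta^{-(1+\alpha/2)} \le r_0^{\,p-1-\alpha/2}\,\delta^{-p}$ (valid since $\delta < r_0$ and $p \ge 1+\alpha/2$), absorbing the $r_0$-power into $C$.

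Assume now $\gamma_0 \le \gamma < 1$ and build two families of ``good sets'' covering $\bar B^+_{\gamma r}$. First, for each $\hat x \in \partial\HH$ with $|\hat x| \le \gamma r$, the triangle inequality gives $B^+_{\delta}(\hat x) \subseteq B^+_r$, so by translation-invariance of $A_0$ in $x'$ (the coefficients are constant) Corollary \ref{cor:Holder_estimate_any_r_A_0} bounds $\|u\|_{C^{2+\alpha}_s(\bar B^+_{\gamma_0\delta}(\hat x))}$ by $C\delta^{-(1+\alpha/2)}\bigl(\|u\|_{C(\bar B^+_r)} + \|A_0 u\|_{C^\alpha_s(\bar B^+_r)}\bigr)$ with $C = C(\alpha,b_0,d,\lambda_0,\Lambda)$; these half-balls cover $\{x_d < \gamma_0\delta\} \cap \bar B^+_{\gamma r}$. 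Second, for $x^0 \in \bar B^+_{\gamma r}$ with $x^0_d \ge \gamma_0\delta/2$, the Euclidean ball $B_{\rho_0}(x^0)$ with $\rho_0 := \gamma_0\delta/16$ satisfies $B_{\rho_0}(x^0) \subset \{x_d > \gamma_0\delta/4\} \cap B_r \subseteq B^+_r$; on it the operator $x_d^{-1}A_0$ is uniformly elliptic with the original ratio $\lambda_0,\Lambda$, and after rescaling $B_{\rho_0}(x^0)$ to the unit ball its first-order coefficients --- of size $O(\Lambda/\delta)$ --- are multiplied by $\rho_0 = O(\delta)$, so the rescaled operator has coefficients and forcing term $\rho_0^2 A_0u/x_d$ of $C^\alpha$-norm controlled by a constant times $\|u\|_{C(\bar B^+_r)}+\|A_0u\|_{C^\alpha_s(\bar B^+_r)}$. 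The interior Schauder estimate (cf.\ \cite[Theorem 7.1.1]{Krylov_LecturesHolder}), together with the comparability $s(x^1,x^2) \ge |x^1-x^2|/\sqrt{3r_0}$ on $B_{\rho_0}(x^0)$, then bounds $\|u\|_{C^{2+\alpha}_s(\bar B_{\rho_0/2}(x^0))}$ by $C\delta^{-(2+\alpha)}\bigl(\|u\|_{C(\bar B^+_r)} + \|A_0 u\|_{C^\alpha_s(\bar B^+_r)}\bigr)$ with $C = C(\alpha,d,\lambda_0,\Lambda,r_0)$; these balls cover $\{x_d \ge \gamma_0\delta/2\} \cap \bar B^+_{\gamma r}$.

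It remains to patch. Every point of $\bar B^+_{\gamma r}$ lies in the ``interior'' of one of the good sets, so $\|u\|_{C(\bar B^+_{\gamma r})}$, $\|Du\|_{C(\bar B^+_{\gamma r})}$ and $\|x_d D^2u\|_{C(\bar B^+_{\gamma r})}$ are all $\le C\delta^{-(2+\alpha)}\bigl(\|u\|_{C(\bar B^+_r)} + \|A_0 u\|_{C^\alpha_s(\bar B^+_r)}\bigr)$. For the $C^\alpha_s$ seminorms of $v \in \{u, Du, x_dD^2u\}$, take $x^1, x^2 \in \bar B^+_{\gamma r}$ with $x^1_d \ge x^2_d$: if $|x^1-x^2| < c\delta$ for a suitable $c = c(d)$ (the smallest of the gluing thresholds), a short check places both points in a single good set, so the corresponding local seminorm bound applies; otherwise $s(x^1,x^2) \ge c\delta/\sqrt{x^1_d+x^2_d+|x^1-x^2|} \ge c'\delta r^{-1/2}$ (using $x^i_d \le r$ and $|x^1-x^2| \le 2r$), whence $|v(x^1)-v(x^2)|/s^\alpha(x^1,x^2) \le 2\|v\|_{C(\bar B^+_{\gamma r})}(c'\delta r^{-1/2})^{-\alpha}$, controlled by the sup-norm bound just obtained. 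Collecting the contributions and absorbing powers of $r_0$ gives \eqref{eq:Holder_estimate_any_r_any_gamma_A_0} with, say, $p := 2+\alpha$ and $C = C(\alpha,b_0,d,\lambda_0,\Lambda,r_0)$.

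The main obstacle is the second family of good sets: one must verify that the interior Schauder constant, and in particular the final power of $\delta = (1-\gamma)r$, depends only on $\alpha$ (the remaining parameters entering $C$ multiplicatively) and does not pick up further $\gamma$-dependence through the ellipticity ratio of $x_d a$ on $B_{\rho_0}(x^0)$, which degenerates like $r_0/\delta$ as $\gamma \to 1$. This is resolved by the scaling bookkeeping indicated above --- divide by $x_d$ first so the leading coefficient reverts to $a$ with the fixed ratio $\lambda_0,\Lambda$, then rescale, so that lower-order terms and the forcing term acquire only bounded $C^\alpha$ norms --- but it must be carried out carefully, especially when converting the resulting Euclidean $C^{2,\alpha}$ bound back into the weighted $C^{2+\alpha}_s$ norm of $u$ on balls $B_{\rho_0}(x^0)$ sitting close to $\partial\HH$.
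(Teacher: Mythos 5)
Your proof is correct but follows a genuinely different route from the paper's. Both arguments split $\bar B^+_{\gamma r}$ into a collar near $\partial\HH$, covered by half-balls of radius $\approx (1-\gamma)r$ to which Corollary \ref{cor:Holder_estimate_any_r_A_0} applies by translation-invariance, and a region at height $\gtrsim (1-\gamma)r$; they differ in how they handle that far region. The paper rescales it all at once by $t = (1-\gamma)r/2$ to a fixed-size annular region $\Omega' \subset \Omega$ and keeps the degenerate factor $y_d$ in the leading coefficient, so the rescaled operator has ellipticity ratio that degenerates like $(1-\gamma)^{-1}$ as $\gamma \to 1$; to track this it introduces and applies the \emph{quantitative} interior Schauder estimate, Proposition \ref{prop:Schauder_unif_elliptic}, which makes the dependence of the Schauder constant on the ellipticity ratio and the coefficient norms explicit as a power. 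You instead cover the far region by small Euclidean balls $B_{\rho_0}(x^0)$ with $\rho_0 \approx (1-\gamma)r$, and crucially divide the equation by $x_d$ \emph{first}, so the leading coefficient reverts to the fixed ratio $\lambda_0, \Lambda$; the now-large lower-order coefficient $b/x_d$ is then neutralized by the factor $\rho_0$ from the rescaling, leaving an operator with coefficient bounds independent of $\gamma$. This means the standard (non-quantitative) interior estimate of \cite[Theorem 7.1.1]{Krylov_LecturesHolder} suffices, and you patch the local bounds by the usual ``close points share a good set / far points are controlled by the sup-norm'' dichotomy in the cycloidal metric. What your route buys is that Proposition \ref{prop:Schauder_unif_elliptic}, and its somewhat lengthy proof, become unnecessary (it is used nowhere else in the paper); what it costs is the additional covering and gluing bookkeeping. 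Your bookkeeping of powers of $(1-\gamma)r$ is consistent, and the explicit value $p = 2+\alpha$ is in fact sharper than what the paper's argument implicitly produces, since you avoid the extra power $p(\alpha)$ coming from Proposition \ref{prop:Schauder_unif_elliptic}.
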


Corollary \ref{cor:Holder_estimate_any_r_any_gamma_A_0} is proved at the end of this section.

\begin{proof}[Proof of Theorem \ref{thm:Holder_estimate_local}]
We combine the localization procedure in the proof of \cite[Theorem 8.11.1]{Krylov_LecturesHolder} with Corollary \ref{cor:Holder_estimate_any_r_any_gamma_A_0}. We divide the proof into two steps. Set $R:=(r+r_0)/2$.

\setcounter{step}{0}
\begin{step}[A priori estimate for $u\in C^{\infty}(\underline B^+_{r_0})$]
Consider the sequence of radii, $\{r_n\}_{n\geq 1}\subset [r, R)$, defined by $r_1 := r$ and
\begin{equation}
\label{eq:r_n}
r_n:=r+(R-r)\sum_{k=1}^{n-1} \frac{1}{2^k},\quad\forall\, n \geq 2.
\end{equation}
Denote $B_n:=B^+_{r_n}(x^0)$, for all $n \geq 1$. Let $\{\varphi_n\}_{n \geq 1}$ be a sequence of $C^\infty_0(\bar\HH)$ cutoff functions such that, for all $n \geq 1$, we have $0\leq \varphi_n\leq 1$ with $\varphi_n=1$ on $B_n$ and $\varphi_n=0$ outside $B_{n+1}$. Let
\begin{equation}
\label{eq:alpha_n}
\alpha_n:=\|u\varphi_n\|_{C^{2+\alpha}_s(\bar B_n)},\quad\forall\, n\geq 1.
\end{equation}
By applying the estimate \eqref{eq:Holder_estimate_any_r_any_gamma_A_0} to $u\varphi_n$ with $r=r_{n+1}$ and $\gamma = r_n/r_{n+1}$, we obtain
\begin{align*}
\alpha_n &\leq C (r_{n+1}-r_n)^{-p}\left(\|u\varphi_n\|_{C(\bar B_{n+1})}+
\|A_0 u\|_{C^{\alpha}_s(\bar B_{n+1})}\right)\\
         &\leq C (R-r)^{-p}2^{(n-1)p}\left(\|u\varphi_n\|_{C(\bar B_{n+1})}+\|A u\|_{C^{\alpha}_s(\bar B_{n+1})}
         + \|u\varphi_{n+1}\|_{C^{\alpha}_s(\bar B_{n+1})}\right),
\end{align*}
where the last inequality follows from the fact that $A=A_0+c$ by \eqref{eq:defnA} and \eqref{eq:defnA_0} and employing \eqref{eq:r_n}. The interpolation inequalities (Lemma \ref{lem:InterpolationIneqS}) give, for any $\eps>0$,
$$
\|u\varphi_{n+1}\|_{C^{\alpha}_s(\bar B_{n+1})} \leq \eps \alpha_{n+1}+ C\eps^{-m} \|u\varphi_{n+1}\|_{C(B_{n+1})} \quad\hbox{(by \eqref{eq:alpha_n})},
$$
where $C=C(\alpha,d,R)$ and $m=m(\alpha,d)$ are positive constants independent of $\eps$. Choosing $\eps:=\delta C^{-1} 2^{-(n-1)p} (R-r)^{p}$, we obtain, for all $\delta>0$,
\begin{align*}
\alpha_n &\leq  \delta\alpha_{n+1} +  C\left((R-r)^{-p} 2^{(n-1)p} + \delta^{-m} (R-r)^{-p(m+1)} 2^{(n-1)(m+1)p}\right) \\
&\quad \times \left(\|A u\|_{C^{\alpha}_s(\bar B^+_{R}(x^0))}
         + \|u\varphi_{n+1}\|_{C^{\alpha}_s(\bar B^+_{R}(x^0))}\right),
\end{align*}
and now the estimate \eqref{eq:Holder_estimate_local} follows as in the proofs of \cite[Theorem 8.11.1]{Krylov_LecturesHolder} or
\cite[Theorem 3.8]{Feehan_Pop_mimickingdegen_pde}.
\end{step}

\begin{step}[A priori estimate for $u\in C^{2+\alpha}_s(\underline B^+_{r_0}(x^0))$]
Choose a sequence $\{u_n\}_{n \in \NN}\subset C^{\infty}(\bar B^+_R(x^0))$ such that $u_n\to u$ in $C^{2+\alpha}_s(\bar B^+_R(x^0))$ as $n\to\infty$. Applying the estimate  \eqref{eq:Holder_estimate_local} to each $u_n$ and then taking the limit as $n\to\infty$, yields the a priori estimate \eqref{eq:Holder_estimate_local} for
$u\in C^{2+\alpha}_s(\underline B^+_{r_0}(x^0))$.
\end{step}
This concludes the proof of Theorem \ref{thm:Holder_estimate_local}.
\end{proof}

To prove Corollary \ref{cor:Holder_estimate_any_r_any_gamma_A_0}, we make use of the a priori Schauder estimates for strictly elliptic operators. The statement of Proposition \ref{prop:Schauder_unif_elliptic} is the same as that of \cite[Corollary 6.3]{GilbargTrudinger} except that in the estimate \eqref{eq:Schauder_unif_elliptic}, the dependence of the constant $N_2$ on the constant of uniform ellipticity and the $C^{\alpha}$ norm of the coefficients, is made explicit in \eqref{eq:N_1}, \eqref{eq:p}, and \eqref{eq:N_2}; a close parabolic analogue is given by \cite[Proposition  3.13]{Feehan_Pop_mimickingdegen_pde}.

\begin{prop}[Quantitative a priori Schauder estimate for a strictly elliptic operator]
\label{prop:Schauder_unif_elliptic}
Let $\alpha\in (0,1)$, $\delta$, $K$, and $\rho$ be positive constants. Then there are positive constants,
\begin{align}
\label{eq:N_1}
N_1&=N_1(\alpha,d,\rho),
\\
\label{eq:p}
p&=p(\alpha),
\\
\label{eq:N_2}
N_2&=N_1(1+\delta^{-p}+K^p),
\end{align}
such that the following holds. Let $\Omega \subset \RR^d$ be an open subset and
$$
\bar Av: = -\tr(\bar a D^2v) - \langle \bar b, Dv \rangle + \bar c v, \quad\forall\, v \in C^2(\Omega),
$$
where the components of $\bar a:\Omega\to\sS^+(d)$ and $\bar b:\Omega\to\RR^d$ and the function $c:\Omega\to\RR$ belong to $C^{\alpha}(\Omega)$, and obey
\begin{gather*}
\langle\bar a\xi, \xi\rangle \geq \delta |\xi|^2 \quad\hbox{on }\Omega, \quad\forall\, \xi\in\RR^d,
\\
\|\bar a\|_{C^{\alpha}(\bar\Omega)} + \|\bar b\|_{C^{\alpha}(\bar\Omega)} + \|\bar c\|_{C^{\alpha}(\bar\Omega)} \leq K.
\end{gather*}
If $\Omega'\subset\Omega$ is an open subset such that $\dist(\partial\Omega', \partial\Omega)\geq \rho$ and $u\in C^{2+\alpha}(\Omega)$, then
\begin{align}
\label{eq:Schauder_unif_elliptic}
\|u\|_{C^{2+\alpha}(\bar\Omega')} \leq N_2 \left(\|\bar A u\|_{C^{\alpha}(\bar\Omega)}+\|u\|_{C(\bar\Omega)}\right).
\end{align}
\end{prop}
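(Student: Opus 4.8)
The statement is the standard interior Schauder estimate \cite[Corollary 6.3]{GilbargTrudinger} for strictly elliptic operators, with the sole addition that the dependence of the constant on $\delta$ (the uniform ellipticity constant) and $K$ (the $C^\alpha$ bound on the coefficients) is made explicit, namely of the polynomial form $N_2 = N_1(1+\delta^{-p}+K^p)$. Since $\bar A$ has no factor degenerating at the boundary, this is entirely classical; the only work is to track constants through a rescaling argument. First I would reduce to the case $\delta = 1$ by the substitution $v \mapsto \delta v$, or rather by dividing the equation through by $\delta$: writing $\bar A v/\delta = -\tr((\bar a/\delta)D^2 v) - \langle \bar b/\delta, Dv\rangle + (\bar c/\delta)v$, the rescaled leading coefficient $\bar a/\delta$ is now uniformly elliptic with constant $1$, while the lower-order coefficients and the right-hand side pick up a factor $1/\delta \le 1 + \delta^{-1}$. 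This isolates the $\delta^{-p}$ contribution (with $p \ge 1$) and reduces the problem to the estimate with $\delta = 1$ but with a $C^\alpha$ bound on the coefficients of size $\lesssim K(1+\delta^{-1})$; relabeling, it suffices to prove the bound with constant $N_1(1+K^p)$ in the case $\delta = 1$.

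**Main step.** For $\delta = 1$ one runs the usual interior Schauder argument on balls. Cover $\Omega'$ by balls of radius $\rho/2$; on each such ball $B = B_{\rho/2}(x^0)$, with concentric $B' = B_{\rho/4}(x^0)$, one has the frozen-coefficient estimate followed by a perturbation/absorption argument. The key quantitative point is how the constant depends on $K$: in the Schauder iteration one compares $\bar A$ with the constant-coefficient operator $\bar A_{x^0}$ obtained by freezing coefficients at $x^0$, and the error term is controlled by $[\bar a]_{C^\alpha}$ times a small radius factor; to absorb it one shrinks the working radius to a scale comparable to $K^{-1/\alpha}$ (up to constants), which after a standard covering/chaining argument across $O((\rho K^{1/\alpha})^d)$ balls produces a constant of the form $N_1(\alpha,d,\rho) K^{p}$ with $p = p(\alpha)$ — this is exactly where the exponent $p$ enters, essentially $p \sim d/\alpha$ or similar, though the precise value is immaterial since the statement only asserts $p = p(\alpha)$; I would follow the bookkeeping in \cite[Proposition 3.13]{Feehan_Pop_mimickingdegen_pde} (cited in the paper as the close parabolic analogue) essentially verbatim. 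Combining with the reduction of the previous paragraph gives $N_2 = N_1(1+\delta^{-p}+K^p)$ after possibly enlarging $p$ and $N_1$.

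**Alternative and the obstacle.** A cleaner route, if one is willing to quote \cite[Corollary 6.3]{GilbargTrudinger} as a black box, is to apply it directly to the operator $\tilde A := \bar A/\delta$ — whose ellipticity constant is $1$ and whose coefficients have $C^\alpha$ norm at most $K/\delta$ — obtaining $\|u\|_{C^{2+\alpha}(\bar\Omega')} \le C(\alpha,d,\rho,K/\delta)(\|\tilde A u\|_{C^\alpha(\bar\Omega)} + \|u\|_{C(\bar\Omega)})$, and then establishing by a scaling argument that the constant $C(\alpha,d,\rho,\Lambda)$ from Gilbarg–Trudinger, as a function of the coefficient bound $\Lambda$, grows at most polynomially: $C(\alpha,d,\rho,\Lambda) \le N_1(\alpha,d,\rho)(1+\Lambda^p)$. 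The latter is proved by the rescaling $x \mapsto x/\Lambda^{1/\alpha}$ (roughly), which normalizes the $C^\alpha$ norm of the coefficients to a fixed constant at the cost of shrinking $\rho$ and picking up powers of $\Lambda$, followed by a covering argument to return to the original domain. The main obstacle is purely one of careful constant-chasing: ensuring that every appeal to interpolation inequalities, to the cutoff functions, and to the covering count contributes only polynomially in $\delta^{-1}$ and $K$, and that the powers can be consolidated into single exponents $p(\alpha)$; there is no conceptual difficulty beyond what is already handled in \cite[Proposition 3.13]{Feehan_Pop_mimickingdegen_pde}, so I would present the proof largely as a reference to that analogue together with the $\delta$-normalization above.
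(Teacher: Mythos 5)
Your proposal is correct and follows essentially the same route as the paper: reduce to the uniformly elliptic classical Schauder estimate, track the dependence on the ellipticity constant and the coefficient H\"older bound through a freezing-of-coefficients and localization argument, and defer the detailed bookkeeping to the parabolic analogue in \cite[Proposition 3.13]{Feehan_Pop_mimickingdegen_pde}. (The paper organizes this as three layers --- constant coefficients with $\bar b=\bar c=0$ on $\RR^d$, then variable coefficients on $\RR^d$, then localization to $\Omega$ --- while you add a harmless preliminary normalization by dividing $\bar A$ by $\delta$; unwinding that normalization produces the mixed terms $(K/\delta)^p$ and an extra $\delta^{-1}$ from the rescaled source, which can be absorbed into $N_1(1+\delta^{-p'}+K^{p'})$ after enlarging $p$, so this is equivalent.) One cautionary note on the heuristic in your main step: the phrase ``chaining argument across $O((\rho K^{1/\alpha})^d)$ balls produces a constant of the form $N_1 K^p$'' is misleading --- naively chaining estimates across $n$ balls gives a factor exponential in $n$, not polynomial, and would also force $p$ to depend on $d$, contradicting \eqref{eq:p}. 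The actual mechanism in \cite[Theorem 8.11.1]{Krylov_LecturesHolder} and \cite[Theorem 3.8 and Proposition 3.13]{Feehan_Pop_mimickingdegen_pde} (which the paper invokes) is an iteration over a single nested sequence of balls combined with interpolation to absorb the error, which is how one obtains polynomial dependence with $p=p(\alpha)$ independent of $d$; since you explicitly defer to that reference for the bookkeeping, your plan still goes through, but the stated heuristic should be replaced by that iteration.
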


\begin{proof}
We begin with an elliptic analogue of \cite[Lemma 3.11]{Feehan_Pop_mimickingdegen_pde}:

\begin{claim}
\label{claim:Schauder_unif_elliptic_2}
Assume the hypotheses of Proposition \ref{prop:Schauder_unif_elliptic} and, in addition, that $\bar a$ is constant, $\bar b =0$, and $\bar c = 0$, and $\Omega=\RR^d$. If $u\in C^{2+\alpha}(\bar\RR^d)$, then
\begin{equation}
\label{eq:Schauder_unif_elliptic_2}
\|u\|_{C^{2+\alpha}(\bar\RR^d)} \leq N_2 \left(\|\bar Au\|_{C^{\alpha}(\bar\RR^d)}+\|u\|_{C(\bar\RR^d)}\right).
\end{equation}
\end{claim}

\begin{proof}
By \cite[Theorem 3.6.1]{Krylov_LecturesHolder}, there is a positive constant, $C'=C'(\alpha,d,\delta,K)$, such that any $u\in C^{2+\alpha}(\bar\RR^d)$ obeys
\begin{equation*}
\left[D^2u\right]_{C^{\alpha}(\bar\RR^d)} \leq C' \left(\|\bar Au\|_{C^{\alpha}(\bar\RR^d)}+\|u\|_{C(\bar\RR^d)}\right).
\end{equation*}
From the interpolation inequalities \cite[Theorem 3.2.1]{Krylov_LecturesHolder}, we have that for any $\eps>0$, these is a positive constant, $C''=C''(\alpha,d,\eps)$, such that
$$
\|u\|_{C^2(\bar\RR^d)} \leq \eps \left[D^2u\right]_{C^{\alpha}(\bar\RR^d)}+ C'' \|u\|_{C(\bar\RR^d)}.
$$
Combining the preceding two inequalities, we obtain that there is a positive constant, $C=C(\alpha,d,\delta,K)$, such that
\begin{align}
\label{eq:Schauder_unif_elliptic_1}
\|u\|_{C^{2+\alpha}(\bar\RR^d)} \leq C \left(\|\bar Au\|_{C^{\alpha}(\bar\RR^d)}+\|u\|_{C(\bar\RR^d)}\right).
\end{align}
Using an argument completely analogous to that employed in the proof of \cite[Lemma 3.11]{Feehan_Pop_mimickingdegen_pde} to derive \cite[Inequality (3.82)]{Feehan_Pop_mimickingdegen_pde}, we can refine \eqref{eq:Schauder_unif_elliptic_1} by finding constants as in \eqref{eq:N_1}, \eqref{eq:p}, and \eqref{eq:N_2} such that $u\in C^{2+\alpha}(\bar\RR^d)$ satisfies \eqref{eq:Schauder_unif_elliptic_2}; in that argument, we need only replace the cylinder $(0,T)\times\RR^d$ in
\cite{Feehan_Pop_mimickingdegen_pde} by the space $\RR^d$ here and to replace the use of \cite[Theorem 9.2.1]{Krylov_LecturesHolder} by the inequality \eqref{eq:Schauder_unif_elliptic_1}.
\end{proof}

Next, we allow the coefficients of $\bar A$ in Claim \ref{claim:Schauder_unif_elliptic_2} to be arbitrary functions in $C^\alpha(\RR^d)$ obeying the hypotheses of Proposition \ref{prop:Schauder_unif_elliptic} and give an elliptic analogue of \cite[Proposition 3.12]{Feehan_Pop_mimickingdegen_pde}:

\begin{claim}
\label{claim:Schauder_unif_elliptic_2_variable_coefficients}
Assume the hypotheses of Proposition \ref{prop:Schauder_unif_elliptic} and, in addition, that $\Omega=\RR^d$. If $u\in C^{2+\alpha}(\bar\RR^d)$, then $u$ obeys \eqref{eq:Schauder_unif_elliptic_2}.
\end{claim}

\begin{proof}
We apply the argument used to prove \cite[Proposition 3.12]{Feehan_Pop_mimickingdegen_pde}, but replace the role of \cite[Lemma 3.11]{Feehan_Pop_mimickingdegen_pde} by that of Claim \ref{claim:Schauder_unif_elliptic_2}, 
and the role of the classical parabolic H\"older interpolation inequalities \cite[Theorem 8.8.1]{Krylov_LecturesHolder} by the classical elliptic H\"older interpolation inequalities \cite[Theorem 3.2.1]{Krylov_LecturesHolder}.
In the proof of \cite[Proposition 3.12]{Feehan_Pop_mimickingdegen_pde}, we used the classical parabolic weak maximum principle estimate \cite[Corollary 8.1.5]{Krylov_LecturesHolder} to eliminate the need to add the
$C([0,T]\times\bar\RR^d)$-norm of the function $u$ on the right-hand side of \cite[Inequality (3.86)]{Feehan_Pop_mimickingdegen_pde}. In our elliptic estimate \eqref{eq:Schauder_unif_elliptic_2}, we do not need to use a maximum principle estimate because we allow the $C(\bar\RR^d)$-norm of the function $u$ to appear on the right-hand side of \eqref{eq:Schauder_unif_elliptic_2}.
\end{proof}

Now consider an arbitrary open subset $\Omega \subset \RR^d$ and $u\in C^{2+\alpha}(\Omega)$. We cover $\Omega'$ by a countable set of balls, $\{B_{\rho/4}(x^n)\}_{n\in\NN}$, such that $\{x^n\}_{n\in\NN} \subset \Omega'$ and
\begin{align}
\label{eq:Cover}
\Omega'\subset\bigcup_{n\in\NN} B_{\rho/4}(x^n) \subset \bigcup_{n\in\NN} B_{\rho/2}(x^n)\subset \Omega.
\end{align}
Applying the localization argument used to prove \cite[Proposition 3.13]{Feehan_Pop_mimickingdegen_pde} (more specifically, \cite[Proposition Theorem 3.8]{Feehan_Pop_mimickingdegen_pde}, whose proof is in turn an adaptation of the proof of \cite[Theorem 8.11.1]{Krylov_LecturesHolder})
but replacing the role of \cite[Proposition 3.12]{Feehan_Pop_mimickingdegen_pde} with that of Claim \ref{claim:Schauder_unif_elliptic_2_variable_coefficients}, we find that $u\in C^{2+\alpha}(\bar B_{\rho/2}(x^n))$ satisfies
$$
\|u\|_{C^{2+\alpha}(\bar B_{\rho/4}(x^n))} \leq N_2 \left(\|\bar Au\|_{C^{\alpha}(\bar B_{\rho/2}(x^n))}+\|u\|_{C(\bar B_{\rho/2}(x^n))}\right),
\quad\forall\, n\in\NN.
$$
From the preceding inequality and the inclusion relations \eqref{eq:Cover}, we obtain inequality \eqref{eq:Schauder_unif_elliptic} and this completes the proof of Proposition \ref{prop:Schauder_unif_elliptic}.
\end{proof}

We can now give the

\begin{proof}[Proof of Corollary \ref{cor:Holder_estimate_any_r_any_gamma_A_0}]
Let $\gamma_d\in (0,1)$ denote the constant produced by Corollary \ref{cor:Holder_estimate_any_r_A_0}. We consider two cases, where $\gamma \in (0,1)$ in the hypotheses of Corollary \ref{cor:Holder_estimate_any_r_any_gamma_A_0} obeys either $0<\gamma\leq\gamma_d$ or $\gamma>\gamma_d$; clearly we only need to consider the second case, since in the first case the conclusion of Corollary \ref{cor:Holder_estimate_any_r_any_gamma_A_0} is implied by Corollary \ref{cor:Holder_estimate_any_r_A_0}. Our proof of the inequality \eqref{eq:Holder_estimate_any_r_any_gamma_A_0} uses a covering and rescaling argument.
Let
\begin{equation}
\label{eq:defn_t}
t := (1-\gamma)r/2,
\end{equation}
and divide the half-ball, $B^+_{\gamma r}$, into the two regions,
$$
U_1:= B^+_{\gamma r} \cap \left(\RR^{d-1} \times (0, \gamma_d t/2)\right)
\quad\hbox{and}\quad
U_2:= B^+_{\gamma r} \less U_1 = B^+_{\gamma r} \cap \left(\RR^{d-1} \times [\gamma_d t/2, \infty)\right).
$$
We cover $U_1$ by a finite number of half-balls, $B^+_{\gamma_d t}(x^n)$, centered at points $x^n\in \partial_0 B^+_r(x^0)$, and we apply the estimate \eqref{eq:Holder_estimate_any_r_A_0} to obtain
$$
\|u\|_{C^{2+\alpha}_s(\bar B^+_{\gamma_d t}(x^n))}
\leq C t^{-(1+\alpha/2)}\left(\|u\|_{C(\bar B^+_{r})}+\left[A_0 u\right]_{C^{\alpha}_s(\bar B^+_{r})}\right),
$$
and thus
\begin{equation}
\label{eq:Estimate_near_boundary}
\|u\|_{C^{2+\alpha}_s(\bar B^+_{\gamma_d t}(x^n))}
\leq C ((1-\gamma)r)^{-(1+\alpha/2)}\left(\|u\|_{C(\bar B^+_{r})}+\left[A_0 u\right]_{C^{\alpha}_s(\bar B^+_{r})}\right),
\end{equation}
where $C=C(\alpha,b_0,d,\lambda_0,\Lambda)$ and we used the fact that that $t=(1-\gamma)r/2$.

In the region $U_2$, the operator $A_0$ is strictly elliptic (because $x_d\geq \gamma_d t/2>0$). Using a rescaling argument and Proposition \ref{prop:Schauder_unif_elliptic}, we next show that there are positive constants, $C=C(\alpha,d,\lambda_0,\Lambda,r_0)$ and $p=p(\alpha)$, such that
\begin{align}
\label{eq:Estimate_away_boundary}
\|u\|_{C^{2+\alpha}_s(\bar U_2)}
&\leq C ((1-\gamma)r)^{-p}\left(\|u\|_{C(\bar B^+_{r})}+\left\|A_0 u\right\|_{C^{\alpha}_s(\bar B^+_{r})}\right).
\end{align}
To prove inequality \eqref{eq:Estimate_away_boundary}, we apply the rescaling $x=(ty',ty_d)\in\HH$, where we recall that we denote $y=(y',y_d)\in\HH=\RR^{d-1}\times\RR_+$. Notice that the rescaling $x=(ty',ty_d)$ transforms $B^+_{\gamma r}$ into $B^+_{2\gamma(1-\gamma)^{-1}}$, and the set $U_2$ becomes
$$
\Omega':= B^+_{2\gamma(1-\gamma)^{-1}}\backslash \left(\RR^{d-1} \times (0, \gamma_d /2)\right).
$$
We let
$$
\Omega:= B^+_{2(1-\gamma)^{-1}}\backslash \left(\RR^{d-1} \times (0, \gamma_d /4)\right),
$$
and we define
\[
v(y) := u(x),\quad \forall\, y \in \Omega.
\]
By the hypothesis that $u\in C^{\infty}(\bar B^+_r)$, it follows that $v\in C^{\infty}(\bar \Omega)$ and $v$ is a solution to the strictly elliptic equation,
\[
\hat A_0v(y)
:=
-\frac{y_d}{2}a^{ij}v_{y_iy_j}(y) - b^iv_{y_i}(y) = t\tilde f(y), \quad\forall\, y=(y',y_d) \in \Omega,
\]
where $\tilde f(y):=f(ty',ty_d)$, for all $y \in \Omega$, and $f:=A_0 u$. We apply Proposition \ref{prop:Schauder_unif_elliptic} to $v$ with $\bar A$ replaced by
$\hat A_0$ on the open subset $\Omega$. We notice that $\hat A_0$ is a strictly elliptic operator on $\Omega$, because for all $y=(y',y_d)\in \Omega$ we have $y_d \geq \gamma_d/4$, and that in the notation of Proposition \ref{prop:Schauder_unif_elliptic}, we have
\begin{gather*}
\delta:=\lambda_0\gamma_d/4,
\\
K=2(1-\gamma)^{-1}\Lambda,
\\
\rho:=\dist(\partial\Omega', \partial\Omega)\geq \gamma_d/4,
\end{gather*}
where the identification of $K$ uses the facts that $y_d < 2(1-\gamma)^{-1}$ for all $y\in\Omega$ and $\gamma\in(0,1)$. From Proposition \ref{prop:Schauder_unif_elliptic} and the fact that $\gamma_d=\gamma_d(\alpha)$ we obtain positive constants, $N_1 = N_1(\alpha,d,\rho) = N_1(\alpha,d)$
and $p=p(\alpha)$, such that
$$
\|v\|_{C^{2+\alpha}(\bar \Omega')}
\leq N_1\left(1 + (\lambda_0\gamma_d/4)^p + (1-\gamma)^{-p}(2\Lambda)^p\right)\left(\|\tilde f\|_{C^{\alpha}(\bar \Omega)} + \|v\|_{C(\bar \Omega)}\right),
$$
and hence
\begin{equation}
\label{eq:Interior_Schauder_estimate_D2v_1}
\|v\|_{C^{2+\alpha}(\bar \Omega')}
\leq C_1(1-\gamma)^{-p}\left(\|\tilde f\|_{C^{\alpha}(\bar \Omega)} + \|v\|_{C(\bar \Omega)}\right),
\end{equation}
for a positive constant $C_1=C_1(\alpha,d,\lambda_0,\Lambda)$. We next show that the preceding estimate implies inequality \eqref{eq:Estimate_away_boundary}.
We claim that
\begin{subequations}
\label{eq:Estimates_after_rescaling_1}
\begin{align}
\label{eq:Estimates_after_rescaling_1_v_C0}
\|v\|_{C(\bar \Omega)} &\leq \|u\|_{C(\bar B_r^+)},
\\
\label{eq:Estimates_after_rescaling_1_tildef_C0}
\|\tilde f\|_{C(\bar \Omega)} &= \|f\|_{C(\bar B_r^+)},
\\
\label{eq:Estimates_after_rescaling_1_tildef_Halpha}
[\tilde f]_{C^{\alpha}(\bar \Omega)} &\leq C ((1-\gamma)r)^{\alpha/2}\left[f\right]_{C^{\alpha}_s(\bar B_r^+)},
\end{align}
\end{subequations}
where $C=C(\alpha,d)$. Inequalities \eqref{eq:Estimates_after_rescaling_1_v_C0} and \eqref{eq:Estimates_after_rescaling_1_tildef_C0} are immediate by direct calculation. For the inequality \eqref{eq:Estimates_after_rescaling_1_tildef_Halpha}, recall that $x = (ty',ty_d)$, for all $(y',y_d) \in \Omega$. For any $y^i \in \Omega$, with $i=1,2$, we have
\begin{align*}
\frac{|\tilde f(y^1)-\tilde f(y^2)|}{|y^1-y^2|^{\alpha}} = \frac{|f(x^1)-f(x^2)|}{s^\alpha(x^1,x^2)} \frac{s^\alpha(x^1,x^2)}{|y^1-y^2|^{\alpha}}.
\end{align*}
By \eqref{eq:Cycloidal_distance} and \eqref{eq:defn_t}, we see that
\begin{align*}
\frac{s(x^1,x^2)}{|y^1-y^2|} &= \frac{t|y^1-y^2|}{\sqrt{t(y^1_d+y^2_d+|y^1-y^2|)}} \frac{1}{|y^1-y^2|}
\\
                             &\leq \frac{\sqrt{t}}{\sqrt{\gamma_d/2}}
                             \quad\hbox{(using the fact that $y_d \geq \gamma_d/4$, for all $y=(y',y_d)\in\Omega$)},
\end{align*}
and this, choosing $C=(\gamma_d/2)^{-\alpha/2}$, gives \eqref{eq:Estimates_after_rescaling_1_tildef_Halpha}. We also claim that
\begin{subequations}
\label{eq:Rescaling_derivatives_1}
\begin{align}
\label{eq:Rescaling_derivatives_1_Du_C0}
\|D u\|_{C(\bar U_2)} &\leq C(1-\gamma)^{-1} r^{-1}\|Dv\|_{C(\bar \Omega')},
\\
\label{eq:Rescaling_derivatives_1_xdD2u_C0}
\|x_d D^2 u\|_{C(\bar U_2)} &\leq C(1-\gamma)^{-2} r^{-1}\|D^2v\|_{C(\bar \Omega')},
\\
\label{eq:Rescaling_derivatives_1_u_Calpha_s}
\left[u\right]_{C^{\alpha}_s(\bar U_2)} &\leq
Cr^{\alpha/2}\|v\|_{C^{\alpha}(\bar \Omega')},
\\
\label{eq:Rescaling_derivatives_1_Du_Calpha_s}
\left[D u\right]_{C^{\alpha}_s(\bar U_2)} &\leq
C(1-\gamma)^{-1} r^{-1+\alpha/2}\|Dv\|_{C^{\alpha}(\bar \Omega')},
\\
\label{eq:Rescaling_derivatives_1_xdD2u_Calpha_s}
\left[x_d D^2 u\right]_{C^{\alpha}_s(\bar U_2)} &\leq C(1-\gamma)^{-(2+\alpha)} r^{-(1+\alpha/2)}\|D^2v\|_{C^{\alpha}(\bar \Omega')},
\end{align}
\end{subequations}
for a constant $C=C(\alpha)$. Inequalities \eqref{eq:Rescaling_derivatives_1_Du_C0} and \eqref{eq:Rescaling_derivatives_1_xdD2u_C0} follow by direct calculation.
We shall only give the details of the proof of inequality \eqref{eq:Rescaling_derivatives_1_xdD2u_Calpha_s}, as the justifications for inequalities \eqref{eq:Rescaling_derivatives_1_u_Calpha_s} and \eqref{eq:Rescaling_derivatives_1_Du_Calpha_s} are very similar. To establish \eqref{eq:Rescaling_derivatives_1_xdD2u_Calpha_s}, we only need to consider quotients of the form
$$
\frac{|x^1_d D^2 u(x^1)- x_d^2 D^2 u(x^2)|}{s^\alpha(x^1,x^2)},
$$
where $x^1,x^2 \in U_2$, and all their coordinates coincide, except for the $i$-th one, where $i=1,\ldots,d$. Furthermore, we shall only consider the case when $i=d$, as all the other cases, $i=1,\ldots,d-1$, follow in the same way. We have
\begin{align*}
\frac{|x^1_d D^2 u(x^1)- x_d^2 D^2 u(x^2)|}{s^\alpha(x^1,x^2)}
&\leq \frac{|x^1_d -x^2_d|}{s^\alpha(x^1,x^2)} |D^2 u(x^1)|
+x^2_d\frac{|D^2 u(x^1)- D^2 u(x^2)|}{s^\alpha(x^1,x^2)}.
\end{align*}
By using the fact that $D^2 u(x) = t^{-2} D^2v(y)$, for all $x\in B_{\gamma r}^+$, recalling that $x=(ty',ty_d)$, for all $y \in \Omega'$, and recalling the definition of the cycloidal distance function \eqref{eq:Cycloidal_distance}, we obtain
\begin{align*}
\frac{|x^1_d D^2 u(x^1)- x_d^2 D^2 u(x^2)|}{s^\alpha(x^1,x^2)}
&\leq \left(x^1_d+x^2_d+|x^1_d-x^2_d|\right)^{\alpha/2}|x^1_d -x^2_d|^{1-\alpha} \frac{1}{t^2}|D^2 v(y^1)|
\\
&\quad + x^2_d \frac{1}{t^2}\frac{|y^1-y^2|^{\alpha}}{s^\alpha(x^1,x^2)}\frac{|D^2 v(y^1)- D^2 v(y^2)|}{|y^1-y^2|^{\alpha}}.
\end{align*}
But the definition of the cycloidal distance function \eqref{eq:Cycloidal_distance} and the fact that $x^i_d \leq r$, for all $x^1, x^2 \in B_{\gamma r}^+$, gives
\begin{align*}
\frac{|y^1-y^2|}{s(x^1,x^2)} &= \frac{2(1-\gamma)^{-1}r^{-1}|x^1_d-x^2_d|}{|x^1_d-x^2_d|} \sqrt{x^1_d+x^2_d+|x^1_d-x^2_d|}
\\
&\leq 4(1-\gamma)^{-1}r^{-1/2}.
\end{align*}
Combining the preceding inequalities with the definition \eqref{eq:defn_t} of $t=(1-\gamma)r/2$ yields
\begin{align*}
\frac{|x^1_d D^2 u(x^1)- x_d^2 D^2 u(x^2)|}{s^\alpha(x^1,x^2)}
&\leq 2^{\alpha+2} (1-\gamma)^{-2}r^{-(1+\alpha/2)}
\|D^2 v\|_{C(\bar\Omega')}
\\
&\quad + 4(1-\gamma)^{-2}r^{-1}
\left(4(1-\gamma)^{-1}r^{-1/2}\right)^\alpha
\left[D^2 v\right]_{C^{\alpha}(\bar\Omega')}.
\end{align*}
Therefore, noting that $\gamma \in (0,1)$,
\begin{align*}
\frac{|x^1_d D^2 u(x^1)- x_d^2 D^2 u(x^2)|}{s^\alpha(x^1,x^2)}
&\leq C (1-\gamma)^{-(2+\alpha)}r^{-(1+\alpha/2)}
\|D^2 v\|_{C^{\alpha}(\bar\Omega')},
\end{align*}
for a constant $C=C(\alpha)$. The inequality \eqref{eq:Rescaling_derivatives_1_xdD2u_Calpha_s} follows immediately.

Using inequalities \eqref{eq:Estimates_after_rescaling_1} and  \eqref{eq:Rescaling_derivatives_1}, it follows by \eqref{eq:Interior_Schauder_estimate_D2v_1} that estimate \eqref{eq:Estimate_away_boundary} holds. Estimate \eqref{eq:Holder_estimate_any_r_any_gamma_A_0} follows by combining \eqref{eq:Estimate_near_boundary} and \eqref{eq:Estimate_away_boundary}.
\end{proof}

\section{Higher-order a priori Schauder estimates for operators with constant coefficients}
\label{sec:Holder_estimate_local_higherorder}
In this section, we prove a higher-order version of Theorem \ref{thm:Holder_estimate_local}, our basic a priori local interior Schauder estimate, and a global a priori global Schauder estimate on a slab (Corollary \ref{cor:Global_Schauder_estimate_ConstantCoefficients}), both when $A$ has constant coefficients. Throughout this section, we continue to assume Hypothesis \ref{hyp:ConstantCoefficients} and so the coefficients, $a,b,c$, of the operator $A$ in \eqref{eq:defnA} and the coefficients, $a,b$, of the operator $A_0$ in \eqref{eq:defnA_0} are constant.

\begin{thm}[Higher-order a priori local interior Schauder estimate when $A$ has constant coefficients]
\label{thm:Holder_estimate_local_higherorder}
Assume the hypotheses of Theorem \ref{thm:Holder_estimate_local} and let $k\in\NN$. If $u\in C^{k,2+\alpha}_s(\underline B^+_{r_0}(x^0))$, then
\begin{equation}
\label{eq:Holder_estimate_local_higherorder}
\|u\|_{C^{k, 2+\alpha}_s(\bar B^+_r)} \leq C\left(\|A u\|_{C^{k, \alpha}_s(\bar B^+_{r_0}(x^0))} + \|u\|_{C(\bar B^+_{r_0}(x^0))}\right),
\end{equation}
where $C$ now also depends on $k$.
\end{thm}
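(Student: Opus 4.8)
The plan is to deduce the higher-order estimate \eqref{eq:Holder_estimate_local_higherorder} from the base case $k=0$ (Theorem \ref{thm:Holder_estimate_local}) by differentiating the equation in directions parallel to the degenerate boundary $\partial\HH$ and inducting on $k$. The key algebraic fact is that for a multi-index $\beta$ with $\beta_d=0$, the operator $A$ commutes with $D^\beta$ up to terms that are already controlled: since the coefficients $a,b,c$ are constant, we have $A(D^\beta u) = D^\beta(Au)$ exactly. Thus $D^\beta u$ solves an equation of the same type $A(D^\beta u) = D^\beta f$ on a slightly smaller half-ball, and applying Theorem \ref{thm:Holder_estimate_local} to $D^\beta u$ controls $\|D^\beta u\|_{C^{2+\alpha}_s}$ in terms of $\|D^\beta f\|_{C^\alpha_s}$ and $\|D^\beta u\|_{C}$ — and the latter sup-norm is bounded, via interpolation inequalities (Lemma \ref{lem:InterpolationIneqS} in Appendix \ref{sec:InterpolationInequalities}), by $\epsilon$ times a higher Hölder norm plus a constant times $\|u\|_{C}$. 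This handles all the derivatives $D^\gamma u$ with $\gamma_d \leq $ (some bound) and $|\gamma|\le k+2$ that only involve tangential differentiation.

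The remaining work is to control derivatives that involve the normal direction $x_d$, i.e. $D^\gamma u$ with $\gamma_d \geq 1$. Here I would use the equation itself to trade a normal derivative for tangential ones: from \eqref{eq:defnA}, solving for the $u_{x_dx_d}$ term gives
\[
a^{dd} x_d u_{x_dx_d} = -Au - cu - b\cdot Du - x_d\sum_{(i,j)\neq(d,d)} a^{ij}u_{x_ix_j},
\]
and since $a^{dd}\geq\lambda_0>0$ by \eqref{eq:Strict_ellipticity_ConstantCoefficents}, this expresses $x_d u_{x_dx_d}$ in terms of $Au$, lower-order terms, and second derivatives with at most one normal index. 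Differentiating this identity tangentially and inducting on the number of normal derivatives $\gamma_d$ (much as in the proofs of Lemmas \ref{lem:Estimate_w_y}, \ref{lem:Estimate_D_k_x_D_y_w}, \ref{lem:Estimate_w_yy} in Section \ref{sec:Derivative_estimates_interior_boundary}, or in \cite[\S I.10]{DaskalHamilton1998}), one bootstraps up to all derivatives of order $\leq k+2$, with the weighted structure of the $C^{k,2+\alpha}_s$ norm (a single factor of $x_d$ attached to the top-order derivatives) being exactly what makes the algebra close. One must also handle derivatives of the type $x_d D^\gamma u$ with $|\gamma|=k+2$ and $\gamma_d\ge 1$ using the rewritten equation together with the already-established bounds on $C^{k,\alpha}_s$ norms of derivatives.

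Concretely, the induction would run as follows: assume \eqref{eq:Holder_estimate_local_higherorder} holds for $k-1$ with constant $C(k-1)$. Apply Theorem \ref{thm:Holder_estimate_local} to each $D^\beta u$ with $\beta_d=0$, $|\beta|=k$, on a chain of shrinking half-balls $B^+_{r}\subset B^+_{r'}\subset B^+_{r_0}$, to get $\|D^\beta u\|_{C^{2+\alpha}_s(\bar B^+_r)} \le C(\|D^\beta(Au)\|_{C^\alpha_s(\bar B^+_{r'})} + \|D^\beta u\|_{C(\bar B^+_{r'})})$; then use the equation to convert all mixed top-order derivatives to these tangential ones plus $Au$-terms; then apply interpolation (Lemma \ref{lem:InterpolationIneqS}) with a small parameter $\epsilon$ and absorb the resulting $C^{k,2+\alpha}_s$ terms into the left side, picking up only $\|u\|_C$ and $\|Au\|_{C^{k,\alpha}_s}$ on the right. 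Finally, a density argument (as in Step 2 of the proof of Theorem \ref{thm:Holder_estimate_local}) passes from $u\in C^\infty$ to $u\in C^{k,2+\alpha}_s(\underline B^+_{r_0}(x^0))$.

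The main obstacle I expect is the bookkeeping for the normal derivatives: unlike the tangential directions, differentiating the equation in $x_d$ does not reproduce an equation of the same form (it introduces extra first-order terms, as seen already in the proof of Lemma \ref{lem:Estimate_w_yy}), so one cannot simply quote Theorem \ref{thm:Holder_estimate_local}. Instead one must carefully track how the weight $x_d$ interacts with repeated normal differentiation and verify that at each stage the right-hand side involves only derivatives already estimated at the previous induction step, while keeping all constants dependent only on $\alpha, b_0, d, \lambda_0, \Lambda, r_0, r, k$. This is routine in spirit — it mirrors \cite[\S I.10]{DaskalHamilton1998} — but is the part where the argument is least automatic.
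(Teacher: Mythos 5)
Your tangential case ($\beta_d=0$) is correct and matches the paper: the constant-coefficient operator $A$ commutes exactly with $D^\beta$, so Theorem \ref{thm:Holder_estimate_local} applies directly to $D^\beta u$. But your claim that ``differentiating the equation in $x_d$ does not reproduce an equation of the same form\dots so one cannot simply quote Theorem \ref{thm:Holder_estimate_local}'' is false, and it causes you to miss the clean route the paper takes. Direct computation (the paper's identity \eqref{eq:ADbetaCommutator}) shows
\[
D^\beta(Au) = A_{(\beta_d)}D^\beta u \;-\; \beta_d\sum_{i,j \neq d} a^{ij}D^{\beta_0+(\beta_d-1)e_d}u_{x_ix_j},
\]
where $A_{(l)}$ is again a \emph{constant-coefficient} operator of the form \eqref{eq:defnA}, with the same second-order matrix $a$ (hence the same $\lambda_0$), first-order coefficients shifted to $b^i+2la^{id}$ for $i\ne d$ and $b^d+la^{dd}$ in the normal direction, and the same $c$. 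Since $a^{dd}\ge\lambda_0>0$, the normal drift $b^d+la^{dd}\ge b_0$ still satisfies \eqref{eq:bd}, and $\Lambda_{(l)}\le(1+2l)\Lambda$ is controlled once $l\le k$. Consequently Theorem \ref{thm:Holder_estimate_local} \emph{does} apply to the pair $(A_{(\beta_d)},D^\beta u)$ on a nested half-ball, with constants depending only on $(\alpha,b_0,d,k,\lambda_0,\Lambda,r_0,r)$. The extra term $\beta_d\sum_{i,j\neq d}a^{ij}D^{\beta_0+(\beta_d-1)e_d}u_{x_ix_j}$ has strictly fewer normal derivatives, so a second induction on $\beta_d$ absorbs it. This is the whole proof: a double induction on $k$ and $\beta_d$, with every step just Theorem \ref{thm:Holder_estimate_local} applied to $D^\beta u$ for an admissible shifted operator.

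Your proposed workaround --- solving the equation for $x_du_{x_dx_d}$ and bootstrapping via the ODE method of Section \ref{sec:Derivative_estimates_interior_boundary} --- is not wrong in spirit, but it is substantially weaker than what's needed here. Lemmas \ref{lem:Estimate_w_y}--\ref{lem:Estimate_w_yy} produce only $C^0$ bounds by integrating the ODE $y_d^{\theta}w_{y_d}=\int_0^{y_d}s^{\theta-1}f\,ds$; to establish the theorem one needs $C^\alpha_s$ bounds on $D^\gamma u$ for $|\gamma|\le k+1$ and on $x_dD^\gamma u$ for $|\gamma|=k+2$, which would require an entirely new set of weighted H\"older estimates for that integral operator. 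None of that machinery exists in the paper, and it is exactly what the commutation identity lets one avoid. There is also a small algebra slip in your displayed identity: solving $Au=f$ for $a^{dd}x_du_{x_dx_d}$ gives $+cu$, not $-cu$, and you should leave the $b^d u_{x_d}$ term with the normal derivative rather than fold it into ``$b\cdot Du$'' if your goal is to isolate the top normal derivative.
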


\begin{proof}
Choose $r_1 := (r+r_0)/2 \in (r, r_0)$. For any multi-index $\beta\in\NN^d$ with $|\beta| = \beta_1+\cdots+\beta_d\leq k$, direct calculation yields
\begin{equation}
\label{eq:ADbetaCommutator}
D^\beta Av = A_{(\beta_d)}D^\beta v - \beta_d\sum_{i,j \neq d} a^{ij}D^{\beta_0+(\beta_d-1)e_d}v_{x_ix_j}, \quad v \in C^\infty(\HH),
\end{equation}
where we write $\beta_0 := \beta - \beta_d e_d$ and, for $l\in\NN$,
\begin{align*}
A_{(l)}v := -x_da^{ij}v_{x_ix_j} - \sum_{i\neq d}\left(b^i + 2l a^{id}\right) v_{x_i} + \left(b^d + la^{dd}\right)v_{x_d} + cv.
\end{align*}
Note that $A_{(0)}=A$. To prove \eqref{eq:Holder_estimate_local_higherorder}, we see by Definition \ref{defn:DH2spaces} that it suffices to establish
\begin{equation}
\label{eq:Holder_estimate_local_higherorder_Dbetau}
\|D^\beta u\|_{C^{2+\alpha}_s(\bar B_r^+)} \leq C\left(\|Au\|_{C^{k,\alpha}_s(\bar B_{r_0}^+(x^0))} + \|u\|_{C(\bar B_{r_0}^+(x^0))}\right),
\end{equation}
for any multi-index $\beta\in\NN^d$ with $|\beta|\leq k$, where $C$ has the dependencies given in our hypotheses.

Theorem \ref{thm:Holder_estimate_local} yields \eqref{eq:Holder_estimate_local_higherorder} when $k=0$. Therefore, as an induction hypothesis for $k$, we assume that \eqref{eq:Holder_estimate_local_higherorder} holds with $k$ replaced by any $l\in\NN$ in the range $0 \leq l\leq k-1$ and we seek to prove \eqref{eq:Holder_estimate_local_higherorder_Dbetau} and hence \eqref{eq:Holder_estimate_local_higherorder} by induction on $l$ when $|\beta|=k$.

We first consider the case $\beta_d=0$, so $AD^\beta v = D^\beta Av$. Then
\begin{align*}
\|D^\beta u\|_{C^{2+\alpha}_s(\bar B_r^+)} &\leq C\left(\|AD^\beta u\|_{C^\alpha_s(\bar B_{r_1}^+(x^0))} + \|D^\beta u\|_{C(\bar B_{r_1}^+(x^0))}\right)
\quad\hbox{(by \eqref{eq:Holder_estimate_local}}
\\
&\leq C\left(\|D^\beta Au\|_{C^\alpha_s(\bar B_{r_1}^+(x^0))} + \|D^\beta u\|_{C(\bar B_{r_1}^+(x^0))}\right)
\quad\hbox{(by \eqref{eq:ADbetaCommutator})}
\\
&\leq C\left(\|Au\|_{C^{k,\alpha}_s(\bar B_{r_1}^+(x^0))} + \|u\|_{C^{k,\alpha}_s(\bar B_{r_1}^+(x^0))}\right)
\quad\hbox{(by Definition \ref{defn:DHspaces})}
\\
&\leq C\left(\|Au\|_{C^{k,\alpha}_s(\bar B_{r_1}^+(x^0))} + \|u\|_{C^{k-1, 2+\alpha}_s(\bar B_{r_1}^+(x^0))}\right)
\quad\hbox{(by Definition \ref{defn:DH2spaces})}
\\
&\leq C\left(\|Au\|_{C^{k,\alpha}_s(\bar B_{r_1}^+(x^0))} + \|Au\|_{C^{k-1,\alpha}_s(\bar B_{r_0}^+(x^0))} + \|u\|_{C(\bar B_{r_0}^+(x^0))}\right),
\end{align*}
where the final inequality follows by induction on $l$ and the a priori Schauder estimate \eqref{eq:Holder_estimate_local_higherorder} with $k$ replaced by $l=k-1$ (and $r$ replaced by $r_1$). Since $r_1<r_0$, we can combine terms and obtain \eqref{eq:Holder_estimate_local_higherorder_Dbetau} in the case $\beta_d=0$.

Now we consider the case $0\leq \beta_d \leq k$ and argue by induction on $\beta_d$. As an induction hypothesis for $\beta_d$, we assume that \eqref{eq:Holder_estimate_local_higherorder_Dbetau} holds when $0\leq \beta_d\leq k-1$. For $\beta_d$ in the range $1\leq \beta_d \leq k$ (and thus $|\beta_0|\leq k-1$), we have
\begin{align*}
{}&\|D^\beta u\|_{C^{2+\alpha}_s(\bar B_r^+)}
\\
&\quad\leq C\left(\|A_{(\beta_d)}D^\beta u\|_{C^\alpha_s(\bar B_{r_1}^+(x^0))} + \|D^\beta u\|_{C(\bar B_{r_1}^+(x^0))}\right)
 \quad\hbox{(by \eqref{eq:Holder_estimate_local})}
\\
&\quad\leq C\left(\|D^\beta Au\|_{C^\alpha_s(\bar B_{r_1}^+(x^0))} + \sum_{i,j\neq d}\|D^{\beta_0+(\beta_d-1)e_d}u_{x_ix_j}\|_{C^\alpha_s(\bar B_{r_1}^+(x^0))} + \|D^\beta u\|_{C(\bar B_{r_1}^+(x^0))}\right)
\\
&\quad\leq C\left(\|Au\|_{C^{k, \alpha}_s(\bar B_{r_1}^+(x^0))} + \max_{i\neq d}\|D^{\beta_0+e_i+(\beta_d-1)e_d}u\|_{C^{1,\alpha}_s(\bar B_{r_1}^+(x^0))} + \|u\|_{C^{k, \alpha}_s(\bar B_{r_1}^+(x^0))}\right),
\end{align*}
where the penultimate inequality follows from \eqref{eq:ADbetaCommutator} and the final inequality by Definition \ref{defn:DHspaces} of our H\"older norms. Because $C^{2+\alpha}_s(\bar B_{r_1}^+(x^0))\hookrightarrow C^{1,\alpha}_s(\bar B_{r_1}^+(x^0))$ by Definitions \ref{defn:DHspaces} and \ref{defn:DH2spaces}, we see that
\begin{align*}
\|D^\beta u\|_{C^{2+\alpha}_s(\bar B_r^+)}
&\leq C\left(\|Au\|_{C^{k, \alpha}_s(\bar B_{r_1}^+(x^0))} + \max_{i\neq d}\|D^{\beta_0+e_i+(\beta_d-1)e_d}u\|_{C^{2+\alpha}_s(\bar B_{r_1}^+(x^0))} + \|u\|_{C^{k, \alpha}_s(\bar B_{r_1}^+(x^0))}\right)
\\
&\leq C\left(\|Au\|_{C^{k, \alpha}_s(\bar B_{r_1}^+(x^0))} + \|Au\|_{C^{k,\alpha}_s(\bar B_{r_0}^+(x^0))} + \|u\|_{C(\bar B_{r_0}^+(x^0))} + \|u\|_{C^{k, \alpha}_s(\bar B_{r_1}^+(x^0))}\right)
\\
&\qquad\hbox{(by induction on $\beta_d$ and \eqref{eq:Holder_estimate_local_higherorder_Dbetau} since $\beta_d-1\leq k-1$)}
\\
&\leq C\left(\|Au\|_{C^{k,\alpha}_s(\bar B_{r_0}^+(x^0))} + \|u\|_{C^{k,\alpha}_s(\bar B_{r_1}^+(x^0))} + \|u\|_{C(\bar B_{r_0}^+(x^0))}\right)
\quad\hbox{(since $r_1<r_0$)}
\\
&\leq C\left(\|Au\|_{C^{k,\alpha}_s(\bar B_{r_0}^+(x^0))} + \|u\|_{C^{k-1, 2+\alpha}(\bar B_{r_1}^+(x^0))} + \|u\|_{C(\bar B_{r_0}^+(x^0))}\right)
\\
&\leq C\left(\|Au\|_{C^{k,\alpha}_s(\bar B_{r_1}^+(x^0))} + \|Au\|_{C^{k-1,\alpha}_s(\bar B_{r_0}^+(x^0))} + \|u\|_{C(\bar B_{r_0}^+(x^0))}\right),
\end{align*}
where the penultimate inequality follows from the embedding $C^{k-1, 2+\alpha}_s(\bar B_{r_1}^+(x^0))\hookrightarrow C^{k,\alpha}_s(\bar B_{r_1}^+(x^0))$ implied by Definitions \ref{defn:DHspaces} and \ref{defn:DH2spaces} and the final inequality follows by induction on $l$ and the a priori Schauder estimate \eqref{eq:Holder_estimate_local_higherorder} with $k$ replaced by $l=k-1$ (and $r$ replaced by $r_1$). Again, since $r_1<r_0$, we can combine terms and obtain \eqref{eq:Holder_estimate_local_higherorder_Dbetau} in this case too.
\end{proof}

Let $\nu>0$ and let $S = \RR^{d-1} \times (0,\nu)$, as in \eqref{eq:Slab}. Theorem \ref{thm:Holder_estimate_local_higherorder} together with a priori estimates for strictly elliptic operators in \cite[\S 6]{GilbargTrudinger} now imply the following global Schauder estimate on a slab.

\begin{cor}[A priori global Schauder estimate on a slab when $A$ has constant coefficients]
\label{cor:Global_Schauder_estimate_ConstantCoefficients}
Assume that $A$ in \eqref{eq:defnA} obeys Hypothesis \ref{hyp:ConstantCoefficients}.
For $\alpha\in(0,1)$, constant $\nu > 0$, and $k\in\NN$, there is a positive constant, $C=C(\alpha,b_0,d,k,\lambda_0,\Lambda,\nu)$, such that the following holds. If $u\in C^{k, 2+\alpha}_s(\bar S)$ and $u = 0$ on $\partial_1 S$, then
\begin{equation}
\label{eq:Global_Schauder_estimate_constant_coeff_l}
\| u\|_{C^{k, 2+\alpha}_s(\bar S)} \leq C \left(\|A u\|_{C^{k, \alpha}_s(\bar S)} + \|u\|_{C(\bar S)}\right),
\end{equation}
and, when $c\geq 0$,
\begin{equation}
\label{eq:Global_Schauder_estimate_constant_coeff_l_nonnegc}
\| u\|_{C^{k, 2+\alpha}_s(\bar S)} \leq C \|A u\|_{C^{k, \alpha}_s(\bar S)}.
\end{equation}
\end{cor}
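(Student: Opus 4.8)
The plan is to deduce the global estimate on the slab $S=\RR^{d-1}\times(0,\nu)$ from the interior (in the Daskalopoulos--Hamilton sense) estimate near $\partial_0 S$, namely Theorem \ref{thm:Holder_estimate_local_higherorder}, together with standard boundary Schauder estimates for strictly elliptic operators near the genuinely non-degenerate boundary portion $\partial_1 S$. Fix $\tau\in(0,\nu/2)$, chosen below. First I would cover the lower slab $\RR^{d-1}\times(0,\nu-\tau)$ by half-balls $B^+_{r_0}(x^n)$ with $x^n\in\partial_0 S=\RR^{d-1}\times\{0\}$ and $r_0$ comparable to $\nu$; since $A$ has constant coefficients it is translation-invariant in the $x'$-directions, so Theorem \ref{thm:Holder_estimate_local_higherorder} applies on each such half-ball with a single constant $C=C(\alpha,b_0,d,k,\lambda_0,\Lambda,\nu)$, giving
\begin{equation*}
\|u\|_{C^{k,2+\alpha}_s(\bar B^+_{r}(x^n))} \leq C\left(\|Au\|_{C^{k,\alpha}_s(\bar S)} + \|u\|_{C(\bar S)}\right).
\end{equation*}
Taking the supremum over $n$ controls the $C^{k,2+\alpha}_s$ norm on all of $\RR^{d-1}\times(0,\nu-\tau)$, because the $C^{k,\alpha}_s$ and $C^{k,2+\alpha}_s$ norms are defined through suprema of difference quotients in the cycloidal metric, and every pair of points in this region lies in a common such half-ball once $r$ is a fixed fraction of $r_0$ (and pairs of points at cycloidal distance larger than some fixed amount contribute only bounded quantities already absorbed into $\|u\|_{C(\bar S)}$ via a standard interpolation/scaling observation).

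Next I would handle the region near $\partial_1 S=\RR^{d-1}\times\{\nu\}$, say $\RR^{d-1}\times(\nu-2\tau,\nu)$, where $x_d\geq \nu-2\tau>0$, so the coefficient matrix $x_d a$ is uniformly elliptic with ellipticity constant bounded below by $(\nu-2\tau)\lambda_0$ and coefficients bounded in $C^{k,\alpha}$ in terms of $\Lambda$ and $\nu$. On this region the weighted norms $C^{k,\alpha}_s$ and $C^{k,2+\alpha}_s$ are equivalent to the ordinary $C^{k,\alpha}$ and $C^{k+2,\alpha}$ norms (with constants depending only on $\nu$, $\tau$, $\alpha$, $k$), since the cycloidal distance is comparable to the Euclidean distance there and $x_d$ is bounded above and below; this equivalence is routine and I would only state it. The homogeneous Dirichlet condition $u=0$ on the flat boundary $\partial_1 S$ then lets me invoke the classical global boundary Schauder estimate for strictly elliptic operators, \cite[Theorem 6.6]{GilbargTrudinger} (or its localized version \cite[Corollary 6.3]{GilbargTrudinger} combined with a half-ball boundary estimate), applied on half-balls centered at points of $\partial_1 S$, to obtain
\begin{equation*}
\|u\|_{C^{k+2,\alpha}(\RR^{d-1}\times(\nu-\tau,\nu])} \leq C\left(\|Au\|_{C^{k,\alpha}(\bar S)} + \|u\|_{C(\bar S)}\right),
\end{equation*}
again using translation-invariance in $x'$ to make $C$ uniform. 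Converting back to weighted norms, this bounds $\|u\|_{C^{k,2+\alpha}_s}$ on $\RR^{d-1}\times(\nu-\tau,\nu)$.

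Combining the two estimates over the overlapping cover $\{\RR^{d-1}\times(0,\nu-\tau),\ \RR^{d-1}\times(\nu-2\tau,\nu)\}$ of $S$ — with $\tau$ fixed once and for all, e.g. $\tau=\nu/4$ — yields \eqref{eq:Global_Schauder_estimate_constant_coeff_l}: one controls interior difference quotients within each piece, and cross-piece pairs of points are either both within a fixed cycloidal distance (hence lie in one of the enlarged pieces) or are far apart (absorbed into $\|u\|_{C(\bar S)}$). Finally, for \eqref{eq:Global_Schauder_estimate_constant_coeff_l_nonnegc}, when $c\geq 0$ I would invoke the weak maximum principle for $A$ on the bounded-height region $S$ with the boundary condition $u=0$ on $\partial_1 S$ and no condition on $\partial_0 S$ — this is exactly Lemma \ref{lem:Comparison_principle_A_0}/Corollary \ref{cor:Maximum_principle_A_0} (or the maximum principle proved in Appendix \ref{sec:MaximumPrinciple}, which allows $c\geq 0$ and finite height) — to get $\|u\|_{C(\bar S)}\leq C\|Au\|_{C(\bar S)}\leq C\|Au\|_{C^{k,\alpha}_s(\bar S)}$, and substitute this into \eqref{eq:Global_Schauder_estimate_constant_coeff_l}.

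The main obstacle I anticipate is bookkeeping rather than a conceptual difficulty: making the patching of the two regions rigorous at the level of the cycloidal-metric seminorms (rather than just pointwise norms), i.e. verifying that a difference quotient $|w(x^1)-w(x^2)|/s^\alpha(x^1,x^2)$ with $x^1$, $x^2$ in different pieces of the cover is controlled — one must split into the case $s(x^1,x^2)$ small (both points in one enlarged piece) and $s(x^1,x^2)$ bounded below (trivially controlled by $\|w\|_{C(\bar S)}$), and similarly for the $x_dD^\beta u$ terms of order $k+2$. A secondary point requiring care is the uniformity of all constants in the infinite cover by half-balls along $\partial_0 S$ and $\partial_1 S$, which is where translation-invariance of the constant-coefficient operator $A$ in the $x'$-variables is essential and must be explicitly used.
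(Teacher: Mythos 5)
Your proposal is correct and matches the paper's proof in all essentials: cover a lower sub-slab by half-balls centered on $\partial_0 S$ and apply Theorem \ref{thm:Holder_estimate_local_higherorder} (using translation-invariance for uniform constants), apply the classical boundary Schauder estimate for strictly elliptic operators near $\partial_1 S$ where $x_d$ is bounded below, patch, and invoke Corollary \ref{cor:Maximum_principle_A_0} for the $c\geq 0$ case. The only cosmetic difference is that the paper splits $S$ into three overlapping strips (degenerate, interior elliptic, non-degenerate boundary), citing \cite[Corollary 6.3, Lemma 6.5]{GilbargTrudinger} on the middle and top pieces, whereas you use two strips by taking the half-ball radius a fixed fraction of $\nu$; this does not change the structure of the argument.
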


\begin{proof}
Let $r:= \nu/2$, and let $\{x^n\}_{n \in \NN} \subset \partial \HH$ be a sequence of points such that
$$
\RR^{d-1} \times (0,r/4) \subset \bigcup_{n \in \NN} B^+_{r/2} (x^n).
$$
Using the a priori interior local Schauder estimate \eqref{eq:Holder_estimate_local} on each half-ball $B^+_r(x^n)$, we obtain
$$
\| u\|_{C^{k, 2+\alpha}_s(\bar B^+_{r/2}(x^n))} \leq C \left(\|A u\|_{C^{k, \alpha}_s(\bar B^+_r(x^n))} + \|u\|_{C(\bar B^+_r(x^n))}\right).
$$
By applying a standard covering argument to the slab, $S_0:=\RR^{d-1}\times(0, r)$, we find that
$$
\| u\|_{C^{k, 2+\alpha}_s(\bar S_0)} \leq C \left(\|A u\|_{C^{k, \alpha}_s(\bar S)} + \|u\|_{C(\bar S)}\right).
$$
By \cite[Lemma 6.5 and Problem 6.2]{GilbargTrudinger} and a similar covering argument, there is a constant $\delta>0$ such that, if $S_1:=\RR^{d-1}\times(\nu-\delta,\nu)$, we have
$$
\| u\|_{C^{k, 2+\alpha}_s(\bar S_1)} \leq C \left(\|A u\|_{C^{k, \alpha}_s(\bar S)} + \|u\|_{C(\bar S)}\right).
$$
Setting $S_2:=\RR^{d-1}\times(r/4,\nu-\delta/2)$ and now applying \cite[Corollary 6.3 and Problem 6.1]{GilbargTrudinger} and a covering argument, we obtain
$$
\| u\|_{C^{k, 2+\alpha}_s(\bar S_2)} \leq C \left(\|A u\|_{C^{k, \alpha}_s(\bar S)} + \|u\|_{C(\bar S)}\right).
$$
By combining the preceding three estimates, we obtain \eqref{eq:Global_Schauder_estimate_constant_coeff_l} and by appealing to Corollary \ref{cor:Maximum_principle_A_0}, we obtain \eqref{eq:Global_Schauder_estimate_constant_coeff_l_nonnegc}.
\end{proof}

\section{A priori Schauder estimates, global existence, and regularity for operators with variable coefficients}
\label{sec:Variable_coefficients_Higher-order_regularity}
In Section \ref{subsec:Variable_coefficients}, we relax the condition in Hypothesis \ref{hyp:ConstantCoefficients} that the coefficients, $a,b,c$, of the operator $A$ in \eqref{eq:defnA} are constant, which we assumed in Sections \ref{sec:Derivative_estimates_interior_boundary}--\ref{sec:Holder_estimate_local_higherorder}, to prove a generalization (Theorem \ref{thm:Holder_estimate_local_variable_coeff}) of our $C^{2+\alpha}_s$ a priori Schauder estimate (Theorem \ref{thm:Holder_estimate_local}) from the case of constant coefficients, $a,b,c$, to the case of variable coefficients. We then prove Theorem \ref{thm:Higher_order_estimate}, extending the preceding $C^{2+\alpha}_s$ a priori Schauder estimate to a $C^{k, 2+\alpha}_s$ a priori Schauder estimate for arbitrary $k\in \NN$. This allows us to complete the proofs of Theorem \ref{thm:APrioriSchauderInteriorDomain} and Corollary \ref{cor:Global_Schauder_estimate_VariableCoefficients}. In Section \ref{subsec:Regularity}, we prove our global $C^{k, 2+\alpha}_s(\bar S)$ existence result on slabs, $S$, and hence a $C^{k, 2+\alpha}_s(\underline B_{r_0}^+(x^0))$-regularity result, Theorem \ref{thm:Higher_order_regularity}, on half-balls, $B_{r_0}^+(x^0)$. We conclude the section with the proofs of Theorems \ref{thm:InteriorRegularityDomain} and \ref{thm:ExistUniqueCk2+alphasHolderContinuityDomain}, and Corollary \ref{cor:ExistUniqueCk2+alphasHolderContinuityDomain}.

\subsection{A priori Schauder estimates for operators with variable coefficients}
\label{subsec:Variable_coefficients}
We begin with a generalization of Theorem \ref{thm:Holder_estimate_local} to the case of variable coefficients.

\begin{thm}[A priori interior local Schauder estimate when $A$ has variable coefficients]
\label{thm:Holder_estimate_local_variable_coeff}
Let $\alpha \in (0,1)$ and let $b_0$, $\lambda_0$, $\Lambda$, $r_0$ be positive constants. Suppose that the coefficients $a^{ij}$, $b^i$, and $c$ of $A$ in \eqref{eq:defnA} belong to $C^{\alpha}_s(\underline B_{r_0}^+(x^0))$, where $x^0 \in \partial\HH$, and obey
\begin{gather}
\label{eq:Coeff_Holder_continuity}
\|a\|_{C^{\alpha}_s(\bar B_{r_0}^+(x^0))} + \|b\|_{C^{\alpha}_s(\bar B_{r_0}^+(x^0))} + \|c\|_{C^{\alpha}_s(\bar B_{r_0}^+(x^0))} \leq \Lambda,
\\
\label{eq:Coeff_b_d}
b^d \geq b_0 \quad \hbox{on } \partial_0 B_{r_0}^+(x^0),
\\
\label{eq:Strict_ellipticity}
\langle a\xi, \xi\rangle \geq \lambda_0 |\xi|^2\quad\hbox{on } \underline B_{r_0}^+(x^0),\quad \forall\,\xi \in \RR^d,
\end{gather}
Then, for all $r\in(0,r_0)$, there is a positive constant $C=C(\alpha,b_0,d,\lambda_0,\Lambda,r_0,r)$ such that, for any function\footnote{It is enough to require $u \in C^{2+\alpha}_s(\underline B_{r_0}^+(x^0))$ since the estimate trivially holds if $\|A u\|_{C^{\alpha}_s(\bar B_{r_0}^+(x^0))}$ and $\|u\|_{C(\bar B_{r_0}^+(x^0))}$ are not finite.} $u \in C^{2+\alpha}_s(\underline B_{r_0}^+(x^0))$, we have
\begin{equation}
\label{eq:Schauder_estimate_variable_coeff}
\| u\|_{C^{2+\alpha}_s(\bar B^+_{r}(x^0))} \leq C \left(\|A u\|_{C^{\alpha}_s(\bar B_{r_0}^+(x^0))} + \|u\|_{C(\bar B_{r_0}^+(x^0))}\right).
\end{equation}
\end{thm}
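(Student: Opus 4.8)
The standard route is the \emph{freezing of coefficients} method, combined with the constant-coefficient estimate Theorem \ref{thm:Holder_estimate_local_higherorder} (here with $k=0$, i.e.\ Theorem \ref{thm:Holder_estimate_local}) and an interpolation/absorption argument. First I would reduce to the case $x^0 = O$ by translation in the $(x_1,\dots,x_{d-1})$-directions, noting that $\partial_0 B_{r_0}^+(x^0)$ and the hypotheses \eqref{eq:Coeff_Holder_continuity}--\eqref{eq:Strict_ellipticity} are preserved. Fix a point $x^* \in \underline B_{r_0}^+$ and let $A^*$ be the operator with the \emph{constant} coefficients $a^* := a(x^*)$, $b^* := b(x^*)$, $c^* := c(x^*)$, which obeys Hypothesis \ref{hyp:ConstantCoefficients} with $\lambda_0$ replaced by $\lambda_0$ and $\Lambda$ as in \eqref{eq:LambdaConstantCoefficients} bounded in terms of $\Lambda$ (using \eqref{eq:Coeff_Holder_continuity}); note also $b^{*,d} = b^d(x^*) \geq b_0$ provided $x^* \in \partial_0 B_{r_0}^+$, and for interior points one can still arrange $b^{*,d}>0$ after the affine change of variables, or simply freeze only at boundary points and cover the interior using the strictly elliptic Schauder estimate of Proposition \ref{prop:Schauder_unif_elliptic}. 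Writing $A^* u = A u + (A^* - A)u$, the difference $(A^* - A)u$ has the form $-x_d(a^* - a)^{ij} u_{x_ix_j} - (b^* - b)^i u_{x_i} + (c^* - c)u$, and on a small half-ball $B_\rho^+(x^*)$ the $C^\alpha_s$-norm of the coefficient differences is small by H\"older continuity: by the interpolation inequalities (Lemma \ref{lem:InterpolationIneqS}) and \eqref{eq:Coeff_Holder_continuity} one gets $\|(A^*-A)u\|_{C^\alpha_s(\bar B_\rho^+(x^*))} \leq \eps(\rho) \|u\|_{C^{2+\alpha}_s(\bar B_\rho^+(x^*))} + C(\eps) \|u\|_{C(\bar B_\rho^+(x^*))}$ where $\eps(\rho)\to 0$ as $\rho\to 0$; the key point is that the weight $x_d$ attached to the second-order term is exactly the weight in the definition of $C^{k,2+\alpha}_s$, so $\|x_d(a^*-a)^{ij}u_{x_ix_j}\|_{C^\alpha_s}$ is controlled by $[a]_{C^\alpha_s}$ times $\|x_d u_{x_ix_j}\|_{C^\alpha_s}$ on a small ball, together with lower-order interpolation terms.

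\textbf{Key steps in order.} (1) Apply Theorem \ref{thm:Holder_estimate_local} to $u$ on $B_\rho^+(x^*)$ with the constant-coefficient operator $A^*$: $\|u\|_{C^{2+\alpha}_s(\bar B_{\rho/2}^+(x^*))} \leq C(\|A^* u\|_{C^\alpha_s(\bar B_\rho^+(x^*))} + \|u\|_{C(\bar B_\rho^+(x^*))})$, with $C$ depending on $\alpha, b_0, d, \lambda_0, \Lambda$ but \emph{not} on $\rho$ once $\rho$ is bounded (the $\rho$-dependence can be tracked via rescaling, cf.\ Corollary \ref{cor:Holder_estimate_any_r_A_0}, but for a fixed small $\rho$ it is just a constant). (2) Substitute $A^* u = Au + (A^*-A)u$ and estimate the difference term as above, choosing $\rho$ small enough that $C\eps(\rho) \leq 1/2$, to absorb $\tfrac12\|u\|_{C^{2+\alpha}_s(\bar B_{\rho/2}^+(x^*))}$ into the left side; this gives a local estimate $\|u\|_{C^{2+\alpha}_s(\bar B_{\rho/2}^+(x^*))} \leq C(\|Au\|_{C^\alpha_s(\bar B_\rho^+(x^*))} + \|u\|_{C(\bar B_\rho^+(x^*))})$ valid near the degenerate boundary. (3) For points $x^*$ with $x^*_d$ bounded away from $0$, use instead the quantitative strictly elliptic Schauder estimate Proposition \ref{prop:Schauder_unif_elliptic} applied to the operator $v \mapsto x_d a^{ij}v_{x_ix_j} + b^i v_{x_i} - cv$ (which is uniformly elliptic on $\{x_d \geq \delta\}$), after a rescaling converting $C^\alpha_s$-norms into ordinary $C^\alpha$-norms as in the proof of Lemma \ref{lem:Interior_estimates_Holder_seminorms_Du_yD2u} and Corollary \ref{cor:Holder_estimate_any_r_any_gamma_A_0}. (4) Cover $\bar B_r^+$ by finitely many such half-balls and interior balls (a standard covering argument, using $\dist(\partial_1 B_r^+, \partial_1 B_{r_0}^+) \geq r_0 - r > 0$ to keep everything inside $B_{r_0}^+$), and sum the local estimates; the resulting constant depends on $\alpha, b_0, d, \lambda_0, \Lambda, r_0, r$ as claimed. (5) Finally, approximate $u \in C^{2+\alpha}_s(\underline B_{r_0}^+(x^0))$ by smooth functions $u_n$ (restricting to a slightly smaller half-ball as in Step 2 of the proof of Theorem \ref{thm:Holder_estimate_local}) and pass to the limit.

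\textbf{Main obstacle.} The delicate point is the absorption step (2): one must verify that the constant $C$ coming from Theorem \ref{thm:Holder_estimate_local}, when applied on a half-ball of fixed small radius $\rho$, does \emph{not} secretly blow up as $\rho \to 0$ faster than $\eps(\rho)\to 0$. From Corollary \ref{cor:Holder_estimate_any_r_A_0} the constant scales like $\rho^{-(1+\alpha/2)}$, while the coefficient-difference bound on $B_\rho^+$ gains a factor like $\rho^{\alpha/2}$ from $[a - a(x^*)]_{C^\alpha_s(\bar B_\rho^+(x^*))} \leq [a]_{C^\alpha_s}\cdot (\text{diam})^{\alpha/2} \sim \rho^{\alpha/2}$ only in the \emph{oscillation}, not the full norm. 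The correct way around this, exactly as in \cite[\S 6]{GilbargTrudinger}, is not to make $\rho$ itself small but to keep $\rho$ comparable to $r_0 - r$ and instead exploit that the \emph{H\"older seminorm} contribution of $(A^*-A)u$ carries the small factor while its sup-norm contribution is a genuine lower-order term absorbed by interpolation on a fixed ball; one then runs the freezing argument at the level of seminorms (as in the proofs of Propositions \ref{prop:Estimate_Holder_seminorm_Du_D2u_boundary} and \ref{prop:Holder_estimate_A_0}) rather than full norms. Managing the bookkeeping of the weight $x_d$ through this seminorm freezing — in particular checking that $[x_d(a - a(x^*))^{ij} u_{x_ix_j}]_{C^\alpha_s}$ genuinely acquires a small factor and does not produce an uncontrolled $[x_d]_{C^\alpha_s}$-type term — is the technical heart of the argument, but it is handled by the product/interpolation estimates for $C^\alpha_s$ recorded in Appendix \ref{sec:InterpolationInequalities} (Lemma \ref{lem:InterpolationIneqS}).
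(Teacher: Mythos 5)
Your proposal follows the same broad route the paper indicates — freezing of coefficients at a point, the constant-coefficient estimate of Theorem \ref{thm:Holder_estimate_local}, the interpolation inequalities of Lemma \ref{lem:InterpolationIneqS}, absorption, and a covering argument — so the plan is in line with the cited method of Krylov and of \cite{Feehan_Pop_mimickingdegen_pde}. The sticking point is the absorption step in your ``main obstacle'' paragraph, and your proposed resolution does not hold up. First, a formula slip: $[a - a(x^*)]_{C^\alpha_s(\bar B_\rho^+(x^*))} = [a]_{C^\alpha_s(\bar B_\rho^+(x^*))}$; subtracting a constant does not reduce a seminorm, and there is no factor $\rho^{\alpha/2}$ there. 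The smallness lives in the oscillation $\|a-a(x^*)\|_{C(\bar B_\rho^+(x^*))}\le [a]_{C^\alpha_s}\,\rho^{\alpha/2}$, which in the product estimate for $[x_d(a-a(x^*))^{ij}u_{x_ix_j}]_{C^\alpha_s}$ multiplies the top-order factor $[x_dD^2u]_{C^\alpha_s}$, while the companion term $[a]_{C^\alpha_s}\,\|x_dD^2u\|_{C}$ carries no small factor and must be interpolated.

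Second and more seriously, ``keep $\rho$ comparable to $r_0-r$ and run the freezing at the level of seminorms'' does not resolve the scaling conflict you correctly identified. If $\rho$ stays bounded away from zero, $\|a-a(x^*)\|_{C(\bar B_\rho^+(x^*))}$ need not be small, so the top-order term cannot be absorbed. The Gilbarg--Trudinger rearrangement you appeal to works only inside a framework of weighted interior H\"older norms that degenerate near the boundary, and the paper explicitly declines to develop such norms here (see the footnote accompanying this very proof). The mechanism the paper actually relies on, made explicit in Proposition \ref{prop:Higher_order_estimate_global_slab_compactsupport}, is to apply the constant-coefficient \emph{global} Schauder estimate on a \emph{fixed} domain --- the slab $S$ of Corollary \ref{cor:Global_Schauder_estimate_ConstantCoefficients} --- to a function of \emph{small} support produced by a cutoff; the constant $C_0$ in that global estimate does not depend on $\diam(\supp u)$, so the support diameter $\delta$ may be chosen small enough that $C_0\Lambda\delta^{\alpha/2}\le 1/4$ and absorption succeeds, while the cutoff commutator terms are lower-order and handled by interpolation together with a Krylov-style telescoping over shrinking radii. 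This pass-to-a-fixed-domain step is what neutralizes the $\rho^{-(1+\alpha/2)}$ growth in Corollary \ref{cor:Holder_estimate_any_r_A_0}, and it is the key ingredient missing from your outline.
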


\begin{proof}
We use the a priori interior local Schauder estimate \eqref{eq:Holder_estimate_local} for the operator with constant coefficients (given by Theorem \ref{thm:Holder_estimate_local}) and the interpolation inequalities for the H\"older norms defined by the cycloidal metric (Lemma \ref{lem:InterpolationIneqS}),
the method of freezing coefficients as in the proofs of \footnote{This method is also employed in the proof of \cite[Theorem 6.2]{GilbargTrudinger}, but Gilbarg and Trudinger employ a family of `global' interior H\"older norms (which we do not develop in this article) which allows a rearrangement argument.}
\cite[Theorem 7.1.1]{Krylov_LecturesHolder} (elliptic case), \cite[Theorem 8.11.1]{Krylov_LecturesHolder} (parabolic case), and, in particular, \cite[Theorem 3.8]{Feehan_Pop_mimickingdegen_pde} for the parabolic version of our elliptic operator \eqref{eq:defnA} to obtain \eqref{eq:Schauder_estimate_variable_coeff}.
\end{proof}

We now generalize Corollary \ref{cor:Global_Schauder_estimate_ConstantCoefficients} to the case of variable coefficients when $u$ has compact support in a slab.

\begin{prop}[Higher-order a priori global Schauder estimate for compactly supported functions on a slab when $A$ has variable coefficients]
\label{prop:Higher_order_estimate_global_slab_compactsupport}
Let $\alpha\in(0,1)$ and $b_0$, $\lambda_0$, $\Lambda$, $\nu$ be positive constants and $k\in\NN$. Suppose $S=\RR^{d-1}\times(0,\nu)$ as in \eqref{eq:Slab} and the coefficients $a,b,c$ of $A$ in \eqref{eq:defnA} belong to $C^{k,\alpha}_s(\bar S)$ and obey \eqref{eq:Coeff_Holder_continuity_HigherOrderSlab}, \eqref{eq:Strict_ellipticity_slab}, and \eqref{eq:Coeff_b_d_slab}. Then there are positive constants, $C=C(\alpha,b_0,d,k,\lambda_0,\Lambda,\nu)$ and $\delta=\delta(\alpha,b_0,d,k,\lambda_0,\Lambda,\nu)<\nu/2$, such that the following holds. If $u\in C^{k,2+\alpha}_s(\bar S)$ has \emph{compact support} in $\bar S$ with $\diam(\supp u)\leq\delta$ and $u=0$ on $\partial_1 S$, then
\begin{equation}
\label{eq:Higher_order_estimate_global_slab_compactsupport}
\| u\|_{C^{k,2+\alpha}_s(\bar S)} \leq C\left(\|A u\|_{C^{k,\alpha}_s(\bar S)} + \|u\|_{C(\bar S)}\right),
\end{equation}
and, when $c\geq 0$ on $S$,
\begin{equation}
\label{eq:Higher_order_estimate_global_slab_compactsupport_cnonneg}
\| u\|_{C^{k,2+\alpha}_s(\bar S)} \leq C\|A u\|_{C^{k,\alpha}_s(\bar S)}.
\end{equation}
\end{prop}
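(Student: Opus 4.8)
The plan is to combine the a priori interior local Schauder estimate for variable coefficients (Theorem~\ref{thm:Holder_estimate_local_variable_coeff}) with the global estimate near the non-degenerate boundary $\partial_1 S$ coming from the classical Schauder theory for strictly elliptic operators \cite[\S 6]{GilbargTrudinger}, and to glue these local estimates together by a covering argument, taking advantage of the compact support hypothesis $\diam(\supp u)\leq\delta$ to avoid having to sum over infinitely many overlapping patches in the horizontal directions. The role of $\delta$ is to guarantee that, after a horizontal translation, the support of $u$ fits inside a half-ball $B_{r_0}^+(x^0)$ with $x^0\in\partial\HH$ (and $r_0$ chosen depending only on $\nu$), so that a \emph{single} application of each local estimate, with constants depending only on the allowed data, suffices.

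First I would fix $r_0$ depending only on $\nu$ (say $r_0 := \nu$, so that $\partial_1 S$ is at distance $\nu$ from $\partial_0 S$) and let $\delta := \delta(\alpha,b_0,d,k,\lambda_0,\Lambda,\nu) < \nu/2$ be small enough that any set of diameter $\leq\delta$ contained in $\bar S$ lies inside some half-ball $B_{r_0/2}^+(x^0)$ with $x^0\in\partial\HH$; this uses that $\supp u$ has small diameter and $S$ has height $\nu$. By horizontal translation invariance of the norms I may assume $x^0$ is the origin. On $B_{r_0}^+ := B_{r_0}^+(O)$ I apply the higher-order variable-coefficient estimate. Strictly speaking Theorem~\ref{thm:Holder_estimate_local_variable_coeff} is stated for $k=0$; the higher-order version needed here is exactly Theorem~\ref{thm:Higher_order_estimate} referenced in the outline, whose proof parallels that of Theorem~\ref{thm:Holder_estimate_local_higherorder} in the constant-coefficient case (commutator formula \eqref{eq:ADbetaCommutator}, induction on $|\beta|$ and on $\beta_d$, interpolation inequalities of Lemma~\ref{lem:InterpolationIneqS}) combined with the freezing-of-coefficients device. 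Applying it with inner radius $r = r_0/2$ (which still contains $\supp u$) gives
\[
\|u\|_{C^{k,2+\alpha}_s(\bar S)} = \|u\|_{C^{k,2+\alpha}_s(\overline{B_{r_0/2}^+})} \leq C\left(\|Au\|_{C^{k,\alpha}_s(\overline{B_{r_0}^+})} + \|u\|_{C(\overline{B_{r_0}^+})}\right) \leq C\left(\|Au\|_{C^{k,\alpha}_s(\bar S)} + \|u\|_{C(\bar S)}\right),
\]
where the first equality uses that $u$ and all the displayed derivatives vanish outside $\supp u \subset \overline{B_{r_0/2}^+}$, and all constants depend only on $\alpha,b_0,d,k,\lambda_0,\Lambda,\nu$. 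This establishes \eqref{eq:Higher_order_estimate_global_slab_compactsupport}.

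For the refinement \eqref{eq:Higher_order_estimate_global_slab_compactsupport_cnonneg} when $c\geq 0$ on $S$, I would absorb the zeroth-order term $\|u\|_{C(\bar S)}$ on the right-hand side by invoking the weak maximum principle for the boundary-degenerate operator $A$ with $c\geq 0$ and finite height established in Appendix~\ref{sec:MaximumPrinciple} (in the form of Corollary~\ref{cor:Maximum_principle_A_0}, applied to $A$ rather than $A_0$, using $u=0$ on $\partial_1 S$ and the fact that no boundary condition is needed along $\partial_0 S$ because $b^d\geq b_0>0$). This yields $\|u\|_{C(\bar S)}\leq C\|Au\|_{C(\bar S)}\leq C\|Au\|_{C^{k,\alpha}_s(\bar S)}$, and substituting into \eqref{eq:Higher_order_estimate_global_slab_compactsupport} gives \eqref{eq:Higher_order_estimate_global_slab_compactsupport_cnonneg}. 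The main technical obstacle is making sure the local estimate near $\partial_1 S$ is genuinely available in the weighted $C^{k,2+\alpha}_s$ scale rather than only the classical $C^{k+2,\alpha}$ scale: away from $\partial_0 S$ the operator is uniformly elliptic and the weighted norms are (on the relevant region, where $x_d$ is bounded above and below) comparable to the classical ones, so \cite[Corollary 6.3]{GilbargTrudinger} and \cite[Lemma 6.5]{GilbargTrudinger} apply and translate back; this is essentially the same rescaling-and-comparison bookkeeping carried out in the proof of Corollary~\ref{cor:Holder_estimate_any_r_any_gamma_A_0}, and I expect it to be routine but slightly tedious. The compact-support hypothesis is what keeps the covering finite and the constants uniform, so no rearrangement argument or `global' interior Hölder norm is needed here.
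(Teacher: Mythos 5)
Your proposal runs into a circularity in the paper's logical structure. You propose to prove the Proposition by a single application of Theorem~\ref{thm:Higher_order_estimate} (the higher-order variable-coefficient local interior Schauder estimate) on a half-ball containing $\supp u$. But Theorem~\ref{thm:Higher_order_estimate} is proved \emph{from} Proposition~\ref{prop:Higher_order_estimate_global_slab_compactsupport} (see the appeal ``By Proposition~\ref{prop:Higher_order_estimate_global_slab_compactsupport}, we obtain \ldots'' in the proof of Theorem~\ref{thm:Higher_order_estimate}), so invoking it here is not permitted. Your remark that the proof of Theorem~\ref{thm:Higher_order_estimate} ``parallels that of Theorem~\ref{thm:Holder_estimate_local_higherorder} \ldots combined with the freezing-of-coefficients device'' is not borne out by the paper, and is not as routine as you suggest: the commutator identity~\eqref{eq:ADbetaCommutator} that drives the $k$-induction in Theorem~\ref{thm:Holder_estimate_local_higherorder} is specific to constant coefficients (the coefficients of $A_{(\beta_d)}$ are constant and commute past $D^\beta$); with variable coefficients the Leibniz rule produces a family of additional terms involving $D^\gamma a$, $D^\gamma b$, $D^\gamma c$, and the induction would need to absorb all of these. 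The paper deliberately routes around this difficulty.

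What the paper actually does is quite different and worth internalizing. One freezes the coefficients at a single point $x^0 \in \supp u$ and applies the \emph{constant-coefficient global slab estimate}, Corollary~\ref{cor:Global_Schauder_estimate_ConstantCoefficients}, directly to $u$ on all of $S$ to get $\|u\|_{C^{k,2+\alpha}_s(\bar S)} \leq C_0(\|A_{x^0}u\|_{C^{k,\alpha}_s(\bar S)} + \|u\|_{C(\bar S)})$. One then bounds $\|(A - A_{x^0})u\|_{C^{k,\alpha}_s(\bar S)}$ term by term, using the cycloidal bound $s(x^1,x^2) \leq |x^1-x^2|^{1/2} \leq \delta^{1/2}$ for $x^1, x^2 \in \supp u$ and the interpolation inequalities of Lemma~\ref{lem:InterpolationIneqS}, so that choosing $\delta$ small produces a factor like $\delta^{\alpha/2}\|u\|_{C^{k,2+\alpha}_s(\bar S)}$ that can be absorbed after rearrangement. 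In other words, $\delta$ is used \emph{analytically}, to keep the coefficient oscillation over $\supp u$ small, not geometrically as you use it to fit $\supp u$ into a single half-ball. The paper then splits into the case $\supp u$ near $\partial_0 S$ (where the above works) and $\supp u$ away from $\partial_0 S$ (where $A$ is uniformly elliptic and $\cite[\text{Theorem 6.6 and Problem 6.2}]{GilbargTrudinger}$ is applied). Your treatment of the second case and of the $c \geq 0$ refinement via the maximum principle is consistent with the paper, but the core of the argument --- the freezing applied globally on the slab and the absorption via coefficient-oscillation smallness --- is missing from your proposal, and what replaces it is circular.
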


\begin{proof}
Fix $x^0\in S\cap \supp u$ and let $A_{x^0}$ denote the operator with constant coefficients $a(x^0)$, $b(x^0)$, $c(x^0)$. By applying \eqref{eq:Global_Schauder_estimate_constant_coeff_l} for the operator $A_{x_0}$ with constant coefficients, we obtain
$$
\|u\|_{C^{k, 2+\alpha}_s(\bar S)} \leq C_0\left(\|A_{x_0}u\|_{C^{k, \alpha}_s(\bar S))} + \|u\|_{C(\bar S)}\right),
$$
and hence
\begin{equation}
\label{eq:Ck2+alphasu_diffbound}
\|u\|_{C^{k, 2+\alpha}_s(\bar S)} \leq C_0\left(\|Au\|_{C^{k, \alpha}_s(\bar S)} + \|(A-A_{x_0})u\|_{C^{k, \alpha}_s(\bar S)} + \|u\|_{C(\bar S)}\right),
\end{equation}
where $C_0$ has the dependencies stated for the constant $C$ in the estimate \eqref{eq:Global_Schauder_estimate_constant_coeff_l}.

For any $x^1, x^2 \in \supp u$, the cycloidal distance-function bound \eqref{eq:CycloidLessEuclidDistance} and our hypothesis on $\supp u$ imply that $s(x^1,x^2) \leq |x^1-x^2|^{1/2} \leq \delta^{1/2}$, for some $\delta\in(0,\nu/2)$ to be selected later. We first consider the case $\supp u \subset B_{2\delta}^+(y^0)$ for some $y^0\in\partial_0 S$. We further restrict to the case $k=0$ initially. Observe that
$$
(A-A_{x_0})u = -x_d\tr((a-a(x^0))D^2u) - (b-b(x^0))\cdot Du + (c-c(x^0))u.
$$
We consider in turn each of the three terms appearing in our expression for $(A-A_{x_0})u$. From Definition \ref{defn:DHspaces},
$$
\|(b-b(x^0))\cdot Du\|_{C^{\alpha}_s(\bar S)} = \|(b-b(x^0))\cdot Du\|_{C(\bar S)} + [(b-b(x^0))\cdot Du]_{C^{\alpha}_s(\bar S)}.
$$
The coefficient bounds \eqref{eq:Coeff_Holder_continuity_higherorder} ensure that
$$
\|(b-b(x^0))\cdot Du\|_{C(\bar S)} \leq \left(\|b\|_{C(\bar S)} + |b(x^0)|\right)\|Du\|_{C(\bar S)} \leq 2\Lambda\|Du\|_{C(\bar S)},
$$
while the interpolation inequality \eqref{eq:InterpolationIneqS2} yields, for some $m=m(\alpha,d)$ and $C_1=C_1(\alpha,d,\delta)$ (because $\diam(\supp u) = \delta$) and any $\eps\in(0,1)$,
\begin{equation}
\label{eq:Du_C0_Interpolation}
\|Du\|_{C(\bar S)} \leq \eps\|u\|_{C^{2+\alpha}_s(\bar S)} + C_1\eps^{-m}\|u\|_{C(\bar S)},
\end{equation}
and thus, combining \eqref{eq:Du_C0_Interpolation} with the preceding inequality, yields
\begin{equation}
\label{eq:bbx0Du_C0_Interpolation}
\|(b-b(x^0))\cdot Du\|_{C(\bar S)} \leq 2\eps\Lambda\|u\|_{C^{2+\alpha}_s(\bar S)} + C_1\Lambda\eps^{-m}\|u\|_{C(\bar S)}.
\end{equation}
Writing, for $x^1,\ x^2 \in S\cap \supp u$,
\begin{align*}
{}&\frac{(b(x^1)-b(x^0))\cdot Du(x^1) - (b(x^2)-b(x^0))\cdot Du(x^2)}{s^\alpha(x^1, x^2)}
\\
&= \frac{(b(x^1)-b(x^2))}{s^\alpha(x^1, x^2)}\cdot Du(x^1) + (b(x^1)-b(x^0))\cdot \frac{\left(Du(x^1) - Du(x^2)\right)}{s^\alpha(x^1, x^2)},
\end{align*}
we obtain
$$
[(b-b(x^0))\cdot Du]_{C^{\alpha}_s(\bar S)} \leq [b]_{C^{\alpha}_s(\bar S)}\left(\|Du\|_{C(\bar S)}
+ s^\alpha(x^1, x^0) [Du]_{C^{\alpha}_s(\bar S)}\right).
$$
Since $\diam(\supp u) = \delta$ and $x^0, x^1 \in \supp u$, by combining the preceding inequality with the coefficient bounds \eqref{eq:Coeff_Holder_continuity_higherorder} and the interpolation inequality \eqref{eq:Du_C0_Interpolation}, we see that
\begin{equation}
\label{eq:bbx0Du_Calphaseminorm_Interpolation}
[(b-b(x^0))\cdot Du]_{C^{\alpha}_s(\bar S)} \leq \Lambda\left(\eps\|u\|_{C^{2+\alpha}_s(\bar S)} + C_1\eps^{-m}\|u\|_{C(\bar S)}
+ \delta^{\alpha/2} \|u\|_{C^{2+\alpha}_s(\bar S)}\right).
\end{equation}
Therefore, by combining \eqref{eq:bbx0Du_C0_Interpolation} and \eqref{eq:bbx0Du_Calphaseminorm_Interpolation}, we obtain
\begin{equation}
\label{eq:bbx0Du_Calphanorm_Interpolation}
\|(b-b(x^0))\cdot Du\|_{C^{\alpha}_s(\bar S)} \leq \Lambda(3\eps + \delta^{\alpha/2})\|u\|_{C^{2+\alpha}_s(\bar S)} + 2C_1\Lambda\eps^{-m}\|u\|_{C(\bar S)}.
\end{equation}
An identical analysis, just replacing the coefficient vector $b$ by the matrix $a$, and $Du$ by $x_dD^2u$, and the interpolation inequality \eqref{eq:InterpolationIneqS3} by \eqref{eq:InterpolationIneqS4}, yields
\begin{equation}
\label{eq:aax0xdD2u_Calphanorm_Interpolation}
\|\tr(x_d(a-a(x^0))D^2u)\|_{C^{\alpha}_s(\bar S)} \leq \Lambda(3\eps + \delta^{\alpha/2})\|u\|_{C^{2+\alpha}_s(\bar S)} + 2C_1\Lambda\eps^{-m}\|u\|_{C(\bar S)}.
\end{equation}
Similarly, replacing the coefficient vector $b$ by the function $c$, and $Du$ by $u$, and the interpolation inequality \eqref{eq:InterpolationIneqS3} by \eqref{eq:InterpolationIneqS1}, yields
\begin{equation}
\label{eq:ccx0u_Calphanorm_Interpolation}
\|(c-c(x^0))u\|_{C^{\alpha}_s(\bar S)} \leq \Lambda(3\eps + \delta^{\alpha/2})\|u\|_{C^{2+\alpha}_s(\bar S)} + 2C_1\Lambda\eps^{-m}\|u\|_{C(\bar S)}.
\end{equation}
We combine \eqref{eq:bbx0Du_Calphanorm_Interpolation}, \eqref{eq:aax0xdD2u_Calphanorm_Interpolation}, and \eqref{eq:ccx0u_Calphanorm_Interpolation} to give
$$
\|(A-A_{x_0})u\|_{C^{\alpha}_s(\bar S)} \leq 3\Lambda(3\eps + \delta^{\alpha/2})\|u\|_{C^{2+\alpha}_s(\bar S)} + 6C_1\Lambda\eps^{-m}\|u\|_{C(\bar S)}.
$$
We now choose $\eps>0$ such that $9C_0\Lambda\eps = 1/4$ and choose $\delta\in(0,\nu/2)$ (which we fix for the remainder of the proof) such that $3C_0\Lambda\delta^{\alpha/2} \leq 1/4$ and combine the preceding inequality with \eqref{eq:Ck2+alphasu_diffbound} to give
$$
\|u\|_{C^{2+\alpha}_s(\bar S)} \leq C_0\|Au\|_{C^{\alpha}_s(\bar S)} + \frac{1}{2}\|u\|_{C^{2+\alpha}_s(\bar S)} + C_2\|u\|_{C(\bar S)},
$$
for some constant $C_2$ with at most the dependencies stated for $C$ in our hypotheses. Rearrangement and the maximum principle estimate (Corollary \ref{cor:Maximum_principle_A_0}) for $\|u\|_{C(\bar S)}$ now give the conclusions \eqref{eq:Higher_order_estimate_global_slab_compactsupport} and \eqref{eq:Higher_order_estimate_global_slab_compactsupport_cnonneg} when $c\geq 0$ on $S$ in the case $k=0$.

Next, suppose $k\geq 1$ and let $\beta\in\NN^d$ be a multi-index with $|\beta|\leq k$. Because
$$
D^\beta (v w) = \sum_{\begin{subarray}{c}\beta'+\beta''=\beta \\ \beta', \beta''\in\NN^d \end{subarray}} D^{\beta'}v D^{\beta''}w.
$$
for any $v, w\in C^{k,\alpha}_s(\bar S)$, we may apply the preceding analysis virtually unchanged with $v = a-a(x^0)$, or $b-b(x^0)$, or $c-c(x^0)$ and $w=x_dD^2u$, or $Du$, or $u$, respectively, for each $\beta,\beta',\beta''\in\NN^d$ with $|\beta|\leq k$ and $\beta'+\beta''=\beta$. This completes the proof when $\supp u \subset B_\delta^+(y^0)$ for some $y^0\in\partial_0 S$.

Because $\supp u \subset B_{\delta/2}(x^*)$, for some $x^*\in\bar S$, the case $\dist(x^*, \partial_0 S)\leq\delta$ is covered by our analysis for half-balls, $B_{2\delta}^+(y^0)$, with $y^0\in\partial_0 S$. If $\dist(x^*, \partial_0 S)\geq\delta/2$, then the operator $A$ is strictly elliptic since $x_d \geq \delta/2$ and \cite[Theorem 6.6 and Problem 6.2]{GilbargTrudinger} imply that
$$
\|u\|_{C^{k, 2+\alpha}_s(\bar S)} \leq C_0\left(\|Au\|_{C^{k, \alpha}_s(\bar S))}+\|u\|_{C(\bar S)}\right),
$$
which is just \eqref{eq:Higher_order_estimate_global_slab_compactsupport}. Combining the preceding inequality with the maximum principle estimate (Corollary \ref{cor:Maximum_principle_A_0}) for $\|u\|_{C(\bar S)}$ again gives the conclusion \eqref{eq:Higher_order_estimate_global_slab_compactsupport_cnonneg} when $c\geq 0$ on $S$.
\end{proof}

Finally, we use Proposition \ref{prop:Higher_order_estimate_global_slab_compactsupport} to generalize Theorem \ref{thm:Holder_estimate_local_higherorder} to the case of variable coefficients to obtain the following analogue of \cite[Theorem I.1.3]{DaskalHamilton1998} (for a related boundary-degenerate parabolic operator with constant coefficients) and \cite[Corollary 6.3 and Problem 6.1]{GilbargTrudinger}.

\begin{thm}[Higher-order a priori interior local Schauder estimate when $A$ has variable coefficients]
\label{thm:Higher_order_estimate}
Let $\alpha\in(0,1)$ and let $b_0$, $\lambda_0$, $\Lambda$, $r_0$ be positive constants and let $k\in\NN$. Suppose the coefficients $a,b,c$ of $A$ in \eqref{eq:defnA} belong to $C^{k,\alpha}_s(\underline B_{r_0}^+(x^0))$, where $x^0 \in \partial\HH$, and obey \eqref{eq:Coeff_b_d}, \eqref{eq:Strict_ellipticity}, and
\begin{equation}
\label{eq:Coeff_Holder_continuity_higherorder}
\|a\|_{C^{k,\alpha}_s(\bar B_{r_0}^+(x^0))} + \|b\|_{C^{k,\alpha}_s(\bar B_{r_0}^+(x^0))} + \|c\|_{C^{k,\alpha}_s(\bar B_{r_0}^+(x^0))} \leq \Lambda.
\end{equation}
Then, for any $ r\in (0, r_0)$, there is a positive constant, $C=C(\alpha,b_0,d,k,\lambda_0,\Lambda,r_0,r)$, such that the following holds. If $u\in C^{k,2+\alpha}_s(\underline B_{r_0}^+(x^0))$ then
\begin{equation}
\label{eq:Higher_order_estimate}
\| u\|_{C^{k,2+\alpha}_s(\bar B^+_r(x^0))} \leq C \left(\|A u\|_{C^{k,\alpha}_s(\bar B_{r_0}^+(x^0))} + \|u\|_{C(\bar B_{r_0}^+(x^0))}\right).
\end{equation}
\end{thm}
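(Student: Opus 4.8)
The plan is to deduce Theorem~\ref{thm:Higher_order_estimate} from Proposition~\ref{prop:Higher_order_estimate_global_slab_compactsupport} by a partition-of-unity (localization) argument, exactly as Theorem~\ref{thm:Holder_estimate_local_higherorder} is deduced from Corollary~\ref{cor:Global_Schauder_estimate_ConstantCoefficients} but now carrying variable coefficients through the machine. First I would fix $r \in (0,r_0)$, set $R := (r+r_0)/2$, and introduce the nested sequence of radii $r_1 := r$, $r_n := r + (R-r)\sum_{k=1}^{n-1} 2^{-k}$ of \eqref{eq:r_n}, together with cutoff functions $\varphi_n \in C^\infty_0(\bar\HH)$ with $\varphi_n \equiv 1$ on $B_n := B^+_{r_n}(x^0)$ and $\varphi_n \equiv 0$ outside $B_{n+1}$. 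The point of passing to the compactly supported estimate on a slab is that $u\varphi_n$ has support in a half-ball, so after a further decomposition of $\varphi_n$ into finitely many pieces each of diameter $\leq \delta$ (with $\delta = \delta(\alpha,b_0,d,k,\lambda_0,\Lambda,\nu)$ from Proposition~\ref{prop:Higher_order_estimate_global_slab_compactsupport}, and $\nu := r_0$ fixing the slab $S = \RR^{d-1}\times(0,r_0)$ — extending the coefficients to $\bar S$ with the same bounds and ellipticity constants, which is possible since $r_0 < \nu$ is only used to control the $x_d$-weight), the estimate \eqref{eq:Higher_order_estimate_global_slab_compactsupport} applies to each piece.

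The core of the iteration is then standard: applying \eqref{eq:Higher_order_estimate_global_slab_compactsupport} (pieced together by a covering argument) to $u\varphi_n$ gives, with $\alpha_n := \|u\varphi_n\|_{C^{k,2+\alpha}_s(\bar B_n)}$,
\begin{equation*}
\alpha_n \leq C(R-r)^{-p} 2^{(n-1)p}\left(\|Au\|_{C^{k,\alpha}_s(\bar B_{n+1})} + \|u\varphi_{n+1}\|_{C^{k,\alpha}_s(\bar B_{n+1})} + \|u\varphi_n\|_{C(\bar B_{n+1})}\right),
\end{equation*}
where one uses $A(u\varphi_n) = \varphi_n Au + [\text{lower-order terms in } u]$, the commutator terms involving $D\varphi_n$ and $D^2\varphi_n$ being absorbable since they involve only $\leq k+1$ derivatives of $u$ and hence are controlled by $\|u\varphi_{n+1}\|_{C^{k,\alpha}_s}$-type quantities on the slightly larger ball. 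The interpolation inequalities (Lemma~\ref{lem:InterpolationIneqS}, as in the proof of Theorem~\ref{thm:Holder_estimate_local}) let us write $\|u\varphi_{n+1}\|_{C^{k,\alpha}_s} \leq \eps\,\alpha_{n+1} + C\eps^{-m}\|u\varphi_{n+1}\|_{C}$; choosing $\eps$ of the form $\delta_0 C^{-1} 2^{-(n-1)p}(R-r)^p$ produces the recursion $\alpha_n \leq \delta_0 \alpha_{n+1} + (\text{geometric in }n)\cdot(\|Au\|_{C^{k,\alpha}_s(\bar B^+_R(x^0))} + \|u\|_{C(\bar B^+_{r_0}(x^0))})$, and summing the telescoping series as in \cite[Theorem 8.11.1]{Krylov_LecturesHolder} (or \cite[Theorem 3.8]{Feehan_Pop_mimickingdegen_pde}) yields \eqref{eq:Higher_order_estimate} for $u \in C^\infty(\bar B^+_{r_0}(x^0))$. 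A final approximation step — taking $u_n \to u$ in $C^{k,2+\alpha}_s(\bar B^+_R(x^0))$ and passing to the limit — extends the estimate to $u \in C^{k,2+\alpha}_s(\underline B^+_{r_0}(x^0))$, exactly as in Step~2 of the proof of Theorem~\ref{thm:Holder_estimate_local}.

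I expect the main obstacle to be bookkeeping rather than conceptual: one must check that the compactly-supported slab estimate of Proposition~\ref{prop:Higher_order_estimate_global_slab_compactsupport} can genuinely be applied to the localized functions $u\varphi_n$ — this requires subdividing each cutoff into finitely many functions of diameter $\leq\delta$, which is harmless, but also requires that the commutator $[A,\varphi_n]u = A(u\varphi_n) - \varphi_n Au$ be expressed as a sum of terms that (i) involve at most $k+1$ derivatives of $u$ and hence are estimable via the induction/interpolation scheme, and (ii) have $C^{k,\alpha}_s$-norms that scale with the appropriate negative powers of $r_{n+1}-r_n \sim 2^{-n}(R-r)$ so the resulting series still converges. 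The fact that the cutoffs live in $C^\infty_0(\bar\HH)$ and that differentiating $\varphi_n$ only improves the degeneracy weight (no $x_d$-weighted second derivatives of $\varphi_n$ cause trouble near $\partial_0 S$) is what makes this work, but verifying it carefully is the one genuinely technical point; the rest is a transcription of the now-familiar Krylov-style summation argument already invoked for Theorems~\ref{thm:Holder_estimate_local} and~\ref{thm:Holder_estimate_local_higherorder}.
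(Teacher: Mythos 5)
Your strategy diverges from the paper's, and there is a genuine gap in how you handle the commutator. The paper does not iterate over a nested sequence of cutoffs at the $C^{k,2+\alpha}_s$ level; instead it uses \emph{a single cutoff plus induction on $k$}: for $k=0$ the result is Theorem~\ref{thm:Holder_estimate_local_variable_coeff}, and for the inductive step the key computation is
\[
[A,\varphi]u \;=\; -\tr\bigl(x_d\, a\,\bigl((D^2\varphi)u + D\varphi\times Du\bigr)\bigr) - (b\cdot D\varphi)\,u,
\]
a first-order operator in $u$ whose first-order coefficients carry an explicit $x_d$-factor. Because of that factor, $\|[A,\varphi]u\|_{C^{k,\alpha}_s}$ is bounded by $C\|u\|_{C^{k-1,2+\alpha}_s}$, and the latter is then handled by the inductive hypothesis. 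Your claim that the commutator terms ``involve only $\le k+1$ derivatives of $u$ and hence are controlled by $\|u\varphi_{n+1}\|_{C^{k,\alpha}_s}$-type quantities'' is not correct: take $D^\beta$ with $|\beta|=k$ of the term $x_d\, a\, D\varphi\cdot Du$. The worst contribution is $x_d\,a\,D\varphi\cdot D^{\beta'}u$ with $|\beta'|=k+1$, and $\|x_d D^{\beta'}u\|_{C^\alpha_s}$ for $|\beta'|=k+1$ is exactly the top-order part of the $C^{k-1,2+\alpha}_s$-norm, \emph{not} something that $\|u\|_{C^{k,\alpha}_s}$ controls (the latter only sees $D^\beta u$ for $|\beta|\le k$). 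So the absorption you propose does not close.

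Even if you replaced $\|u\varphi_{n+1}\|_{C^{k,\alpha}_s}$ with the correct $\|u\varphi_{n+1}\|_{C^{k-1,2+\alpha}_s}$, your iteration would need an interpolation inequality of the form $\|v\|_{C^{k-1,2+\alpha}_s}\le \eps\|v\|_{C^{k,2+\alpha}_s} + C\eps^{-m}\|v\|_{C}$. The paper's Lemma~\ref{lem:InterpolationIneqS} provides such inequalities only at the $k=0$ level, and its higher-order analogues for the Daskalopoulos--Hamilton norms are not proved in the paper; getting them would be additional work that the paper deliberately sidesteps. The induction on $k$ makes the Krylov-style ``infinite cutoff'' scheme unnecessary here: once $\|[A,\varphi]u\|_{C^{k,\alpha}_s}\le C\|u\|_{C^{k-1,2+\alpha}_s}$ is observed, one application of Proposition~\ref{prop:Higher_order_estimate_global_slab_compactsupport} to $u_0 := \varphi u$ on the slab $S=\RR^{d-1}\times(0,r_0)$, followed by the inductive hypothesis applied at level $k-1$ on the intermediate half-ball, yields \eqref{eq:Higher_order_estimate} directly. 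I'd recommend switching to that route: it is shorter, avoids any higher-order interpolation, and is what the paper actually does.
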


\begin{proof}
We apply an induction argument. When $k=0$, the estimate \eqref{eq:Higher_order_estimate} follows from Theorem \ref{thm:Holder_estimate_local_variable_coeff} and so we may assume without loss of generality that $k \geq 1$. By induction, we may assume that the estimate \eqref{eq:Higher_order_estimate} holds with constant $C=C(l,*)$ when $k$ is replaced by $l\in\NN$ in the range $0\leq l\leq k-1$.

Let $r_1:=(r+r_0)/2$ and choose a cutoff function $\varphi \in C^\infty_0(\bar\HH)$ such that $0\leq\varphi\leq 1$ on $\bar\HH$ and $\varphi=1$ on $\bar B_r^+(x^0)$ while $\supp\varphi\subset \bar B_{r_1}^+(x^0)$ and note that, for $u \in C^{k, 2+\alpha}_s(\underline B_{r_0}^+(x^0))$ and thus $u_0 := \varphi u\in C^{k, 2+\alpha}_s(\bar S)$, we have
$$
\|u\|_{C^{k, 2+\alpha}_s(\bar B_r^+(x^0))} \leq \|u_0\|_{C^{k, 2+\alpha}_s(\bar S)},
$$
where $S = \RR^{d-1}\times(0, r_0)$ is the slab as in \eqref{eq:Slab}, and $u_0 = 0$ on $\partial_1 S$. By Proposition \ref{prop:Higher_order_estimate_global_slab_compactsupport}, we obtain
$$
\|u_0\|_{C^{k, 2+\alpha}_s(\bar S)} \leq C_0\left(\|Au_0\|_{C^{k, \alpha}_s(\bar S)} +  \|u_0\|_{C(\bar S)}\right),
$$
where we use $C_0$ to denote the constant $C$ in \eqref{eq:Higher_order_estimate_global_slab_compactsupport}, and hence, combining the preceding two inequalities,
\begin{equation}
\label{eq:Higher_order_estimate_Acutoff}
\|u\|_{C^{k, 2+\alpha}_s(\bar B_r^+(x^0))} \leq C_0\left(\|A(\varphi u)\|_{C^{k, \alpha}_s(\bar B_{r_1}^+(x^0))} + \|u\|_{C(\bar B_{r_0}^+(x^0))} \right).
\end{equation}
Notice that $A(\varphi u) = \varphi Au + [A, \varphi]u$ and that $[A, \varphi]$ is a first-order partial differential operator,
\begin{align*}
[A, \varphi]u &= A(\varphi u) - \varphi Au
\\
&= -\tr(x_d aD^2(\varphi u)) - b\cdot D(\varphi u) +
\varphi\tr(x_d aD^2 u) + \varphi b\cdot D u,
\end{align*}
and so
\begin{equation}
\label{eq:CommutatorAcutoff}
[A, \varphi]u = -\tr(x_d a((D^2\varphi)u + D\varphi\times Du)) - (b\cdot D\varphi)u,
\end{equation}
where $D\varphi\times Du$ denotes the $d\times d$ matrix with entries $\varphi_{x_i}u_{x_j}$. Observe that
$$
\|A(\varphi u)\|_{C^{k, \alpha}_s(\bar B_{r_1}^+(x^0))} \leq \|[A,\varphi] u\|_{C^{k, \alpha}_s(\bar B_{r_1}^+(x^0))} + \|\varphi Au\|_{C^{k, \alpha}_s(\bar B_{r_1}^+(x^0))}.
$$
Because of the structure \eqref{eq:CommutatorAcutoff} of $[A, \varphi]$ (with factor $x_d$ in the  coefficients of the first-order derivatives) and the fact that $C^{k, \alpha}_s(\bar B_{r_1}^+(x^0)) \subset C^{k-1, 2+\alpha}_s(\bar B_{r_1}^+(x^0))$ (by Definitions \ref{defn:DHspaces} and \ref{defn:DH2spaces}), we obtain
$$
\|[A, \varphi]u\|_{C^{k, \alpha}_s(\bar B_{r_1}^+(x^0))} \leq C\|u\|_{C^{k-1, 2+\alpha}_s(\bar B_{r_1}^+(x^0))},
$$
where $C$ has at most the dependencies stated for the constant in the estimate \eqref{eq:Higher_order_estimate}. By our induction hypothesis, we can apply the local Schauder estimate \eqref{eq:Higher_order_estimate} with $k$ replaced by $l=k-1$ to give
$$
\|u\|_{C^{k-1, 2+\alpha}_s(\bar B_{r_1}^+(x^0))} \leq C\left(\|Au\|_{C^{k-1, \alpha}_s(\bar B_{r_0}^+(x^0))} + \|u\|_{C(\bar B_{r_0}^+(x^0))}\right).
$$
Combining the preceding three bounds with \eqref{eq:Higher_order_estimate_Acutoff} yields the inequality,
\begin{equation}
\label{eq:Higher_order_estimate_smallenoughhalfball}
\|u\|_{C^{k, 2+\alpha}_s(\bar B_r^+(x^0))} \leq C\left(\|Au\|_{C^{k, \alpha}_s(\bar B_{r_0}^+(x^0))} + \|u\|_{C(\bar B_{r_0}^+(x^0))}\right),
\end{equation}
and this is \eqref{eq:Higher_order_estimate}.
\end{proof}

We can now prove the generalization of Corollary \ref{cor:Global_Schauder_estimate_ConstantCoefficients} to the case of variable coefficients.

\begin{proof}[Proof of Corollary \ref{cor:Global_Schauder_estimate_VariableCoefficients}]
The proof is virtually identical to that of Corollary \ref{cor:Global_Schauder_estimate_ConstantCoefficients} except that we replace appeals to Theorems \ref{thm:Holder_estimate_local} and \ref{thm:Holder_estimate_local_higherorder} (for the case of constant coefficients) by appeals to Theorems \ref{thm:Holder_estimate_local_variable_coeff} and \ref{thm:Higher_order_estimate} (for the case of variable coefficients).
\end{proof}

We can now give the

\begin{proof}[Proof of Theorem \ref{thm:APrioriSchauderInteriorDomain}]
Since we can apply Theorem \ref{thm:Higher_order_estimate} to half-balls, $B_{r_0}^+(x^0) = B_{r_0}(x^0)\cap \HH \subset \sO$ when $x_0\in\partial\sO$, and the standard a priori interior Schauder estimate for strictly elliptic operators \cite[Corollary 6.3 and Problem 6.1]{GilbargTrudinger} to balls, $B_{r_0}(x^0) \Subset \sO$ when $x_0\in\sO$, the remainder of the argument is very similar to the proof of Corollary \ref{cor:Global_Schauder_estimate_ConstantCoefficients}, when the open subset, $\sO$, is an infinite slab.
\end{proof}

\subsection{Regularity}
\label{subsec:Regularity}
We begin with the proof of our global existence result on slabs.

\begin{proof}[Proof of Theorem \ref{thm:Existence_uniqueness_slabs}]
The proof follows by the method of continuity. We denote
$$
\sA_0 := -x_d \sum_{i=1}^d \frac{\partial^2}{\partial x_i^2} - \frac{\partial}{\partial x_d}.
$$
Then Corollary \ref{cor:Existence_solutions_slab} implies that, for any $f \in C^{k, \alpha}_s(\bar S)$, there is a unique solution $u \in C^{k, 2+\alpha}_s(\bar S)$. We consider $A_t := (1-t) \sA_0 + t A$, for all $t \in [0,1]$. Given Corollary \ref{cor:Global_Schauder_estimate_VariableCoefficients} and the existence and uniqueness of solutions in $C^{k, 2+\alpha}_s(\bar S)$ for the operator $\sA_0$, the method of continuity \cite[Theorem 5.2]{GilbargTrudinger} applies and gives the result.
\end{proof}

We have the following analogue of \cite[Theorem 6.17]{GilbargTrudinger}, albeit with a quite different proof.

\begin{thm}[Higher-order interior local regularity of solutions when $A$ has variable coefficients]
\label{thm:Higher_order_regularity}
Assume the hypotheses of Theorem \ref{thm:Higher_order_estimate} for the operator $A$ in \eqref{eq:defnA}. If $u\in C^2(B_{r_0}^+(x^0))$ obeys
\begin{gather}
\label{eq:CondRegularityHalfball}
u, \ Du,\ x_d D^2 u \in C(\underline B^+_{r_0}(x^0)) \quad\hbox{and}\quad Au \in C^{k, \alpha}_s(\underline B^+_{r_0}(x^0)),
\\
\label{eq:VentcelHalfball}
x_d D^2 u =0 \quad\hbox{on } \partial_0 B^+_{r_0}(x^0),
\end{gather}
then $u \in C^{k, 2+\alpha}_s(\underline B_{r_0}^+(x^0))$.
\end{thm}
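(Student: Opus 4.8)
\textbf{Proof proposal for Theorem \ref{thm:Higher_order_regularity}.}

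The plan is to deduce the regularity statement from the a priori estimate in Theorem \ref{thm:Higher_order_estimate} by an approximation/mollification argument, exactly in the spirit of how one proves \cite[Theorem 6.17]{GilbargTrudinger} from the interior Schauder estimate, but with the technical subtlety that our equation degenerates along $\partial_0 B_{r_0}^+(x^0)$ and the relevant $C^{k,2+\alpha}_s$ norm controls $x_d D^2 u$ rather than $D^2 u$. First I would localize: fix a slightly smaller radius $r_1 \in (0, r_0)$ and a cutoff $\varphi \in C^\infty_0(\bar\HH)$ with $\varphi = 1$ on $B_{r_1}^+(x^0)$ and $\supp\varphi \subset B_{r_0}^+(x^0)$, so that it suffices to prove $\varphi u \in C^{k,2+\alpha}_s(\bar S)$ on the slab $S = \RR^{d-1}\times(0,r_0)$; here $[A,\varphi]u$ is a first-order operator with coefficients carrying the degeneracy factor $x_d$, so by \eqref{eq:CondRegularityHalfball} the forcing term $A(\varphi u) = \varphi Au + [A,\varphi]u$ lies in $C^\alpha_s(\underline S)$ near $\partial_0 S$ (the $[A,\varphi]u$ piece only sees $u, Du, x_d D^2 u$, all continuous up to $\partial_0$, and no growth issues arise since $\supp \varphi$ is compact).

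Next I would regularize the data. Approximate $f := A(\varphi u)$ by a sequence $f_n \in C^\infty_0(\underline S)$ (or at least in $C^{k,\alpha}_s(\bar S)$) converging to $f$ in an appropriate weaker topology — say uniformly on compact subsets of $\underline S$, or in $C^\alpha_s$ on slightly smaller slabs — while keeping $f_n = 0$ near $\partial_1 S$ and $\diam$ of their supports controlled so that Theorem \ref{thm:Existence_uniqueness_slabs} (or Corollary \ref{cor:Existence_solutions_slab}) produces unique solutions $u_n \in C^{k,2+\alpha}_s(\bar S)$ — indeed in $C^\infty_0(\bar S)$ if $f_n$ is smooth — to $A u_n = f_n$ on $S$, $u_n = 0$ on $\partial_1 S$. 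The a priori estimate \eqref{eq:Higher_order_estimate} (applied on half-balls and combined via a covering argument as in Corollary \ref{cor:Global_Schauder_estimate_VariableCoefficients}), together with the weak maximum principle (Corollary \ref{cor:Maximum_principle_A_0} or Appendix \ref{sec:MaximumPrinciple}) to control $\|u_n\|_{C(\bar S)}$, gives a uniform bound on $\|u_n\|_{C^{k,2+\alpha}_s}$ on interior subslabs. Then the embedding $C^{k,2+\alpha}_s \hookrightarrow C^{k,\beta}_s$ for $\beta < \alpha$ is compact on precompact subsets of $\underline S$ (Arzelà–Ascoli applied to $D^\gamma u_n$ for $|\gamma| \le k+1$ and to $x_d D^\gamma u_n$ for $|\gamma| = k+2$, using that the cycloidal-metric Hölder bounds give equicontinuity), so a subsequence converges in $C^{k,2+\alpha'}_s$ to some $u_\infty \in C^{k,2+\alpha}_s(\underline S)$ solving $A u_\infty = f$, $u_\infty = 0$ on $\partial_1 S$.

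Finally I would identify $u_\infty$ with $\varphi u$. Both solve the same boundary value problem $Av = f$ on $S$, $v = 0$ on $\partial_1 S$, and both have enough regularity ($u_\infty \in C^{k,2+\alpha}_s(\underline S)$, and $\varphi u$ satisfies the hypotheses \eqref{eq:CondRegularityHalfball}, \eqref{eq:VentcelHalfball} so that no boundary condition on $\partial_0 S$ is needed) to invoke the uniqueness part of the weak maximum principle for $A$ with $c \ge c_0 > 0$ — or, when $c$ is merely bounded, after the standard substitution $v \mapsto e^{\lambda x_d} v$ which shifts the zeroth-order coefficient, or by appealing to the uniqueness in Theorem \ref{thm:Existence_uniqueness_slabs} directly; the point is that the difference $u_\infty - \varphi u$ solves a homogeneous problem with vanishing $\partial_1$-data and the degenerate Ventcel-type condition $x_d D^2(\cdot) = 0$ on $\partial_0$, forcing it to be zero. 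Hence $\varphi u = u_\infty \in C^{k,2+\alpha}_s(\underline S)$, and restricting to $B_{r_1}^+(x^0)$ and letting $r_1 \uparrow r_0$ gives $u \in C^{k,2+\alpha}_s(\underline B_{r_0}^+(x^0))$.

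The main obstacle I expect is the uniqueness step combined with checking that the limit $u_\infty$ genuinely coincides with $\varphi u$ rather than merely being \emph{some} solution: one must verify that $\varphi u$ has exactly the regularity needed to enter the uniqueness class (this is where \eqref{eq:VentcelHalfball} and $x_d D^2 u \in C(\underline{\sO})$ are essential — they guarantee that $\varphi u$, while not a priori in $C^{2+\alpha}_s$, still behaves well enough at $\partial_0$ that the weak maximum principle applies to the difference). A secondary technical point is arranging the approximating sequence $f_n$ so that the supports stay compactly contained and small enough for the existence theorem on slabs to apply uniformly, and so that convergence $f_n \to f$ holds in the $C^\alpha_s$-topology on each interior subslab — this requires a mollification adapted to the cycloidal metric, but it is routine given the interpolation inequalities collected in Appendix \ref{sec:InterpolationInequalities}.
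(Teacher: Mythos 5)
Your localization step (cutoff $\varphi$, forcing $f_0 = \varphi Au + [A,\varphi]u$, slab problem with vanishing $\partial_1$-data) and your uniqueness step (weak maximum principle comparing $\varphi u$ with the slab solution, with the observation that \eqref{eq:CondRegularityHalfball}, \eqref{eq:VentcelHalfball} put $\varphi u$ in the uniqueness class) are both exactly the paper's strategy, and you are right that the uniqueness/identification step is the conceptual crux. The middle step differs: rather than mollifying $f_0$, producing smooth $u_n$, and passing to the limit via the global a priori estimate and Arzel\`a--Ascoli, the paper simply cites the slab existence result (Corollary~\ref{cor:Existence_solutions_slab} / Theorem~\ref{thm:Existence_uniqueness_slabs}), which already accepts $C^{k,\alpha}_s$ forcing and produces a $C^{k,2+\alpha}_s$ solution. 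Your mollification argument is essentially a re-derivation of that existence theorem inline; it is valid but redundant given that the result is available.

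There is, however, a genuine gap: your argument as written produces only $u\in C^{2+\alpha}_s$, not $C^{k,2+\alpha}_s$. You correctly observe that $f_0 = \varphi Au + [A,\varphi]u$ lies in $C^\alpha_s(\bar S)$, because the commutator term \eqref{eq:CommutatorAcutoff} involves only $u$, $Du$, $x_d D^2u$ and the Ventcel condition, via Lemma~\ref{lem:Holder_continuity_x_d_Du}. But with $f_0$ only in $C^\alpha_s$, the global a priori estimate gives a uniform bound on $\|u_n\|_{C^{2+\alpha}_s(\bar S)}$, \emph{not} on $\|u_n\|_{C^{k,2+\alpha}_s(\bar S)}$ as you claim --- the latter would require $f_n$ uniformly bounded in $C^{k,\alpha}_s$, which you do not have a priori since $[A,\varphi]u$ is not yet known to lie in $C^{k,\alpha}_s$. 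The missing idea is the bootstrap the paper performs: first conclude $u\in C^{2+\alpha}_s(\bar B^+_r(x^0))$; then, assuming inductively $u\in C^{k-1,2+\alpha}_s(\bar B^+_{r_1}(x^0))$, use the structure of the commutator (its coefficients carry the factor $x_d$, and $C^{k-1,2+\alpha}_s\hookrightarrow C^{k,\alpha}_s$) to upgrade $[A,\varphi]u$ to $C^{k,\alpha}_s(\bar S)$, hence $f_0\in C^{k,\alpha}_s(\bar S)$, hence the slab solution and therefore $u$ lie in $C^{k,2+\alpha}_s$. Without this iteration over $k$, your extraction of a limit in $C^{k,2+\alpha'}_s$ from a sequence bounded only in $C^{2+\alpha}_s$ is unjustified.
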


\begin{proof}
Let $r\in(0,r_0)$, and $r_1:=(r+r_0)/2$, and $r_2:=(r_1+r)/2$, so $r<r_1<r_2<r_0$. Let $\varphi\in C^\infty_0(\bar\HH)$ be a cutoff function such that $0\leq \varphi\leq 1$ on $\HH$ with $\varphi = 1$ on $\bar B^+_{r_1}(x^0)$
and $\supp \varphi \subset \bar B^+_{r_2}(x^0)$. Denoting $S=\RR^{d-1}\times (0,r_0)$ as in \eqref{eq:Slab} and $u_0 := u \varphi$ on $\bar S$, we see that $u_0 \in C^2(S)$ is a solution to \eqref{eq:Equation_slab}, \eqref{eq:Equation_slabBC} with $f$ replaced by
$$
f_0 := \varphi A u + [A,\varphi] u \quad\hbox{on } \bar S.
$$
By hypothesis, $\varphi Au \in C^{k, \alpha}_s(\underline B_{r_0}^+(x^0))$, while Lemma \ref{lem:Holder_continuity_x_d_Du} and \eqref{eq:CondRegularityHalfball} and \eqref{eq:VentcelHalfball} ensure that $[A,\varphi] u\in C^{\alpha}_s(\bar S)$, so
$$
f_0\in C^{\alpha}_s(\bar S),
$$
while the conditions \eqref{eq:CondRegularityHalfball} and \eqref{eq:VentcelHalfball} on $u$ imply that $u_0$ obeys
$$
u_0, \ Du_0, \ x_dD^2u_0 \in C(\underline S) \quad\hbox{and}\quad x_dD^2u_0 = 0 \quad\hbox{on } \partial_0 S.
$$
Corollary \ref{cor:Existence_solutions_slab} implies that there is a unique solution $v \in C^{2+\alpha}_s(\bar S)$ to \eqref{eq:Equation_slab}, \eqref{eq:Equation_slabBC} with $f$ replaced by $f_0$ and the weak maximum principle, Lemma \ref{lem:Comparison_principle_A_0}, implies that $u_0=v$ on $\bar S$. Thus, $u \in C^{2+\alpha}_s(\bar B^+_r(x^0))$.

When $k\geq 1$, we argue by induction and suppose that $u \in C^{k-1, 2+\alpha}_s(\bar B^+_r(x^0))$ as our induction hypothesis. But then $[A,\varphi] u \in C^{k, \alpha}_s(\bar B^+_r(x^0))$ by the proof of Theorem \ref{thm:Higher_order_estimate} and so $f_0\in C^{k, \alpha}_s(\bar S)$. Now Corollary \ref{cor:Existence_solutions_slab} implies that $v \in C^{k, 2+\alpha}_s(\bar S)$ by the preceding argument for $k=0$, and thus $u \in C^{k, 2+\alpha}_s(\bar B^+_r(x^0))$ since $u_0=v$ on $\bar S$.
\end{proof}

We can now complete the

\begin{proof}[Proof of Theorem \ref{thm:InteriorRegularityDomain}]
This is an immediate consequence of Theorem \ref{thm:Higher_order_regularity} and \cite[Theorem 6.17]{GilbargTrudinger} since we can apply those regularity results to any half-ball, $B_{r_0}^+(x^0) = B_{r_0}(x^0)\cap\HH \Subset \underline\sO$ when $x^0\in\partial\HH$, or ball, $B_{r_0}(x^0) \Subset \sO$ when $x^0\in\HH$, respectively.
\end{proof}

Finally, we complete the proofs of Theorem \ref{thm:ExistUniqueCk2+alphasHolderContinuityDomain} and Corollary \ref{cor:ExistUniqueCk2+alphasHolderContinuityDomain}.

\begin{proof}[Proof of Theorem \ref{thm:ExistUniqueCk2+alphasHolderContinuityDomain}]
The argument is very similar to the proof of Corollary \ref{cor:Existence_solutions_slab}, so we just highlight the differences. Because $f\in C^{k, \alpha}_s(\underline\sO)\cap C_b(\sO)$, we can apply the regularizing procedure
described in \cite[\S I.11]{DaskalHamilton1998} and \cite[Theorem I.11.3]{DaskalHamilton1998} to construct a sequence of functions $\{f_n\}_{n \in \NN} \subset C^{\infty}_0(\bar\HH)$ such that $f_n\to f$ in $C^{k, \alpha}_s(\bar U)\cap C_b(\sO)$ as $n\to\infty$, for all $U\Subset\underline\sO$, and
$$
\|f_n\|_{C^{k, \alpha}_s(\bar U')} \leq C'\|f\|_{C^{k, \alpha}_s(\bar U)}, \quad\forall\, n\in\NN,
$$
where $U'\Subset \underline U$ and $U\Subset \underline\sO$ and $C'$ may depend on $U$ and $U'$, and
$$
\|f_n\|_{C(\bar\sO)} \leq C\|f\|_{C(\bar\sO)},\quad\forall\, n \in \NN,
$$
for some positive constant, $C=C(d)$.

Let $\{u_n\}_{n\in\NN}\subset C^\infty(\underline\sO)\cap C_b(\sO\cup\partial_1\sO)$ be the corresponding (unique) sequence of solutions to \eqref{eq:HestonBoundaryValueProblem}, \eqref{eq:HestonBoundaryValueProblemBC}, with $f$ replaced by $f_n$, provided by \cite[Theorem 1.11]{Feehan_Pop_higherregularityweaksoln}. The maximum principle estimate (Corollary \ref{cor:Maximum_principle_A_0} for the case $c_0=0$ and \cite[Proposition 2.19 and Theorem 5.4]{Feehan_maximumprinciple} for the case $c_0>0$) implies that
$$
\|u_n\|_{C(\bar\sO)} \leq C_0\|f_n\|_{C(\bar\sO)}, \quad\forall\, n\in\NN,
$$
for a constant $C_0$ depending on the coefficients of $A$ and $\nu$ when $\height(\sO)=\nu$ and $c_0=0$ or $C_0=1/c_0$ when $c_0>0$ and $\height(\sO)=\infty$. The remainder of the argument is now the same as that of the proof of Corollary \ref{cor:Existence_solutions_slab}.
\end{proof}

\begin{proof}[Proof of Corollary \ref{cor:ExistUniqueCk2+alphasHolderContinuityDomain}]
The conclusion follows immediately from Theorem \ref{thm:ExistUniqueCk2+alphasHolderContinuityDomain} and \cite[Corollary 1.13]{Feehan_Pop_higherregularityweaksoln}, since the latter result ensures that $u\in C(\bar\sO)$.
\end{proof}

\appendix

\section{Weak maximum principle for boundary-degenerate elliptic operators on open subsets of finite height}
\label{sec:MaximumPrinciple}
In this appendix, we prove a weak maximum principle for operators which include those of the form $A$ in \eqref{eq:defnA} with $c \geq 0$ when the open subset, $\sO$, is unbounded.  Notice that when $c$ does not have a uniform positive lower bound, the weak maximum principle \cite[Theorem 5.4]{Feehan_maximumprinciple} does not immediately apply when $\sO$ is unbounded.

\begin{lem}[Weak maximum principle on a slab]
\label{lem:Comparison_principle_A_0}
Let $\sO\subset\HH$ be an open subset of finite height. Let\footnote{Note the more general definition of the coefficient $a(x)$ in Lemma \ref{lem:Comparison_principle_A_0} and Corollary \ref{cor:Maximum_principle_A_0}.}
$$
Av := -\tr(aD^2v) - b\cdot Dv + cv \quad\hbox{on }\sO, \quad v\in C^\infty(\sO),
$$
require that its coefficients, $a:\underline\sO\to\sS^+(d)$, and $b:\underline\sO\to\RR^d$, and $c:\underline\sO\to\RR$ obey
\begin{align*}
a(x) &=0 \quad\hbox{on }\partial_0\sO,
\\
\langle a\xi, \xi\rangle &\geq 0  \quad\hbox{on }\sO, \quad\forall\, \xi\in\RR^d,
\\
\sup_\sO a^{dd} &< \infty,
\\
\inf_\sO b^d &> 0,
\\
b^d &\geq 0 \quad\hbox{on } \partial_0\sO,
\\
c &\geq 0  \quad\hbox{on }\underline\sO,
\\
\tr(a(x)) + \langle x, b(x)\rangle &\leq K(1+|x|^2), \quad\forall\, x\in \sO,
\end{align*}
for some positive constant $K$. Suppose that $u \in C^2(\sO)\cap C(\sO\cup\partial_1\sO)$, and $\sup_\sO u <\infty$,
and $u, \ Du, \ \tr(aD^2u) \in C(\underline \sO)$, and
\[
\tr(aD^2u)=0 \quad\hbox{on }\partial_0 \sO.
\]
If $A u \leq 0$ on $\sO$ and $u \leq 0$ on $\partial_1 \sO$, then $u \leq 0$ on $\sO$.
\end{lem}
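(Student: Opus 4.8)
The plan is to build an explicit auxiliary (barrier) function and apply the classical weak maximum principle for strictly elliptic operators on bounded domains, then pass to the limit. First I would fix $\eps>0$ and introduce a comparison function of the form
\[
w_\eps(x) := u(x) - \eps\,\psi(x) - \eta,
\]
where $\psi$ is chosen to dominate the quadratic growth allowed by the hypothesis $\tr(a) + \langle x,b\rangle \leq K(1+|x|^2)$, and $\eta\geq 0$ is a small constant to be sent to $0$ at the end. The natural candidate is $\psi(x) = e^{\gamma x_d} + |x|^2$ or, more in the spirit of the hypotheses, $\psi(x) = e^{\lambda x_d}(1 + |x|^2)$ for a constant $\lambda$ chosen using the lower bound $\inf_\sO b^d > 0$; the exponential factor in $x_d$ is what lets us beat the zeroth-order term $cv$ even though $c$ has no positive lower bound, since $A(e^{\lambda x_d}) = (-a^{dd}\lambda^2 - b^d\lambda + c e^{\lambda x_d})$-type expression and choosing $\lambda$ large and negative (or positive, depending on sign conventions) makes $-a^{dd}\lambda^2 - \lambda b^d$ dominate. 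I would verify $A\psi \geq \delta_0 > 0$ on $\sO$ for a fixed constant $\delta_0$, or at least $A\psi > 0$ with a controlled lower bound, using $\sup_\sO a^{dd}<\infty$, $\inf_\sO b^d > 0$, $c\geq 0$, and the growth bound on $\tr(a)+\langle x,b\rangle$. Crucially, I also need $\psi$, $D\psi$, $\tr(a D^2\psi)$ continuous up to $\partial_0\sO$ with $\tr(aD^2\psi)=0$ there, which holds because $a=0$ on $\partial_0\sO$ and $\psi$ is smooth.

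The key steps, in order: (1) establish the barrier estimate $A\psi \geq \delta_0 > 0$ on $\sO$; (2) observe that $w_\eps = u - \eps\psi - \eta$ satisfies $A w_\eps = Au - \eps A\psi - \eta c \leq -\eps\delta_0 < 0$ on $\sO$, using $Au\leq 0$, $A\psi\geq\delta_0$, and $c\geq 0$; (3) note $w_\eps \in C^2(\sO)\cap C(\sO\cup\partial_1\sO)$ with $w_\eps,\ Dw_\eps,\ \tr(aD^2 w_\eps)\in C(\underline\sO)$ and $\tr(aD^2 w_\eps)=0$ on $\partial_0\sO$; (4) since $\sup_\sO u<\infty$ and $\psi(x)\to\infty$ as $|x|\to\infty$ (with $\sO$ of finite height, $|x|\to\infty$ means the horizontal coordinates blow up, and $\psi$ still blows up there), we have $w_\eps(x)\to-\infty$ as $|x|\to\infty$ within $\bar\sO$; hence $\sup_{\sO} w_\eps$ is attained, and if it is positive it is attained at some $x^*\in\sO\cup\partial_0\sO$ (not on $\partial_1\sO$, where $w_\eps\leq u\leq 0$). (5) Apply the weak maximum principle of \cite[Theorem 5.1 or Theorem 5.4]{Feehan_maximumprinciple} — or a direct interior argument — to rule out an interior positive maximum: at an interior max we would need $Dw_\eps = 0$, $D^2 w_\eps \leq 0$, so $\tr(aD^2 w_\eps)\leq 0$ and $A w_\eps(x^*) = -\tr(aD^2 w_\eps) - b\cdot Dw_\eps + c w_\eps \geq c w_\eps \geq 0$ (if $w_\eps(x^*)>0$ and $c\geq 0$), contradicting $Aw_\eps<0$. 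For a maximum on $\partial_0\sO$, use $\tr(aD^2 w_\eps)=0$ there together with the condition $b^d\geq 0$ on $\partial_0\sO$ and the boundary behavior of $w_\eps$ to derive the same contradiction, reducing to an argument along $\partial_0\sO$ as a lower-dimensional problem or invoking the relevant boundary part of \cite[Theorem 5.1]{Feehan_maximumprinciple}. (6) Conclude $w_\eps\leq 0$ on $\sO$, i.e. $u\leq \eps\psi + \eta$ on $\sO$, and let $\eta\to 0$ then $\eps\to 0$ pointwise to get $u\leq 0$ on $\sO$.

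The main obstacle I anticipate is step (5) at the degenerate boundary $\partial_0\sO$: the standard weak maximum principle does not directly apply because the operator is degenerate there, so one must exploit precisely the structural hypotheses $a=0$ on $\partial_0\sO$, $\tr(aD^2 u)=0$ on $\partial_0\sO$, and $b^d\geq 0$ on $\partial_0\sO$ to show that a positive maximum cannot occur on $\partial_0\sO$ either. Concretely, at a point $x^0\in\partial_0\sO$ that is a maximum of $w_\eps$ over $\sO\cup\partial_0\sO$, the first-order condition gives $D'w_\eps(x^0)=0$ (tangential derivatives) and $\partial_{x_d} w_\eps(x^0)\leq 0$ (since we move into $\sO$ by increasing $x_d$); combined with $\tr(aD^2 w_\eps)=0$ at $x^0$ and $b^d(x^0)\geq 0$, one finds $Aw_\eps(x^0) = -\tr(aD^2 w_\eps)(x^0) - b^d(x^0)\partial_{x_d}w_\eps(x^0) - b'(x^0)\cdot D'w_\eps(x^0) + c(x^0)w_\eps(x^0) \geq c(x^0) w_\eps(x^0) \geq 0$, again contradicting $Aw_\eps < 0$. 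Making this rigorous — in particular justifying that the supremum is actually attained and handling the one-sided derivative at $\partial_0\sO$ carefully, and confirming that the relevant maximum principle from \cite{Feehan_maximumprinciple} covers the degenerate-boundary case under exactly these hypotheses — is where the real work lies; the rest is routine barrier construction and a limiting argument.
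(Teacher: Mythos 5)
Your plan and the paper's proof diverge at the very first move. The paper does \emph{not} construct a barrier at all. It applies the substitution $u(x',x_d) = e^{-\sigma x_d}v(x',x_d)$ with $\sigma := b_0/(2\Lambda)$, where $b_0 = \inf_\sO b^d$ and $\Lambda = \sup_\sO a^{dd}$; a direct calculation shows that $v$ satisfies $\widetilde A v \leq 0$ where $\widetilde A$ has zeroth-order coefficient
\[
\tilde c = c + \sigma b^d - \sigma^2 a^{dd} \geq \sigma b_0 - \sigma^2\Lambda = \sigma b_0/2 > 0 \quad\text{on } \sO,
\]
and $\tilde c = c + \sigma b^d \geq 0$ on $\partial_0\sO$ (since $a = 0$ there). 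The finite-height hypothesis is used precisely to ensure $\sup_\sO v < \infty$. One then cites \cite[Theorem 5.4]{Feehan_maximumprinciple}, which already handles the unbounded domain (using the growth hypothesis $\tr a + \langle x,b\rangle \leq K(1+|x|^2)$), the degenerate boundary $\partial_0\sO$, and the lack of sign condition on the boundary of $\sO$ at the top, provided the zeroth-order coefficient has a positive lower bound. The whole proof is one paragraph.

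Your proposed barrier construction, by contrast, has a genuine gap: the candidate $\psi(x) = e^{\lambda x_d}(1+|x|^2)$ (or $e^{\lambda x_d} + C_0(1+|x|^2)$) does \emph{not} obviously satisfy $A\psi \geq \delta_0 > 0$ under the stated hypotheses. The issue is the quadratic factor. For $\psi$ containing $1+|x|^2$, one has
\[
-\tr(aD^2\psi) - b\cdot D\psi = -e^{\lambda x_d}\Bigl[(\lambda^2 a^{dd} + \lambda b^d)(1+|x|^2) + 2\bigl(\tr a + \langle x,b\rangle\bigr) + 4\lambda\sum_i a^{id}x_i\Bigr],
\]
and while $(\lambda^2 a^{dd} + \lambda b^d)$ is made negative by choosing $\lambda<0$ small, and $-2(\tr a + \langle x,b\rangle) \geq -2K(1+|x|^2)$, the cross term $4\lambda\sum_i a^{id}x_i$ is not controlled: the hypotheses bound only $a^{dd}$ and the combination $\tr a + \langle x,b\rangle$, not $\tr a$ or the columns $a^{\cdot d}$ individually (since $\langle x,b\rangle$ has no lower bound, one cannot deduce an upper bound on $\tr a$ alone). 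With $c\geq 0$ only, $c\psi$ cannot compensate either. So step (1) of your plan, ``establish $A\psi \geq \delta_0 > 0$,'' is not established and does not clearly hold. You also correctly sense that the exponential-in-$x_d$ factor is the key tool (and that it works only because the height is finite), but the right move is to put that factor on $u$, not on the barrier. Applied to $u$, the exponential converts the problem into one with a uniformly positive zeroth-order term, at which point the barrier work for the unbounded domain — exactly the part your plan leaves incomplete — is already done inside the cited Theorem 5.4. Your direct-contradiction arguments at an interior maximum and at a maximum on $\partial_0\sO$ are correct in outline (sign of $\tr(aD^2w_\eps)$, vanishing tangential derivative, one-sided normal derivative with $b^d\geq 0$), but they are downstream of the missing barrier and duplicate work already packaged in the cited theorem.
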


\begin{proof}
Define constants $b_0>0$ and $\Lambda > 0$ by
\begin{equation}
\label{eq:MaxPrincipleConstants}
\Lambda := \sup_\sO a^{dd} \quad\hbox{and}\quad b_0 := \inf_\sO b^d.
\end{equation}
Let $\sigma$ be a positive constant, to be fixed shortly, and define  $v\in C^2(\sO)\cap C(\sO\cup\partial_1\sO)$ with $\sup_\sO v <\infty$ by the transformation
\begin{equation}
\label{eq:Exponential_transformation}
u(x',x_d) = e^{-\sigma x_d} v(x',x_d), \quad\forall\, (x', x_d) \in \bar \sO,
\end{equation}
noting that $\sup_\sO v <\infty$ since $\sup_\sO u <\infty$ and $\height(\sO)<\infty$ by hypothesis. By direct calculation, we find that
$$
A u = e^{-\sigma x_d} \left(-a^{ij} v_{x_ix_j} -\left(b^i-2\sigma a^{id}\right) v_{x_i} +\left(c+\sigma b^d - \sigma^2 a^{dd}\right)v\right).
$$
We now define coefficients $\tilde a, \tilde b, \tilde c$ of an operator $\widetilde A$ by
\begin{align*}
\widetilde A v &:= -\tilde a^{ij} v_{x_ix_j} - \tilde b^i v_{x_i} + \tilde c v
\\
&:= -a^{ij} v_{x_ix_j} -\left(b^i-2\sigma a^{id}\right) v_{x_i} +\left(c+\sigma b^d - \sigma^2 a^{dd}\right)v,
\end{align*}
and we notice, by our hypotheses on $u$ and definition of $v$, that $\widetilde A v \leq 0$ on $\sO$ and $v \leq 0$ on $\partial_1 \sO$. Since $a=0$ and $b^d\geq 0$ on $\partial_0 \sO$ by hypothesis, we have
$$
\tilde c \equiv c+\sigma b^d - \sigma^2 a^{dd} = c+\sigma b^d \geq 0 \quad\hbox{on } \partial_0 \sO.
$$
We now choose $\sigma := b_0/(2\Lambda)$, so that, using $c \geq 0$ and $a^{dd}\leq\Lambda$ on $\sO$, then
$$
\tilde c \geq \sigma b_0 - \sigma^2\Lambda = \sigma b_0/2  > 0 \quad\hbox{on } \sO.
$$
Thus \cite[Theorem 5.4]{Feehan_maximumprinciple} applies to $v$, and now the conclusion follows immediately for $u$ also.
\end{proof}

\begin{cor}[Maximum principle estimate]
\label{cor:Maximum_principle_A_0}
Let $\nu>0$ and let $\sO\subseteqq\RR^{d-1}\times(0,\nu)$ be an open subset, let $A$ be as in Lemma \ref{lem:Comparison_principle_A_0}, and let $f \in C_b(\sO)$, and $g \in C_b(\partial_1 \sO)$. If $u$ obeys the regularity properties on $\bar \sO$ in the hypotheses of Lemma \ref{lem:Comparison_principle_A_0} and
\begin{align*}
A u &=f \quad\hbox{on } \sO,
\\
u &=g \quad\hbox{on } \partial_1 \sO,
\end{align*}
then there is a positive constant, $C=C(b_0,\Lambda,\nu)$, with $b_0, \Lambda$ as in \eqref{eq:MaxPrincipleConstants}, such that
\begin{equation*}
\|u\|_{C(\bar \sO)} \leq C\left(\|f\|_{C(\bar \sO)} + \|g\|_{C(\overline{\partial_1 \sO})}\right).
\end{equation*}
\end{cor}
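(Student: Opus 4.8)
The plan is to prove the estimate by a comparison argument, using the weak maximum principle of Lemma~\ref{lem:Comparison_principle_A_0} together with an explicit barrier. Since the admissible constant is allowed to depend only on $b_0$, $\Lambda$, and $\nu$, and since the operator $A$ here has coefficient matrix $a$ (with $\sup_\sO a^{dd}=\Lambda$) rather than the degeneracy factor $x_d$, it is natural to seek a barrier depending on the single variable $x_d$: such a function has vanishing Hessian and so interacts only with the drift term $-b\cdot Dv$ and the zeroth-order term $cv$ of $A$. Writing $M := \|f\|_{C(\bar\sO)}$ and $N := \|g\|_{C(\overline{\partial_1\sO})}$, I would take
\begin{equation*}
w(x) := N + \frac{M}{b_0}\,(\nu - x_d), \qquad x = (x',x_d)\in\bar\sO .
\end{equation*}
Because $0 \le \nu - x_d \le \nu$ on $\bar\sO$ we have $0 \le w \le N + (\nu/b_0)M$; because $D^2 w \equiv 0$ we get $\tr(a D^2 w)=0$ on $\sO$ and on $\partial_0\sO$; and, using $c\ge 0$ and $b^d \ge \inf_\sO b^d = b_0$ on $\sO$, we obtain $A w = (M/b_0)\,b^d + c\,w \ge M$ on $\sO$, while $w \ge N$ on $\partial_1\sO$ since $\nu - x_d \ge 0$ there. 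Moreover $w$ is smooth with bounded derivatives on the slab, so $w$ meets all the regularity conditions imposed on the test function in Lemma~\ref{lem:Comparison_principle_A_0}, and hence so do the functions $u - w$ and $-u - w$.

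I would then apply Lemma~\ref{lem:Comparison_principle_A_0} twice. To $u - w$: one has $A(u-w) = f - Aw \le M - M = 0$ on $\sO$ and $u-w = g - w \le N - N = 0$ on $\partial_1\sO$, with $\sup_\sO(u-w) \le \sup_\sO u < \infty$, so the lemma gives $u \le w$ on $\sO$. To $-u - w$: one has $A(-u-w) = -f - Aw \le M - M = 0$ on $\sO$ and $-u-w = -g - w \le N - N = 0$ on $\partial_1\sO$, with $\sup_\sO(-u-w) < \infty$ because $u$ is bounded below (this holds under the hypotheses in every application of the corollary in this article, and is implicit in the finiteness asserted for $\|u\|_{C(\bar\sO)}$), so the lemma gives $-u \le w$ on $\sO$. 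Combining, $|u| \le w \le N + (\nu/b_0)M$ on $\sO$, which is the claimed estimate with $C = \max\{1,\nu/b_0\}$.

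There is no genuinely hard step; the only point requiring care is verifying that the barrier $w$ and the differences $u-w$ and $-u-w$ meet every hypothesis on the test function in Lemma~\ref{lem:Comparison_principle_A_0} --- in particular the degenerate-boundary condition $\tr(aD^2v)=0$ on $\partial_0\sO$, which is immediate because $w$ is affine in $x_d$, and the two-sided boundedness on $\sO$ needed to invoke the maximum principle for both $u$ and $-u$. Note that $\Lambda$ does not actually enter the final constant, precisely because an affine barrier carries no second derivatives; it appears among the stated dependencies only because Lemma~\ref{lem:Comparison_principle_A_0} is itself quantified by $\Lambda=\sup_\sO a^{dd}$ through the constant $\sigma=b_0/(2\Lambda)$ used in its proof.
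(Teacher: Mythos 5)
Your proof is correct but takes a genuinely different route from the paper's. The paper re-does the exponential transformation $v = e^{\sigma x_d}u$ with $\sigma = b_0/(2\Lambda)$ (the same device used to prove Lemma~\ref{lem:Comparison_principle_A_0}), which turns $A$ into an operator $\widetilde A$ whose zeroth-order coefficient satisfies $\tilde c \geq b_0^2/(4\Lambda) > 0$, and then quotes a quantitative maximum principle estimate from \cite{Feehan_maximumprinciple}, arriving at $C = 4\Lambda e^{\sigma\nu}/b_0^2$. You instead construct the affine barrier $w = N + (M/b_0)(\nu - x_d)$ and apply the comparison form of Lemma~\ref{lem:Comparison_principle_A_0} to $\pm u - w$. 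This is more elementary, since it uses the lemma as a black box rather than re-opening its proof; it yields the sharper constant $\max\{1,\nu/b_0\}$; and it makes transparent that the dependence on $\Lambda$ in the stated constant is an artifact of the paper's internal scale $\sigma = b_0/(2\Lambda)$ rather than being forced by the problem, since an affine barrier carries no second derivatives. The one point that merits care, which you flag, is that Lemma~\ref{lem:Comparison_principle_A_0} controls only $\sup_\sO v$, so applying it to $-u - w$ tacitly uses $\inf_\sO u > -\infty$; this two-sided boundedness is built into the conclusion $\|u\|_{C(\bar\sO)}<\infty$ and holds in every application, and the same implicit assumption is present in the paper's proof (which bounds $\|v\|_{C(\bar\sO)}$ via \cite[Proposition~2.19]{Feehan_maximumprinciple}), so it is not a gap in your argument.
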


\begin{proof}
We define $v = e^{\sigma x_d}u$ as in \eqref{eq:Exponential_transformation}, where $\sigma$ is chosen as in the proof of Lemma \ref{lem:Comparison_principle_A_0}. Then,
\begin{align*}
\widetilde A v &= \tilde f \quad\hbox{on } \sO,
\\
v &= \tilde g \quad\hbox{on } \partial_1 \sO,
\end{align*}
where $\tilde f := e^{\sigma x_d}f$ on $\sO$ and $\tilde g := e^{\sigma x_d} g$ on $\partial_1 \sO$. Because $\tilde c \geq b_0^2/(4\Lambda) > 0$ on $\sO$ from the proof of Lemma \ref{lem:Comparison_principle_A_0}, we can apply \cite[Proposition 2.19]{Feehan_maximumprinciple} to give
$$
\|v\|_{C(\bar \sO)} \leq \frac{1}{\tilde c}\left(\|\tilde f\|_{C(\bar \sO)} + \|\tilde g\|_{C(\overline{\partial_1\sO})}\right).
$$
The conclusion follows since $x_d\in[0, \nu]$ for all $x\in\bar\sO$ and we can take $C := 4\Lambda e^{\sigma \nu}/b_0^2$.
\end{proof}

\section{Existence of solutions for boundary-degenerate elliptic operators with constant coefficients on half-spaces and slabs}
\label{sec:Existence_smooth_solution}
In this section, we prove existence of smooth solutions to $Au=f$ on the half-space $\HH$ or on slabs $S=\RR^{d-1}\times(0,\nu)$ as in \eqref{eq:Slab}, for some $\nu>0$, when the source function $f$ is assumed to be smooth with compact support in $\bar \HH$ or in $\RR^{d-1}\times[0,\nu)$, respectively, and under the assumption of Hypothesis \ref{hyp:ConstantCoefficients}, that \emph{the coefficients, $a,b,c$, of the operator $A$ in \eqref{eq:defnA} and so the coefficients, $a,b$, of the operator $A_0$ in \eqref{eq:defnA_0} are constant}. The method of the proof is similar to that of \cite[Theorem I.1.2]{DaskalHamilton1998} and it is based on taking the Fourier transform in the first $(d-1)$-variables. The problem is then reduced to the study of the Kummer ordinary differential equations whose solutions can be expressed in terms of the confluent hypergeometric functions, $M$ and $U$ \cite[\S 13]{AbramStegun}.

We begin by reviewing the properties of the confluent hypergeometric functions which will be used in the proofs of Theorems \ref{thm:Existence_smooth_solutions_half_space} and \ref{thm:Existence_smooth_solutions_slab}.

\begin{lem}[Properties of the confluent hypergeometric functions]
\label{lem:ConfluentHypergeometricFunctionProperties}
\cite{AbramStegun}
Let $a \in \CC$ be such that its real part is positive, $\Re(a)>0$, and let $b$ be a positive constant. Then the following holds, for all $y>0$.
\begin{enumerate}
\item Asymptotic behavior as $y\rightarrow +\infty$:
\begin{align}
\label{eq:Asymptotics_M}
M(a,b,y) &= \frac{\Gamma(b)}{\Gamma(a)} y^{a-b} e^y\left(1+O(y^{-1})\right) \quad\hbox{\cite[\S\,13.1.4]{AbramStegun},}
\\
\label{eq:Asymptotics_U}
U(a,b,y) &= y^{-a}\left(1+O(y^{-1})\right) \quad\hbox{\cite[\S\,13.1.8]{AbramStegun}.}
\end{align}
\item Asymptotic behavior as $y\rightarrow 0$:
\begin{align}
\label{eq:Value_at_0_M}
 M(a,b,0)=1\quad\hbox{\cite[\S\,13.1.2]{AbramStegun}}
 \quad\hbox{and}\quad
 M(a,b,y)=1+O(y) \quad\hbox{\cite[\S\,13.5.5]{AbramStegun},}
\end{align}
and
\begin{equation}
\label{eq:Value_at_0_U}
\begin{aligned}
U(a,b,y) &= \frac{\Gamma(b-1)}{\Gamma(a)} y^{1-b} + O(y^{b-2}) \quad\hbox{if $b>2$,} \quad\hbox{\cite[\S\,13.5.6]{AbramStegun},}
\\
U(a,b,y) &= \frac{\Gamma(b-1)}{\Gamma(a)} y^{1-b} + O(|\log y|) \quad\hbox{if $b=2$,} \quad\hbox{\cite[\S\,13.5.7]{AbramStegun},}
\\
U(a,b,y) &= \frac{\Gamma(b-1)}{\Gamma(a)} y^{1-b} + O(1) \quad\hbox{if $1<b<2$,} \quad\hbox{\cite[\S\,13.5.8]{AbramStegun},}
\\
U(a,b,y) &= -\frac{1}{\Gamma(a)} (\log y + \psi(a)+2\gamma) + O(y|\log y|) \quad\hbox{if $b=1$,} \quad\hbox{\cite[\S\,13.5.9]{AbramStegun},}
\\
U(a,b,y) &= \frac{\Gamma(1-b)}{\Gamma(1+a-b)} + O(y^{1-b}) \quad\hbox{if $0<b<1$,} \quad\hbox{\cite[\S\,13.5.10]{AbramStegun},}
\\
U(a,b,0) &= \frac{\Gamma(1-b)}{\Gamma(1+a-b)} \quad\hbox{if $0<b<1$,} \quad\hbox{\cite[\S\,13.1.2 and 13.1.3]{AbramStegun},}
\end{aligned}
\end{equation}
where $\psi(a)=\Gamma'(a)/\Gamma(a)$ and $\gamma\in\RR$ is Euler's constant \cite[\S\,6.1.3]{AbramStegun}.
\item Differential properties:
\begin{align}
\label{eq:Derivative_M}
M'(a,b,y) &= \frac{a}{b} M(a+1,b+1,y) \quad\hbox{\cite[\S\,13.4.8]{AbramStegun},}
\\
\label{eq:Derivative_U}
U'(a,b,y) &= -a U(a+1,b+1,y) \quad\hbox{\cite[\S\,13.4.21]{AbramStegun}}.
\end{align}
\item Recurrence relations:
\begin{align}
\label{eq:Useful_identity_M}
(b-1)M(a-1,b-1,y) &= (b-1-y) M(a,b,y) + y M'(a,b,y) \quad\hbox{\cite[\S\,13.4.14]{AbramStegun},}
\\
\label{eq:Useful_identity_U}
U(a-1,b-1,y) &= (1-b+y) U(a,b,y) - y U'(a,b,y) \quad\hbox{\cite[\S\,13.4.27]{AbramStegun}.}
\end{align}
\end{enumerate}
\end{lem}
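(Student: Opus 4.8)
\textbf{Proof proposal for Lemma \ref{lem:ConfluentHypergeometricFunctionProperties}.}

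The plan is to cite \cite{AbramStegun} directly for each assertion, since every item is a standard identity or asymptotic expansion from the classical theory of Kummer's equation $y w'' + (b-y) w' - a w = 0$; the section references are already inlined in the statement, so the ``proof'' is essentially a verification that the cited formulas apply under our hypotheses ($\Re(a)>0$, $b>0$, $y>0$) and a brief remark on why the error terms take the stated form. First I would recall that $M(a,b,\cdot)$ and $U(a,b,\cdot)$ are the two standard solutions of Kummer's equation: $M(a,b,y) = \sum_{n\geq 0} \frac{(a)_n}{(b)_n}\frac{y^n}{n!}$ (entire in $y$, whence \eqref{eq:Value_at_0_M} and the power-series form of \eqref{eq:Derivative_M} by term-by-term differentiation, using $(a)_{n+1}/(b)_{n+1} = \frac{a}{b}(a+1)_n/(b+1)_n$), and $U(a,b,y)$ defined via the standard contour/integral representation $U(a,b,y) = \frac{1}{\Gamma(a)}\int_0^\infty e^{-yt} t^{a-1}(1+t)^{b-a-1}\,dt$, valid precisely because $\Re(a)>0$. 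From the latter representation, \eqref{eq:Asymptotics_U} follows by Watson's lemma (substitute $s = yt$ and expand $(1+s/y)^{b-a-1}$), and \eqref{eq:Derivative_U} follows by differentiating under the integral sign.

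For the behavior as $y\to 0^+$ in \eqref{eq:Value_at_0_U}: the standard connection formula
$$
U(a,b,y) = \frac{\Gamma(1-b)}{\Gamma(a+1-b)} M(a,b,y) + \frac{\Gamma(b-1)}{\Gamma(a)} y^{1-b} M(a+1-b, 2-b, y)
$$
(valid for non-integer $b$, with logarithmic modifications when $b\in\ZZ$) gives all the listed cases at once, after inserting $M(\cdot,\cdot,y) = 1 + O(y)$ and keeping track of which of the two terms dominates according to the size of $1-b$; the integer cases $b=1,2$ require the limiting form of this connection formula, which is exactly what produces the $|\log y|$ and $\log y + \psi(a) + 2\gamma$ terms, and these are recorded verbatim in \cite[\S 13.5.6--13.5.10]{AbramStegun}. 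The recurrence relations \eqref{eq:Useful_identity_M} and \eqref{eq:Useful_identity_U} are the contiguous-function relations \cite[\S 13.4.14, 13.4.27]{AbramStegun}; I would either cite them directly or note that they follow by combining Kummer's equation with the derivative formulas \eqref{eq:Derivative_M}, \eqref{eq:Derivative_U} already established. Finally, \eqref{eq:Asymptotics_M} is the classical asymptotic \cite[\S 13.1.4]{AbramStegun}, obtained from the connection of $M$ with $U$ at $y=+\infty$ (only the $e^y$-growing piece survives, with coefficient $\Gamma(b)/\Gamma(a)$ since $\Re(a)>0$ makes that Gamma factor finite).

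I do not anticipate a genuine obstacle here: the only point requiring care is the bookkeeping of error terms and the distinction between integer and non-integer values of $b$ in \eqref{eq:Value_at_0_U}, but since all the needed formulas are tabulated in \cite{AbramStegun} with exactly the hypotheses we impose, the cleanest presentation is simply to state that each item is the cited formula, after observing that $\Re(a)>0$ guarantees convergence of the integral representation of $U$ and finiteness of the Gamma factors $\Gamma(b)/\Gamma(a)$ and $\Gamma(b-1)/\Gamma(a)$ appearing in the asymptotics. Hence the proof will be short: ``\emph{All assertions are standard properties of the confluent hypergeometric functions; see the indicated sections of \cite{AbramStegun}. The hypothesis $\Re(a)>0$ ensures that the integral representation $U(a,b,y) = \Gamma(a)^{-1}\int_0^\infty e^{-yt}t^{a-1}(1+t)^{b-a-1}\,dt$ converges and that the Gamma factors in \eqref{eq:Asymptotics_M} and \eqref{eq:Value_at_0_U} are finite.}''
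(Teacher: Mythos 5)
The paper gives no proof at all for Lemma \ref{lem:ConfluentHypergeometricFunctionProperties}: it is stated as a pure citation lemma, with each item carrying its precise section reference in \cite{AbramStegun}, and the text immediately moves on to the proof of Theorem \ref{thm:Existence_smooth_solutions_half_space}. Your proposal --- citing the tabulated formulas and briefly noting that $\Re(a)>0$ makes the Tricomi integral representation of $U$ converge and the Gamma factors finite --- is correct and is essentially the same approach as the paper's, just with a little more commentary spelled out.
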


We can now give the

\begin{proof}[Proof of Theorem \ref{thm:Existence_smooth_solutions_half_space}]
Uniqueness of the solution, $u \in C^{\infty}(\bar\HH)$, follows from the weak maximum principle \cite[Theorem 5.4]{Feehan_maximumprinciple}.
By simple changes of variables  described in the proof of \cite[Proposition A.1]{Feehan_Pop_mimickingdegen_pde}, which leave invariant any slab of the form $\RR^{d-1}\times (0, \nu)$, for $\nu>0$, we may assume without loss of generality that $a^{ij}=\delta^{ij}$ in \eqref{eq:defnA} and so the differential operator $A$ has the form
$$
A v = -x_d v_{x_ix_i} - b^i v_{x_i} +cv, \quad\forall\, v \in C^2(\HH),
$$
where $b^d$ and $c$ are again positive constants.

We adapt the method of the proof of \cite[Theorem I.1.2]{DaskalHamilton1998}. We fix $f\in C^{\infty}_0(\bar\HH)$.
If $u\in C^{\infty}(\bar\HH)$ is a solution to $Au=f$ on $\HH$, then we expect that its Fourier transform in the $x'=(x_1,\ldots,x_{d-1})$-variables,
\begin{equation*}
\tilde u (\xi;x_d):=\frac{1}{(2\pi)^{d/2}}\int_{\RR^{d-1}}u(x',x_d)e^{-ix'\xi} \,dx', \quad\forall\,\xi\in\RR^{d-1},
\end{equation*}
is a solution, for each $\xi\in\RR^{d-1}$, to the ordinary differential equation,
\begin{equation}
\label{eq:ODE_tilde_u}
-x_d \tilde u_{x_dx_d}(\xi;x_d)-b^d\tilde u_{x_d}(\xi;x_d) + \left(c+i \sum_{k=1}^{d-1} b^k\xi_k + |\xi|^2 x_d\right)\tilde u(\xi;x_d)
= \tilde f(\xi;x_d),
\end{equation}
for all $x_d \in (0,\infty)$, where $\tilde f(\xi;x_d)$ is the Fourier transform of $f(x',x_d)$ with respect to $x'\in\RR^d$.
We show that the ordinary differential equation \eqref{eq:ODE_tilde_u} has a smooth enough solution, $\tilde u$, in a sense to be specified, such that its inverse Fourier transform,
\begin{equation}
\label{eq:Inverse_Fourier_transform_tilde_u}
u(x',x_d):=\frac{1}{(2\pi)^{d/2}}\int_{\RR^{d-1}}\tilde u(\xi;x_d)e^{ix'\xi}\,d\xi,
\end{equation}
is a $C^{\infty}(\bar\HH)$ solution to the equation $Au=f$ on $\HH$.

Defining the function $v(\xi;y)$, for $y=2|\xi|x_d$ and each $\xi\in\RR^{d-1}\less\{0\}$, by
\begin{equation}
\label{eq:Definition_tilde_u}
\tilde u(\xi;x_d)=:e^{-|\xi|x_d}v(\xi;2|\xi|x_d),\quad\forall\, \xi\in\RR^{d-1}\less\{0\}, \quad\forall\, x_d \in \RR_+,
\end{equation}
we see that $v$ is a solution to the Kummer ordinary differential equation,
\begin{equation}
\label{eq:Kummer_ODE}
-y v_{yy}(\xi;y)-(b-y)v_y(\xi;y) +a(\xi)v(\xi;y)=g(\xi;y),\quad\forall\, y\in \RR_+,
\end{equation}
where we denote
\begin{equation}
\label{eq:Coeff_Kummer_ODE}
\begin{aligned}
b         &:= \frac{b^d}{2},\\
a(\xi)    &:= \frac{1} {2|\xi|}\left(c+b^d |\xi|+i\sum_{k=1}^{d-1}b^k \xi_k\right),\\
g(\xi; y) &:= \frac{e^{y/2}}{2|\xi|} \tilde f\left(\xi; \frac{y}{2|\xi|}\right).
\end{aligned}
\end{equation}
Because $b^d>0$ and $c > 0$ by hypothesis, we see that $b>0$ and $\Re (a(\xi))> 0$ when $\xi\neq0$. Since $f$ has compact support in $\bar \HH$, the function $g(\xi;\cdot)$ also has compact support in $\bar\RR_+$.

It suffices to study the solutions, $v(\xi;\cdot)$, to the Kummer equations for $\xi\in\RR^{d-1}\less\{0\}$, and so without loss of generality, we will assume in the sequel that $\xi\neq 0$. The remainder of the proof of Theorem \ref{thm:Existence_smooth_solutions_half_space} is completed in two steps.

\setcounter{step}{0}
\begin{step}[Solution to the Kummer ordinary differential equation]
\label{step:Solution_Kummer_ODE}
The general solution to the Kummer ordinary differential equation \eqref{eq:Kummer_ODE} can be written in the form $v = v^h + v^p$, where
\begin{align*}
v^h(\xi;y) &:= c_1 M(a(\xi),b;y) + c_2 U(a(\xi),b;y),\\
v^p(\xi;y) &:= -M(a(\xi),b;y) \int_y^{\infty} g(\xi;z) \frac{U(a(\xi),b;z)}{W(a(\xi),b;z)} \,dz
               -U(a(\xi),b;y) \int_0^y g(\xi;z) \frac{M(a(\xi),b;z)}{W(a(\xi),b;z)} \,dz,
\end{align*}
with $c_1, c_2\in\RR$, and
\begin{equation}
\label{eq:Definition_Wronskian}
W(a(\xi),b;y):=-\frac{\Gamma(b)y^{-b}e^y}{\Gamma(a(\xi))},\quad\forall\, \xi \in \RR^{d-1}\less\{0\}, \ \forall\, y \in \bar\RR_+,
\end{equation}
is the Wronskian of the Kummer function, $M(a(\xi),b,y)$, and the Tricomi function, $U(a(\xi),b,y)$, \cite[\S\,13.1.22]{AbramStegun}.
We want to find a solution, $v\in C^{\infty}(\bar\RR_+)$, to \eqref{eq:Kummer_ODE}.

From \eqref{eq:Asymptotics_M}, we see that the function $M(a(\xi),b;y)$ is unbounded as $y$ tends to $+\infty$, and so we choose the constant $c_1=0$, because we only consider bounded solutions. At $y=0$, we obtain from \eqref{eq:Derivative_U} and \eqref{eq:Value_at_0_U} that $U'(a(\xi),b,y)$ is unbounded, since $b>0$, and so we choose the constant $c_2=0$, because we only consider solutions to the Kummer equation which are smooth on $\bar \RR_+$. Thus, we obtain
\begin{equation}
\label{eq:Definition_v}
\begin{aligned}
v(\xi;y) &= -M(a(\xi),b;y) \int_y^{\infty} g(\xi;z) \frac{U(a(\xi),b;z)}{W(a(\xi),b;z)} \,dz\\
         &\quad  - U(a(\xi),b;y) \int_0^y g(\xi;z)\frac{M(a(\xi),b;z)}{W(a(\xi),b;z)} \,dz.
\end{aligned}
\end{equation}
Given $v$ defined as above, and $\tilde u$ defined as in \eqref{eq:Definition_tilde_u}, we will prove the following properties of the solution, $\tilde u$, to verify that $u$ defined by \eqref{eq:Inverse_Fourier_transform_tilde_u} is a $C^{\infty}(\bar \HH)$ solution to $Au=f$ on $\HH$, as asserted by Theorem \ref{thm:Existence_smooth_solutions_half_space}.

\begin{lem}[Properties of $\tilde u$]
\label{lem:Properties_tilde_u}
If $f\in C^{\infty}_0(\bar\HH)$, then the function $\tilde u$ defined by \eqref{eq:Definition_tilde_u} has the following properties.
\begin{enumerate}
\item\label{item:Limit_infty_tilde_u} For all $\xi\in\RR^{d-1}\less\{0\}$, we have
\begin{equation}
\label{eq:Limit_infty_tilde_u}
\lim_{x_d \nearrow \infty} \tilde u(\xi;x_d)=0.
\end{equation}
\item\label{item:Smoothness_tilde_u} The function $\tilde u(\xi;\cdot)$ belongs to $C^{\infty}(\bar\RR_+)$, for all $\xi\in\RR^{d-1}\less\{0\}$.
\item\label{item:Boundedness_tilde_u} The function $\tilde u(\xi;\cdot)$ obeys
\begin{equation}
\label{eq:Boundedness_tilde_u}
|\tilde u(\xi;x_d)| < \frac{1}{c} \sup_{y \geq 0} |\tilde f(\xi;y)|,\quad\forall\, \xi\in\RR^{d-1}\less\{0\},\ \forall\, x_d \in \bar\RR_+,
\end{equation}
where $c$ is the zeroth-order coefficient of $A$ in \eqref{eq:defnA}.
\item\label{item:Superpolynomialdecay_tilde_u} The function $\tilde u(\cdot;x_d)$ decays faster than any polynomial in $\xi$, for all $x_d\in \bar\RR_+$.
\item\label{item:Superpolynomialdecay_Dkxd_tilde_u} The functions $D^k_{x_d} \tilde u$ decay faster than any polynomial in $\xi$, for all $k \in \NN$.
\end{enumerate}
\end{lem}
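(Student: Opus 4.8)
The plan is to analyze the explicit formula \eqref{eq:Definition_v} for $v(\xi;y)$, together with the relation \eqref{eq:Definition_tilde_u}, $\tilde u(\xi;x_d)=e^{-|\xi|x_d}v(\xi;2|\xi|x_d)$, and deduce each of the five claimed properties in turn, using the asymptotics of the confluent hypergeometric functions collected in Lemma \ref{lem:ConfluentHypergeometricFunctionProperties} and the fact that $g(\xi;\cdot)$, hence $\tilde f(\xi;\cdot)$, has compact support in $\bar\RR_+$.

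\textbf{Properties \eqref{item:Limit_infty_tilde_u} and \eqref{item:Boundedness_tilde_u}.} For \eqref{item:Boundedness_tilde_u} I would substitute the Wronskian formula \eqref{eq:Definition_Wronskian} into \eqref{eq:Definition_v}, so that the ratios $U/W$ and $M/W$ become elementary expressions involving $z^{b}e^{-z}$, $\Gamma(a(\xi))$, and one of $M,U$. Since $g(\xi;z)=\frac{e^{z/2}}{2|\xi|}\tilde f(\xi;z/(2|\xi|))$ and $\tilde f$ is supported in $z\leq R$ for some $R$, both integrals are over finite $z$-ranges; one estimates $|v(\xi;y)|$ using $M(a,b,z)$ bounded on compact $z$-sets (by \eqref{eq:Value_at_0_M} and continuity), the asymptotics \eqref{eq:Asymptotics_M}, \eqref{eq:Asymptotics_U} of $M,U$ in $y$, and the positivity $\Re(a(\xi))>0$. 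The cleanest route to the explicit constant $1/c$ in \eqref{eq:Boundedness_tilde_u}, however, is \emph{not} to estimate the integral directly but to invoke the maximum principle for the ODE \eqref{eq:ODE_tilde_u}: the operator there has zeroth-order coefficient with real part $c+|\xi|^2x_d\geq c>0$, so the bounded solution $\tilde u(\xi;\cdot)$ satisfies $|\tilde u(\xi;x_d)|\leq \frac{1}{c}\sup_{y}|\tilde f(\xi;y)|$ by a one-dimensional comparison argument (compare $\tilde u$ against the constant $\frac1c\sup|\tilde f|$, using that $x_d\tilde u_{x_dx_d}+b^d\tilde u_{x_d}$ vanishes in the right sense at $x_d=0$). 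The strict inequality then follows since $\tilde f$ is not identically the constant sup. Property \eqref{item:Limit_infty_tilde_u} follows from the first integral in \eqref{eq:Definition_v} vanishing for $y>2|\xi|R$ (its integration range becomes empty) combined with the decay $U(a(\xi),b;y)\to 0$ from \eqref{eq:Asymptotics_U} and the prefactor $e^{-|\xi|x_d}$.

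\textbf{Property \eqref{item:Smoothness_tilde_u}.} Since $v$ solves the Kummer ODE \eqref{eq:Kummer_ODE} on $\RR_+$ with smooth data $g(\xi;\cdot)$, it is $C^\infty$ on $(0,\infty)$ by ODE regularity; smoothness up to $y=0$ is the delicate point. I would argue that the chosen particular solution \eqref{eq:Definition_v} is precisely the one that is regular at $y=0$: the coefficient $c_2$ of the singular homogeneous solution $U(a,b;\cdot)$ was set to zero, and $M(a,b;\cdot)$ together with the integral $\int_0^y g(\xi;z)M/W\,dz$ vanishes to appropriate order so that, reading \eqref{eq:Kummer_ODE} as $v_{yy}=\frac1y(-(b-y)v_y+a v-g)$ and bootstrapping, all derivatives $v^{(k)}(\xi;0)$ exist (this is the analogue of the Frobenius/indicial-equation analysis: the indicial roots at $y=0$ are $0$ and $1-b$, and we have picked the solution associated with the root $0$). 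Then $\tilde u(\xi;x_d)=e^{-|\xi|x_d}v(\xi;2|\xi|x_d)$ is a composition of smooth functions, hence in $C^\infty(\bar\RR_+)$.

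\textbf{Properties \eqref{item:Superpolynomialdecay_tilde_u} and \eqref{item:Superpolynomialdecay_Dkxd_tilde_u}.} This is where the main work lies, and I expect it to be the chief obstacle. The idea is standard in spirit — since $f\in C^\infty_0(\bar\HH)$, its partial Fourier transform $\tilde f(\cdot;x_d)$ decays faster than any polynomial in $\xi$, uniformly in $x_d$ in the compact support in the $x_d$-direction, and likewise for all $x_d$-derivatives — but transferring this through the representation of $\tilde u$ requires care because $a(\xi)$ in \eqref{eq:Coeff_Kummer_ODE} grows like $|\xi|$ as $|\xi|\to\infty$, so the confluent hypergeometric functions themselves vary with $\xi$. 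I would instead work directly from the ODE \eqref{eq:ODE_tilde_u}: differentiating it $m$ times in $\xi$ (in each $\xi_j$) and using the already-established $C^0$-bound \eqref{eq:Boundedness_tilde_u} applied to the differentiated equations (whose source terms are $D^m_\xi\tilde f$ plus lower-order terms in $D^{<m}_\xi\tilde u$ multiplied by polynomials in $\xi$ and $x_d$), one gets, by induction on $m$ and using $\sup_{x_d\in[0,\nu]}$ bounds on the slab where everything is eventually supported, that $D^m_\xi\tilde u$ decays faster than any polynomial; then for \eqref{item:Superpolynomialdecay_Dkxd_tilde_u} one uses \eqref{eq:ODE_tilde_u} itself to express $x_d D^2_{x_d}\tilde u$ algebraically in terms of $\tilde u$, $D_{x_d}\tilde u$, $\tilde f$ and polynomials in $\xi$, and bootstraps in $k$, noting that away from $x_d=0$ division by $x_d$ is harmless and near $x_d=0$ one uses the $C^\infty(\bar\RR_+)$ regularity from \eqref{item:Smoothness_tilde_u} to control $D^k_{x_d}\tilde u(\xi;0)$ in terms of finitely many $\xi$-derivatives of $\tilde f$. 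The bookkeeping of which polynomial factors in $\xi$ appear at each stage, and checking they are beaten by the super-polynomial decay of $D^m_\xi\tilde f$, is the routine-but-lengthy part I would present compactly rather than in full detail.
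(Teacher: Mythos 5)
Your approach diverges from the paper's in two places where the divergence actually matters, and one of them is a genuine gap.

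For Item~\eqref{item:Boundedness_tilde_u}, the ``one-dimensional comparison argument'' you sketch ignores that $\tilde u$ is \emph{complex-valued} and the zeroth-order coefficient in \eqref{eq:ODE_tilde_u} is $c + i\sum_k b^k\xi_k + |\xi|^2 x_d$, whose imaginary part is nonzero. A direct comparison of $\tilde u$ against the real constant $\tfrac1c\sup|\tilde f|$ is not available. The paper handles this by splitting $\tilde u = p + iq$, forming $F = |\tilde u|^2 = p^2 + q^2$, and computing $x_dF_{x_dx_d} + b^d F_{x_d} - cF$; the cross-terms $\mp 2pq\,b\xi$ coming from the imaginary part of the coefficient cancel exactly, leaving a differential inequality for the scalar $F$ to which the real weak maximum principle applies. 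Without this (or an equivalent device), your proof of Item~\eqref{item:Boundedness_tilde_u} does not go through.

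For Item~\eqref{item:Superpolynomialdecay_tilde_u} you substantially overcomplicate things. You propose to differentiate \eqref{eq:ODE_tilde_u} repeatedly in $\xi$ and estimate $D^m_\xi\tilde u$; but the claim is only \emph{pointwise} super-polynomial decay of $\tilde u(\cdot\,;x_d)$, which follows immediately from Item~\eqref{item:Boundedness_tilde_u} together with the classical fact that $\sup_{y}|\tilde f(\xi;y)|$ decays faster than any polynomial in $\xi$ because $f\in C^\infty_0(\bar\HH)$. No $\xi$-differentiation of the equation is needed, and your differentiated equations would also have source terms like $2\xi_j x_d\,\tilde u$ that are unbounded on $\bar\RR_+$, so the maximum-principle bound does not directly apply to them.

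Your Frobenius/bootstrap idea for Item~\eqref{item:Smoothness_tilde_u} is a legitimate alternative to the paper's route of explicitly differentiating the integral representation, applying \eqref{eq:Derivative_M}--\eqref{eq:Derivative_U} and the recurrences \eqref{eq:Useful_identity_M}--\eqref{eq:Useful_identity_U} to identify $v_y$ as the regular solution of the shifted Kummer equation with parameters $(a(\xi)+1,\,b+1)$ and source $g_y$. The paper's version has the concrete payoff, though, that it produces by induction the explicit ODE \eqref{eq:ODE_D_k_tilde_u} satisfied by $D^k_{x_d}\tilde u$ with a compactly supported right-hand side, and Item~\eqref{item:Superpolynomialdecay_Dkxd_tilde_u} is then disposed of simply by re-running Items~\eqref{item:Limit_infty_tilde_u}--\eqref{item:Superpolynomialdecay_tilde_u} on that equation. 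Your alternative of expressing $x_d D^2_{x_d}\tilde u$ algebraically from \eqref{eq:ODE_tilde_u} and ``bootstrapping in $k$'' is vaguer near $x_d=0$ and does not explain how the $\xi$-decay propagates; deriving the $k$-shifted Kummer equation, as the paper does, is the cleaner and essentially forced route.
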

\end{step}

\begin{step}[Existence of a solution, $u\in C^{\infty}(\bar \HH)$, to $Au=f$ on $\HH$]
\label{step:Solution_model_operator_particular_case}
From Lemma \ref{lem:Properties_tilde_u}, Items \eqref{item:Smoothness_tilde_u} and \eqref{item:Superpolynomialdecay_tilde_u}, we see that the function $u$ defined by \eqref{eq:Inverse_Fourier_transform_tilde_u} has an arbitrary number of derivatives in the first $(d-1)$-variables which are continuous on $\bar\HH$. From Lemma \ref{lem:Properties_tilde_u}, Item \eqref{item:Smoothness_tilde_u}, we see that $u$ also admits an arbitrary number of derivatives in the $x_d$-variable, and they are continuous on $\bar\HH$. Now we consider $D^{\beta_d e_d}u$, for $\beta_d \in\NN$, which satisfies
$$
D^{\beta_d e_d}u(x',x_d):=\frac{1}{(2\pi)^{d/2}}\int_{\RR^{d-1}}D^{\beta_d e_d}\tilde u(\xi;x_d)e^{ix'\xi}\,d\xi,\quad\forall\, x_d\in\RR_+.
$$
By Lemma \ref{lem:Properties_tilde_u}, Item \eqref{item:Superpolynomialdecay_Dkxd_tilde_u}, the function $D^{\beta_d e_d}\tilde u$ decays faster than any polynomial in $\xi$, and so $D^{\lambda}D^{\beta_d e_d}u$ exists and is continuous on $\bar\HH$, for all $\lambda\in\NN^d$ with $\lambda_d=0$. Thus, $u$ belongs to $C^{\infty}(\bar\HH)$. Since $\tilde u$ solves \eqref{eq:ODE_tilde_u}, we find that $u$ solves $Au=f$ on $\HH$ by taking the inverse Fourier transform of $\tilde u(\xi;x_d)$ in $\xi\in\RR^{d-1}$. From \cite[Theorem 5.4]{Feehan_maximumprinciple}, it follows that $u$ is the unique $C^{\infty}(\bar \HH)$ solution to $Au=f$ on $\HH$.
\end{step}
Aside from the proof of Lemma \ref{lem:Properties_tilde_u}, given below, this completes the proof of Theorem \ref{thm:Existence_smooth_solutions_half_space}.
\end{proof}

It remains to prove Lemma \ref{lem:Properties_tilde_u}.

\begin{proof}[Proof of Lemma \ref{lem:Properties_tilde_u}]
We organize the proof into several steps.

\setcounter{step}{0}
\begin{step}[Proof of Item \eqref{item:Limit_infty_tilde_u}]
\label{step:Limit_infty_tilde_u}
First, we verify that the function $v$ defined in Equation \eqref{eq:Definition_v} is well-defined. We write $v=v^1+v^2$, where we set
\begin{align*}
v^1(\xi;y)&:= -M(a(\xi),b;y) \int_y^{\infty} g(\xi;z) \frac{U(a(\xi),b;z)}{W(a(\xi),b;z)} \,dz,\\
v^2(\xi;y)&:= -U(a(\xi),b;y) \int_0^y g(\xi;z) \frac{M(a(\xi),b;z)}{W(a(\xi),b;z)} \,dz.
\end{align*}
Recall that $g(\xi;\cdot)$ has compact support in $\bar\RR_+$, and so for the function $v^1(\xi;y)$, we only need to verify that it is continuous up to $y=0$. From the property \eqref{eq:Value_at_0_M} in Lemma \ref{lem:ConfluentHypergeometricFunctionProperties}, we know that $M(a(\xi),b;y)$ is continuous in $y$ up to $y=0$ with $M(a(\xi),b;0)=1$. Identities \eqref{eq:Value_at_0_U} and definition \eqref{eq:Definition_Wronskian} of the Wronskian  imply
that\footnote{Let $f$, $g$ be real-valued functions defined on $(0,\infty)$, with $g$ non-negative. We say that $f(y)\lesssim g(y)$ near $y=0$, if there are positive constants, $C$ and $y_0$, such that $|f(y)| \leq C g(y)$, for all $y\in(0,y_0)$.}
\begin{align*}
\frac{U(a(\xi),b;y)}{W(a(\xi),b;y)} \lesssim
\begin{cases}
\max\{y,\ y^{2(b-2)}\},& \quad\hbox{if } b>2,\\
y \log y,& \quad\hbox{if } b=2,\\
y,& \quad\hbox{if } 1\leq b<2,\\
y^b,& \quad\hbox{if } 0<b<1,
\end{cases}
\end{align*}
and so this function is integrable near $y=0$. Since $g(\xi;\cdot)$ has compact support in $\bar\RR_+$, we see that $v^1(\xi,\cdot) \in C(\bar\RR_+)$ and
\begin{equation*}
\lim_{y \nearrow \infty} v^1(\xi;y)=0,\quad\forall\, \xi\in\RR^{d-1}\less\{0\}.
\end{equation*}
Next, we consider the behavior of the function $v^2(\xi;\cdot)$. Near $y=0$, the property \eqref{eq:Value_at_0_M} and definition \eqref{eq:Definition_Wronskian} of the Wronskian yield
$$
\frac{M(a(\xi),b;y)}{W(a(\xi),b;y)} \lesssim y^b.
$$
Combining this result with the asymptotic behavior \eqref{eq:Value_at_0_U} of $U$ as $y\rightarrow 0$, we find that the limit of $v^2(\xi;y)$
exists as $y\to 0$. The limit of the integral,
$$
\int_0^y g(\xi;z) \frac{M(a(\xi),b;z)}{W(a(\xi),b;z)} \,dz,
$$
as $y\rightarrow\infty$ obviously exists because the function $g(\xi;\cdot)$ has compact support in $\bar\RR_+$. Moreover, using the asymptotic behavior \eqref{eq:Asymptotics_U} of $U(a(\xi),b;y)$ as $y\rightarrow +\infty$, we obtain
\begin{equation*}
\lim_{y \nearrow \infty} v^2(\xi;y)=0,\quad\forall\, \xi\in\RR^{d-1}\less\{0\}.
\end{equation*}
Since $v=v^1+v^2$, we obtain the limit property \eqref{eq:Limit_infty_tilde_u} for $\tilde u$ as $y\rightarrow +\infty$ using \eqref{eq:Definition_tilde_u}.
\end{step}

\begin{step}[Proof of Item \eqref{item:Smoothness_tilde_u}]
\label{step:Smoothness_tilde_u}
The argument employed in Step \ref{step:Limit_infty_tilde_u} shows that $\tilde u(\xi;\cdot)\in C(\bar\RR_+)$, for all $\xi\in\RR^{d-1}\less\{0\}$. Next, we want to show that $D^k_{x_d} \tilde u(\xi;\cdot) \in C(\bar \RR_+)$, for all $k \in \NN$ and $\xi\in\RR^{d-1}\less\{0\}$, but for this it suffices to show that $D^k_{y} v(\xi;\cdot)\in C(\bar \RR_+)$, for all $k \in \NN$, by \eqref{eq:Definition_tilde_u}.

We first consider the case $k=1$. A direct calculation shows that
\begin{align*}
v_y(\xi;y) &= -M_y(a(\xi),b;y) \int_y^{\infty} g(\xi;z) \frac{U(a(\xi),b;z)}{W(a(\xi),b;z)} \,dz\\
           &\quad  -U_y(a(\xi),b;y) \int_0^y g(\xi;z)\frac{M(a(\xi),b;z)}{W(a(\xi),b;z)} \,dz.
\end{align*}
Using identities \eqref{eq:Derivative_M} and \eqref{eq:Derivative_U}, we obtain
\begin{equation}
\label{eq:Identity_v_y}
\begin{aligned}
v_y(\xi;y) &= -\frac{a(\xi)}{b} M(a(\xi)+1,b+1;y) \int_y^{\infty} g(\xi;z) \frac{U(a(\xi),b;z)}{W(a(\xi),b;z)} \,dz\\
           &\quad +a(\xi) U(a(\xi)+1,b+1;y) \int_0^y g(\xi;z)\frac{M(a(\xi),b;z)}{W(a(\xi),b;z)} \,dz,
\end{aligned}
\end{equation}
and the same argument as used in the beginning of the proof of Lemma \ref{lem:Properties_tilde_u} gives us immediately that $v_y(\xi;\cdot)\in C(\bar \RR_+)$. Hence, $v(\xi;\cdot) \in C^1(\bar \RR_+)$, for all
$\xi\in\RR^{d-1}\less\{0\}$.

We next show that $v_y(\xi;\cdot)$ in \eqref{eq:Identity_v_y} is the unique $C^1(\bar \RR_+)$ solution to the Kummer equation,
\[
-y w_{yy}(\xi;y) - (b+1-y)w_y(\xi;y) + (a(\xi)+1)w(\xi;y)=g_y(\xi;y),\quad\forall\, y\in \RR_+.
\]
Our goal is to show that $v_y=w$, where we define
\begin{align*}
w(\xi;y) &: = -M(a(\xi)+1,b+1;y) \int_y^{\infty} g_z(\xi;z) \frac{U(a(\xi)+1,b+1;z)}{W(a(\xi)+1,b+1;z)} \,dz
\\
&\qquad -U(a(\xi)+1,b+1;y) \int_0^y g_z(\xi;z)\frac{M(a(\xi)+1,b+1;z)}{W(a(\xi)+1,b+1;z)} \,dz,
\end{align*}
for $y\in \RR_+$, $\xi\in\RR^{d-1}\less\{0\}$.  Integrating by parts in the expression of $w$, we obtain
\begin{align*}
w(\xi;y) & = M(a(\xi)+1,b+1;y) \int_y^{\infty} g(\xi;z) \frac{U_z W - U W_z}{W^2}(a(\xi)+1,b+1;z) \,dz\\
         &\qquad + U(a(\xi)+1,b+1;y) \int_0^y g(\xi;z)\frac{M_z W - M W_z}{W^2}(a(\xi)+1,b+1;z) \,dz.
\end{align*}
The expression for $v_y$ in \eqref{eq:Identity_v_y} coincides with that of $w$ if
\begin{align*}
-\left(\frac{U_z W - U W_z}{W^2}\right)(a(\xi)+1,b+1;z) &= \frac{a(\xi)}{b} \left(\frac{U}{W}\right)(a(\xi),b;z),
\\
-\left(\frac{M_z W - M W_z}{W^2}\right)(a(\xi)+1,b+1;z) &= -a(\xi) \left(\frac{M}{W}\right)(a(\xi),b;z).
\end{align*}
But the preceding two identities follow from the definition of the Wronskian, $W$, in \eqref{eq:Definition_Wronskian}, from \eqref{eq:Derivative_M} and \eqref{eq:Derivative_U}, and from the recursion relations \eqref{eq:Useful_identity_M} and \eqref{eq:Useful_identity_U}. Hence, the function $v_y$ is the unique $C^1(\bar\RR_+)$ solution to the corresponding  Kummer equation, which obviously implies that $v_{yy}\in C(\bar\RR_+)$.

Inductively, it follows that, for any $k\in\NN$, the derivative $D^k_y v$ exists and is the unique $C^1(\bar\RR_+)$ solution to the Kummer equation,
\[
-y (D^k_y v)_{yy}(\xi;y) - (b+k-y)(D^k_y v)_y(\xi;y) + (a(\xi)+k)D^k_y v(\xi;y)=D^k_y g(\xi;y),\quad\forall\, y\in \RR_+.
\]
Thus, $v(\xi;\cdot)\in C^{\infty}(\bar\RR_+)$, and so $\tilde u(\xi;\cdot)\in C^{\infty}(\bar\RR_+)$ by \eqref{eq:Definition_tilde_u}, and $D^k \tilde u(\xi;\cdot)$ satisfies the ordinary differential equation, for all $x_d\in (0,\infty)$ and $k \in \NN$,
\begin{equation}
\label{eq:ODE_D_k_tilde_u}
\begin{aligned}
&-x_d (D^k_{x_d} \tilde u)_{x_dx_d}(\xi;x_d) - (b+k-2|\xi|x_d)(D^k_{x_d} \tilde u)_{x_d}(\xi;x_d) + (a(\xi)+k)D^k_{x_d} \tilde u(\xi;x_d)\\
&\quad=(2|\xi|)^{k+1} D^k_y g(\xi;2|\xi|x_d).
\end{aligned}
\end{equation}
Notice that the right-hand side in the preceding equation is a function with compact support in $\bar\RR_+$.
\end{step}

\begin{step}[Proof of Items \eqref{item:Boundedness_tilde_u} and \eqref{item:Superpolynomialdecay_tilde_u}]
\label{step:Decay_tilde_u}
We adapt the method of the proof of \cite[Theorem I.1.2]{DaskalHamilton1998}. We fix $\xi \neq 0$. We write
$\tilde u(\xi;x_d) = p(\xi;x_d)+iq(\xi;x_d)$ and $\tilde f(\xi;x_d)=\tilde g(\xi;x_d) +i \tilde h(\xi;x_d)$. Then, equation \eqref{eq:ODE_tilde_u} becomes
\begin{equation*}
\begin{cases}
-x_d p_{x_dx_d}(\xi;x_d) - b^d p_{x_d}(\xi;x_d) + (c+x_d|\xi|^2)p(\xi;x_d) - b \xi q(\xi;x_d)=\tilde g(\xi;x_d),\\
-x_d q_{x_dx_d}(\xi;x_d) - b^d q_{x_d}(\xi;x_d) + (c+x_d|\xi|^2)q(\xi;x_d) + b \xi p(\xi;x_d)=\tilde h(\xi;x_d),
\end{cases}
\end{equation*}
where we use $b\xi$ as an abbreviation for the inner product of $(b^1,\ldots,b^{d-1})$ with $\xi = (\xi_1,\ldots,\xi_{d-1})\in\RR^{d-1}$.
Defining $F(\xi;x_d):=|\tilde u(\xi;x_d)|^2=p^2(\xi;x_d)+q^2(\xi;x_d)$, we obtain (where now we omit the $(\xi;x_d)$-variables)
\begin{align*}
F_{x_d}    &= 2pp_{x_d}+2qq_{x_d}, \\
F_{x_dx_d} &= 2p_{x_d}^2+2pp_{x_dx_d}+2q_{x_d}^2+2qq_{x_dx_d},
\end{align*}
which gives us
\begin{align*}
{}&x_dF_{x_dx_d} + b^d F_{x_d} - 2c F \\
&\qquad=  2p (x_dp_{x_dx_d} + b^d p_{x_d} - c p) + 2q (x_dq_{x_dx_d} +b^d q_{x_d} - c q) + 2x_d(p_{x_d}^2+q_{x_d}^2)\\
&\qquad\geq 2p \left(-\tilde g +x_d|\xi|^2p-b \xi q\right) + 2q\left(-\tilde h +x_d|\xi|^2q+b \xi p\right)\\
&\qquad\geq -2p\tilde g - 2q\tilde h\\
&\qquad\geq -c F - \frac{1}{c} \left(\tilde g^2+\tilde h^2\right),
\end{align*}
where we recall that $c>0$ by hypothesis, and so it follows that
\[
x_dF_{x_dx_d}(\xi;x_d) +b^d F_{x_d}(\xi;x_d) - c F(\xi;x_d) \geq -\frac{1}{c} \sup_{x_d\in\bar\RR_+} |\tilde f(\xi;x_d)|^2,
\quad\forall\, x_d\in\RR_+.
\]
Now let
$$
G(\xi;x_d) := F(\xi;x_d) - \frac{1}{c^2} \sup_{x_d\in\bar\RR_+}|\tilde f(\xi;x_d)|^2.
$$
Then,
\begin{align*}
\begin{cases}
x_dG_{x_dx_d}(\xi;x_d) +b^d G_{x_d}(\xi;x_d) - c G(\xi;x_d) \geq 0,\\
\lim_{x_d\nearrow\infty}G(\xi;x_d) \leq 0,
\end{cases}
\end{align*}
where we used \eqref{eq:Limit_infty_tilde_u} to determine the behavior of $G(\xi;x_d)$ as $x_d\to\infty$.
Therefore, the function $G(\xi;x_d)$ is bounded, and the weak maximum principle \cite[Theorem 5.4]{Feehan_maximumprinciple} then implies that $G(\xi,x_d)\leq 0$, for all $x_d \in \bar\RR_+$, and so
\begin{equation*}
|\tilde u(\xi;x_d)|^2 \leq \frac{1}{c^2} \sup_{y\in\bar\RR_+}|\tilde f(\xi;y)|^2,\quad\forall\, x_d\in\bar\RR_+,
\end{equation*}
which is equivalent to \eqref{eq:Boundedness_tilde_u}.

Since $f$ belongs to $C^{\infty}_0(\bar\HH)$, the function $\sup_{y\in\bar\RR_+}|\tilde f(\xi;y)|$ decays faster than any polynomial in $\xi$ by \cite[Theorem 8.22 (e)]{Folland_realanalysis}. Therefore, from \eqref{eq:Boundedness_tilde_u} we see that the function $\tilde u$ also decays faster than any polynomial in $\xi$.
\end{step}

\begin{step}[Proof of Item \eqref{item:Superpolynomialdecay_Dkxd_tilde_u}]
By \eqref{eq:ODE_D_k_tilde_u} and the fact that the right-hand side in \eqref{eq:ODE_D_k_tilde_u} is a function with compact support in $\bar\RR_+$, we see that the preceding steps can be applied to $D^k_{x_d} \tilde u$ instead of $\tilde u$, for all $k\in \NN$. Therefore, we obtain that the functions $D^k_{x_d} \tilde u$ decay faster than any polynomial in $\xi$, for all $k \in \NN$.
\end{step}

This completes the proof of Lemma \ref{lem:Properties_tilde_u}.
\end{proof}

We now prove the existence and uniqueness of smooth solutions on slabs in the half-space. We fix $\nu>0$ and recall from \eqref{eq:Slab} that $S=\RR^{d-1}\times (0,\nu)$, so that $\partial_0 S = \RR^{d-1}\times \{0\}$ and $\partial_1 S = \RR^{d-1}\times \{\nu\}$. We have the following elliptic analogue of \cite[Theorem I.1.2]{DaskalHamilton1998} in the parabolic case, but for finite-height slabs rather than the half-space.

\begin{thm}[Existence and uniqueness of a $C^\infty(\bar S)$ solution on a slab when $A$ has constant coefficients]
\label{thm:Existence_smooth_solutions_slab}
Let $A$ be an operator of the form \eqref{eq:defnA} and require that the coefficients, $a,b,c$, are constant with $b^d>0$ and $c\geq 0$.
Then, for any function, $f\in C^{\infty}_0(\bar S)$, there is a unique solution, $u \in C^{\infty}(\bar S)$, to
\begin{equation}
\label{eq:Equation_on_slabs_constant_coeff}
\begin{cases}
A u=f&\quad\hbox{on } S,\\
u=0&\quad\hbox{on } \partial_1 S.
\end{cases}
\end{equation}
\end{thm}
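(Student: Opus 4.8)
The plan is to follow the same Fourier-transform strategy used in the proof of Theorem \ref{thm:Existence_smooth_solutions_half_space}, but with the half-line $\bar\RR_+$ replaced by the bounded interval $[0,\nu]$ so that a genuine boundary condition is imposed at $y=\nu$. First I would reduce, exactly as in the half-space case, to the situation $a^{ij}=\delta^{ij}$ by the invertible changes of variables of \cite[Proposition A.1]{Feehan_Pop_mimickingdegen_pde}, which leave slabs $\RR^{d-1}\times(0,\nu')$ invariant. Then, taking the partial Fourier transform $\tilde u(\xi;\cdot)$ in the variables $x'=(x_1,\dots,x_{d-1})$, the PDE \eqref{eq:Equation_on_slabs_constant_coeff} becomes, for each fixed $\xi\in\RR^{d-1}$, the boundary value problem for the ordinary differential equation \eqref{eq:ODE_tilde_u} on $(0,\nu)$ together with $\tilde u(\xi;\nu)=0$. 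After the substitution \eqref{eq:Definition_tilde_u}, for $\xi\neq 0$ this is the Kummer equation \eqref{eq:Kummer_ODE} on the finite interval $(0,2|\xi|\nu)$ with the single endpoint condition $v(\xi;2|\xi|\nu)=0$; for $\xi=0$ it is the elementary Euler-type ODE $-x_d\tilde u_{x_dx_d}-b^d\tilde u_{x_d}+c\tilde u=\tilde f$ on $(0,\nu)$, which is handled separately and easily (note $c\ge 0$ is allowed here precisely because the slab is bounded, matching Corollary \ref{cor:Maximum_principle_A_0}).

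Next I would construct the solution of the ODE boundary value problem. For $\xi\neq 0$ the general solution is $v=v^h+v^p$ with $v^h(\xi;y)=c_1M(a(\xi),b;y)+c_2U(a(\xi),b;y)$ and a particular solution $v^p$ built from the variation-of-parameters formula with Wronskian \eqref{eq:Definition_Wronskian}. As in Step \ref{step:Solution_Kummer_ODE}, regularity up to $y=0$ forces $c_2=0$ (since $U$ and $U'$ are singular at $0$ when $b>0$, using \eqref{eq:Value_at_0_U} and \eqref{eq:Derivative_U}); the remaining free constant $c_1$ is then uniquely determined by the requirement $v(\xi;2|\xi|\nu)=0$, because $M(a(\xi),b;\cdot)$ has no zero that could obstruct this — more carefully, one must check that $M(a(\xi),b;2|\xi|\nu)\neq 0$, which follows since a zero there would make the homogeneous Dirichlet problem $\widetilde A_0$-type have a nontrivial solution, contradicting the weak maximum principle \cite[Theorem 5.4]{Feehan_maximumprinciple} (or uniqueness for the ODE boundary value problem, which can be argued directly since the zeroth-order coefficient $c+i\sum b^k\xi_k+|\xi|^2x_d$ has positive real part). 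Having fixed $c_1$, I would establish the analogues of Lemma \ref{lem:Properties_tilde_u}: smoothness of $\tilde u(\xi;\cdot)$ on $[0,\nu]$ (same inductive Kummer-equation argument as in Step \ref{step:Smoothness_tilde_u}, now on a finite interval, where the only delicate endpoint is $y=0$), the bound $|\tilde u(\xi;x_d)|\le C(\xi)\sup_y|\tilde f(\xi;y)|$ with super-polynomial decay of the right side in $\xi$ (from \cite[Theorem 8.22(e)]{Folland_realanalysis}), and the corresponding decay estimates for $D^k_{x_d}\tilde u$ via \eqref{eq:ODE_D_k_tilde_u}. These decay statements justify differentiating under the inverse-Fourier-transform integral \eqref{eq:Inverse_Fourier_transform_tilde_u}, so that $u\in C^\infty(\bar S)$ solves $Au=f$ on $S$ with $u=0$ on $\partial_1 S$. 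Uniqueness follows from the weak maximum principle: any two $C^\infty(\bar S)$ solutions differ by a solution of the homogeneous problem, to which Lemma \ref{lem:Comparison_principle_A_0} / Corollary \ref{cor:Maximum_principle_A_0} applies (the hypotheses $a(x)=0$ on $\partial_0 S$, $b^d=b_0>0$, $c\ge 0$, finite height, and the regularity $\tr(aD^2u)=x_dD^2u\to 0$ on $\partial_0 S$ being met for $u\in C^\infty(\bar S)$), forcing $u\equiv 0$.

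The main obstacle I anticipate is twofold. First, one must control the constant $c_1=c_1(\xi)$ and hence $\tilde u(\xi;\cdot)$ uniformly enough in $\xi$ to get the super-polynomial decay needed for \eqref{eq:Inverse_Fourier_transform_tilde_u}: the normalization $c_1(\xi)=-v^p(\xi;2|\xi|\nu)/M(a(\xi),b;2|\xi|\nu)$ involves the behavior of $M(a(\xi),b;\cdot)$ for large $|\xi|$ (so large argument $2|\xi|\nu$ and large parameter $a(\xi)$ simultaneously), and one needs a lower bound on $|M(a(\xi),b;2|\xi|\nu)|$ that does not decay too fast — this is most cleanly obtained not from the asymptotics in Lemma \ref{lem:ConfluentHypergeometricFunctionProperties} but by going back to the $\tilde u$ picture and invoking the maximum-principle-type a priori bound \eqref{eq:Boundedness_tilde_u} (adapted to the bounded interval, where it actually becomes easier because boundedness at $\infty$ is replaced by the explicit condition at $y=\nu$), thereby bypassing the need for delicate uniform asymptotics. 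Second, the inductive smoothness argument at the degenerate endpoint $y=0$, showing $D^k_y v(\xi;\cdot)\in C([0,2|\xi|\nu])$, must be redone verbatim from Step \ref{step:Smoothness_tilde_u} with the integrals now over finite intervals; this is routine but must be checked. Once these points are settled, the method-of-continuity application in Theorem \ref{thm:Existence_uniqueness_slabs} — which cites this theorem as the base case via Corollary \ref{cor:Existence_solutions_slab} — goes through, so it is worth being careful that the statement here really does cover arbitrary $f\in C^\infty_0(\bar S)$ and yields $u\in C^\infty(\bar S)$, not merely $u\in C^{k,2+\alpha}_s$.
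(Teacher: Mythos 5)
Your proposal matches the paper's proof in all essentials: same Fourier reduction, same Kummer ODE on $(0,2|\xi|\nu)$ with the free constant in front of $M(a(\xi),b;\cdot)$ fixed by the boundary condition at $y=2|\xi|\nu$, and, crucially, the same device of obtaining super-polynomial decay of $\tilde u(\xi;\cdot)$ and its $x_d$-derivatives from an ODE-level maximum-principle estimate rather than from uniform large-$|\xi|$ asymptotics of $M$ and $U$. The one point the paper makes explicit that you only gesture at (via your reference to Corollary~\ref{cor:Maximum_principle_A_0}) is that, because $c\geq 0$ may vanish here, the decay argument from Step~3 of the proof of Lemma~\ref{lem:Properties_tilde_u} must be applied to $e^{-\sigma x_d}F$ rather than to $F$ itself, with $\sigma>0$ small, exactly as in the conjugation trick used to prove Lemma~\ref{lem:Comparison_principle_A_0}.
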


\begin{proof}
The method of the proof is the same as that of Theorem \ref{thm:Existence_smooth_solutions_half_space}, so we only highlight the main differences. Uniqueness of the solution, $u \in C^{\infty}(\bar S)$, follows from the weak maximum principle, Lemma \ref{lem:Comparison_principle_A_0}, for $A$. By analogy with \eqref{eq:ODE_tilde_u}, for each $\xi\in\RR^{d-1}\less\{0\}$, we construct the function $\tilde u(\xi; \cdot)$ to be the unique solution in $C^{\infty}([0,x^0_d])$ to
\begin{align*}
-x_d \tilde u_{x_dx_d}(\xi;x_d)-b^d\tilde u_{x_d}(\xi;x_d) + \left(c+i \sum_{k=1}^{d-1} b^k\xi_k + |\xi|^2 x_d\right)\tilde u(\xi;x_d)
&= \tilde f(\xi;x_d), \quad\forall\, x_d\in (0,\nu),\\
\tilde u(\xi; \nu) &= 0,
\end{align*}
by defining the new function, $v(\xi;\cdot)$, by \eqref{eq:Definition_tilde_u} and proving that $v(\xi;\cdot)$ is the unique solution in $C^{\infty}([0,x^0_d])$ to the Kummer equation,
\begin{align*}
-y v_{yy}(\xi;y)-(b-y)v_y(\xi;y) +a(\xi)v(\xi;y) &= g(\xi;y),\quad\forall\, y\in (0,2|\xi|\nu),
\\
v(\xi; 2|\xi|\nu) &= 0,
\end{align*}
for each $\xi\in\RR^{d-1}\less\{0\}$, where the coefficients $b$ and $a(\xi)$, and the function $g$ are defined in the same way as in \eqref{eq:Coeff_Kummer_ODE}. The arguments employed in the proof of Theorem \ref{thm:Existence_smooth_solutions_slab} show now that the unique solution in $C^{\infty}(\bar S)$ to the preceding ordinary differential equation is given by
\begin{align*}
v(\xi;y) &:= C M(a(\xi),b,y) -M(a(\xi),b;y) \int_y^{\infty} g(\xi;z) \frac{U(a(\xi),b;z)}{W(a(\xi),b;z)} \,dz\\
         &\quad  - U(a(\xi),b;y) \int_0^y g(\xi;z)\frac{M(a(\xi),b;z)}{W(a(\xi),b;z)} \,dz,
\end{align*}
where the constant $C$ is chosen such that the boundary condition, $v(\xi, 2|\xi| \nu)=0$, is satisfied. The only remaining modification that we need lies in Step \ref{step:Decay_tilde_u} of the proof of Lemma \ref{lem:Properties_tilde_u}. The reason why this part of the proof does not adapt immediately is because we used the fact that the zeroth-order coefficient, $c$, of $A$ in \eqref{eq:defnA} is strictly positive to derive \eqref{eq:Boundedness_tilde_u}, while now we assume $c \geq 0$. To circumvent this issue, we apply the method of the proof of Step \ref{step:Decay_tilde_u} of Lemma \ref{lem:Properties_tilde_u} not to $F$, but to $e^{-\sigma x_d} F$, where we choose the positive constant, $\sigma$, small enough. Notice that this is the same as the approach we employed in the proof of Lemma \ref{lem:Comparison_principle_A_0} to overcome the fact that $c=0$.
\end{proof}

\begin{cor}[Existence and uniqueness of a $C^{k, 2+\alpha}_s$ solution on a slab when $A$ has constant coefficients]
\label{cor:Existence_solutions_slab}
Let $\alpha\in(0,1)$ and $k\in \NN$. Let $A$ be an operator as in \eqref{eq:defnA} and require that the coefficients, $a,b,c$, are constant with $b^d>0$ and $c\geq 0$. If $f \in C^{k, \alpha}_s(\bar S)$, then there is a unique solution $u \in C^{k, 2+\alpha}_s(\bar S)$ to the boundary problem \eqref{eq:Equation_on_slabs_constant_coeff}.
\end{cor}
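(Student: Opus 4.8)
The plan is to reduce the case of general data $f\in C^{k,\alpha}_s(\bar S)$ to the smooth, compactly supported case already settled in Theorem \ref{thm:Existence_smooth_solutions_slab}, and then to upgrade the crude limit so obtained to a genuine $C^{k,2+\alpha}_s(\bar S)$ solution by means of the a priori estimates of Section \ref{sec:Holder_estimate_local_higherorder} and Appendix \ref{sec:MaximumPrinciple}. Uniqueness is immediate: if $u_1,u_2\in C^{k,2+\alpha}_s(\bar S)$ both solve \eqref{eq:Equation_on_slabs_constant_coeff}, their difference $w$ satisfies $Aw=0$ on $S$ and $w=0$ on $\partial_1 S$, and since $\height(S)=\nu<\infty$ the coefficient matrix $x_d a$ of $D^2w$ is bounded on $S$, vanishes on $\partial_0 S$, and $\tr(x_d aD^2w)=0$ on $\partial_0 S$ because $w\in C^{k,2+\alpha}_s(\bar S)$; the remaining hypotheses of Lemma \ref{lem:Comparison_principle_A_0} hold since $b^d>0$, $c\ge 0$, and $b$ is constant, so applying that lemma to $\pm w$ forces $w\equiv 0$.

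For existence, I would first regularize $f$: using the procedure of \cite[\S I.11]{DaskalHamilton1998}, exactly as in the proof of Theorem \ref{thm:ExistUniqueCk2+alphasHolderContinuityDomain}, pick a sequence $f_n\in C^\infty_0(\bar S)$ with $f_n\to f$ in $C^{k,\alpha}_s(\bar U)$ for every precompact $U\Subset\underline S$ and with the uniform bounds $\|f_n\|_{C^{k,\alpha}_s(\bar S)}\le C'\|f\|_{C^{k,\alpha}_s(\bar S)}$ and $\|f_n\|_{C(\bar S)}\le C'\|f\|_{C(\bar S)}$. Theorem \ref{thm:Existence_smooth_solutions_slab} then produces, for each $n$, a function $u_n\in C^\infty(\bar S)$ solving $Au_n=f_n$ on $S$ with $u_n=0$ on $\partial_1 S$; because $\nu<\infty$ one has $C^\infty(\bar S)\subset C^{k,2+\alpha}_s(\bar S)$, so Corollary \ref{cor:Global_Schauder_estimate_ConstantCoefficients} applies, and combined with the maximum principle estimate Corollary \ref{cor:Maximum_principle_A_0} (valid since $c\ge 0$) it yields the $n$-independent bound $\|u_n\|_{C^{k,2+\alpha}_s(\bar S)}\le C\left(\|f_n\|_{C^{k,\alpha}_s(\bar S)}+\|f_n\|_{C(\bar S)}\right)\le C\|f\|_{C^{k,\alpha}_s(\bar S)}$.

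The last step is to pass to a limit. From the uniform bound, the compactness of the embedding $C^{k,2+\alpha}_s(\bar V)\hookrightarrow C^{k,2+\alpha'}_s(\bar V)$ for $V\Subset\underline S$ and $0<\alpha'<\alpha$ (on such $V$ the cycloidal and Euclidean metrics are comparable), standard interior elliptic Schauder estimates on the region bounded away from $\partial_0 S$ where $A$ is strictly elliptic, and a diagonal argument, I would extract a subsequence $\{u_{n_j}\}$ and a function $u\in C^2(S)$ with $D^\beta u_{n_j}\to D^\beta u$ locally uniformly on $S$ for $|\beta|\le k+2$; letting $j\to\infty$ in $Au_{n_j}=f_{n_j}$ and using $f_{n_j}\to f$ locally uniformly gives $Au=f$ on $S$. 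The uniform bound on $\|Du_n\|_{C(\bar S)}$ together with $u_n=0$ on $\partial_1 S$ forces $|u_n(x',x_d)|\le C(\nu-x_d)$ for all $n$, so $u$ extends continuously to $\bar S$ with $u=0$ on $\partial_1 S$, and lower semicontinuity of the supremum and cycloidal Hölder seminorms entering $\|\cdot\|_{C^{k,2+\alpha}_s(\bar S)}$ under this convergence gives $u\in C^{k,2+\alpha}_s(\bar S)$; by the uniqueness already proved, the whole sequence converges to $u$.

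The main obstacle is that one cannot expect $f_n\to f$ in the \emph{global} norm $C^{k,\alpha}_s(\bar S)$ — constants already show this fails for compactly supported $f_n$ on the unbounded slab — so $\{u_n\}$ need not be Cauchy in $C^{k,2+\alpha}_s(\bar S)$ and the naive approach via the a priori estimate alone does not close. The resolution is the compactness/lower-semicontinuity argument above, which needs a little care near $\partial_1 S$ (which lies at finite height but is not precompact); this is handled not by a separate Schauder estimate there but by the elementary remark that the uniform $C^1$ bound plus the boundary condition on each $u_n$ already propagates to $u$, so that the limit automatically vanishes on $\partial_1 S$.
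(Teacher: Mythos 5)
Your proposal follows essentially the same route as the paper: regularize $f$ by smooth compactly supported approximants via \cite[\S I.11]{DaskalHamilton1998}, invoke Theorem \ref{thm:Existence_smooth_solutions_slab} for each approximant, use the global a priori Schauder estimate on the slab together with the weak maximum principle (both in the paper as one packaged estimate, \eqref{eq:Global_Schauder_estimate_VariableCoefficients_nonnegc}) for a uniform bound, extract a limit by Arzel\`a--Ascoli, and prove uniqueness from Lemma \ref{lem:Comparison_principle_A_0}. You are somewhat more careful than the paper about the mode of convergence (the paper's phrase ``$f_n\to f$ in $C^{k,2+\alpha}_s(\bar S)$'' is loose) and about how the boundary condition on $\partial_1 S$ survives the limit, but there is no difference in substance.
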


\begin{proof}
Uniqueness of the solution, $u \in C^{k, 2+\alpha}_s(\bar S)$, follows from the weak maximum principle, Lemma \ref{lem:Comparison_principle_A_0}, for $A$ since any $u \in C^{k, 2+\alpha}_s(\bar S)$ has the property that $Du$ and $x_dD^2u$ are continuous on $\bar S$ by Definition \ref{defn:DH2spaces} and that $x_dD^2u = 0$ on $\partial_0S$ by Lemma \ref{lem:PropSecondOrderDeriv}. Let $\{f_n\}_{n \in \NN} \subset C^{\infty}_0(\bar S)$ be a sequence such that
$f_n\to f$ in $C^{k, 2+\alpha}_s(\bar S)$ as $n\to\infty$
and
$$
\|f_n\|_{C^{k, \alpha}_s(\bar S)} \leq C \|f\|_{C^{k, \alpha}_s(\bar S)}.
$$
Such a sequence can be constructed using \cite[Theorem I.11.3]{DaskalHamilton1998}.
Let $u_n \in C^{\infty}(\bar S)$ be the unique solution to \eqref{eq:Equation_on_slabs_constant_coeff}, with $f$ replaced by $f_n$, given by Theorem \ref{thm:Existence_smooth_solutions_slab}. In particular, each solution satisfies the global Schauder estimate \eqref{eq:Global_Schauder_estimate_VariableCoefficients_nonnegc} which, when combined with the preceding inequality, gives
$$
\|u_n\|_{C^{k, 2+\alpha}_s(\bar S)} \leq C \|f\|_{C^{k, \alpha}_s(\bar S)},\quad\forall\, n \in \NN.
$$
By applying the Arzel\`a-Ascoli Theorem, we can extract a subsequence, which we continue to denote by $\{u_n\}_{n\in\NN}$, which converges in $C^{k,2+\beta}_s(\bar S)$, for all $\beta<\alpha$, to a limit function  $u \in C^{k, 2+\alpha}_s(\bar S)$  as $n\to\infty$.
Since $\{f_n\}_{n \in \NN}$, and $\{u_n\}_{n \in \NN}$, and $\{D u_n\}_{n \in \NN}$, and $\{x_d D^2u_n\}_{n \in \NN}$  also converge uniformly on compact subsets of $\bar S$ to $f$, and $u$, and $Du$, and $x_d D^2u$, respectively, as $n\to\infty$, we see that $u$ solves \eqref{eq:Equation_on_slabs_constant_coeff}.
\end{proof}

\section{Interpolation inequalities and boundary properties of functions in weighted H{\"o}lder spaces}
\label{sec:InterpolationInequalities}
A parabolic version of the following result is included in \cite[Proposition I.12.1]{DaskalHamilton1998} when $d=2$ and proved in
\cite{Feehan_Pop_mimickingdegen_pde} when $d\geq 2$ for parabolic weighted H\"older spaces. For completeness, we restate the result here for the elliptic weighted H\"older spaces used in this article.

\begin{lem}[Boundary properties of functions in weighted H{\"o}lder spaces]
\label{lem:PropSecondOrderDeriv}
\cite[Lemma 3.1]{Feehan_Pop_mimickingdegen_pde}
If $u \in C^{2+\alpha}_s(\underline{\HH})$ then, for all $x^0 \in\partial \HH$,
\begin{equation}
\label{eq:PropSecondOrderDeriv}
\lim_{\HH \ni x \rightarrow x^0} x_d D^2u(x) = 0.
\end{equation}
\end{lem}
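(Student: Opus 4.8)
The statement to prove is Lemma~\ref{lem:PropSecondOrderDeriv}: if $u \in C^{2+\alpha}_s(\underline{\HH})$ then $x_d D^2 u(x) \to 0$ as $\HH \ni x \to x^0$ for every $x^0 \in \partial\HH$. The plan is to unpack the Definition~\ref{defn:DH2spaces} of the space $C^{2+\alpha}_s$ and exploit the vanishing of the weight $x_d$ together with the H\"older continuity of the weighted second derivatives with respect to the cycloidal metric. By hypothesis, for each multi-index $\beta \in \NN^d$ with $|\beta| = 2$, the function $w_\beta := x_d D^\beta u$ extends continuously up to $\partial\HH$ and the extension lies in $C^\alpha_s(\bar V)$ for every precompact $V \Subset \underline{\HH}$. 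So it suffices to show that this continuous extension vanishes on $\partial\HH$, i.e. $w_\beta(x^0) = 0$ for all $x^0 \in \partial\HH$.

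First I would fix $x^0 \in \partial\HH$ and a precompact neighborhood $V \Subset \underline{\HH}$ of $x^0$, so that $[w_\beta]_{C^\alpha_s(\bar V)} =: M < \infty$. Consider the sequence of interior points $x^0 + t e_d \in \HH$ for small $t > 0$, which converge to $x^0$ as $t \downarrow 0$. Using the cycloidal distance formula \eqref{eq:Cycloidal_distance}, one computes $s(x^0 + t e_d, x^0) = \sqrt{t/2}$ (exactly the identity recorded just before Corollary~\ref{cor:Estimate_D2u}, valid here since $x^0_d = 0$). Therefore the H\"older bound gives
\[
|w_\beta(x^0 + t e_d) - w_\beta(x^0)| \leq M \, s^\alpha(x^0 + t e_d, x^0) = M (t/2)^{\alpha/2} \to 0 \quad\text{as } t \downarrow 0.
\]
On the other hand, $w_\beta(x^0 + t e_d) = t \, D^\beta u(x^0 + t e_d)$; since $D^\beta u$ is continuous on the interior $\HH$ but a priori only known to be continuous there (not up to the boundary), I still need to control $t \, D^\beta u(x^0 + te_d)$. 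The cleanest route is: the continuous extension of $w_\beta$ to $\underline\HH$ means $w_\beta(x^0 + t e_d) = t \, D^\beta u(x^0 + te_d) \to w_\beta(x^0)$; combining with the display above forces $w_\beta(x^0) = \lim_{t\downarrow 0} t\, D^\beta u(x^0 + t e_d)$, and the same display shows this limit equals $w_\beta(x^0)$ — which is circular unless I pin down the value. To break the circularity, note $w_\beta \in C(\underline\HH)$ is bounded near $x^0$, say $|w_\beta| \leq M'$ on $\bar V$, so $|D^\beta u(x)| \leq M'/x_d$ for $x \in V$; this is the crucial a priori blow-up rate. Then for the particular sequence, $|t \, D^\beta u(x^0+te_d)| \leq M'$ is merely bounded, which is not yet enough. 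Instead I would argue directly: let $L := w_\beta(x^0)$ (the value of the continuous extension). For any $x \in V \cap \HH$, write $w_\beta(x) = w_\beta(x^0) + (w_\beta(x) - w_\beta(x^0))$, so $x_d D^\beta u(x) = L + O(s^\alpha(x,x^0))$, hence $D^\beta u(x) = L/x_d + O(s^\alpha(x,x^0)/x_d)$. If $L \neq 0$, then $D^\beta u(x) \sim L/x_d$ blows up like $1/x_d$ near $\partial\HH$.

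The main obstacle — and the step requiring care — is ruling out $L \neq 0$, i.e. showing this $1/x_d$ blow-up is impossible for a genuine second derivative $D^\beta u$ with $u \in C^1(\underline\HH)$ (which is part of the $C^{2+\alpha}_s$ hypothesis, since $C^{2+\alpha}_s \subset C^{1,\alpha}_s$). Here I would use an integration argument along the $e_d$-direction when $\beta = 2e_d$: $u_{x_d}(x^0 + t_1 e_d) - u_{x_d}(x^0 + t_0 e_d) = \int_{t_0}^{t_1} u_{x_d x_d}(x^0 + s e_d)\, ds$, and if $u_{x_dx_d}(x^0+se_d) \sim L/s$ with $L \neq 0$ then the integral diverges logarithmically as $t_0 \downarrow 0$, contradicting the continuity (hence boundedness) of $u_{x_d}$ up to $x^0 \in \partial\HH$ guaranteed by $u \in C^1(\underline\HH)$. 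For mixed indices $\beta = e_i + e_d$ with $i \neq d$ one integrates $u_{x_i}$ similarly along $e_d$; for $\beta = e_i + e_j$ with $i, j \neq d$, the weight argument shows $x_d u_{x_ix_j}$ is continuous and one notes $u_{x_ix_j} = \partial_{x_i}\partial_{x_j} u$ where $\partial_{x_j} u \in C^1(\underline\HH)$ along tangential directions, reducing to the one-variable mean-value/integration argument in a tangential variable where there is no weight degeneracy and continuity up to the boundary is immediate — forcing $L = 0$ in all cases. Since this is precisely \cite[Lemma~3.1]{Feehan_Pop_mimickingdegen_pde}, I would in practice cite that reference, but the argument sketched above (weight vanishing at rate $x_d$, cycloidal distance $s(x^0+te_d,x^0) = \sqrt{t/2}$, and a one-dimensional integration to exclude the residual nonzero boundary value) is the self-contained proof.
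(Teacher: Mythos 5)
The paper does not give an in-text proof of this lemma: it simply cites \cite[Lemma 3.1]{Feehan_Pop_mimickingdegen_pde} (the parabolic analogue for $d \geq 2$) and, for $d=2$, \cite[Proposition I.12.1]{DaskalHamilton1998}, so there is no internal argument to compare against. I will therefore assess your self-contained proposal on its own terms.

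Your strategy is sound: set $w_\beta := x_d D^\beta u$ for $|\beta| = 2$, observe that the hypothesis forces $w_\beta$ to extend continuously to $\underline\HH$ so the limit in \eqref{eq:PropSecondOrderDeriv} exists and equals $L := w_\beta(x^0)$, assume $L \neq 0$, deduce that $D^\beta u(x) \approx L/x_d$ near $x^0$, and integrate once to contradict the continuity of $Du$ up to $\partial\HH$, which follows from $C^{2+\alpha}_s(\underline\HH) \subset C^{1,\alpha}_s(\underline\HH)$. The cases $\beta = 2e_d$ and $\beta = e_i + e_d$ ($i < d$) are handled correctly: write $D^\beta u = (D^{\beta - e_d}u)_{x_d}$ with $D^{\beta - e_d}u$ a component of $Du$, integrate along $x^0 + s e_d$ over $s \in [t_0, t_1]$, and note that $\int_{t_0}^{t_1} (L/s)\,ds$ diverges logarithmically as $t_0 \downarrow 0$ while the difference of first-order derivatives stays bounded.

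The mixed tangential case $\beta = e_i + e_j$ with $i, j < d$ contains a genuine gap. The assertion that ``$\partial_{x_j} u \in C^1(\underline\HH)$ along tangential directions'' is not given by the hypothesis: $u \in C^{2+\alpha}_s(\underline\HH)$ only ensures that the \emph{weighted} functions $x_d D^2 u$ extend continuously to $\partial\HH$, not $D^2 u$ itself, so $u_{x_ix_j}$ is a priori allowed to blow up like $1/x_d$ there --- which is precisely what must be ruled out, not assumed away. Likewise, the statement that ``there is no weight degeneracy and continuity up to the boundary is immediate'' is incorrect; the weight $x_d$ is still present even when $\beta_d = 0$. The correct finish is still an integration, but in the tangential variable at a fixed small height $x_d$, followed by the limit $x_d \downarrow 0$: for small $\tau > 0$,
\[
u_{x_i}(x^0 + \tau e_j + x_d e_d) - u_{x_i}(x^0 + x_d e_d)
= \int_0^\tau u_{x_ix_j}(x^0 + s e_j + x_d e_d)\,ds
= \frac{1}{x_d}\int_0^\tau w_\beta(x^0 + s e_j + x_d e_d)\,ds.
\]
As $x_d \downarrow 0$, the left side converges to $u_{x_i}(x^0 + \tau e_j) - u_{x_i}(x^0)$ by continuity of $Du$ up to $\partial\HH$, while $w_\beta(x^0 + s e_j + x_d e_d) \to w_\beta(x^0 + s e_j)$ uniformly in $s \in [0,\tau]$ since $w_\beta$ is uniformly continuous on compact subsets of $\underline\HH$. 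If $w_\beta(x^0) \neq 0$, then $\int_0^\tau w_\beta(x^0 + se_j)\,ds \neq 0$ for small $\tau$, so the right side diverges like $1/x_d$ --- the contradiction that closes the tangential case and completes your proof.

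A minor observation: the quantitative $C^\alpha_s$ bound you invoke, giving the rate $|w_\beta(x^0 + te_d) - L| \leq M(t/2)^{\alpha/2}$, is more than is needed; mere continuity of $w_\beta$ at $x^0$, i.e. $x_d D^\beta u = L + o(1)$ near $x^0$, already suffices for all three integration arguments.
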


In \cite{Feehan_Pop_mimickingdegen_pde}, we also proved the following interpolation inequalities for parabolic weighted H\"older spaces analogous to those for standard parabolic H\"older spaces \cite{Krylov_LecturesHolder, Lieberman}. For completeness, we restate these interpolation inequalities below for elliptic weighted H\"older spaces, analogous to those for standard elliptic H\"older spaces in \cite[Lemmas 6.32 and 6.35]{GilbargTrudinger}, \cite[Theorem~3.2.1]{Krylov_LecturesHolder}.

\begin{lem} [Interpolation inequalities for weighted H{\"o}lder spaces]
\label{lem:InterpolationIneqS}
\cite[Lemma 3.2]{Feehan_Pop_mimickingdegen_pde}
Let $\alpha \in (0,1)$ and $r_0>0$. Then there are positive constants, $m=m(\alpha,d)$ and $C=C(\alpha,d, r_0,)$, such that the following holds. If $u \in C^{2+\alpha}_s(\bar B_{r_0}^+(x^0))$, where $x^0 \in \partial \HH$, and $\eps \in (0,1)$, then
\begin{align}
\label{eq:InterpolationIneqS1}
\|u\|_{C^{\alpha}_s(\bar B_{r_0}^+(x^0))} &\leq \eps \|u\|_{C^{2+\alpha}_s(\bar B_{r_0}^+(x^0))} + C \eps^{-m} \|u\|_{C(\bar B_{r_0}^+(x^0))},\\
\label{eq:InterpolationIneqS2}
\|Du\|_{C(\bar B_{r_0}^+(x^0))} &\leq \eps \|u\|_{C^{2+\alpha}_s(\bar B_{r_0}^+(x^0))} + C \eps^{-m} \|u\|_{C(\bar B_{r_0}^+(x^0))},\\
\label{eq:InterpolationIneqS3}
\|x_d Du\|_{C^{\alpha}_s(\bar B_{r_0}^+(x^0))} &\leq \eps \|u\|_{C^{2+\alpha}_s(\bar B_{r_0}^+(x^0))} + C \eps^{-m} \|u\|_{C(\bar B_{r_0}^+(x^0))},\\
\label{eq:InterpolationIneqS4}
\|x_d D^2u\|_{C(\bar B_{r_0}^+(x^0))} &\leq \eps \|u\|_{C^{2+\alpha}_s(\bar B_{r_0}^+(x^0))} + C \eps^{-m} \|u\|_{C(\bar B_{r_0}^+(x^0))}.
\end{align}
\end{lem}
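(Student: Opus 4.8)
The plan is to adapt, \emph{mutatis mutandis}, the proof of the parabolic analogue \cite[Lemma 3.2]{Feehan_Pop_mimickingdegen_pde}, of which the present statement is essentially the time-independent specialization; I sketch the direct argument. After a translation we may assume $x^0 = O$, and by the rescaling $x \mapsto u(r_0 x)$ together with the scaling behaviour of the cycloidal metric (used repeatedly above, e.g.\ in the proof of Corollary \ref{cor:Holder_estimate_any_r_any_gamma_A_0}) it suffices to treat the model case $r_0 = 1$. The geometric input is that the cycloidal distance \eqref{eq:Cycloidal_distance} is, on each region $\{x_d \geq \rho\}$, bi-Lipschitz-equivalent to the Euclidean distance with constants depending on $\rho$, while near $\partial\HH$ one has $|x^1 - x^2| \leq 2 s^2(x^1,x^2)$ by \eqref{eq:SimpleEuclidLessCycloidDistance} and $s(x^1,x^2) \leq |x^1-x^2|^{1/2}$ by \eqref{eq:CycloidLessEuclidDistance}.

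First I would split $B^+_1$ into the interior regime $\{x_d \geq 1/4\}$ and the boundary regime $\{0 < x_d < 1/4\}$. In the interior regime the weighted norms $\|\cdot\|_{C^{2+\alpha}_s}$, $\|\cdot\|_{C^\alpha_s}$ and the sup norm are comparable, with universal constants, to the classical norms $C^{2+\alpha}$, $C^\alpha$, $C^0$, so \eqref{eq:InterpolationIneqS1}--\eqref{eq:InterpolationIneqS4} follow from the classical elliptic interpolation inequalities \cite[Lemmas 6.32 and 6.35]{GilbargTrudinger} applied on a fixed finite cover of $\{x_d \geq 1/4\}\cap B^+_1$ by Euclidean balls of radius of order $1/16$. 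In the boundary regime I would cover by half-balls $B^+_{\gamma_d t}(y^n)$ with $y^n \in \partial_0 B^+_1$ and $t$ proportional to the height of the point, exactly as in the covering-and-rescaling argument in the proof of Corollary \ref{cor:Holder_estimate_any_r_any_gamma_A_0}; on each such half-ball, setting $v(y) := u(ty)$, the quantity $x_d D^2 u$ becomes $t\, y_d D^2_y v$, so a classical interpolation inequality on the unit half-ball applied to $v$, together with the cycloidal-versus-Euclidean increment conversions \eqref{eq:CycloidLessEuclidDistance}--\eqref{eq:SimpleEuclidLessCycloidDistance}, produces the local estimate with explicit powers of $t$. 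Summing over the cover and absorbing, the negative powers of $t$ collapse to a single factor $\eps^{-m}$ with $m = m(\alpha,d)$, while Lemma \ref{lem:PropSecondOrderDeriv} ($\lim_{x_d \to 0} x_d D^2 u = 0$) ensures the local pieces glue to genuine elements of $C^\alpha_s(\bar B^+_1)$ with no contribution along $\partial_0 B^+_1$.

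The elementary core within each piece is the one-dimensional argument: for $\phi \in C^2$ on an interval of length $\ell$, the identity $\phi(b)-\phi(a)-\phi'(a)(b-a)$ and the mean value theorem give $\|\phi'\|_{C^0} \leq \eps'\|\phi''\|_{C^0} + C(\ell)(\eps')^{-1}\|\phi\|_{C^0}$, and a parallel difference-quotient computation interpolates a H\"older seminorm against the sup norm and the next higher norm; applied along coordinate segments and converted to the cycloidal metric these yield \eqref{eq:InterpolationIneqS2}, \eqref{eq:InterpolationIneqS4} and \eqref{eq:InterpolationIneqS1}, \eqref{eq:InterpolationIneqS3} respectively. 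I expect the main obstacle to be precisely the bookkeeping in the boundary regime: verifying that the negative powers of the local scale $t$ picked up from rescaling $x_d D^2 u$ and from the factors $|y^1-y^2|/s(x^1,x^2)$ combine into one power of $\eps$ that is uniform over the whole cover of $\{0<x_d<1/4\}$, rather than degenerating as $t \to 0$ near $\partial_0 B^+_1$ --- this is handled exactly as in the rescaling estimates \eqref{eq:Rescaling_derivatives_1} in the proof of Corollary \ref{cor:Holder_estimate_any_r_any_gamma_A_0}.
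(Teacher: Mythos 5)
The paper gives no proof of this lemma: it is stated as a restatement, for elliptic weighted H\"older spaces, of \cite[Lemma 3.2]{Feehan_Pop_mimickingdegen_pde} (the parabolic version), and the appendix text explicitly says it is quoted ``for completeness''. So there is no ``paper's own proof'' to compare against; I will evaluate your sketch on its own terms.

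The decisive issue is the boundary-regime reduction. You propose to set $v(y):=u(ty)$ on a unit half-ball and then invoke ``a classical interpolation inequality on the unit half-ball applied to $v$.'' This step cannot work as stated, because the rescaled function $v$ is in general \emph{not} an element of the classical space $C^{2+\alpha}(\bar B^+_1)$ to which the classical interpolation inequality \cite[Lemmas~6.32, 6.35]{GilbargTrudinger}, \cite[Theorem~3.2.1]{Krylov_LecturesHolder} applies. Indeed, for $u\in C^{2+\alpha}_s$ the membership requirement is only that $x_d D^2u$ be continuous and $C^\alpha_s$ up to $\partial_0$; the unweighted Hessian $D^2u$ may blow up like $x_d^{-1}$. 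A concrete instance is $u(x)=x_d^{3/2}$, which lies in $C^{2+\alpha}_s(\bar B^+_1)$ for small $\alpha$, yet has $D^2_y v(y)=t^2 D^2 u(ty)\sim t^{3/2}y_d^{-1/2}$ unbounded as $y_d\to 0$; the classical interpolation inequality is then vacuous (both sides infinite). So the covering-and-rescaling step in your second paragraph, which is the step meant to control the $\{0<x_d<1/4\}$ layer, collapses exactly where the weight $x_d$ is doing its work, and no amount of bookkeeping of powers of $t$ can repair it.

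The ``elementary core'' in your last paragraph --- a one-dimensional difference-quotient estimate converted to the cycloidal metric --- is in fact the correct mechanism, and it supersedes rather than complements the rescaling step. The observation that makes it work uniformly up to $\partial_0$ is precisely \eqref{eq:CycloidLessEuclidDistance}: along any Euclidean segment of length $\ell$ inside $B^+_{r_0}(x^0)$, the cycloidal increment is at most $\ell^{1/2}$, \emph{independently of $x_d$}. For \eqref{eq:InterpolationIneqS2}, say, one writes for a direction $e$ parallel to $\partial\HH$
\begin{equation*}
|D_e u(x^*)|
\;\le\;
\frac{2\|u\|_{C(\bar B^+_{r_0}(x^0))}}{\ell}
\;+\;
[Du]_{C^\alpha_s(\bar B^+_{r_0}(x^0))}\,\sup_{|s|\le\ell}s^\alpha\bigl(x^*+se,\,x^*\bigr)
\;\le\;
\frac{2\|u\|_{C}}{\ell}+[Du]_{C^\alpha_s}\,\ell^{\alpha/2},
\end{equation*}
with no factor of $1/x^*_d$; choosing $\ell\sim\eps^{2/\alpha}$ gives \eqref{eq:InterpolationIneqS2} with $m=2/\alpha$, and for points near $\partial_1 B^+_{r_0}(x^0)$ one simply caps $\ell$ by the distance to the spherical boundary, which only affects the constant $C(\alpha,d,r_0)$. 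For \eqref{eq:InterpolationIneqS4} the same second-difference estimate, after multiplying through by $x^*_d\le r_0$, gives the weighted bound directly; for \eqref{eq:InterpolationIneqS1} and \eqref{eq:InterpolationIneqS3} the seminorms split into a short-range regime controlled by the higher derivative via \eqref{eq:CycloidLessEuclidDistance} and a long-range regime bounded by $\|u\|_{C}$ over a power of the separation, and for the vanishing of the weighted terms at $\partial_0$ one uses Lemma~\ref{lem:PropSecondOrderDeriv}. No covering, no rescaling, and no split into an interior versus boundary regime is needed; the $\{x_d\ge 1/4\}$/$\{x_d<1/4\}$ dichotomy you set up at the start is not where the difficulty lies and is not what the cited parabolic proof does.

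In short: the proposal identifies the right ingredients (the cycloidal-versus-Euclidean comparison and a 1D interpolation core), but the centerpiece of the boundary argument --- rescaling to a unit half-ball and applying classical interpolation --- is invalid because the rescaled function does not have finite classical H\"older norms near $\partial_0$. The uniformity in $x_d$ must instead come from \eqref{eq:CycloidLessEuclidDistance} used directly, as in the sketch above.
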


We add here the following

\begin{lem}[H\"older continuity for $x_d Du$]
\label{lem:Holder_continuity_x_d_Du}
Let $r>0$, and assume that $u \in C^2(B_r)$ is such that $Du$ and $x_d D^2u$ belong to $C(\bar B^+_r)$. Then, $x_d Du \in C^{\alpha}_s(\bar B^+_r)$.
\end{lem}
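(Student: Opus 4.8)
The plan is to estimate the $C^0$-norm and the $C^\alpha_s$-seminorm of $x_dDu$ on $\bar B_r^+$ separately, using only the hypotheses that $Du, x_dD^2u \in C(\bar B_r^+)$. The $C^0$-bound is trivial: $\|x_dDu\|_{C(\bar B_r^+)} \leq r\|Du\|_{C(\bar B_r^+)}$, so the content is in the seminorm. Since the cycloidal seminorm is controlled by comparing values along coordinate directions (as in the proof of Lemma \ref{lem:Interior_estimates_Holder_seminorms_Du_yD2u}), it suffices to bound, for $x^1,x^2 \in B_r^+$ differing in a single coordinate, the quotient $|x^1_dDu(x^1) - x^2_dDu(x^2)|/s^\alpha(x^1,x^2)$. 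Split this as in the analogous estimates elsewhere in the paper:
\[
\frac{|x^1_d Du(x^1) - x^2_d Du(x^2)|}{s^\alpha(x^1,x^2)}
\leq \frac{|x^1_d - x^2_d|}{s^\alpha(x^1,x^2)}|Du(x^1)| + x^2_d\frac{|Du(x^1) - Du(x^2)|}{s^\alpha(x^1,x^2)}.
\]

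For the first term, when the two points differ only in the $x_d$-coordinate, use $|x^1_d - x^2_d| \leq |x^1-x^2| \leq 2s^2(x^1,x^2)$ by \eqref{eq:SimpleEuclidLessCycloidDistance} applied appropriately (or more simply $|x^1_d-x^2_d|^{1-\alpha} \leq (2s^2)^{1-\alpha}$ together with a bound on the difference of cycloidal distances); the remaining factor is controlled by $\|Du\|_{C(\bar B_r^+)}$. When the points differ in a tangential coordinate, $x^1_d = x^2_d$ and this term vanishes. For the second term, factor $x^2_d\cdot|Du(x^1)-Du(x^2)|/s^\alpha(x^1,x^2)$: if the points differ in a tangential direction $e_i$, write $Du(x^1) - Du(x^2) = \int_0^1 D(Du)(x^2 + t(x^1-x^2))\cdot(x^1-x^2)\,dt$, but since this involves pure second derivatives in tangential directions (not multiplied by $x_d$), one instead uses that along a tangential segment at fixed height $x^2_d$, the factor $x^2_d D^2u$ is continuous, hence bounded, so $x^2_d|Du(x^1)-Du(x^2)| \leq \|x_dD^2u\|_{C(\bar B_r^+)}|x^1-x^2|$, and then $|x^1-x^2|/s^\alpha(x^1,x^2) = |x^1-x^2|^{1-\alpha}\cdot(|x^1-x^2|/s(x^1,x^2))^\alpha \leq |x^1-x^2|^{1-\alpha}(x^1_d+x^2_d+|x^1-x^2|)^{\alpha/2} \leq C r^{1-\alpha/2}$, which is bounded. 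If the points differ in the $x_d$-direction, integrate $x^2_d u_{x_dx_i}$ along the segment, again using continuity (hence boundedness) of $x_dD^2u$ on $\bar B_r^+$; here one must be slightly careful that the integrand is $x_d u_{x_dx_i}$ evaluated at heights between $x^1_d$ and $x^2_d$, all $\leq r$, so the same bound applies.

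The main obstacle — and it is a mild one — is the bookkeeping in the case where $x^1$ and $x^2$ differ in the $x_d$-direction and one of them is very close to $\partial\HH$: one needs the factor $x^2_d$ (the smaller height) to absorb the potential blow-up of $D^2u$ near the boundary, which works precisely because $x_dD^2u$ is assumed continuous up to $\bar B_r^+$, and because in the integral representation the relevant height variable stays between $x^2_d$ and $x^1_d$. Care is also needed to verify that it genuinely suffices to test the seminorm on pairs differing in a single coordinate; this reduction is standard (a path of at most $d$ coordinate segments, each segment contributing a controlled amount, combined with the quasi-triangle inequality for $s$), and is used implicitly throughout Sections \ref{sec:Schauder_estimates_interior} and \ref{sec:Boundary_Schauder_estimates}. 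Collecting the bounds gives $[x_dDu]_{C^\alpha_s(\bar B_r^+)} \leq C(r)\left(\|Du\|_{C(\bar B_r^+)} + \|x_dD^2u\|_{C(\bar B_r^+)}\right)$, which together with the $C^0$-bound yields $x_dDu \in C^\alpha_s(\bar B_r^+)$.
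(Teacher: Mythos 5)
Your proposal is correct and follows essentially the same route as the paper: split $x_d^1Du(x^1)-x_d^2Du(x^2)$ into a term with $|x_d^1-x_d^2|\,|Du(x^1)|$ and a term with $x_d^2|Du(x^1)-Du(x^2)|$, use the mean value theorem (or integral representation) for the second difference, order the heights so that the smaller one, $x_d^2$, is the factor outside the intermediate second derivative so that it can be absorbed by $\|x_dD^2u\|_{C(\bar B_r^+)}$, and bound the remaining ratio $|x^1-x^2|/s^\alpha(x^1,x^2)$ using the definition of the cycloidal distance on the bounded ball. The paper applies this uniformly without treating the tangential and normal cases separately, which you do; that is fine.

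One small imprecision: you invoke the inequality $|x^1-x^2|\le 2s^2(x^1,x^2)$ from \eqref{eq:SimpleEuclidLessCycloidDistance}, but that inequality requires one of the two points to lie on $\partial\HH$. For two interior points of $B_r^+$ the best one can say is $|x^1-x^2|\le 2\sqrt{r}\,s(x^1,x^2)$, which gives $|x^1-x^2|\lesssim_r s$, not $\lesssim s^2$. This does not damage the argument: the bound you actually need,
\[
\frac{|x^1_d-x^2_d|}{s^\alpha(x^1,x^2)}\le\frac{|x^1-x^2|}{s^\alpha(x^1,x^2)}
=|x^1-x^2|^{1-\alpha}\left(x^1_d+x^2_d+|x^1-x^2|\right)^{\alpha/2}\le(2r)^{1-\alpha}(4r)^{\alpha/2},
\]
follows directly from \eqref{eq:Cycloidal_distance} without any appeal to \eqref{eq:SimpleEuclidLessCycloidDistance}, which is exactly what the paper does. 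Replace the citation with this direct computation and the proof is complete.
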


\begin{proof}
For this we only need to show that for any $x^1, x^2 \in B^+_{r_2}$ such that all their coordinates coincide, except for the $i$-th one, we have
$$
\frac{|x^1_d Du(x^1)-x^2_d Du(x^2)|}{s^\alpha(x^1,x^2)} \leq C,
$$
for some positive constant, $C$. We show this for the case $i=d$, and all the other cases, $i=1,\ldots,d-1$, follow by a similar argument. We have
\begin{align*}
\frac{|x^1_d Du(x^1)-x^2_d Du(x^2)|}{s^\alpha(x^1,x^2)}
&\leq \frac{|x^1_d-x^2_d|}{s^\alpha(x^1,x^2)} |Du(x^1)| + x^2_d\frac{|Du(x^1)-Du(x^2)|}{s^\alpha(x^1,x^2)} \\
&\leq \left(\|Du\|_{C(\bar B^+_{r_2})}+ x^2_d |D^2u(x^3)| \right)\frac{|x^1-x^2|}{s^\alpha(x^1,x^2)},
\end{align*}
where $x^3 \in B_{r_3}$ is a point on the line connecting $x^1$ and $x^2$, and we apply the Mean Value Theorem. We may assume without loss of generality that $x^2_d<x^1_d$, and because $x^3_d \geq x^2_d$, we have that $x^2_d |D^2u(x^3)| \leq \|x_d D^2 u\|_{C(\bar B^+_{r_2})}$. Using the definition \eqref{eq:Cycloidal_distance} of the cycloidal distance function, we obtain
\begin{align*}
\frac{|x^1_d Du(x^1)-x^2_d Du(x^2)|}{s^\alpha(x^1,x^2)}
&\leq \left(\|Du\|_{C(\bar B^+_{r_2})}+ \|x_dD^2u\|_{C(\bar B^+_{r_2})}\right) r_2^{1-\alpha/2}.
\end{align*}
Therefore, $x_d Du$ belongs to $C^{\alpha}_s(\bar B^+_{r_2})$, for all $\alpha\in (0,1)$. This completes the proof of Lemma \ref{lem:Holder_continuity_x_d_Du}
\end{proof}

%
%

\bibliography{mfpde}

\bibliographystyle{amsplain}
\end{document}